
\documentclass[reqno]{amsart}
\usepackage{mathrsfs, amssymb, eucal, amsmath, amsthm, amscd, amsrefs}

\usepackage[all]{xy}
\usepackage{graphicx}
\usepackage{delarray}
\usepackage{graphicx}

\pagestyle{headings}
\parindent0cm




\numberwithin{equation}{section}
\newtheorem{theorem}{\bf Theorem}[section]
\newtheorem{lemma}[theorem]{\bf Lemma}
\newtheorem{proposition}[theorem]{\bf Proposition}
\newtheorem{corollary}[theorem]{\bf Corollary}
\newtheorem{definition}{\bf Definition}[section]
\theoremstyle{remark}
\newtheorem{remark}{\bf Remark}[section]


\begin{document}

\title[On the space of trajectories]  
      {On the space of trajectories \\
       of a generic gradient like vector field} 

\author[Burghelea]{Dan Burghelea}

\address{Department of Mathematics,
         The Ohio State University\\
         USA}
\email{burghele@mps.ohio-state.edu}

\author[Friedlander]{Leonid Friedlander}

\address{Department of Mathematics,
         University of Arizona\\
        USA.}
\email{friedlan@math.arizona.edu}

\author[Kappeler]{Thomas Kappeler}

\address{Department of Mathematics,
         University of Zurich\\
          Switzerland.}
\email{tk@math.unizh.ch }

\subjclass{57R20, 58J52}
\thanks{Partially supported by NSF grant no MCS 0915996}
\dedicatory{Dedicated to Dan Papuc on the occasion of his 80th birthday}

\maketitle


\begin{abstract}
This paper describes the construction of a canonical compactification
of the space of trajectories and of the unstable/stable sets of
a generic gradient like vector field on a closed manifold
as well as a canonical structure of a smooth manifold with corners of these spaces.
As an application we discuss the geometric complex associated with a gradient like
vector field and show how differential forms can be integrated
on its unstable/stable sets. Integration leads to a morphism between
the de Rham complex and the geometric complex.
\end{abstract}

\tableofcontents

\section{Introduction}
This paper gives a detailed description of the construction of a compactification of the
space of trajectories and of the unstable/stable sets of a gradient like Morse-Smale vector
field  on a closed manifold $M$ as well as of a canonical structure of a smooth manifold
with corners on these spaces. The property of a vector field being
Morse-Smale is generic.

As an application the paper discusses
the geometric complex associated with such a vector field. This complex calculates the
cohomology  of $M$ and is most commonly known as Morse complex, referring to the case
where the vector field is
the gradient of a Morse function w.r. to a Riemannian metric of $M$.
Using that the constructed compactification of the unstable sets are a smooth manifold
with corners one can show that a
canonical integration map from the de-Rham complex to the geometric complex induces an
isomorphism in cohomology. For further applications see e.g. \cite{BZ1}, \cite{BZ2},
\cite{Bfk}, \cite{HN}, \cite{HS2}, \cite{Zh}.

The results presented in this paper are known, compare [8], [18], [19], [20], [28]. 
 The aim of this paper is to give a new and self contained treatment of them.

Due to its comprehensive exposition the
paper can be used as part of a course on Morse theory on finite dimensional manifolds.
At the beginning of each section we summarize its contents and provide some
references.
Section~\ref{6. Gradient-like flows} and Section~\ref{4 Manifold
with corners} can be read independently.
This paper is a chapter of a book in preparation on the Witten deformation of the
de Rham complex where it will be incorporated.

Professor Dan Papuc is a mathematician interested not only in  mathematical research but
also in teaching mathematics to interested students. During many years of friendship he
has encouraged the first author to give graduate courses on various topics
and provided him with a number of opportunities to do
this. With this in mind we dedicate this paper to him on the occasion of his 80th birthday.

\section{Gradient-like flows}
\label{6. Gradient-like flows}

Ideas from dynamical systems can be used to investigate the diffeomorphism
type of a closed manifold. Let $h$ be a Morse function on a closed Riemannian
manifold $M$ with Riemannian metric $g$, and let $X=-\text{grad}_gh$
be the gradient vector field of $h$ with respect to the metric $g$.
Note that the rest points of $X$ coincide with the critical points of $h$.
Trajectories $t \mapsto \gamma(t)$ of $X$ originate (as $t\to-\infty$) and
terminate (as $t\to+\infty$) at critical points. In physicist's lingo, these
trajectories are called
instantons. Denote by $W_v^-$ [$W_v^+$] the unstable [stable] manifold
of $X$ at the critical point $v$ of $h$. They are sets of all points
that lie on trajectories that originate [terminate] at $v$. As any point
of $M$ lies on exactly one trajectory of $X$, and each trajectory
originates at a critical point of $h$, the unstable manifolds
$W_v^-$, $v\in\text{Crit}(h)$ are the cells of a decomposition of $M$.
These cells are open in the sense that they are the image of a smooth embedding
of $\mathbb R^k$ for some $0 \le k \le n.$
Here by $\text{Crit}(h)$ we denote the set of all critical points of $h$.
Notice that the dimension of $W_v^-$ equals the index of the critical point
$v$. To use this decomposition for describing the diffeomorphism type
of a manifold, one needs the additional condition that unstable manifolds $W_v^-$
and stable ones $W_w^+$ intersect transversally. It is called the Morse--Smale
condition. In general, the gradient vector field $X$ does not satisfy this condition.
However, Smale showed in [Sm1] that one can find an arbitrarily small perturbation
$g'$ of the (arbitrarily) given metric $g$ in such a way that $X'=-\text{grad}_{g'}h$
satisfies the Morse--Smale condition.

One can use the cells $W_v^-$ to construct a chain complex of finite-dimensional
spaces, called the geometric complex. Typically, they are not compact.
To relate the de Rham complex to the geometric complex one has to be able to integrate
differential forms  over $W_v^-$ and to use Stokes's theorem. For doing that, one needs
to compactify the cells. We will discuss a canonical compactification of the unstable
manifolds in section 4.

Throughout this section, our approach is based on reducing our investigation
to the analysis of the objects under consideration near critical points.
In neighborhoods of these points, we will use local coordinates that are
convenient for our purposes.

Among the many existing references we mention \cite{Bo}, \cite{Hi}, \cite{Mi},
\cite{Mi2}, \cite{Sc} -- \cite{Th}, \cite{Wi} and references therein.

\subsection{Morse-Smale pairs}
\label{2.1Set-up}
Let $M$ be a smooth, connected manifold of dimension $n$. A point
$v \in M$ is said to be a {\it critical point} of a given smooth function
$h : M \rightarrow {\mathbb R}$ if the differential $d_v h$ at $v$
vanishes. The set of critical points $u, v, w, \ldots $ of $h$ is
denoted by $\mbox{Crit}(h)$. The function $h$ is a {\it Morse function} if
the Hessian $d^2_v h$ at any critical point $v$ of $h$ is
nondegenerate.
According to the Morse Lemma - see 
\cite{Mi}, \cite {Mi2} and \cite {Hi}
there exist
coordinates $x_1,...,x_n$ around any critical point $v$ of a given
Morse function $h$ so that
   \begin{equation} \label{1.0} h(x) = h(v) - \frac{1}{2} \sum _{j=1}^{k} x_j^2 +
   \frac{1}{2} \sum _{j=k+1}^{n} x_j^2 .
   \end{equation}
Hence, any critical point of a Morse
function is isolated. In particular, if $M$ is a compact manifold, a
Morse function $h : M
\rightarrow {\mathbb R}$ has only finitely many critical points. The
Hessian $d^2_v h$ of $h$ at
$v$ is a quadratic form on the tangent space $T_v M$ of $v$ at $M$.
We denote by $i(v), 0
\leq i(v) \leq n$, the index of $d^2_v h$ which is defined to be
the maximal dimension of a subspace of $T_v M$
on which $d^2_vh$ is negative definite. One can read off the index
from the representation (\ref{1.0})  of $h$, $i(v) = k.$

Let $X$ be a smooth vector field and let
$x \in M$. By the existence and uniqueness theorem for the initial
value problem of ODE's
there exists $T > 0$ so that
   \begin{equation}
   \label{1.2} \frac {d}{dt} \Phi _t(x) = X \left( \Phi _t(x) \right)
               \ ; \quad \Phi _0(x) = x
   \end{equation}
has a unique solution $\Phi _t(x)$, defined for $|t| < T$.
By the theorem on the smooth dependence of the solution $\Phi
_t(x)$ on the initial data it follows that for any $p \in M$
there exist a neighborhood $U$ of $p$ and $T > 0$ so that
for any $x \in U$ the solution $\Phi _t(x)$ exists for $|t| <
T$ and that it is
smooth in $(x,t) \in U \times (-T,T)$. $\Phi _t(x)$ is referred to
as the flow induced by $X$ whereas the solution $t \mapsto \Phi _t(x)$
is referred to as parametrized trajectory of $X$. The set of points of
a parametrized trajectory is sometimes called an unparametrized trajectory
or an orbit of the vector field $X$.
In what follows we will often use the term `trajectory' without
further specification within a given context.
If not stated otherwise we will always
assume in the sequel that $X$ is {\it complete }, i.e. that the
flow induced by $X$ is defined for any time $t \in {\mathbb R}$.
In this case $\Phi : M \times {\mathbb R} \rightarrow M, (x,t) \mapsto
\Phi _t(x)$ is smooth.
Using the local existence and uniqueness theorem for ODE's one can
show that in the case when $M$ is closed,
any smooth vector field is complete - see e.g.
\cite{La}.

By the uniqueness of a solution of the initial value problem (\ref{1.2})
one has for any $x \in M$ and $t, s \in {\mathbb R}$
   \[ \Phi _{t + s}(x) = \Phi _t \left( \Phi _s(x) \right) .
   \]
It follows that for any $t \in {\mathbb R}$, $\Phi _t : M \rightarrow
M$ is a diffeomorphism with inverse given by $\Phi _{-t}$.
In the sequel, the following standard models will be considered. For the
manifold $M$ we choose ${\mathbb R}^n$ and the Morse function is given
by
   \begin{equation}
   \label{3.6.2bis} h_k(x):= - \frac {1}{2} \| x ^- \| ^2 +
                    \frac{1}{2} \| x^+ \| ^2
   \end{equation}
where $(x^-, x^+) \in {\mathbb R}^k \times {\mathbb R}^{n - k}$ and
   \[ \| x^- \| ^2 = \sum ^k_1 x^2_i \ \mbox {\ and \ } \ \| x^+ \|^2 =
      \sum ^n _{k + 1} x^2_i .
   \]
Note that the origin in ${\mathbb R}^n$ is the only critical
point of $h_k$ and that its index is equal to $k$. The
vector field is then chosen to be the gradient vector field of
$-h _k$ with respect to the Euclidean metric on ${\mathbb R}^n$,
   \begin{equation}
   \label{3.6.5} X^{(k)}(x) = \sum ^k_1 x_j \frac {\partial }
                 {\partial x_j} - \sum ^n_{k + 1} x_j
				 \frac {\partial }{\partial x_j} .
   \end{equation}
Clearly, $X^{(k)}(h_k)(x) = - \| x \|^2 < 0$ for any $x \in
{\mathbb R} ^n \backslash \{ 0 \} $. These models motivate the
following definition.

\medskip

\begin{definition}
\label{Definition1.0}
A vector field $X$ is said to be {\it gradient-like} with respect to a
Morse function $h$ (in the sense of Milnor \cite{Mi2}  ) if the following
properties hold:

\medskip

\begin{list}{\upshape }{
\setlength{\leftmargin}{13mm}
\setlength{\rightmargin}{0mm}
\setlength{\labelwidth}{20mm}
\setlength{\labelsep}{2.5mm}
\setlength{\itemindent}{0,0mm}}

\item[{\rm (GL1)}] $X(h)(x) < 0 \quad \forall x \in M \backslash
\mbox{\rm Crit}(h).$

\item[{\rm (GL2)}] For any critical point $v \in \mbox{\rm Crit}(h)$ there
exist an open neighborhood $U_v$ of $v$
and a coordinate map $\varphi _v : B_r \rightarrow
U_v$ from the open ball $B_r = B_r(0; {\mathbb R}^n)$ with center
$0$ and radius
$r = r(v) > 0$ onto $U_v$ so that $h$ and $X$, when expressed in
the coordinates $x_1, \ldots , x_n$ take the form
   \begin{equation}
   \label{1.1} (h \circ \varphi _v)(x_1 , \ldots , x_n) = h(v) -
               \frac{1}{2}
      \sum _{1}^{i(v)} x_j^2 +  \frac{1}{2} \sum _{i(v) + 1}^{n} x_j^2
   \end{equation}
and
   \begin{equation}
   \label{1.1bis} (\varphi ^\ast _v X)(x_1,...,x_n) = \sum ^{i(v)}_1 x_j
                  \frac {\partial } {\partial x_j} - \sum ^n_{i(v) + 1} x_j
                  \frac {\partial }{\partial x_j}.
   \end{equation}
\end{list}
\end{definition}

\medskip

We refer to the charts $(U_v, \varphi _v), v \in \mbox{Crit}(h)$, as
standard charts of the pair $(h, X)$ and to the coordinates $x_1, \ldots ,
x_n$ as standard coordinates. We will always choose $U_v, v \in
\mbox{Crit}(h)$, sufficiently small so that they are pairwise disjoint.

For a gradient-like vector field, $h$ decreases along a trajectory
$t \mapsto \Phi
_t(x)$ and hence it is a Lyapunov function for the flow. More precisely,
for any $x \in M \backslash \mbox{Crit}(h)$ and any $t \in {\mathbb R}$,
   \[ \frac {d}{dt} h \left( \Phi _t(x) \right) = X(h) \left( \Phi _t
      (x) \right) < 0 .
   \]
In particular, it follows that any point $x_0$ is a zero of $X$
if and only if it is a critical point of $h$.

As an example we mention the case where the vector field $X$ is
given by the gradient vector field $X = - \mbox{grad}_g h$ with $g$ being
a Riemannian metric on $M$. In local coordinates
$x_1, \ldots , x_n$, the components of the
gradient of $-h, -\mbox{grad}_g h$, are given
by $X_i = - \sum ^n_{j = 1} g^{ij} \partial _{x_j} h $
where $g^{ij}$ are the entries of the inverse of the
metric tensor $(g_{k\ell })$ of $g$. Then $X(h)(x) = - \| d_x h \| ^2
< 0$ on $M \backslash \mbox{Crit}(h)$, i.e. (GL1) is satisfied.
To make sure that (GL2) holds as well we need to make an
additional assumption.
We say that the pair $(h,g)$
is {\it compatible} or that $g$ is $h$-compatible
if for any critical point $v$ of $h$ there exist a
neighborhood $U_v$ of $v$ and a coordinate map $\varphi _v :
B_r \rightarrow U_v$ so that
when expressed in the coordinates $x_1, \ldots , x_n$, $h$ takes
the form \eqref{1.1} and $\varphi ^\ast _v g$ is given by the
standard metric on $B_r$, i.e.
   \[ g_{ij}(x) = \delta _{ij} \quad \forall 1 \leq i,j \leq n .
   \]
Clearly, if $g$ is $h$-compatible, then (GL2) is satisfied. Using an
appropriate
partition of unity for $M$ one can prove that for any given
Morse function $h$, $h$-compatible metrics can
always be constructed.
In fact, any gradient-like vector field $X$ is a gradient vector
field with respect to an appropriately chosen, $h$-compatible
Riemannian metric $g$ on $M$.

\medskip

\begin{lemma}
\label{Lemma1.0} Let $X$ be a gradient-like vector field on $M$ with respect
to a Morse function $h : M \rightarrow {\mathbb R}$.
Then there exists an $h$-compatible Riemannian metric $g$ on $M$
so that $X = - \mbox{\rm grad}_g h$.
\end{lemma}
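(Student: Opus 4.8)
The plan is to construct the metric $g$ directly, first on the standard charts where we have no choice, and then globally by patching with a partition of unity, checking that the patching does not destroy the defining identity $X=-\operatorname{grad}_g h$. On each standard chart $(U_v,\varphi_v)$ the pullback of $h$ is the quadratic form \eqref{1.1} and the pullback of $X$ is \eqref{1.1bis}, so if we declare $\varphi_v^\ast g$ to be the Euclidean metric on $B_r$, then indeed $-\operatorname{grad}_{g}h$ has components $-\sum_j\delta^{ij}\partial_{x_j}(h\circ\varphi_v)=-\partial_{x_i}(h\circ\varphi_v)$, which for the form \eqref{1.1} equals $x_i$ for $i\le i(v)$ and $-x_i$ for $i>i(v)$; that is exactly \eqref{1.1bis}. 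So on $\bigcup_v U_v$ the metric is forced, is $h$-compatible by construction, and satisfies $X=-\operatorname{grad}_g h$ there.

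Next I would produce a metric on the complement of the critical set. Away from $\operatorname{Crit}(h)$ the one-form $dh$ is nowhere zero, and by (GL1) the function $X(h)=dh(X)$ is strictly negative; set $\lambda:=-X(h)>0$. The idea is to choose, at each point $p\in M\setminus\operatorname{Crit}(h)$, an inner product $g_p$ on $T_pM$ for which the vector $X_p$ is the metric dual of the covector $-\frac{1}{\lambda(p)}\,d_ph$ scaled so that $g_p(X_p,\cdot)=-d_ph$; equivalently one wants $g_p(X_p,Y)=-d_ph(Y)$ for all $Y$. Concretely, pick any auxiliary metric $g^0$, decompose $T_pM=\mathbb R X_p\oplus H_p$ where $H_p=\ker d_ph$ (a genuine direct sum since $d_ph(X_p)=-\lambda(p)\neq0$), let $g_p$ restrict to $g^0$ on $H_p$, declare $H_p\perp X_p$, and set $g_p(X_p,X_p):=\lambda(p)=-d_ph(X_p)$. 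Then $g_p(X_p,H_p)=0=-d_ph(H_p)$ and $g_p(X_p,X_p)=-d_ph(X_p)$, so $X=-\operatorname{grad}_{g}h$ on $M\setminus\operatorname{Crit}(h)$; smoothness follows from the smoothness of $X$, $h$, $g^0$ and the nonvanishing of $\lambda$.

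Finally I would glue the two constructions. Choose a smooth partition of unity $\{\rho_0,\rho_1\}$ subordinate to the open cover $\{M\setminus\operatorname{Crit}(h),\ \bigcup_v U_v'\}$, where $U_v'\Subset U_v$ are slightly shrunk standard neighborhoods, and set $g:=\rho_0\,g^{\mathrm{loc}}+\rho_1\,g^{\mathrm{std}}$, with $g^{\mathrm{std}}$ the Euclidean metric on the standard charts and $g^{\mathrm{loc}}$ the metric from the previous paragraph; this is a Riemannian metric since a convex combination of inner products is an inner product. On the overlap region, however, both $g^{\mathrm{loc}}$ and $g^{\mathrm{std}}$ satisfy $g(X,\cdot)=-dh$ --- this is the point where I must be careful and it is the heart of the matter --- so I would instead glue only over the region where $X(h)<0$, i.e.\ take the cover $\{M\setminus\operatorname{Crit}(h),\ \bigcup_v U_v'\}$ and note that on $\bigcup_v U_v'\setminus\operatorname{Crit}(h)$ we may first replace $g^{\mathrm{loc}}$ by $g^{\mathrm{std}}$ (both are legitimate choices there and $g^{\mathrm{std}}$ is defined on all of $U_v'$), so that the two pieces actually \emph{agree} on the overlap; then $g:=g^{\mathrm{std}}$ near critical points and $g:=g^{\mathrm{loc}}$ elsewhere, smoothly interpolated, is well defined. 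The identity $X=-\operatorname{grad}_g h$ then holds pointwise because it is an affine (indeed convex-combination) condition on $g$ that is preserved under the partition-of-unity average whenever it holds for each summand; $h$-compatibility holds because $g$ equals the Euclidean metric on each $U_v'$.

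The main obstacle is exactly the bookkeeping in the last step: one must arrange that the local model metric and the Euclidean metric coincide on the annular overlap $U_v'\setminus\overline{U_v''}$ (not merely that both satisfy the duality relation with $dh$), so that a partition-of-unity combination does not introduce a seam; the cleanest fix, as indicated, is to \emph{define} the ``away-from-critical-points'' metric so that it already equals $g^{\mathrm{std}}$ on all of $\bigcup_v U_v'$ and only genuinely differs from it outside a neighborhood of $\operatorname{Crit}(h)$, which is possible because $g^{\mathrm{std}}$ itself satisfies $X=-\operatorname{grad}h$ on the standard charts and the set of metrics satisfying this relation at a given point is a nonempty affine subspace, hence one can interpolate within it.
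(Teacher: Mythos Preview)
Your argument is correct and follows essentially the same construction as the paper: take the Euclidean metric on the standard charts $U_v$, and away from the critical set build a metric from the splitting $T_pM=\mathbb{R}X_p\oplus\ker d_ph$ by declaring the summands orthogonal, prescribing $g(X_p,X_p)=-X(h)(p)$, and using an auxiliary metric on $\ker d_ph$. The only difference is in the gluing: the paper chooses the auxiliary metric to already be Euclidean on the $U_v$, so the two pieces literally coincide on the overlap and no interpolation is needed; you instead interpolate by a partition of unity, which works precisely because the condition $g(X,\cdot)=-dh$ is affine in $g$ and hence preserved under convex combinations.

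That last observation is decisive, and it means your final paragraph is unnecessary hand-wringing: you do \emph{not} need the two metrics to coincide on the annulus, only that each satisfies $g(X,\cdot)=-dh$ there, which you have already arranged. The $h$-compatibility is secured because $\mathrm{supp}(\rho_0)$ avoids a neighborhood of $\mathrm{Crit}(h)$, so $g=g^{\mathrm{std}}$ near each critical point. You can simply delete the last paragraph.
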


\begin{proof}
Let $U$ be the open neighborhood of ${\rm Crit}(h), U = \cup_{v
\in {\rm Crit}(h)} U_v$, where $(U_v)_{v \in {\rm Crit}(h)}$ are pairwise
disjoint coordinate charts of the critical points $v$ of $h$ so that for any
$v \in {\rm Crit}(h), U_v$ satisfies the properties stated in (GL2) of
Definition~\ref{Definition1.0}. Let $g'$ be a Riemannian metric on $M$
so that for any $v \in {\rm Crit}(h)$, the pull back of $\varphi ^\ast _v g'$
of $g'$ by the coordinate map $\varphi _v : B_r \rightarrow U_v$ of
(GL2) is the standard metric on $B_r$. Furthermore, let $N$ be an open
neighborhood of $M \backslash U$ so that $\overline {N} \subseteq X
\backslash {\rm Crit}(h)$. In particular, $X(h) (x) < 0$ for any $x \in
\overline {N}$. Note that for any $x \in N$, the tangent space $T_xM$
decomposes as a direct sum $T_xM = V_x \oplus \langle X(x) \rangle $
where $\langle X(x)\rangle $ denotes the one dimensional ${\mathbb R}$-vector
space generated by $X(x)$ and $V_x$ denotes the kernel of $d_xh, V_x = \{
\xi \in T_x M \arrowvert d_xh(\xi ) = 0 \} $. As $X$ and $-{\rm grad}_{g'}h$
agree on $U$ it follows from (GL2) that for any $x$ in $N \cap U$, the
positive number $-X(h)(x)$ is the square of the length of $X(x)$ with
respect to the inner product $g'(x)$ and $X(x)$ is orthogonal to $V_x$.
Now define a new Riemannian metric $g$ on $M$ as follows. For $u \in
U, g(x):= g'(x)$ whereas for $x \in M \backslash U, g(x)$ is determined
as follows. The restriction $g(x)\arrowvert _{V_x}$ is given by
$g'(x)\arrowvert _{V_x}, V_x$ and $\langle X(x)\rangle $ are orthogonal
and the length of $X(x)$ is equal to $\sqrt{-X(h)(x)}$. As $-X(h)$ is
strictly positive on $N$, $g(x)$ is positive definite. In a straightforward
way one verifies that $g$ is a smooth Riemannian metric on $M$ with $X =
-{\rm grad}_gh$.
\end{proof}

\medskip

Many gradient-like vector fields are complete.
Indeed it is not hard to show that $X$ is complete if $h$ is a proper
function, $X$ a gradient-like vector field with respect to $h$, and
$X(h)$ bounded on $M$. (Recall that $h$ is said to be proper if
the inverse image of any compact set is compact.)

Let us now come back to the standard models introduced earlier
where the manifold $M$ is ${\mathbb R}^n$ and the
Morse function $h$ is given by $h_k(x) = - \| x^- \| ^2 / 2
+ \| x^+ \| ^2 / 2$ with
$x = (x^-, x^+) \in {\mathbb R}^k \times {\mathbb R}^{n-k}$
for some $0 \leq k \leq n$.
The gradient vector field of $h_k$ with
respect to the Euclidean metric $g_0$ on ${\mathbb R}^n$ is then given by
   \begin{equation}
   \label{1.4} X^{(k)}(x):= - \mbox{grad}_{g_0} h_k(x) = \sum ^k_1 x_j \frac
              {\partial } {\partial x_j} - \sum ^n_{k + 1} x_j \frac
{\partial }{\partial x_j}
   \end{equation}
and the initial value problem \eqref{1.2} takes the form
   \[ \frac {d}{dt} (x^- (t), x^+(t)) = \left( x^-(t), - x^+(t)
      \right) ; \ (x^-(0), x^+(0)) = (x^-, x^+) .
   \]
The corresponding flow $\Phi ^{(k)}_t(x) = (x^- (t), x^+(t))$ is
then obtained by a simple integration
   \begin{equation}
   \label{1.5} \Phi ^{(k)}_t(x) = (e^t x^-, e^{-t} x^+)
   \end{equation}
and defined for any $t \in {\mathbb R}$.

Introduce the subsets
   \[ W^\pm _0 \equiv W^{(k) \pm }_0 := \{ x \in {\mathbb R}^n
      \big\arrowvert \lim _{t
      \rightarrow \pm \infty } \Phi ^{(k)}_t(x) = 0 \} .
   \]
The subset $W^+_0$ is referred to as the {\it stable manifold} of the
critical point $0$ and is given by
   \[ W^+_0 = \{ (0,x^+) \big\arrowvert x^+ \in {\mathbb R}^{n - k}
      \}
   \]
whereas $W^-_0$ is the unstable manifold of $0$ and given by
   \[ W^-_0 = \{ (x^-, 0) \big\arrowvert x^- \in {\mathbb R}^k \} .
   \]
The canonical models are used to describe features of a vector field
$X$ which is gradient-like with respect to a Morse function $h$ on
the manifold $M$. First
note that whenever the limit $x_
\infty := \lim _{t \rightarrow \infty } \Phi _t(x)$ exists one has
for any $s \in {\mathbb R}$
   \[ \Phi _s \left( x_ \infty  \right) = \lim _{t \rightarrow
      \infty } \Phi _{t + s} (x) = x_ \infty  ,
   \]
and it follows from \eqref{1.2} that $X \left( x_ \infty  \right)
= 0$.
As $X$ is gradient-like, this then implies that $x_ \infty $
must be a critical point of $h$. Similarly, one argues that whenever
the limit $\lim _{t \rightarrow - \infty } \Phi _t(x)$ exists it
must be a critical point of $h$.
Denote by $W^+_v$ the stable and by  $W^-_v$ the unstable set of a
critical point $v
\in  \mbox{\rm Crit}(h)$ with respect to the flow $\Phi _{t}$,
   \[ W^\pm _v:= \{ x \in M \big\arrowvert \lim _{t \rightarrow
      \pm \infty } \Phi _t(x) = v \} .
   \]

\medskip

As in the standard model cases discussed above it turns out that
$W^\pm _v$ are smooth submanifolds of $M$. Before we state and
prove this result let us introduce the rescaled vector field
   \begin{equation}
   \label{3.6.11} Y(x):= - \frac{1}{X(h)(x)} X(x) \quad x \in M
                  \backslash \mbox{Crit}(h) .
   \end{equation}
As $X$ is gradient-like with respect to $h$ and therefore $X(h)
(x) < 0$ for any $x \in M \backslash \mbox{Crit}(h)$,
the vector field $Y$ is well
defined on $M \backslash \mbox{Crit}(h)$ and
   \[ Y(h)(x) = - 1 \quad \forall x \in M \backslash \mbox{Crit}
      (h) .
   \]
Denote by $\Psi _s$ the flow of $Y$, i.e.
   \begin{equation}
   \label{3.6.12} \frac{d}{ds} \Psi _s(x) = Y(\Psi _s(x)) ; \ \Psi _0
                  (x) = x .
   \end{equation}
Note that on $M \backslash \mbox{Crit}(h)$, the orbits of $X$ and $Y$
are identical, but are
traversed at different speeds. We will see that the vector field
$Y$ is not complete. For $s$ with $\Psi _s(x)$ defined, one has
   \[ \frac{d}{ds} h(\Psi _s(x)) = Y(h)(\Psi _s(x)) = - 1 .
   \]
Hence, whenever $\Psi _s(x)$ is defined, we have
   \[ h(\Psi _s(x)) - h(\Psi _0(x)) = \int ^s_0 \frac {d}{ds'} h(\Psi
      _{s'}(x)) ds' = - s
   \]
or
   \begin{equation}
   \label{3.6.6ter} h(\Psi _s(x)) = h(x) - s .
   \end{equation}
The point $\Psi _s(x)$, when regarded as a point on the trajectory of $X$,
coincides with
$\Phi _{\tau (s,x)}(x)$ where $ \tau (s; x)$ is the solution of the initial
value problem
$\frac {d\tau }{ds} = - \frac {1}{X(h)(\Psi _s(x))}$ and $\tau(0,x)=0$
and given by
   \begin{equation}
   \label{3.6.6quater} \tau (s; x) = \int ^s_0 - \big( X(h)(\Psi _{s'}(x))
                       \big)^{-1}
                       ds' .
   \end{equation}

\medskip

Finally recall that a smooth map $f : N_1 \rightarrow N_2$ between smooth
manifolds $N_1$ and $N_2$ is said to be an {\it immersion} [{\it
submersion}] if $d_x f : T_x N_1 \rightarrow T_x N_2$ is $1 - 1$ [onto]
for any $x \in N_1$. An immersion $f$ is said to be an {\it embedding}
if $f$ is $1 - 1$ and $f^{-1} : f(N_1) \rightarrow N_2$ is continuous.
The image of a $1 - 1$ immersion $f$ is a submanifold iff  $f$ is an embedding.

\medskip

\begin{lemma}
\label{Lemma1.2bis} $W^-_v$ and $W^+_v$ are smooth submanifolds
of $M$ which are diffeomorphic to ${\mathbb R}^{i(v)}$ and \ ${\mathbb R}^{n -
i(v)}$ respectively. They are referred to as the unstable and
stable manifold of $v$.
\end{lemma}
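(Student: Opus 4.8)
The plan is to show that each point of $W^-_v$ flows, in finite $Y$-time, into the standard chart $U_v$ around $v$, where the unstable set is manifestly the smooth submanifold $W^-_0 = \{(x^-,0)\} \cong \mathbb R^{i(v)}$ coming from \eqref{1.5}. First I would fix a standard chart $(U_v,\varphi_v)$ as in (GL2) and pick a small closed ball $\bar B_\rho \subset B_r$; via $\varphi_v$ this gives a compact neighborhood of $v$. Using (GL1) and the Lyapunov property (that $h$ strictly decreases along nonconstant trajectories), I would argue that any $x \in W^-_v \setminus \{v\}$, tracked backward in time under $\Phi_t$, enters and stays in $U_v$ for all sufficiently negative $t$: since $\Phi_t(x) \to v$ as $t \to -\infty$ and the only rest point in $\overline{U_v}$ is $v$, the trajectory cannot escape once it is close enough to $v$. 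Equivalently, passing to the rescaled field $Y$ and its flow $\Psi_s$ (defined on $M \setminus \mathrm{Crit}(h)$), every $x \in W^-_v \setminus\{v\}$ satisfies $\Psi_{-s}(x) \in U_v$ for all $s$ large, and $h(\Psi_{-s}(x)) = h(x) + s \to +\infty$ is ruled out only by leaving a sublevel set — so one instead uses that $\Psi_{-s}(x)$ converges to $v$ inside $U_v$ in the rescaled time parametrization, which in the standard coordinates is exactly the statement that the $\mathbb R^{n-i(v)}$-component decays to $0$.

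Concretely, the key step is to define the map. Let $W^{(v)-}_0 \subset U_v$ denote the local unstable piece $\varphi_v(\{(x^-,0) : \|x^-\| < r\})$, which by \eqref{1.1bis} and \eqref{1.5} is a smooth $i(v)$-dimensional disk invariant under the (local) flow. Then I would define $\Theta_v : W^{(v)-}_0 \setminus \{v\} \to W^-_v$ by following the flow forward: $\Theta_v(y) = \Phi_{t}(y)$ — but since different points sit at different values of $h$, it is cleaner to parametrize the global unstable manifold as the union $W^-_v = \bigcup_{t \ge 0} \Phi_t\big(\varphi_v(S_\rho \cap \{x^+=0\})\big) \cup \{v\}$, where $S_\rho = \partial B_\rho$ and $\Phi_t$ for $t\ge 0$ is the forward flow. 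More efficiently: the map $\Phi : \mathbb R \times (S^{i(v)-1}) \to M$, $(t,\omega)\mapsto \Phi_t(\varphi_v(\rho\,\omega, 0))$, together with the point $v$, covers $W^-_v$; in polar-type coordinates $(t,\omega)$ on $\mathbb R \times S^{i(v)-1}$ this realizes $W^-_v \setminus\{v\}$ as the image of $\mathbb R \times S^{i(v)-1} \cong \mathbb R^{i(v)}\setminus\{0\}$. I would check this map is a smooth injective immersion — injectivity because trajectories are disjoint and $h$ is strictly monotone along them, immersivity because $\Phi_t$ is a diffeomorphism for each $t$ and the sphere $\varphi_v(\rho\, S^{i(v)-1}\times\{0\})$ is transverse to $X$ in $U_v$ by \eqref{1.1bis}. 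Then I would verify it is an embedding onto $W^-_v\setminus\{v\}$: properness follows because the image meets each level set $h^{-1}(c)$ (for $c < h(v)$) in a compact sphere, so preimages of compact sets are compact. Finally, smoothness at the added point $v$ is read off directly from the standard chart, where $W^-_v \cap U_v = W^{(v)-}_0$ is literally $\mathbb R^{i(v)} \cap B_r$. Gluing the chart picture near $v$ with the flow picture away from $v$ yields the smooth structure making $W^-_v \cong \mathbb R^{i(v)}$. The case of $W^+_v$ is identical after reversing time, i.e. replacing $X$ by $-X$ and $h$ by $-h$, which exchanges stable and unstable sets and indices $i(v) \leftrightarrow n - i(v)$.

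The main obstacle is the last gluing step: reconciling the two descriptions of a neighborhood of $v$ in $W^-_v$ — the one coming from the linear model in the standard chart and the one coming from flowing the sphere $\varphi_v(\rho S^{i(v)-1} \times \{0\})$ — so that they define a single smooth atlas, and in particular checking that the immersion extends smoothly across $v$ rather than merely continuously. This requires knowing that near $v$ the flow lines of $X$ that limit to $v$ form precisely the coordinate plane $\{x^+=0\}$ and fill it smoothly, which is exactly the content of (GL2) via the explicit solution \eqref{1.5}; so the serious work is purely the verification that the transition between the polar parametrization $(t,\omega)$ on $\mathbb R\times S^{i(v)-1}$ and the Cartesian coordinates $x^-$ on $B_r\cap\mathbb R^{i(v)}$ is a diffeomorphism away from the origin and extends. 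I expect the immersion and injectivity statements to be routine given the earlier setup, with the embedding (properness) following cleanly from the Lyapunov function, leaving the chart-compatibility at $v$ as the only genuinely delicate point.
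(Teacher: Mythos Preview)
Your outline is close to the paper's, but you have the difficulty located in the wrong place, and this leads to a genuine gap.

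The paper does essentially what you propose---flow the local unstable disk out under $\Phi_t$---but with a cleaner parametrization that dissolves your ``gluing at $v$'' worry entirely. It sets
\[
\Theta^-_v:\mathbb{R}^{i(v)}\to M,\qquad x^-\mapsto \Phi_t\big(\varphi_v(e^{-t}x^-,0)\big),
\]
observing that the right-hand side is independent of $t$ once $e^{-t}x^-\in B_r$. This single formula is defined on all of $\mathbb{R}^{i(v)}$, agrees with $\varphi_v$ on $B_r\cap(\mathbb{R}^{i(v)}\times\{0\})$, and is visibly smooth at the origin. So there is no polar/Cartesian transition to reconcile; the ``genuinely delicate point'' you flag is an artifact of your choice of coordinates $(t,\omega)\in\mathbb{R}\times S^{i(v)-1}$.

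Conversely, the step you dismiss as routine---that the injective immersion is an \emph{embedding}---is where the paper does the real work, and your justification for it is incomplete. Your claim is that ``properness follows because the image meets each level set $h^{-1}(c)$ in a compact sphere.'' First, the map $\mathbb{R}\times S^{i(v)-1}\to M$ is \emph{not} proper: if a trajectory from $\varphi_v(\rho\omega,0)$ limits to some critical point $w\neq v$ as $t\to+\infty$, then any compact neighborhood of $w$ in $M$ has unbounded preimage. What you actually need is continuity of the inverse on $W^-_v$, and the compact-sphere observation alone does not give it: you must show that if $y_n\to y$ in $W^-_v\setminus\{v\}$ then the corresponding flow-times $t_n$ stay bounded and converge. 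The paper does this carefully by passing to the rescaled flow $\Psi_s$: since $h(\Psi_s(x))=h(x)-s$, one gets $s_n\to s$ immediately from $h(y_n)\to h(y)$; then continuity of $\Psi$ gives $x_n\to x$ on a fixed level set inside $U_v$; finally the time-change formula $\tau(s;x)=\int_0^s -\big(X(h)(\Psi_{s'}(x))\big)^{-1}\,ds'$ converts this into $t_n\to t$. That integral is exactly what controls the unbounded-$t$ scenario, and it is the substantive content of the proof. Your proposal needs this argument (or an equivalent) in place of the one-line properness claim.
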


\begin{proof}
 We compare $W^\pm _v$ with the model case
for $k:= i(v)$ introduced above for which $W^-_0 = {\mathbb R}^k
\times \{ 0 \} $ and $W^+_0 = \{ 0 \}  \times {\mathbb R}^{n - k}$.
Note that the coordinate map $\varphi _v : B_r \rightarrow U_v$
conjugates the flow $\Phi ^{(k)}_t$ of the model case
with $\Phi _t$ when
properly restricted. Hence, given any $x^- \in {\mathbb R}^k$, it
follows that $\Phi _t \left( \varphi _v(e^{-t} x^-, 0) \right) $ is
independent of $t \geq t_-$  where $t_-$ is chosen
sufficiently large so that $(e^{-t} x^-,
0) \in B_r$. Similarly, for any $x^+ \in {\mathbb R}^{n - k}, \Phi
_{-t} \left( \varphi _v(0, e^{-t} x^+) \right) $ is independent of
$t \geq t_+$ where $t_+$ is
so large that $(0, e^{-t} x^+) \in B_r$. Hence we can
define
   \[ \Theta ^-_v : {\mathbb R}^{i(v)} \rightarrow M, \ x^- \mapsto
      \Phi _t \left( \varphi _v(e^{-t} x^-, 0) \right)
   \]
and
   \[ \Theta ^+_v : {\mathbb R}^{n - i(v)} \rightarrow M, \ x^+ \mapsto
      \Phi _{-t} \left( \varphi _v(0, e^{-t} x^+) \right)
   \]
where for any $x^\pm , t$ is chosen so large that $e^{-t} x^\pm \in
B_r$. Note that on $B_r \cap W^\pm _0, \Theta ^\pm _v$ coincides with
the restriction of $\varphi _v$ on $B_r \cap W^\pm _0$.
Using that $\Phi _t$ is a diffeomorphism one concludes that
$\Theta ^-_v$ and $\Theta ^+_v$ are smooth immersions which map
trajectories of the model flow $\Phi ^{(k)}_t$ onto trajectories of
the flow $\Phi _t$. Hence $\Theta ^-_v$ and $\Theta ^+_v$ are $1-1$
and the images $\Theta ^-_v({\mathbb R}^{i(v)})$ and $\Theta
^+_v({\mathbb R}^{n - i(v)})$ coincide with $W^-_v$ and $W^+_v$
respectively. (Note that $\Theta ^\pm _v$ but not their images $W^\pm
_v$ depend on the choice of the coordinate map $\varphi _v : B_r
\rightarrow U_v$.)

\begin{figure}[h]
 \begin{center}\centering
\includegraphics[scale=.5,clip]{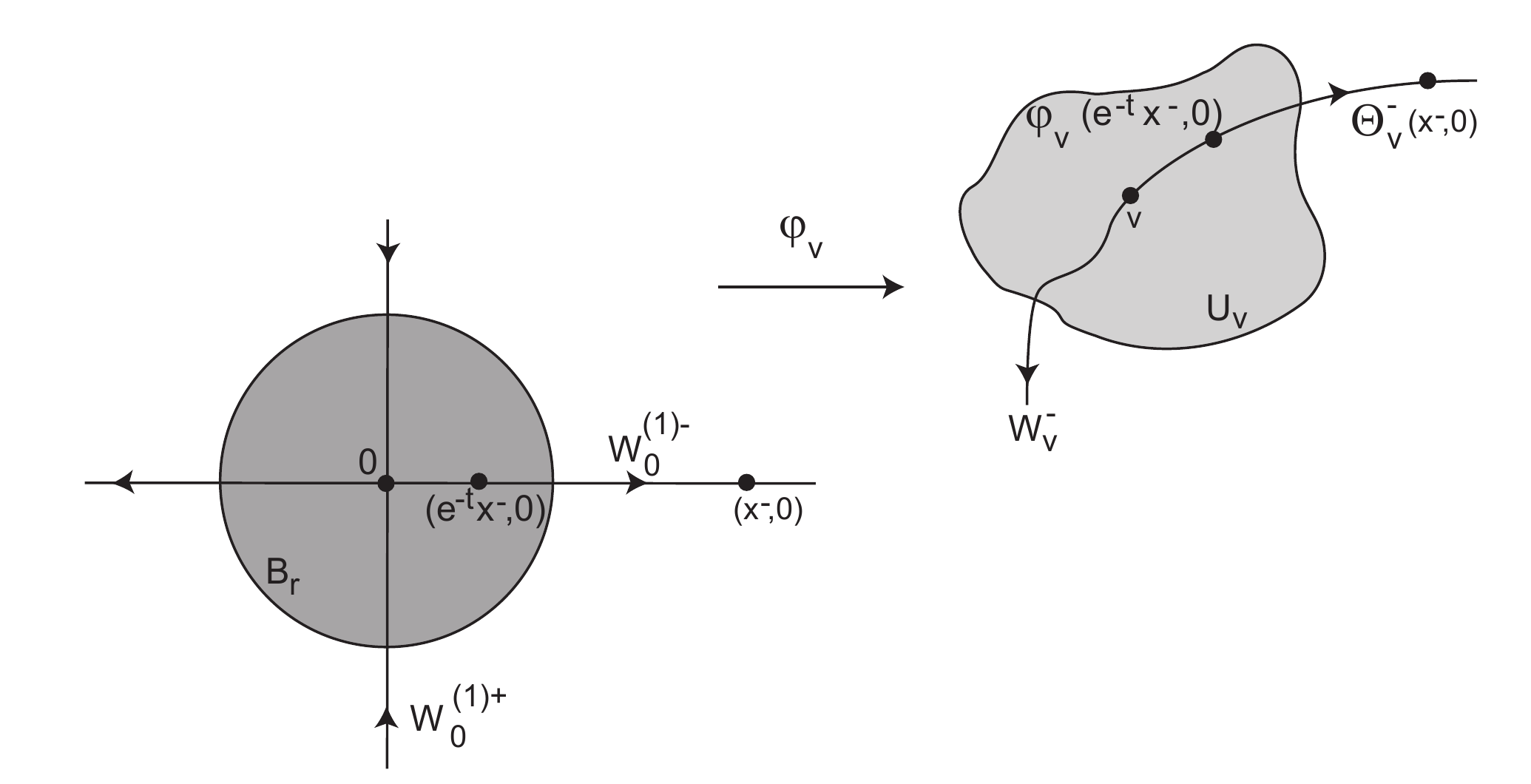}
  \caption[Illustration $\Theta ^- _v$]
          {Illustration $\Theta ^-_v$ when
		   $\dim M = 2$ and $k:= i(v) = 1$}
 \protect\label{Figure105}
\end{center}
\end{figure}

To see that $W^+_v$ and $W^-_v$ are submanifolds it is to show that
$\Theta ^\pm _v$ are embeddings onto
$W^\pm _v$.
Let us show this for $\Theta ^-_v.$  the proof for $\Theta ^+
_v$ is in fact similar. It remains to show that $(\Theta ^-_v)^{-1}$ is
continuous. Let $(y_n)_{n \geq 1}$ be a sequence in $W^-_v \backslash
\{ v \} $ which converges to $y \in W^-_v$. As $\Theta ^-_v$ is an
extension of the restriction of $\varphi _v$ to $B_r \cap ({\mathbb R}
^{i(v)} \times \{ 0 \} )$ we can assume without loss of
generality that $y \not= v$. Choose $c \in {\mathbb R}$ with $c
< h(v)$ so that $h^{-1} \{ c \} \cap W^-_v \subseteq U_v \cap W^-_v$.
Denote by $x_n$ the unique point on the orbit through $y_n$ so that
$h(x_n) = c$. Then $y_n = \Phi _{t_n}(x_n)$ for some $t_n \in
{\mathbb R}$ and $y = \Phi _t(x)$
for some $x \in U_v \cap W^-_v$ and $t \in {\mathbb R}$.
First we show that
$\lim _{n \rightarrow \infty } x_n = x$.
Note that the rescaled vector field $Y$ introduced in (\ref{3.6.11})
is defined on all of $W^-_v \backslash \{ v \}$. Hence
there exists a unique set of real numbers $s_n, n \geq 1$, and $s$
such that $\Psi _{s_n}(x_n) = \Phi _{t_n}(x_n)$  and $\Psi
_s(x) = \Phi _t(x)$. By (\ref{3.6.6ter}) we have
$s_n = c - h(y_n)$ and
we conclude that
   \[ \lim _{n \rightarrow \infty } s_n = c - h(y) = s
   \]
Accordingly,
   \[ x_n = \Psi _{-s_n}(y_n) \underset {n \rightarrow \infty }
      {\longrightarrow } \Psi _{-s}(y) = x .
   \]
Next we show that $t = \lim _{n \rightarrow \infty } t_n$. This follows
easily from \eqref{3.6.6ter} and the convergence of $(s_n)_{n \geq 1}$ and
$(x_n)_{n \geq 1}$,
    \begin{align*} t &= \int ^s_0 - \left( X(h) (\Psi _{s'}(x)) \right)
	                  ^{-1} ds' \\
				   &= \lim _{n \rightarrow \infty } \int ^{s_n}_0 - \left(
				      X(h) (\Psi _{s'}(x_n)) \right) ^{-1} ds' \\
				   &= \lim _{n \rightarrow \infty } t_n .
   \end{align*}
Hence we have shown
   \begin{align*} (\Theta ^-_v)^{-1}(y) &= e^t \varphi ^{-1}_v (\Phi _{-t}
                     y) = e^t \varphi ^{-1}_v(x) \\
				  &= \lim _{n \rightarrow \infty } e^{t_n} \varphi ^{-1}_v
				     (x_n) \\
				  &= \lim _{n \rightarrow \infty } e^{t_n} \varphi ^{-1}
				     _v (\Phi _{-t_n} y_n) \\
				  &= \lim _{n \rightarrow \infty } (\Theta ^-_v)^{-1}
				     (y_n) .
   \end{align*}
This shows that $\Theta ^{-1}_v$ is continuous.
\end{proof}

\medskip

\begin{definition}
\label{Definition1.1bis} A gradient-like
vector field (with respect to the Morse function $h$)
is said to satisfy the {\it Morse-Smale condition} if for any
pair of critical points, $v,w \in \mbox{\rm Crit}(h)$, $\Theta ^+_w$ and
$\Theta ^-_v$ are transversal or, equivalently, the submanifolds
$W^-_v$ and $W^+_w$ intersect transversally,
   \begin{equation}
   \label{MS} W^-_v \pitchfork W^+_w ,
   \end{equation}
i.e. for any $x \in W^-_v \cap W^+_w$, the tangent space $T_xM$ at
$x$ is given by the span of $T_x(W^-_v) \cup T_x(W^+_w)$.
\end{definition}

\medskip

The Morse-Smale condition implies that $W^-_v \cap W^+_w$ is a
submanifold of $M$. Note that for any $x \in W^-_v \cap W^+_w$,
   \[ \lim _{t \rightarrow - \infty } \Phi _t(x) = v \mbox { and }
      \lim _{t \rightarrow \infty } \Phi _t(x) = w .
   \]
In particular, for $v = w$ one has $W^-_v \cap W^+_v = \{ v \} $. In fact,
the flow $\Phi $ acts on this submanifold,
   \begin{equation}
   \label{1.6bis} \Phi : {\mathbb R} \times \left( W^-_v \cap W^+_w
                  \right) \rightarrow
                  W^-_v \cap W^+_w , \;
                  (t, x) \mapsto \Phi _t(x).
   \end{equation}
For $v \not= w$ with $W^-_v \cap W^+_w \not= \emptyset $, this action is
free and we denote by ${\mathcal T}(v,w)$ the quotient,
   \begin{equation}
   \label{1.7bis} {\mathcal T}(v,w):= \left( W^-_v \cap W^+_w\right) /
                  {\mathbb R}
   \end{equation}
with its induced differentiable structure. By slight abuse of
notation, elements in ${\mathcal T}
(v,w)$ are called trajectories or, more specifically,
unbroken trajectories from $v$ to $w$.
They are denoted by $\gamma, \gamma _1, \gamma _2, \ldots $.
The trajectory corresponding to a solution $\left( \Phi _t (x) \right)
_{-\infty < t < \infty } $ of (\ref{1.2}) is sometimes denoted by $[\Phi
_\cdot (x)]$. Note that ${\mathcal T}(v,w)$ is a manifold and,
for any $a$ with $h(w) < a < h(v)$, it can
be canonically embedded into the level set $L_a = h^{-1} \left( \{
a\} \right) $ by assigning to a trajectory in ${\mathcal T}(v,w)$
its intersection with the level set $L_a$.

\medskip

\begin{definition}
\label{Definition1.1} A pair $(h,X)$, consisting of a smooth, proper Morse
function $h$ and a smooth vector field $X$, is said to be Morse-Smale or
a Morse-Smale pair if

\begin{list}{\upshape }{
\setlength{\leftmargin}{11mm}
\setlength{\rightmargin}{0mm}
\setlength{\labelwidth}{15mm}
\setlength{\labelsep}{1.4mm}
\setlength{\itemindent}{0,0mm}}

\item[{\rm (MS1)}] $X$ is gradient-like with respect to $h$;

\item[{\rm (MS2)}] $X$ satisfies the Morse-Smale condition;
\end{list}

A vector field $X$ satisfying (MS1) - (MS2) is also referred to as being
Morse-Smale with respect to $h$.
\end{definition}

\medskip

Two Morse-Smale pairs $(h_1, X_1)$ and $(h_2, X_2)$ are said to be
equivalent, $(h_1, X_1) \sim (h_2, X_2)$ if

\begin{list}{\upshape }{
\setlength{\leftmargin}{11mm}
\setlength{\rightmargin}{0mm}
\setlength{\labelwidth}{15mm}
\setlength{\labelsep}{1.4mm}
\setlength{\itemindent}{0,0mm}}

\item[(EQ1)] $\mbox{\rm Crit}(h_1) = \mbox{\rm Crit}(h_2)$;

\item[(EQ2)] for any $v \in \mbox{\rm Crit}(h_1)$, the unstable manifolds
corresponding to $X_1$ and $X_2$ coincide.
\end{list}

\medskip

\begin{definition}
\label{Definition1.3} A Morse cellular structure $\tau $ of a compact
manifold is an equivalence class of Morse-Smale pairs.
\end{definition}

\medskip

The reason to call an equivalence class of Morse-Smale pairs a Morse
cellular structure is that according to \cite {Th},
the collection of unstable manifolds of $X$
can be viewed as the cells of a cell partition
of $M$. We will say more on this later.

One can also consider compact manifolds with boundaries, or more generally,
with corners as well as noncompact manifolds. In these cases one has to
make further assumptions on a Morse-Smale pair. For example if $M$ is not
compact one typically imposes the additional condition

\begin{list}{\upshape }{
\setlength{\leftmargin}{11mm}
\setlength{\rightmargin}{0mm}
\setlength{\labelwidth}{15mm}
\setlength{\labelsep}{1.4mm}
\setlength{\itemindent}{0,0mm}}

\item[(MS3)] $h$ is proper and bounded from below.
\end{list}

\medskip

In the sequel we will not distinguish between a Morse pair $(h,X)$
and its equivalence class $[(h,X)]$ and by a slight abuse of
terminology refer to $(h,X)$ as a Morse cellular structure as
well. Instead of $(h,X)$, in view of Lemma \ref {Lemma1.0} we will also 
use $(h,g)$ to denote a
Morse cellular structure where $g$ is an $h$-compatible Riemannian
metric on $M$ so that $X = - \mbox{grad}_g h$.

Throughout this chapter, we always assume that $(h,X)$ is Morse-Smale
and we fix a collection of pairwise disjoint neighborhoods $U_v$ and
coordinate maps $\varphi _v$ as above so that \eqref{1.1} and
\eqref{1.1bis} hold.
Let $M$ be a smooth manifold (not necessarily compact).
A number $c \in {\mathbb R}$ is said to be a {\it critical value} of
$h$ if there exists $v \in \mbox{\rm Crit}(h)$ with $h(v) = c$. As $h$ is
assumed to be a proper Morse function its
critical values form a sequence $(c_j)$ of isolated numbers
which we list in descending order,
   \[ \ldots < c_{j + 1} < c_j < c_{j - 1} < \ldots
   \]
Note that this sequence can be  bounded from below or
above, or unbounded on both sides. If the sequence $(c_j)_j$ is
bounded - which holds e.g. if $M$ is compact - there are only finitely
many critical values, which we denote by
   \[ c_{K + N} < c_{K + N - 1} < \ldots < c_K .
   \]

For any critical value $c_j$ introduce
   \[ M^\pm _j \equiv M^\pm _{j, \varepsilon _j} := L_{c_j \pm \epsilon _j}
   \]
with $\varepsilon _j > 0$ sufficiently small so that
   \[ c_j + \varepsilon _j < c_{j - 1} - \varepsilon _{j - 1}
   \]
and where $L_a$ is the $a$-level set
   \[ L_a:= \{ x \in M \mid h(x) = a \} .
   \]

\medskip

Throughout this chapter we will use a collection $(U_v, \varphi _v),
v \in \mbox{\rm Crit}(h)$, of canonical charts of $M, \varphi _v : B_r
\rightarrow U_v$ so that for any critical value $c_j$ of $h$, $r$ corresponding to
$v$ is denoted  by $r_j > 0,$  is taken to be the same for any of the finitely many critical
points $v \in \mbox{\rm Crit}(h)$ with $h(v) = c_j$ and
   \[ c_j + r^2_j < c_{j - 1} - r^2_{j - 1} .
   \]
For convenience we then choose $0 < \varepsilon _j < (r_j / 2)^2$.
With this choice one has for any
$v \in \mbox{\rm Crit}(h)$ with $h(v) = c_j$
   \begin{equation}
   \label{3.1.8} W^\pm _v \not= \{ v\} \mbox { iff } \varphi _v(B_{r_j})
                 \cap M^\pm _j \not= \emptyset 
   \end{equation}
(The condition $0 < \varepsilon _j < (r_j / 2)^2$ makes sure that
$\varphi _v(B_{r_j}) \cap M^\pm _j$ is not empty if $W^\pm _v \not=
\{ v \} $.) To investigate the level sets $M^\pm _j$ we use the
rescaled vector field $Y(x)$ introduced in (\ref{3.6.11}). Recall that
it is defined on $M \backslash \mbox{Crit}(h)$.

\medskip

\begin{lemma}
\label{diffeo-level} Let $a, b \in {\mathbb R}$ with $c_{j+1} < b < a <
c_j$. Then $\Psi _{a-b}(\cdot )$ is a diffeomorphism from $L_a$ to $L_b$.
\end{lemma}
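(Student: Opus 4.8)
The key point is that between consecutive critical values the rescaled vector field $Y$ from \eqref{3.6.11} has no zeros and its flow $\Psi_s$ strictly decreases $h$ at unit speed by \eqref{3.6.6ter}, so it should carry one level set diffeomorphically onto a nearby one. The plan is to show that for every $x \in L_a$ the orbit $s \mapsto \Psi_s(x)$ is defined on the whole interval $s \in [0, a-b]$, so that the map $x \mapsto \Psi_{a-b}(x)$ makes sense as a smooth map $L_a \to M$; then verify it lands in $L_b$, and finally produce $\Psi_{-(a-b)} = \Psi_{b-a}$ restricted to $L_b$ as its two-sided inverse.

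First I would establish the a priori bound guaranteeing that the orbit does not escape before time $a-b$. Fix $x \in L_a$, so $h(x) = a$, and let $[0, S_x)$ be the maximal forward interval of definition of $\Psi_s(x)$. On this interval \eqref{3.6.6ter} gives $h(\Psi_s(x)) = a - s$, so as long as $s \le a - b$ we have $h(\Psi_s(x)) \ge b > c_{j+1}$, while also $h(\Psi_s(x)) \le a < c_j$. Thus the orbit stays in the compact (using properness of $h$) slab $K := h^{-1}([b,a])$, which contains no critical points of $h$; since $Y$ is smooth on $M \setminus \mathrm{Crit}(h) \supseteq K$ and $K$ is compact, the standard escape-lemma for ODEs forces $S_x > a - b$. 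Hence $\Psi_{a-b}$ is defined on all of $L_a$, and by smooth dependence on initial conditions it is a smooth map $L_a \to M$; by \eqref{3.6.6ter} again $h(\Psi_{a-b}(x)) = a - (a-b) = b$, so its image lies in $L_b$.

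Next I would argue it is a diffeomorphism onto $L_b$. By the same compactness argument applied with roles of $a$ and $b$ exchanged, $\Psi_{b-a}$ (i.e. running the flow of $Y$ backward, equivalently the flow of $-Y$ forward) is defined on all of $L_b$ and maps $L_b$ into $L_a$. The group law for the flow of $Y$, valid wherever both sides are defined, gives $\Psi_{b-a} \circ \Psi_{a-b} = \Psi_0 = \mathrm{id}$ on $L_a$ and $\Psi_{a-b} \circ \Psi_{b-a} = \mathrm{id}$ on $L_b$; here one just has to note that the compactness bound keeps every intermediate orbit segment inside $K$, so no issue of domains arises. Both maps are smooth, so $\Psi_{a-b}\colon L_a \to L_b$ is a diffeomorphism with inverse $\Psi_{b-a}$.

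The only mild obstacle is the completeness/escape issue: $Y$ is explicitly not complete on $M$ (its orbits blow up in finite time as they approach $\mathrm{Crit}(h)$), so one cannot simply invoke a global flow. The argument above sidesteps this precisely by confining the relevant orbit segments to the compact, critical-point-free slab $h^{-1}([b,a])$ — which is where the hypothesis $c_{j+1} < b < a < c_j$ (together with properness of $h$) is essential — and there the standard ODE theory applies without change. Everything else (smoothness, the level-set identity, the inverse) is routine.
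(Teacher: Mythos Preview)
Your proof is correct and follows essentially the same route as the paper: use \eqref{3.6.6ter} to see the flow carries $L_a$ into $L_b$, invoke the group law to get $\Psi_{-(a-b)}$ as inverse, and appeal to smooth dependence on initial data. If anything you are more careful than the paper, which simply asserts that $\Psi_s(x)$ exists for $0 \le s < a - c_{j+1}$; your explicit use of properness of $h$ to make $K = h^{-1}([b,a])$ compact and critical-point-free, together with the escape lemma, is exactly the justification that assertion needs.
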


\begin{proof} 
We have already noticed that $\frac {d}{ds} h(\Psi _s(x)) =
- 1$ whenever
$\Psi _ s(x)$ is defined. For any $x \in L_a$, $\Psi _s(x)$ exists at
least for $0 \leq s < a - c_{j + 1}$ and
   \[ h(\Psi _{a-b}(x)) - h(\Psi _0(x)) = - \int ^{a-b}_0 ds = b - a
   \]
or
   \[ h(\Psi _{a-b}(x)) = b .
   \]
By the uniqueness of a solution of the initial value problem
(\ref{3.6.12}) it follows that $\Psi _{a-b} :
L_a \rightarrow L_b$ has $\Psi _{-(a-b)}$ as an inverse. By the smooth
dependence of the solution of (\ref{3.6.12}) on the
initial data one concludes that $\Psi _{a-b} : L_a \rightarrow L_b$ is a
diffeomorphism.
\end{proof}

\medskip

Lemma~\ref{diffeo-level} can be partially extended. Precisely, if $b= c_{j+1},$
$\Psi_{a-b}$
is still well defined but only a continuous map.

First note that in
the case $a = b$, the map $\Psi _0 : L_a \rightarrow L_a$ is always the
identity map. To go further we analyze the rescaled vector fields
$Y^{(k)} (0 \leq k \leq n)$ of the standard vector fields $X^{(k)}$ and verify the above
statement  in the case of standard model.
According to \eqref{3.6.2bis} and
\eqref{1.4} one has for any $y \in {\mathbb R}^n \backslash \{ 0 \}$
   \begin{equation}
   \label{1.10} Y^{(k)}(y):= - \frac{1}{X^{(k)}(h_k)(y)} X^{(k)}(y) =
                \sum ^k_1 \frac {y_j}{\| y\| ^2} \frac
                {\partial }{\partial y_j} - \sum ^n_{k + 1} \frac
                {y_j}{\| y\| ^2} \frac {\partial }{\partial y_j} .
   \end{equation}
The solution of the initial value problem
   \[ \frac {d}{ds} \Psi ^{(k)}_s(z) = \frac {1}{\| y(s)\| ^2} \left(
      y^-(s), - y^+(s) \right) ; \ \Psi ^{(k)}_0(0) = (z^-, z^+)
	  \in {\mathbb R}^k \times {\mathbb R}^{n - k}	
   \]
can be explicitly computed. For initial data $z = (0,z^+) \in
\{ 0 \} \times
{\mathbb R}^{n - k} \backslash \{ 0 \} $, one has $y^-(s) \equiv 0$
and $y^+(s)$ is
of the form $f(s) z^+$ where $f(s) > 0$ satisfies
   \[ \frac {d}{ds} f(s) = - \left( f(s) \| z^+\| ^2 \right) ^{-1} ;
      \ f(0) = 1 .
   \]
Hence $f(s)^2 = 1 - 2s / \| z^+\| ^2$ and
   \begin{equation}
   \label{1.11} y(s) = \left( 1 - 2s / \| z^+ \| ^2 \right) ^{1/2} (0,
                z^+) .
   \end{equation}
This solution exists for $0 \leq s < \| z^+ \| ^2 / 2$ and has a limit
   \begin{equation}
   \label{1.12} \lim _{s \rightarrow \| z^+\| ^2 / 2} y(s) = 0 .
   \end{equation}
For initial data $z = (z^-, z^+)$ with $z^- \not= 0$, a solution $y
(s)$ of \eqref{1.10} can be found by reparametrizing the solution
$x(t) = (z^- e^t, z^+ e^{-t})$ given by \eqref{1.5}.
In view of the definition of the rescaled vector field $Y^{(k)}$ in
\eqref{1.10}
the function $s(t) \equiv s(t; z)$, determined by $y \left( s(t) \right)
= x(t)$, then satisfies
   \[ \frac {ds}{dt} = \| x(t) \| ^2 \quad
      ; \ s(0) = 0 .
   \]
As $\| x(t)\| ^2 = \| z^- \| ^2 e^{2t} + \| z^+\| ^2 e^{-2t}$ this
leads to
   \begin{equation}
   \label{1.13} s(t) = \| z^- \| ^2 (e^{2t} - 1) / 2 + \| z^+ \| ^2(1 -
                e^{-2t}) / 2 .
   \end{equation}
For any $0 \leq k \leq n$ and $0 < b < a$, the diffeomorphism $\Psi
^{(k)} _{a-b}$ from the level set $h^{-1}_k(a)$ to the level set $h^{-1}
_k(b)$ has an extension for $b = 0$: For $z = (z^-, z^+)$ with $z^- \not=
0$, $s:= a - b = a$ is given by $s = h_k ((z^-, z^+)) =
\left( \| z^+ \| ^2 - \| z^- \| ^2
\right) / 2$ and thus by \eqref{1.13}
   \[ \left( \| z^+\| ^2 - \| z^-\| ^2 \right) / 2 = \| z^- \| ^2(e^{2t} - 1)
      / 2 + \| z^+ \| ^2 (1 - e^{-2t}) / 2 .
   \]
Hence $e^t = (\| z^+ \| / \| z^- \| )^{1/2}$. Substituting this expression
into $\Psi ^{(k)}_a(z^-, z^+) = (z^- e^t, z^+ e^{-t})$ we obtain the map
$\Psi ^{(k)}_a : h^{-1}_k(a) \rightarrow h^{-1}_k(0)$,
   \begin{equation}
   \label{1.13bis}  \Psi ^{(k)}_a(z^-, z^+) =
      \begin{cases} \left( \left(\| z^+\| / \| z^- \|
	                \right)
	                ^{1/2} z^-, \left(
                    \| z^- \| / \| z^+ \| \right)
					^{1/2} z^+ \right)
                    &\mbox { if } z^- \not= 0 \\
                    (0,0) &\mbox { if } z^- = 0 .
      \end{cases}
   \end{equation}
Clearly, this extension is continuous. The next lemma shows that a
similar result as for the standard models holds in the
general situation. See Figure~\ref{Figure2} for illustration.

\medskip

\begin{lemma}
\label{Lemma2.8}
\label{diffeo-level1} (i) Let $a, b \in {\mathbb R}$ with $c_{j+1} = b <
a < c_j$. Then $\Psi _{a-c_{j + 1}}$ is a diffeomorphism from $L_a \backslash
\bigcup _{h(v) = c_{j + 1}} W^+_v$ onto $L_{c_{j + 1}} \backslash
\mbox{\rm Crit}(h)$ and admits a continuous extension from $L_a$ onto
$L_{c_{j + 1}}$ which, for any critical point $v$ with $h(v) = c_{j + 1}$,
maps $L_a \cap W^+_v$ to $v$ .

\smallskip

(ii) Let $a, b \in {\mathbb R}$ with $c_{j+1} < b < a = c_j$. Then $\Psi
_{b-c_j}$ is a diffeomorphism from $L_b \backslash \bigcup _{h(v) = c_j} W^-_v$
onto $L_{c_j} \backslash \mbox{\rm Crit}(h)$ and admits a continuous extension
from $L_b$ onto $L_{c_j}$
which, for any critical point $v$
with $h(v) = c_j$, maps $L_b \cap W^-_v$ to $v$ .
\end{lemma}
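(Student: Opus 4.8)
The plan is to prove part (i) and then observe that part (ii) follows by an essentially identical argument (or by applying (i) to the reversed flow, i.e. to the pair $(-h,-X)$, which exchanges the roles of stable and unstable manifolds and of the critical values $c_j$ and $c_{j+1}$). So I would concentrate on (i). First I would establish the statement about the diffeomorphism on the complement of the stable manifolds. Fix $x \in L_a$ with $x \notin \bigcup_{h(v)=c_{j+1}} W^+_v$. Since the only critical value in the open interval $(c_{j+1},c_j)$ is none, and $h(\Psi_s(x)) = h(x) - s = a - s$ by \eqref{3.6.6ter} as long as $\Psi_s(x)$ is defined, the trajectory $\Psi_s(x)$ cannot reach a critical point for $s \in [0, a-c_{j+1})$; moreover, because $x$ is not in the stable manifold of any critical point on the level $c_{j+1}$, the limit of $\Psi_s(x)$ as $s \uparrow a-c_{j+1}$ is not a critical point either, so by completeness of $Y$ away from $\mathrm{Crit}(h)$ and the standard ODE continuation argument $\Psi_s(x)$ stays defined and bounded up to and including $s = a - c_{j+1}$, landing in $L_{c_{j+1}} \setminus \mathrm{Crit}(h)$. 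The inverse is $\Psi_{-(a-c_{j+1})}$ on $L_{c_{j+1}} \setminus \mathrm{Crit}(h)$ (one runs the flow of $Y$ backwards, which raises $h$ and is defined for time up to $a - c_{j+1}$ since there are no critical values strictly between), and smoothness of both maps follows from smooth dependence of solutions of \eqref{3.6.12} on initial data, exactly as in Lemma~\ref{diffeo-level}. This gives the diffeomorphism claim.

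Next I would construct the continuous extension to all of $L_a$. For $x \in L_a \cap W^+_v$ with $h(v) = c_{j+1}$, I simply set the extended map to send $x$ to $v$; this is forced, since for such $x$ one has $\lim_{t\to\infty}\Phi_t(x) = v$, and I need to check that the reparametrization $\Psi_s$ carries this $\Phi$-limit into a limit as $s \uparrow a - c_{j+1}$. Concretely, using \eqref{3.6.6ter} the trajectory through $x$ reaches $h$-value $c_{j+1}$ only in the limit $s \to a - c_{j+1}$, and along the trajectory $h(\Phi_t(x)) \to c_{j+1} = h(v)$ as $t \to \infty$ while $h(\Phi_t(x)) > c_{j+1}$ for finite $t$; the time change $\tau(s;x)$ of \eqref{3.6.6quater} is monotone increasing, so $s \uparrow a-c_{j+1}$ corresponds to $\tau \to \infty$, hence $\Psi_s(x) = \Phi_{\tau(s;x)}(x) \to v$. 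Thus the extension is at least well-defined and agrees with $\Psi_{a-c_{j+1}}$ off the stable manifolds.

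The main work is continuity of this extension at points of $L_a \cap W^+_v$. Here I would localize near the critical point $v$ using the standard chart $(U_v,\varphi_v)$ and reduce to the model computation already carried out in the excerpt, culminating in formula \eqref{1.13bis}. More precisely, choose a level set $L_{c}$ with $c_{j+1} < c < c_j$ and $c$ close enough to $c_{j+1}$ that $L_c \cap U_v = \varphi_v(\{h_k = c - h(v)\} \cap B_{r_j})$; by the diffeomorphism part already proved, $\Psi_{a-c} : L_a \to L_c$ is a diffeomorphism, so it suffices to prove continuity of the extension of $\Psi_{c - c_{j+1}} : L_c \to L_{c_{j+1}}$ at the points of $L_c \cap W^+_v$. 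On the chart, the flow $\Psi_s$ is conjugated by $\varphi_v$ to the model flow $\Psi^{(k)}_s$ for $k = i(v)$ (after the affine change $h \mapsto h - h(v)$), and the model computation shows the extended map is precisely \eqref{1.13bis}, which is manifestly continuous, including at $z^- = 0$ (i.e. on the stable manifold $W^+_0$), with value $0$ there. Pulling back along $\varphi_v$ gives continuity of the extension of $\Psi_{c-c_{j+1}}$ at every point of $L_c \cap U_v$, in particular at every point of $L_c \cap W^+_v$; off the stable manifolds continuity is already clear from the diffeomorphism statement. Composing with the diffeomorphism $\Psi_{a-c}$ transports this back to $L_a$. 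The one subtlety I expect to be the genuine obstacle is making the localization rigorous: one must check that a trajectory of $Y$ starting at a point of $L_a$ sufficiently close to $L_a \cap W^+_v$ actually enters the chart $U_v$ before its $h$-value drops to $c_{j+1}$, and stays in $U_v$ thereafter — this is a uniform-entry-time statement that follows from continuity of the flow $\Psi_s$ on the open set where it is defined together with the fact that the trajectory through a point of $L_a \cap W^+_v$ enters $U_v$ at some finite time strictly before reaching $h$-value $c_{j+1}$; once this is in hand the model computation does the rest.
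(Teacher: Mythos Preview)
Your proposal is correct and follows essentially the same approach as the paper's proof: both concentrate on (i), first establish the diffeomorphism on the complement of the stable manifolds by the same flow argument, define the extension on $L_a\cap W^+_v$ to be $v$, and then prove continuity by using Lemma~\ref{diffeo-level} to reduce to a nearby level $L_c$ with $c-c_{j+1}$ small enough that the relevant piece of $L_c$ lies in the standard chart $U_v$, after which the model formula \eqref{1.13bis} gives continuity directly. The ``subtlety'' you flag --- that trajectories near $W^+_v$ stay in $U_v$ until reaching the critical level --- is genuine but easily dispatched: in the standard model the squared norm $e^{2t}\|z^-\|^2+e^{-2t}\|z^+\|^2$ is monotone decreasing along the forward trajectory from $h_k^{-1}(c-c_{j+1})$ down to $h_k^{-1}(0)$, so a trajectory starting in $B_r$ remains in $B_r$; the paper simply absorbs this into the phrase ``follows from formula \eqref{1.13bis}''.
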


\begin{figure}[h]
\begin{center}
\includegraphics[scale=.3]{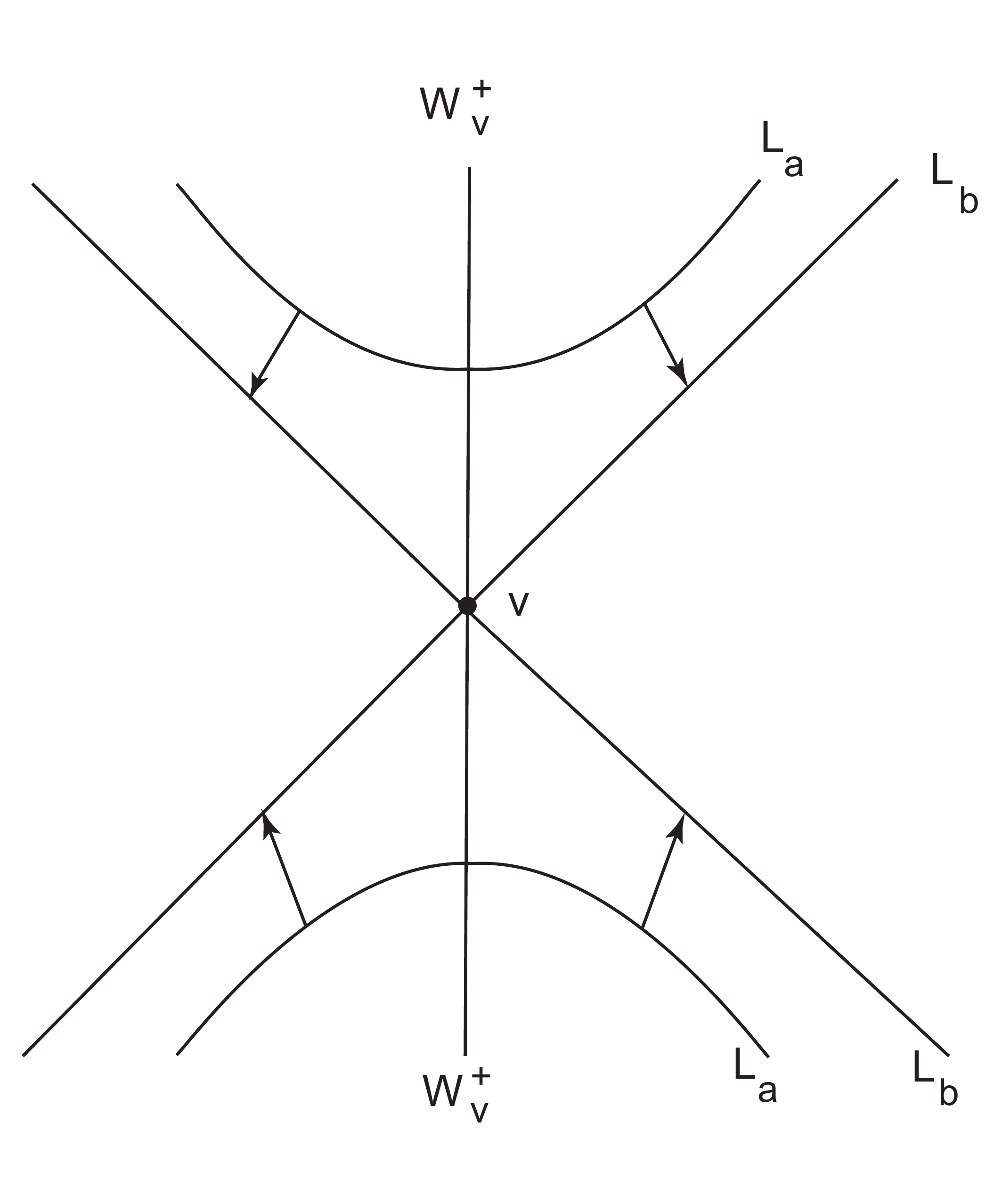}
  \caption[Illustration of the map $\Psi _{a-b}: L_a
          \rightarrow L_b$]
          {\small Illustration of the map $\Psi _{a-b}: L_a
          \rightarrow L_b$ (indicated by arrows);
          $v \in \mbox{\rm Crit}(h)$
          with $i(v) = 1$, $a$ is a regular value of $h$ near $b = h(v)$
          with $b < a$.}
  \protect\label{Figure2}
\end{center}
\end{figure}

\begin{proof}
Statement (i) and (ii) are proved in the same way, so
we concentrate on (i). For $x \in L_a \backslash \bigcup _{h(v)
= c_{j + 1}} W^+_v$, the trajectory $\Psi _s(x)$ exists for $s$ in
the compact interval $[0,a - c_{j + 1}]$.
Hence $\Psi _{a - c_{j + 1}} (x)$ is a well defined point  of $L
_{c_{j + 1}}$. For $x \in L_a \cap W^+_v$, one has by the
definition of the stable manifold $W^+_v$ that $\lim _{t
\rightarrow \infty } \Phi _t(x) = v$. Hence $\Psi _s(x)$ exists
for $0 \leq s < a - c_{j + 1}$ and $\lim _{s \rightarrow a - c_{j + 1}}
\Psi _s(x) = v$. In this case we define $\Psi _{a - c_{j + 1}}
(x):= v$. Using the properties of the flow $\Psi _s$, the fact
that $X$ is a gradient-like vector field w.r. to $h$ and the
investigations above of $\Psi ^{(k)}_a$ one concludes that
   \[ \Psi _{a - c_{j + 1}} : L_a \backslash \bigcup _{h(v) =
      c_{j + 1}} W^+_v \rightarrow L_{c_{j + 1}}  \backslash
	  \mbox{Crit}(h)
   \]
is a diffeomorphism. By definition, $\Psi _{a - c_{j + 1}} : L_a \cap
W^+_v \rightarrow L_{c_{j + 1}}$ is the constant map with value
$v$. Hence to prove that $\Psi _{a - c_j} : L_a \rightarrow
L_{c_j}$ is a continuous map
it suffices to show that the restriction of $\Psi _{a-b}$ to a
neighborhood of $L_a \cap W^+_v$ is continuous where $v$ is one of
finitely many critical points with critical value $b$. In view of
Lemma~\ref{diffeo-level} we may assume
without loss of any generality that $a - b < \rho ^2$ so that
$L_a \cap U_v$ is a neighborhood of $L_a \cap W^+_v$. The
continuity of $\Psi _{a - b}$ on $L_a \cap U_v$ then follows from
formula \eqref{1.13bis}.
\end{proof}

\medskip

As an application of Lemma~\ref{diffeo-level} and
Lemma~\ref{diffeo-level1} we get the following

\medskip

\begin{corollary}
\label{Corollary 3.6.8bis} Assume that $M$ is closed, $h : M
\rightarrow {\mathbb R}$ a Morse function and $X$ a gradient-like
vector field with respect to $h$. Then, for any $x \in M$, both
limits, $\lim _{t \rightarrow \pm \infty } \Phi _t(x)$ exist
and they are critical points of $h$. As a consequence, $M =
\bigcup _{v \in \mbox{\scriptsize{\rm Crit}}(h)} W^-_v$, and the unstable
manifolds $(W^-_v)_{v \in \mbox{\scriptsize{\rm Crit}}(h)}$ are a
decomposition of $M$ into pairwise disjoint submanifolds of $M,$ each
diffeomorphic to some  $\mathbb R^k,$ $0 \leq k\leq \dim M.$
\end{corollary}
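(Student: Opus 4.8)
The plan is to show first that for every $x\in M$ the forward limit $\lim_{t\to\infty}\Phi_t(x)$ exists and is a critical point, and then to derive the decomposition statement as a formal consequence together with Lemma~\ref{Lemma1.2bis}. Since $M$ is closed, $h$ is automatically proper and bounded, and $X$ is complete, so the flow $\Phi_t(x)$ is defined for all $t\in\mathbb{R}$. The function $t\mapsto h(\Phi_t(x))$ is non-increasing (strictly decreasing unless $x\in\mathrm{Crit}(h)$) and bounded below by $\min_M h$, hence converges to some value $a\in\mathbb{R}$ as $t\to\infty$. If $x\in\mathrm{Crit}(h)$ there is nothing to prove, so assume $x\notin\mathrm{Crit}(h)$; then $a$ is not a critical value would be the wrong thing to hope for in general, so instead I would argue directly about the orbit.

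Next I would use the rescaled flow $\Psi_s$ of $Y$ and the level-set picture. Pick a regular value $a'$ with $a<a'<h(x)$ that lies strictly between two consecutive critical values, say $c_{j+1}\le a<a'<c_j$ (after possibly relabeling; here $c_{j+1}$ is either the next critical value below $a'$ or $-\infty$, in which case $a'$ can be taken arbitrarily small above $\min_M h$, which is a critical value). Since $h(\Phi_t(x))\searrow a$, the orbit eventually enters the region $\{h<a'\}$ and stays there; reparametrizing by the rescaled vector field $Y$ it meets $L_{a'}$ in a single point $y$. If $a$ equals the critical value $c_{j+1}$, Lemma~\ref{diffeo-level1}(i) tells us that $\Psi_{a'-c_{j+1}}$ extends continuously to all of $L_{a'}$ and sends $L_{a'}\cap W^+_v$ to $v$; one checks that $\lim_{t\to\infty}\Phi_t(x)=\Psi_{a'-c_{j+1}}(y)$, which is either a critical point $v$ with $h(v)=c_{j+1}$ (precisely when $y\in W^+_v$) or a genuine point of $L_{c_{j+1}}$. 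But a limit point of a trajectory is a rest point of $X$ (as recalled right before Lemma~\ref{Lemma1.2bis}), hence a critical point of $h$; since the only critical points on or below $L_{a'}$ with value $\ge c_{j+1}$ have value exactly $c_{j+1}$, we are forced into the first case and $x\in W^+_v$ for some $v$ with $h(v)=c_{j+1}$. If instead $a$ were not a critical value, the orbit would cross $L_a$ and continue to strictly smaller $h$-values, contradicting $h(\Phi_t(x))\searrow a$; so $a$ must be a critical value after all. The argument for $t\to-\infty$ is symmetric, using $-X$ (or equivalently Lemma~\ref{diffeo-level1}(ii)), and shows $x\in W^-_w$ for some critical point $w$.

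Having shown $M=\bigcup_{v\in\mathrm{Crit}(h)}W^-_v$, the remaining assertions are immediate: the sets $W^-_v$ are pairwise disjoint because each $x\in M$ lies on a unique trajectory and that trajectory has a unique backward limit, namely the $v$ for which $x\in W^-_v$; and by Lemma~\ref{Lemma1.2bis} each $W^-_v$ is a smooth submanifold of $M$ diffeomorphic to $\mathbb{R}^{i(v)}$, with $0\le i(v)\le\dim M$.

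The main obstacle is the last step of the first paragraph of the argument, namely verifying carefully that the forward limit of the original $\Phi_t$-orbit is correctly identified with the value of the continuous extension $\Psi_{a'-c_{j+1}}$ at $y$, i.e. matching the two time-parametrizations near the limiting critical point. This is exactly the content packaged into Lemma~\ref{Lemma1.2bis} (continuity of $(\Theta^-_v)^{-1}$) and Lemma~\ref{diffeo-level1}, so once those are invoked the compactness of $M$ (ensuring completeness of $X$, properness and boundedness of $h$, and finiteness of $\mathrm{Crit}(h)$) does the rest; no new estimate is needed beyond the monotonicity of $h$ along orbits.
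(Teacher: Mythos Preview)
Your overall strategy---reduce to showing the forward limit exists by tracking $h(\Phi_t(x))\searrow a$, then use Lemmas~\ref{diffeo-level} and~\ref{diffeo-level1} on level sets---is exactly what the paper has in mind (the corollary is stated as a direct application of those two lemmas, without a written-out proof). The decomposition statement and the appeal to Lemma~\ref{Lemma1.2bis} are fine.

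However, the central step of your argument is circular as written. You assert ``one checks that $\lim_{t\to\infty}\Phi_t(x)=\Psi_{a'-c_{j+1}}(y)$'' and then split into the two cases (critical versus non-critical value of $\Psi_{a'-c_{j+1}}(y)$), invoking ``a limit point of a trajectory is a rest point'' to rule out the second. But the existence of $\lim_{t\to\infty}\Phi_t(x)$ is precisely what you are trying to establish, so you cannot use it to eliminate a case. Worse, in the non-critical case the displayed equality is simply false: if $y\notin\bigcup_v W^+_v$, then $\Psi_{a'-c_{j+1}}(y)$ is a non-critical point of $L_{c_{j+1}}$ that the orbit reaches in \emph{finite} $\Phi$-time (the arc $\{\Psi_s(y):0\le s\le a'-c_{j+1}\}$ is compact and avoids $\mathrm{Crit}(h)$, so $-X(h)$ is bounded below on it and the time-change integral \eqref{3.6.6quater} is finite), after which the orbit continues and $h$ drops strictly below $c_{j+1}=a$, contradicting $h(\Phi_t(x))\searrow a$.

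The clean fix is to reorganize the dichotomy without presupposing the limit. First dispose of the case that $a$ is a regular value, exactly as you do at the end of the paragraph. Then $a=c_{j+1}$; let $y$ be the intersection of the orbit with $L_{a'}$. If $y\notin\bigcup_{h(v)=c_{j+1}}W^+_v$, derive the finite-time contradiction just described. Hence $y\in W^+_v$ for some such $v$, and then \emph{by the definition of $W^+_v$} one has $\lim_{t\to\infty}\Phi_t(y)=v$, so $\lim_{t\to\infty}\Phi_t(x)=v$. No appeal to ``limits of trajectories are rest points'' is needed (or valid) before the limit is known to exist.
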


\subsection{Smale's Theorem}
\label{2.2Smale's Theorem}

In this subsection we prove that for any given Morse function $h : M
\rightarrow {\mathbb R}$ with $M$ closed, i.e. compact and without
boundary, there exists a Morse cellular structure
$(h,g)$ or $(h,X)$. More precisely we show the following result due
to Smale \cite{Sm1}, \cite{Sm2}.

\medskip

\begin{theorem}
\label{Theorem2.1} Let $M$ be closed, $(h,g)$ be
a compatible pair, and let $\ell \in {\mathbb N}$. Then, in
any neighborhood of $g$ in the space of smooth Riemannian metrics
on $M$, equipped with the $C^\ell $-topology, there is a metric $g'$
so that $(h,g')$ is a Morse cellular structure.
The metric $g'$ can be chosen in such a way that it coincides
with $g$ outside shells contained in the standard charts
$(U_v, \varphi _v : B_r \rightarrow U_v), v \in \mbox{\rm Crit}(h)$.
Here a shell in
$U_v$ is an open subset of the form $\varphi _v \left( B_{r_2}
\backslash \overline {B}_{r_1} \right)$ with $0 < r_1 < r_2 < r$.
\end{theorem}

\medskip

By Lemma \ref {Lemma1.0} 
we
know that for any gradient-like vector field $X$ w.r. to $h$
there exists a Riemannian metric $g$ so that $X = - \mbox{grad}
_g h$ and $(h,g)$ is compatible. Hence Theorem~\ref{Theorem2.1}
implies the following result on $h$-compatible vector fields.

\medskip

\begin{theorem}
\label{Theorem2.1'} Let $M$ be closed, $X$ be  an $h$-compatible vector
field, and $\ell \in {\mathbb N}$. Then, in any neighborhood of $X$
in the space of smooth vector
fields on $M$, equipped with the $C^\ell$-topology, there exists a
vector field $X'$ so that $(h,X')$ is Morse-Smale. The vector field
$X'$ can be chosen in such a way that it coincides with $X$ outside
shells contained in the standard charts $(U_v, \varphi _v)$, $v \in
\mbox{\rm Crit}(h)$.
\end{theorem}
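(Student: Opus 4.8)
The plan is to deduce this statement directly from Theorem~\ref{Theorem2.1} by combining it with Lemma~\ref{Lemma1.0}. The only subtlety is that the two theorems refer to perturbations in different function spaces --- one perturbs the metric, the other perturbs the vector field --- so the heart of the argument is to check that the assignment $g \mapsto -\mathrm{grad}_g h$ is continuous in the appropriate $C^\ell$-topologies and respects the localization to shells.

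First I would invoke Lemma~\ref{Lemma1.0}: since $X$ is $h$-compatible, there exists an $h$-compatible Riemannian metric $g$ on $M$ with $X = -\mathrm{grad}_g h$. (In fact, by the construction in the proof of Lemma~\ref{Lemma1.0}, we may take $g$ to equal the standard metric inside each standard chart $U_v$, which is exactly the compatibility condition needed to apply Theorem~\ref{Theorem2.1}.) Next I would fix an arbitrary $C^\ell$-neighborhood $\mathcal{U}$ of $X$ in the space of smooth vector fields on $M$. In any local chart, the components of $-\mathrm{grad}_{g'} h$ are $-\sum_j g'^{ij}\,\partial_{x_j} h$, and the entries $g'^{ij}$ of the inverse metric tensor depend smoothly (hence $C^\ell$-continuously, using Cramer's rule and the fact that matrix inversion is smooth on the open set of invertible matrices, with uniform control on a compact manifold) on the entries $g'_{k\ell}$. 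Consequently the map $g' \mapsto -\mathrm{grad}_{g'} h$ is continuous from the space of Riemannian metrics with the $C^\ell$-topology to the space of vector fields with the $C^\ell$-topology. Therefore there is a $C^\ell$-neighborhood $\mathcal{V}$ of $g$ in the space of Riemannian metrics whose image under $g'\mapsto -\mathrm{grad}_{g'}h$ lies inside $\mathcal{U}$.

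Now apply Theorem~\ref{Theorem2.1} with this neighborhood $\mathcal{V}$: there exists $g' \in \mathcal{V}$ such that $(h,g')$ is a Morse cellular structure --- i.e.\ the vector field $X' := -\mathrm{grad}_{g'} h$ is gradient-like with respect to $h$ (automatic from compatibility of $g'$) and satisfies the Morse--Smale condition. By construction $X' \in \mathcal{U}$, so $X'$ lies in the prescribed neighborhood of $X$. Finally, Theorem~\ref{Theorem2.1} guarantees that $g'$ can be chosen to coincide with $g$ outside shells $\varphi_v(B_{r_2}\setminus \overline{B}_{r_1})$ contained in the standard charts; since $X = -\mathrm{grad}_g h$ and $X' = -\mathrm{grad}_{g'} h$ and the gradient of $h$ at a point depends only on $d_x h$ and the metric at that point, it follows that $X' = X$ outside those same shells. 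This establishes all the assertions of Theorem~\ref{Theorem2.1'}.

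The main (and essentially only) obstacle is the continuity claim for $g'\mapsto -\mathrm{grad}_{g'}h$ in the $C^\ell$-topology: one must argue that a $C^\ell$-small perturbation of the metric produces a $C^\ell$-small perturbation of the inverse metric tensor, uniformly over the compact manifold $M$. This is routine --- it follows from the smoothness of matrix inversion together with compactness of $M$ and a finite atlas --- but it is the one place where a genuine estimate, rather than a formal deduction, is needed.
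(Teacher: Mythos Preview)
Your proposal is correct and follows exactly the approach the paper takes: the paper simply observes that Lemma~\ref{Lemma1.0} provides an $h$-compatible metric $g$ with $X=-\mathrm{grad}_g h$, and then says ``Hence Theorem~\ref{Theorem2.1} implies the following result on $h$-compatible vector fields,'' leaving the continuity of $g'\mapsto -\mathrm{grad}_{g'}h$ and the shell-localization remark implicit. You have spelled out precisely the details the paper omits.
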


\medskip

\begin{remark}
\label{Remark 6.10bis}
For versions of both previous theorems in the case where $M$ is not compact
but the set of critical values of $h$ is bounded from below see e.g.
\cite{Hi}.
\end{remark}

\medskip

\begin{proof}
(Proof of Theorem~\ref{Theorem2.1}) We essentially
follow the proof given by Smale \cite{Sm1}.
Let $c_N < \ldots
< c_1$ be the critical values of $h$. For any $h$-compatible
Riemannian metric $g'$ denote by
$W^{+_{'}}_v$ and $W^{-_{'}}_v$ the stable and unstable manifolds
of $- grad _{g'} h$ at $v$.
To start, we first observe that whenever $x \in (W^-_v \cap
W^+_w) \backslash \{ v,w \} $ satisfies
   \[ \dim W^-_v + \dim W^+_w = n + \dim (T_x W^-_v \cap T_x
      W^+_w)
   \]
then the same holds for any point on the orbit $[\Phi _\bullet
(x)]$ through $x$. This suggests that it might suffice to change
the metric $g$ near $v$ to achieve that $W^{-_{'}}_v$ and $W^{+_{'}}_w$
intersect transversally and leads to the formulation of the
following statement ${\mathcal H}(i)$ which we will prove by
induction starting at $i$ corresponding to the lowest critical value.

\noindent
${\mathcal H}(i)$: in any $C^\ell $-neighborhood of an arbitrary
$h$-compatible Riemannian metric $g$,
there exists a smooth Riemannian metric $g'$ so that

\noindent
${\mathcal H}(i)_1$ $W^{-_{'}}_v \pitchfork  W^{+_{'}}_w \quad
\forall v,w \in \mbox{\rm Crit}(h)$ with $h(v) \leq c_i$;

\noindent
${\mathcal H}(i)_2$ $g$ and $g'$ coincide outside the
union of shells each
of which is contained in a standard neighborhood of a critical
point $v$ with $h(v) \leq c_i$. In particular, $g'$ is $h$-compatible.

Notice that ${\mathcal H}(1)$ coincides with the statement of
Theorem~\ref{Theorem2.1}. Further, as
$h^{-1}(c_N)$ consists of absolute minima only, one has $h^{-1}
(c_N) \subseteq \mbox{Crit}(h)$, hence $W^-_v = \{ v \} $ for any
$v \in h^{-1}(c_N)$ and for any $w \in \mbox{Crit}(h)$ with
$w \not= v$, one has $W^+_w \cap W^-_v = \emptyset $. Thus ${\mathcal H}(N)$
is always satisfied and we might choose $g' = g$.
It remains
to prove the induction step ${\mathcal H}(i + 1) \Longrightarrow
{\mathcal H}(i)$. To this end it suffices to consider any Riemannian
metric $g$ satisfying ${\mathcal H}(i + 1)$.
Property ${\mathcal H}(i)$ then follows by successively applying
Proposition~\ref{Proposition2.4} below  to the finitely
many critical points $v$ with $h(v) = c_i$.
\end{proof}

\medskip

\begin{proposition}
\label{Proposition2.4} Let $(h,g)$ be a compatible pair, $v \in
\mbox{\rm Crit}(h)$, and $\ell \in {\mathbb N}$.
Then, in any $C^\ell $-neighborhood of $g$ in the space of
smooth
metrics on $M$, there exists a Riemannian metric $g'$ so that

\begin{list}{\upshape }{
\setlength{\leftmargin}{11mm}
\setlength{\rightmargin}{0mm}
\setlength{\labelwidth}{15mm}
\setlength{\labelsep}{1.4mm}
\setlength{\itemindent}{0,0mm}}

\item[(i)] $W^{-_{'}}_v \pitchfork W^{+_{'}}_w \quad \forall w \in \mbox{\rm Crit}(h)$;

\item[(ii)] $g$ and $g'$ coincide outside of a shell, contained in a
standard neighborhood of $v$. In particular, $(h, g')$ is a
compatible pair.
\end{list}
Here $W^{-_{'}}_v [W^{+_{'}}_v]$ denotes the unstable [stable] manifold of $v$ with
respect to the vector field $-{\rm grad}_{g'}h$.
\end{proposition}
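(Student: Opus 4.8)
The plan is to fix one critical point $v$ with $h(v)=c_i$ and to perturb the metric $g$ only inside a shell $S=\varphi_v(B_{r_2}\setminus\overline B_{r_1})$ of the standard chart $U_v$, arguing that a generic such perturbation forces $W^{-_{'}}_v$ to meet every $W^{+_{'}}_w$ transversally. The first step is to understand how changing $g$ on $S$ changes the unstable manifold. Since $g$ and $g'$ agree on $B_{r_1}$, the local piece $W^{-_{'}}_v\cap U_v$ equals the old linear disc $\varphi_v(B_{r_1}\cap(\mathbb R^{i(v)}\times\{0\}))$; I would then track this disc forward under the flow of $X'=-\mathrm{grad}_{g'}h$. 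The key observation is that $W^{-_{'}}_v$ is determined by where the flow carries the small sphere $\varphi_v(\partial B_{r_1}\cap(\mathbb R^{i(v)}\times\{0\}))$; call its image in the level set $M^-_i=L_{c_i-\varepsilon_i}$ the set $D_v\subset M^-_i$, a smoothly embedded $(i(v)-1)$-sphere bounding an embedded disc, and $W^{-_{'}}_v$ below $c_i$ is swept out by the negative $X$-flow of this disc. Changing $g'$ on the shell $S$ amounts to changing, by an arbitrary (small) diffeomorphism of $M^-_i$ supported near $D_v$, the embedding of $D_v$ into $M^-_i$; this is the content of the ``opening up'' that shells provide, and it is essentially the device used in Milnor's and Hirsch's treatments.

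The second step is the transversality count. Fix a critical point $w$ with $h(w)\le h(v)$ (for $h(w)>h(v)$, $W^{-_{'}}_v\cap W^{+_{'}}_w$ sits above $c_i$ and by ${\mathcal H}(i+1)$, or triviality, is already transverse, so this is where we use that the shell lies below $c_i$ and leaves the higher part of $W^+_w$ untouched). Inside the level set $M^-_i$, the stable manifold $W^{+_{'}}_w$ meets $M^-_i$ in a smooth submanifold $E_w$ of dimension $(n-i(w))-1$, and by Lemma~\ref{diffeo-level} and Lemma~\ref{diffeo-level1} this $E_w$ does not depend on the perturbation inside $S$, since the perturbation affects only $W^-_v$, not the stable manifolds. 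Transversality of $W^{-_{'}}_v$ and $W^{+_{'}}_w$ along an orbit crossing $M^-_i$ is equivalent to transversality in $M^-_i$ of the disc $\varphi_v(B_{r_1})$-image (equivalently of $D_v$ and its interior) with $E_w$. Now I would invoke the parametric transversality theorem (Thom): let the perturbation of $g'$ on $S$ range over a finite-dimensional family rich enough that the induced family of embeddings $f_\lambda\colon \overline B_{r_1}^{\,i(v)}\to M^-_i$ is a submersion onto $M^-_i$ at every point of the image (such a family is easy to build by multiplying $g$ by $(1+\sum\lambda_\alpha\psi_\alpha)$-type bump factors supported in $S$, or by shearing the metric there); then the evaluation map is transverse to $E_w$, so for almost every $\lambda$ the map $f_\lambda$ is transverse to $E_w$, i.e. (i) holds for that $w$. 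Intersecting the finitely many full-measure sets over all $w\in\mathrm{Crit}(h)$ gives a single $g'$ working for all $w$ simultaneously, and shrinking $\lambda$ keeps $g'$ in the prescribed $C^\ell$-neighborhood of $g$; since the perturbation is supported in $S\subset U_v$ and does not touch $B_{r_1}$, the metric stays $h$-compatible, giving (ii).

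The main obstacle — and the step deserving the most care — is establishing that perturbing $g$ inside the shell $S$ really does move the embedded disc $D_v\hookrightarrow M^-_i$ with enough freedom, i.e. that the family of induced embeddings is a submersion onto $M^-_i$. The subtlety is that we are not free to perturb the embedding $D_v$ directly; we may only change the metric, and only on $S$, and the resulting change in $W^{-_{'}}_v$ is mediated by an ODE (the flow of $-\mathrm{grad}_{g'}h$). One must check that every tangent direction in $M^-_i$ along a point of $D_v$ can be realized as $\frac{d}{d\lambda}\big|_0 f_\lambda$ for some admissible metric variation supported in $S$ — this is where the geometry of gradient trajectories and the non-vanishing of $X(h)$ on the shell enter, allowing one to "steer" a trajectory laterally by a local metric change while keeping its endpoints' heights fixed. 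Once this controllability statement is in hand, the rest is a routine application of Thom transversality plus the elementary bookkeeping that finitely many generic conditions can be imposed at once and that small metric perturbations stay in the given neighborhood.
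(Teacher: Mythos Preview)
Your approach via parametric Thom transversality is a genuine alternative to the paper's proof and is in fact explicitly acknowledged as such in Remark~\ref{Remark6.15}(ii). The paper instead follows Smale directly: it passes to a model $M_0=\mathbb{R}\times\mathbb{S}^{k-1}_\rho\times\mathbb{R}^{n-k}$ via a flow-box map $\Theta$, then (Lemma~\ref{Lemma6.12}) uses Sard's theorem to pick a regular value $a^+$ of the projection $\pi:V^+\to\mathbb{R}^{n-k}$ and builds an \emph{explicit} metric perturbation whose effect is simply to translate the unstable sphere $V^-$ onto $\mathbb{S}^{k-1}_\rho\times\{a^+\}$, so transversality is immediate from regularity of $a^+$. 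This bypasses entirely the controllability issue you flag, at the cost of a somewhat elaborate local construction.

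There is, however, a genuine gap in your argument beyond the controllability step you already isolate. Your claim that $E_w=W^{+_{'}}_w\cap M^-_i$ is unaffected by the perturbation is not correct as stated: the perturbation changes the vector field on the shell $S$, and hence changes \emph{every} trajectory that crosses $S$, not just those in $W^-_v$. In particular, a point $p\in M^-_i$ lies in $W^{+_{'}}_w$ iff its forward $X'$-trajectory reaches $w$, and that trajectory may well pass through the portion of $S$ lying below $M^-_i$; so $E_w$ can move. The paper handles this by checking transversality not at $M^-_i$ but at the \emph{bottom} level $\{-s_0\}$ of its box $\mathcal{B}$, where by construction $W^+_{Y'_0}\cap h_0^{-1}\{-s_0\}=\{-s_0\}\times V^+$ is genuinely independent of the perturbation (the flow below the box is unchanged). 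Your argument is repairable along the same lines: either confine the support of your metric variation to the part of the shell strictly above $M^-_i$, or reduce transversality to a level set strictly below the support. Finally, as the paper warns in Remark~\ref{Remark6.15}(ii), $W^+_w\cap M^-_i$ is in general not a closed submanifold of $M^-_i$, so when you invoke Thom transversality you must check that the resulting set of good parameters is still residual (it is, but this is exactly the ``care'' the remark refers to).
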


\medskip

We will derive Proposition~\ref{Proposition2.4} from the following model
problem: For any $0 \leq k \leq n$ given let
   \begin{align*} M_0 := &{\mathbb R} \times {\mathbb S}^{k - 1}_\rho
                     \times {\mathbb R}^{n - k} \\
				  h_0:= &M \rightarrow {\mathbb R} , (s, p, \xi ) \mapsto
				     s \\
				  Y_0:= &- \frac {\partial }{\partial s}
   \end{align*}
where ${\mathbb S}^{k - 1}_\rho \subseteq {\mathbb R}^k$ is the
$(k - 1)$ dimensional sphere of radius $\rho > 0$ centered at $0$
and $0 \leq k \leq n$. Let $g_0$ be an arbitrary Riemannian metric
on $M_0$ so that $Y_0 = - \mbox{grad}_{g_0} h_0$. Further let
   \[ V^- := {\mathbb S}^{k - 1}_\rho \times \{ 0 \} \subseteq
      {\mathbb S}^{k - 1}_\rho \times {\mathbb R}^{n - k}
   \]
and let $V^+$ denote a smooth submanifold of ${\mathbb S}^{k - 1}_\rho
\times {\mathbb R}^{n - k}$. In the proof of Proposition~\ref{Proposition2.4},
$k$ will be the index of the critical point $v \in \mbox{Crit}(h), k =
i(v), V^-$ the set $W^-_v \cap L_{h(v) - \rho ^2}$ and $V^+$
will be formed from $\sqcup _w (W^+_w \cap L_{h(v) - \rho ^2})$.
For an arbitrary smooth vector field $Z$ on $M_0$ with the property
that the support of $Z - Y_0, \mbox{supp}(Z - Y_0)$, is compact introduce
the auxiliary sets $W^\pm _Z$ defined as follows: Choose $s_0 > 0$ so that
the support of $Z - Y_0$ is contained in the strip $(- s_0, s_0) \times
{\mathbb S}^{k - 1}_\rho \times {\mathbb R}^{n - k}$. Then $W^-_Z$
is defined as the set of all points of $M_0$ which lie on a trajectory of
$Z$, {\it originating} in $(s_0, \infty ) \times V^-$. Similarly,
$W^+_Z$ is defined as the set of all points which lie on a trajectory
ending up in $(- \infty , - s_0) \times V^+$. As the trajectories of
$Z$ outside $\mbox{supp}(Z - Y_0)$ coincide with those of $Y_0$ and
$\mbox{supp}(Z - Y_0)$ is compact, $Z$ is a complete vector field. It
follows that $W^\pm _Z \cong {\mathbb R} \times V^\pm $. In fact $W
^\pm _Z$ are submanifolds of $M$. To see it, define
   \[ \Theta ^\pm _Z : {\mathbb R} \times V^\pm \rightarrow W^\pm _Z ; \
      (s, x) \mapsto \Phi ^Z_{\pm s_0 + s}(\mp s_0, x)
   \]
where $\Phi ^Z_s$ denotes the flow of $Z$. By the properties of a flow,
one sees that $\Theta ^\pm _Z$ are immersions. Arguing as in the proof
of Lemma~\ref{Lemma1.2bis} one concludes that $\Theta ^\pm _Z$ are
embeddings and therefore, $W^\pm _Z$ are submanifolds. Notice that
for $Z = Y_0$, one has $W^\pm _{Y_0} = {\mathbb R} \times V^\pm $.
Our aim is to find a metric $g'_0$ on $M_0$ which is close to $g_0$
and coincides with $g_0$ outside a compact set so that for the
gradient vector field
   \[ Y'_0 := - \mbox{grad}_{g'_0} h_0
   \]
the manifolds $W^+_{Y'_0}$ and $W^-_{Y'_0}$ intersect transversally.
To make a more precise statement, introduce the box
   \[ {\mathcal B}:= (- s_0, s_0) \times {\mathbb S}^{k - 1}_\rho
      \times B^{n - k}_\rho
   \]
where $B^{n - k}_\rho $ is the open ball of radius $\rho $ in
${\mathbb R}^{n - k}$ centered at $0$. The notations introduced
above are illustrated in Figure~\ref{Figure2a}.

\begin{figure}[h]
 \begin{center}
  \includegraphics[width=0.85\linewidth,clip]{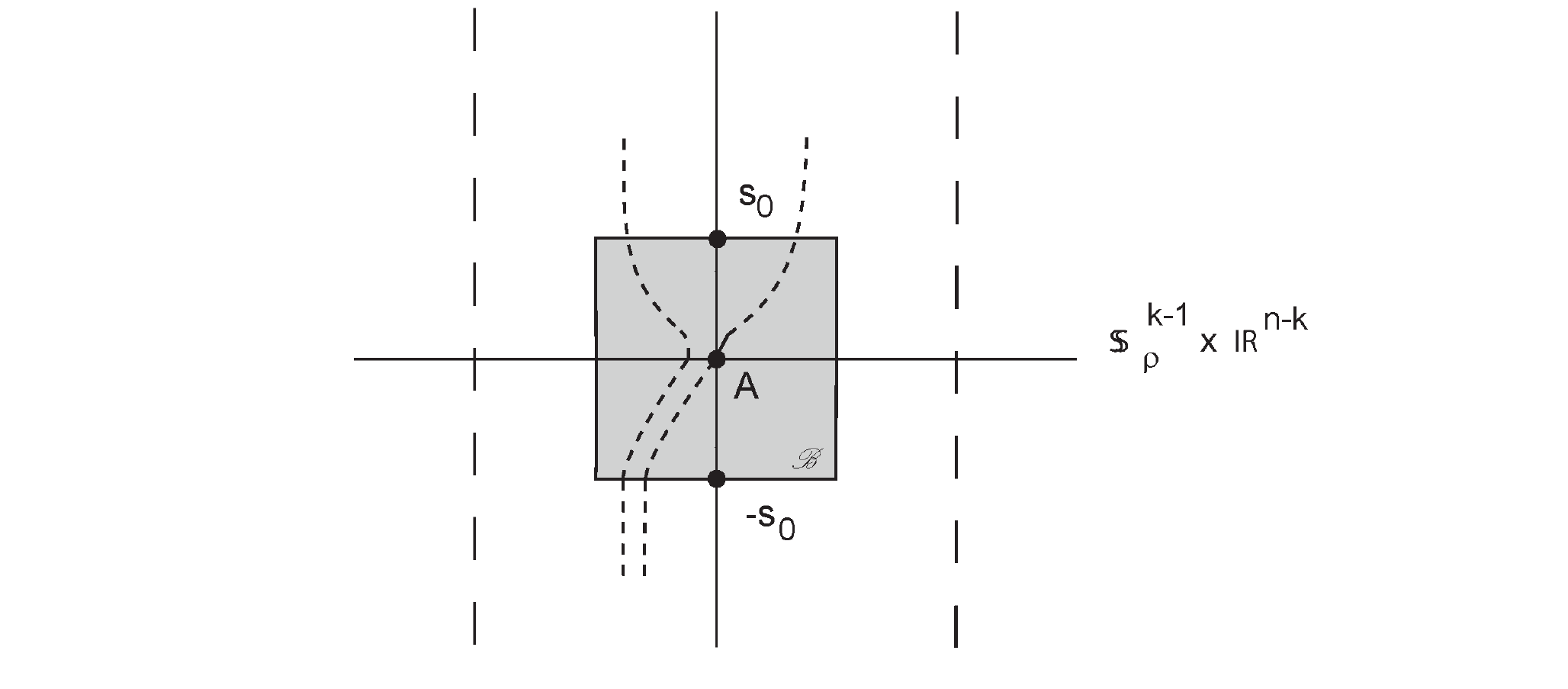}
  \caption{\small Trajectories of $Z$; $A:= {\mathbb S}^{k - 1}_\rho \times
  \{ 0 \} $}
  \protect\label{Figure2a}
\end{center}
\end{figure}

\begin{lemma}
\label{Lemma6.12} In any $C^\ell$-neighborhood of $g_0$ with $\ell \geq 1$
there exists a smooth metric $g'_0$ with the following properties:

\begin{list}{\upshape }{
\setlength{\leftmargin}{7mm}
\setlength{\rightmargin}{0mm}
\setlength{\labelwidth}{15mm}
\setlength{\labelsep}{1.0mm}
\setlength{\itemindent}{0,0mm}}

\item[{\rm (i)}] $g_0 = g'_0$ on an open neighborhood of $M_0 \backslash
{\mathcal B}$.

\item[{\rm (ii)}] $W^+_{Y'_0} \pitchfork W^-_{Y'_0}$ where $Y'_0:= -
\mbox{grad}_{g'_0} h_0$.
\end{list}
\end{lemma}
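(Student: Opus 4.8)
The plan is to produce the metric $g_0'$ by an explicit, localized modification of $g_0$ that amounts to prescribing a small shear of the gradient flow inside the box $\mathcal B$, and then to verify transversality of the resulting invariant manifolds by a Sard-type argument. First I would reduce the problem to one about vector fields rather than metrics: instead of perturbing $g_0$ directly, I would look for a vector field $Z$ of the form $Z = Y_0 + W$, where $W$ is a smooth vector field with $\mathrm{supp}\,W \subseteq \mathcal B$ and $W$ tangent to the level sets of $h_0$ (i.e. $dh_0(W)=0$, so $W$ has no $\partial/\partial s$ component). For such $Z$ one still has $Z(h_0) = Y_0(h_0) = -1$, so $Z = -\mathrm{grad}_{g_0'}h_0$ for a metric $g_0'$ obtained from $g_0$ by the same recipe as in Lemma~\ref{Lemma1.0} (keep the level-set directions, adjust the transverse direction so that $Z$ is orthogonal to the levels with the correct length); moreover $g_0' = g_0$ wherever $W = 0$, giving (i), and $g_0'$ is $C^\ell$-close to $g_0$ provided $W$ is $C^\ell$-small. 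So it suffices to find a small, $\mathcal B$-supported, level-preserving $W$ making $W^+_Z \pitchfork W^-_Z$.

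Next I would set up the transversality computation. Using the embeddings $\Theta^\pm_Z$ from the discussion preceding the lemma, $W^-_Z$ is the forward $Z$-orbit of $(s_0,\infty)\times V^-$ and $W^+_Z$ is the backward $Z$-orbit of $(-\infty,-s_0)\times V^+$; since $Z$ agrees with $Y_0$ outside the strip, the key object is the time-$2s_0$ transition map $F_W := \Phi^Z_{2s_0}(-s_0,\cdot)$ from the slice $\{s=-s_0\}$ to the slice $\{s=s_0\}$, both canonically identified with $\mathbb S^{k-1}_\rho\times\mathbb R^{n-k}$. One checks that $W^+_Z\cap\{s=s_0\} = F_W(V^+)$ while $W^-_Z\cap\{s=s_0\}$ is a small tube around $V^- = \mathbb S^{k-1}_\rho\times\{0\}$ that is unaffected by $W$ (trajectories through $(s_0,\infty)\times V^-$ never enter $\mathcal B$ if $s_0$ is chosen with the support slightly inside). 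Because the flow direction is transverse to both slices, $W^+_Z\pitchfork W^-_Z$ inside the strip is equivalent to $F_W(V^+)$ being transverse to $V^-$ inside $\mathbb S^{k-1}_\rho\times\mathbb R^{n-k}$; outside the strip transversality is automatic (there $V^+$ meets $V^-$, if at all, only where the flows coincide, and along orbits transversality propagates). So the whole lemma is reduced to: make $F_W(V^+)\pitchfork V^-$ by a small, compactly supported $W$.

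For that final step I would appeal to a standard parametric transversality (Thom/Sard) argument. Fix a compactly supported family of level-preserving vector fields $W_a$, $a$ ranging over a ball in some $\mathbb R^N$, rich enough that the evaluation map $(a,q)\mapsto F_{W_a}(q)$ is a submersion onto a neighborhood of $V^-$ for $q$ near $F_0^{-1}(V^-)$ — for instance take $W_a$ to generate all translations in the $\mathbb R^{n-k}$ factor localized by a bump function supported in $\mathcal B$, so that varying $a$ moves $F_{W_a}(V^+)$ freely in the normal directions to $V^-$. Then the map $G(a,q)=F_{W_a}(q)$ is transverse to $V^-$, hence by the parametric transversality theorem, for almost every $a$ the map $q\mapsto F_{W_a}(q)$ is transverse to $V^-$, i.e. $F_{W_a}(V^+)\pitchfork V^-$. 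Choosing such an $a$ arbitrarily small (which is possible since the bad set has measure zero and $G$ is continuous in $a$ with $W_0=0$) yields a small $W := W_a$, hence a $C^\ell$-small $g_0'$ with the required properties. The main obstacle I expect is the bookkeeping in the reduction of the second step — carefully identifying $W^\pm_Z$ near the boundary of $\mathcal B$ and checking that the $\mathbb R^{n-k}$-ball $B^{n-k}_\rho$ in the definition of $\mathcal B$ (rather than all of $\mathbb R^{n-k}$) is large enough to contain the relevant intersection, so that the compactly supported perturbation genuinely controls all of $W^+_Z\cap W^-_Z$; once that is in place, the Sard argument is routine.
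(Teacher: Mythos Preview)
Your approach is sound and follows the same architecture as the paper: perturb $Y_0$ by a level-preserving vector field supported in $\mathcal B$, reduce transversality of $W^\pm$ to a statement in a single slice, and settle that by a Sard-type argument. The execution differs, though. You invoke parametric transversality over a family of perturbations; the paper is more direct. Since $V^-=\mathbb S^{k-1}_\rho\times\{0\}$ is the fiber over $0$ of the projection $\pi\colon\mathbb S^{k-1}_\rho\times\mathbb R^{n-k}\to\mathbb R^{n-k}$, transversality of a translate $\mathbb S^{k-1}_\rho\times\{a\}$ with $V^+$ is exactly the condition that $a$ be a regular value of $\pi|_{V^+}$. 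So the paper applies Sard once to $\pi|_{V^+}$, picks a regular value $a^+$ with $0<\|a^+\|<\rho/3$, rotates coordinates so $a^+=(\alpha,0,\dots,0)$, and writes down the single perturbation $Y'_0=-\partial_s-\beta(s)\gamma(\|\xi\|)\,\partial_{\xi_1}$ whose forward flow carries $\{s_0\}\times V^-$ precisely to $\{-s_0\}\times\mathbb S^{k-1}_\rho\times\{a^+\}$. Because $\|a^+\|<\rho/3$ and $\gamma\equiv 1$ on $\{\|\xi\|\le\rho/3\}$, the flowed $V^-$ never feels the cutoff, which dissolves the compact-support bookkeeping you flagged as the main obstacle. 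The paper also writes $g'_0$ by an explicit formula (add $\beta(s)\gamma(\|\xi\|)$ to the $(s,\xi_1)$ and $(\xi_1,s)$ entries of the inverse metric) rather than going through the Lemma~\ref{Lemma1.0} recipe; both routes produce a valid $g'_0$, but the explicit one makes the $C^\ell$-smallness immediate from the bounds on $\beta$ in Lemma~\ref{Lemma2.5}.

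Two small slips in your write-up: your $F_W=\Phi^Z_{2s_0}(-s_0,\cdot)$ goes the wrong way, since $\dot s=-1$ under $Z$ sends $s=-s_0$ to $s=-3s_0$; and $W^-_Z\cap\{s=s_0\}$ is exactly $\{s_0\}\times V^-$, not a tube around it.
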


\medskip

Before proving Lemma~\ref{Lemma6.12} let us show how it is used to
prove Proposition~\ref{Proposition2.4}.

\medskip

\begin{proof}
(Proof of Proposition~\ref{Proposition2.4}) Let $v \in
\mbox{Crit}(h)$ with $h(v) = c_i$ and $i(v) = k$. Following
\cite{Sm1} one gets a diffeomorphism $\Theta $ from
${\mathcal B}:= (-s_0, s_0) \times {\mathbb S}^{k - 1}_\rho
\times B^{n - k}_\rho $ into $U_v$ where $s_0 > 0$ will be
chosen sufficiently small to insure that in the construction
below, ${\mathcal B}$ is indeed mapped into $U_v$. Denote by
$M^-_i$ the level set $L
_{c_i - \rho ^2}$ where $0 < \rho < r_i / 4$ and $r_i > 0$ is
the radius of the ball $B_{r_i}$ of the domain of the coordinate
map $\varphi _v : B_{r_i} \rightarrow U_v$. The diffeomorphism
$\Theta $ is chosen in such a way that

\begin{list}{\upshape }{
\setlength{\leftmargin}{11mm}
\setlength{\rightmargin}{0mm}
\setlength{\labelwidth}{15mm}
\setlength{\labelsep}{2.9mm}
\setlength{\itemindent}{0,0mm}}

\item[$(\Theta 1)$] $\Theta ( \{ 0 \} \times {\mathbb S}^{k - 1}
_\rho \times B^{n - k}_\rho ) \subseteq M^-_i$

\item[$(\Theta 2)$] $\Theta ( \{ 0 \} \times V^-) = M^-_i \cap
W^-_v$ where $V^- = {\mathbb S}^{k - 1}_\rho \times \{ 0 \}$

\item[$(\Theta 3)$] $\Theta ({\mathcal B}) \subseteq U_v \cap
\{ x \in M | h(x) < c_i - \rho ^2/2 \} $

\item[$(\Theta 4)$] $\Theta _\ast(- \frac {\partial }{\partial s}
\big\arrowvert _{\mathcal B}) = - \mbox{grad}_{\| d_x h \| ^2 g}h
\big\arrowvert _{\Theta ({\mathcal B})}$.
\end{list}

\medskip

To satisfy $(\Theta 4)$ the map $\Theta $ is defined in terms of the
flow of the rescaled vector field $-\mbox{grad}_{\| d_x h \| ^2 g}h$.
More precisely we set
   \[ \Theta : (- s_0, s_0) \times {\mathbb S}^{k - 1}_\rho \times
      B^{n - k }_\rho \rightarrow U_v, \ (s, p, \xi ) \mapsto
	  \varphi _v(y(-s)) .
   \]
Here $y(t) = (y^- (t), y^+(t)) \in {\mathbb R}^k \times
{\mathbb R}^{n - k}$ is the solution of the initial value problem
   \[ \dot y(t) = Y^{(k)} (y(t)) = \frac{1}{\| y(t)\| ^2} (y^-(t),
      -y^+(s)) , \quad y(0) = (\lambda p, \xi ) ,
   \]
where $Y^{(k)}$ is the rescaled standard vector field defined by
\eqref{1.10} and the scalar $\lambda = \lambda (\xi , \rho )$
appearing in the initial condition $y(0)$ is determined in such
a way that $(\Theta 1)$ holds, i.e. $\varphi _v(\lambda p, \xi )
\in M^-_i$. As $M^-_i = h^{-1}(c_i - \rho ^2)$ and
   \[ (h \circ \varphi _v)(\lambda p, \xi ) = c_i - \frac {1}{2}
      \| \lambda p \| ^2 + \frac {1}{2} \| \xi \| ^2
   \]
one has
   \[ \lambda (\xi , \rho ) = (2 + \| \xi \| ^2 / \rho ^2 )^{1/2} .
   \]
By construction, $(\Theta 2)$ holds. To verify $(\Theta 3)$, note
that for $(p, \xi ) \in {\mathbb S}^{k - 1}_\rho \times B^{n - k}
_\rho $, one has $\Theta (0, p, \xi ) = \varphi _v(\lambda p,
\xi )$ and
   \[ \| (\lambda p, \xi ) \| ^2 = (2 + \| \xi \| ^2 / \rho ^2)
      \rho ^2 + \| \xi \| ^2 < 4 \rho ^2 < (r_i / 2)^2
   \]
as $0 \leq \rho < r_i / 4$. Hence $(\lambda p, \xi ) \in B_{r_i}$
and therefore $\varphi _v(\lambda p, \xi ) \in U_v$. Moreover,
as by the definition of $\Theta $, the set $\Theta (\{ s \}
\times {\mathbb S}^{k - 1}_\rho \times B^{n- k}_\rho )$ is
contained in $h^{-1}(c_i - \rho ^2 - s)$ it follows that
$(\Theta 3)$ is satisfied if $s_0 > 0$ is chosen
sufficiently small.

We now apply Lemma~\ref{Lemma6.12} with $V^+$ given by
   \[ \{ 0 \} \times V^+ = \Theta ^{-1} \left( M^-_i \cap \bigsqcup
      _w W^+_w \right)
   \]
and the metric $g_0$ on $M_0 = {\mathbb R} \times {\mathbb S}^{k -1}
_\rho \times {\mathbb R}^{n - k}$ chosen in such a way that its
restriction to ${\mathcal B}$ coincides with the pullback $\Theta
^\ast (\| d_x h \| ^2 g \big\arrowvert _{\Theta ({\mathcal B})})$
and $-\mbox{grad}_{g_0} h_0 = - \frac{\partial }{\partial s}$.
In view of the property $(\Theta 4)$ and the assumption that
$U_v$ is a standard coordinate chart such a metric $g_0$ exists.

\begin{figure}[h]
 \begin{center}
  \includegraphics[width=0.6\linewidth,clip]{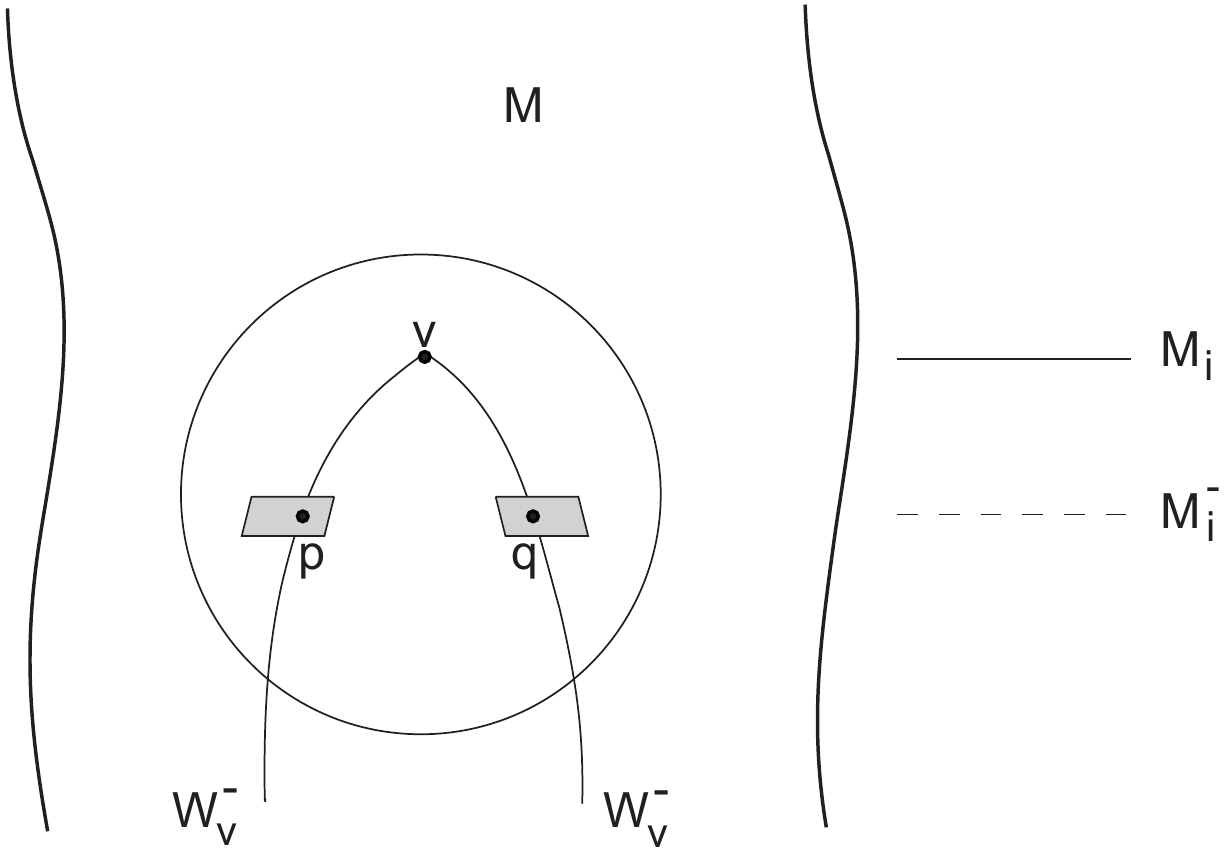}
  \caption[Illustration of $\Theta ({\mathcal B})$]
          {\small Illustration of $\Theta ({\mathcal B})$ (shaded
		  area) in the case $k = 1$. Note that $W^-_v \cap M^-_i =
		  \{ p,q\}$}
 \protect\label{Figure3}
\end{center}
\end{figure}

Denote by $g'$ the metric on $M$ given by $g$ on $M \backslash
\Theta ({\mathcal B})$ and on $\Theta ({\mathcal B})$ by $\| d_x
h\| ^{-2} \Theta _\ast (g'_0 \big\arrowvert _{{\mathcal B}})$
where $\Theta _\ast (g'_0 \big\arrowvert _{{\mathcal B}})$ is
the push forward by $\Theta $ of the metric $g'_0 \big\arrowvert
_{{\mathcal B}}$ provided by Lemma~\ref{Lemma6.12}. Then $g'$
is a smooth metric on $M$. As $g'_0$ can be chosen arbitrarily
close to $g_0$ in $C^\ell$-topology, $g'$ can be chosen arbitrarily
close to $g$ in $C^\ell$-topology as well. By construction, $-\mbox{grad}
_{g'} h$ coincides with $-\mbox{grad}_g h$ on $M \backslash
\Theta ({\mathcal B})$ whereas on $\Theta ({\mathcal B})$ it is
given by $\| d_x h \| ^2 \Theta _\ast (-\mbox{grad}_{g'_0} h_0)$
and
   \begin{align*} W^-_{Y'_0} \cap h^{-1}_0 (\{ 0 \} ) &= \Theta
                     ^{-1} (W^{-_{'}} _v \cap M^-_i) \\
                  W^+_{Y'_0} \cap h^{-1}_0 \{ 0 \} &= \Theta
                     ^{-1}\left( \bigsqcup _w (W^{+_{'}} _w \cap M^-_i)
					 \right)
   \end{align*}
where $W^\pm _{Y'_0}$ are the submanifolds given by
Lemma~\ref{Lemma6.12} and $W^{\pm _{'}}_w$ denote the stable/unstable
manifolds corresponding to $-\mbox{grad}_{g'} h$. As $W^-_{Y'_0}
\cap h^{-1}_0 (\{ s_0 \} ) = \{ s_0 \} \times V^-$ one concludes
that $W^{-_{'}}_v \cap M^-_i$ is completely contained in the image
of $\Theta $
   \[ \Theta (W^-_{Y'_0} \cap h^{-1}_0 \{ 0 \} ) = W^{-_{'}}_v \cap
      M^-_i .
   \]
By Lemma~\ref{Lemma6.12}, it follows that $W^-_{Y'_0} \cap h^{-1}_0
(\{ 0 \} ) \pitchfork W^+_{Y'_0} \cap h^{-1}_0(\{ 0 \} )$ and hence
   \[ W^{-_{'}}_v \cap M^-_i \pitchfork \bigsqcup _w (W^{+_{'}}_w \cap
      M^-_i) .
   \]
We therefore have proved that $W^{-_{'}}_v \pitchfork W^{+_{'}}_w$ for
any $w \in \mbox{Crit}(h)$. This completes the proof of
Proposition~\ref{Proposition2.4}.
\end{proof}

\medskip

In the remainder of this section we prove Lemma~\ref{Lemma6.12}.
The construction of $g'_0$ involves two cut-off
functions, introduced in \cite{Sm1} whose properties are stated
in the following lemmas. Denote by $\left( g_{ij}(x) \right)$ the
$n \times n$ matrix that represents in local coordinates
the metric $g_0$; as usual $(g^{ij}(x))$ denotes
the inverse of $\left( g_{ij}(x)
\right) $. Choose $\eta _0 \equiv \eta (g_0) > 0$ so small
that for any symmetric
$n \times n$ matrix $\left( G^{ij}(x) \right) $ with
support in ${\mathcal B}:= (-s_0, s_0) \times {\mathbb S}^{k - 1}
_\rho \times B^{n - k}_\rho $ and
$\sup _{x
\in M_0} \left( \sum _{i,j} (G^{ij}(x))^2 \right) ^{1/2} \leq
\eta _0$, the matrix $\left( g^{ij}(x) + G^{ij}(x) \right) $ is
positive definite for any $x \in M_0$; then its inverse
defines a Riemannian metric on $M_0$.

\medskip

\begin{lemma}
\label{Lemma2.5} Let $s_0 > 0, \ell \in {\mathbb Z}_{\geq 1}$
and $0 < \eta \leq \eta _0$.
Then there exists $\delta > 0$ depending on $s_0, \ell ,$ and $\eta $
such that for any $0 < \alpha \leq \delta $ there is a
$C^\infty $-function $\beta \equiv \beta _\alpha : {\mathbb R}
\rightarrow
{\mathbb R}$ with support in the open interval $(-s_0,s_0)$
and the property that $\beta $ and its derivatives $d^j_t \beta
\ (1 \leq j \leq \ell )$ satisfy the estimates
$0 \leq \beta \leq \eta$,
$|d^j_t \beta | \leq \eta $, and
   \[ \int ^{s_0}_{-s_0} \beta (t)dt = \alpha .
   \]
\end{lemma}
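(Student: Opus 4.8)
The plan is to obtain $\beta_\alpha$ simply by rescaling the amplitude of a single fixed bump function, since none of the required properties involves the location or width of the support beyond the fixed constraint $\mbox{supp}\,\beta_\alpha\subseteq(-s_0,s_0)$. First I would fix, once and for all and depending only on $s_0$, a function $\chi\in C^\infty(\mathbb{R})$ with $\mbox{supp}\,\chi\subseteq(-s_0,s_0)$, $0\le\chi\le 1$, and $\chi\not\equiv 0$; such a $\chi$ exists by the standard construction of smooth bump functions (e.g. from $e^{-1/t}$-type building blocks). Set $I:=\int_{-s_0}^{s_0}\chi(t)\,dt>0$ and $C:=\max\bigl\{1,\ \max_{1\le j\le\ell}\sup_{t\in\mathbb{R}}|d^j_t\chi(t)|\bigr\}$; both $I$ and $C$ depend only on $s_0$ and $\ell$.

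Next I would define $\delta:=\eta I/C>0$, which depends only on $s_0$, $\ell$, and $\eta$ as required, and for any $0<\alpha\le\delta$ set
\[
   \beta_\alpha(t):=\frac{\alpha}{I}\,\chi(t).
\]
Then $\beta_\alpha\in C^\infty(\mathbb{R})$ with support in $(-s_0,s_0)$, and $\int_{-s_0}^{s_0}\beta_\alpha(t)\,dt=(\alpha/I)\cdot I=\alpha$. For the pointwise bounds, observe that $\alpha/I\le\delta/I=\eta/C\le\eta$; hence from $0\le\chi\le 1$ we get $0\le\beta_\alpha\le\alpha/I\le\eta$, and for $1\le j\le\ell$ we get $|d^j_t\beta_\alpha|=(\alpha/I)\,|d^j_t\chi|\le(\eta/C)\cdot C=\eta$. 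This verifies all the asserted properties of $\beta\equiv\beta_\alpha$.

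The argument presents no genuine obstacle; the only points requiring (routine) care are that $\chi$, and hence $I$ and $C$, must be chosen before $\alpha$ and independently of it, so that $\delta$ genuinely depends only on $(s_0,\ell,\eta)$, and that $C$ is taken to be at least $1$ so that dividing by it does not spoil the $j=0$ estimate $\beta_\alpha\le\eta$ for the function itself. Note that $\eta_0$ plays no role in the construction beyond delimiting the admissible range $0<\eta\le\eta_0$ inherited from the ambient setup.
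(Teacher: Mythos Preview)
Your proof is correct and follows essentially the same approach as the paper: fix a single smooth bump function supported in $(-s_0,s_0)$ and obtain $\beta_\alpha$ by scaling its amplitude. The paper normalizes its bump $\zeta$ to have integral $1$ and sets $\beta_\alpha=\alpha\zeta$ with $\delta=\eta/(1+\|\zeta\|_{C^\ell})$, whereas you leave the integral $I$ unnormalized and compensate via $\beta_\alpha=(\alpha/I)\chi$ with $\delta=\eta I/C$; these are the same construction up to this cosmetic difference.
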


\begin{figure}[h]
\begin{center}  \centering
  \includegraphics[width=0.5\linewidth,clip]{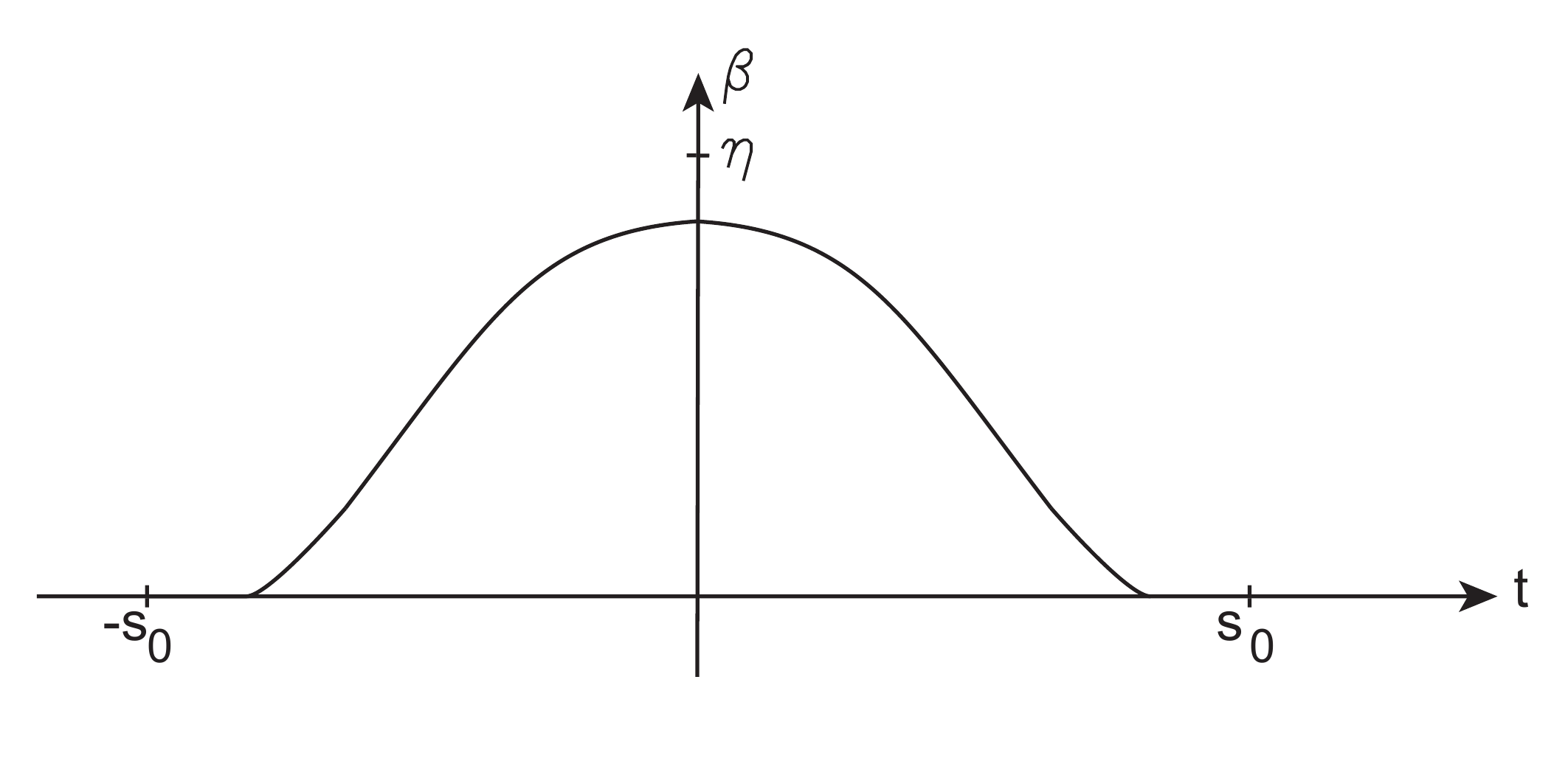}
  \caption{\small Graph of $\beta $}
  \protect\label{Figure5}
\end{center}
\end{figure}

\begin{proof}
(Proof of Lemma~\ref{Lemma2.5}) Choose a smooth cut-off
function $\zeta : {\mathbb R} \rightarrow {\mathbb R}_{\geq 0}$
with $\mbox{supp}(\zeta ) \subseteq (-s_0, s_0)$ so that $\int ^{s_0}
_{-s_0} \zeta (s) ds = 1$ and let $\delta := \eta / (1 +
\| \zeta \| _{C^\ell })$ where $\| \zeta \| _{C^\ell } = \sup _{\underset
{0 \leq j \leq \ell }{s \in {\mathbb R}}} | d^j_s \zeta |$. Then
for any $0 < \alpha \leq \delta $, the cut-off function
$\beta _\alpha := \alpha \zeta $ has the desired properties.
\end{proof}

\medskip

\begin{lemma}
\label{Lemma2.6} For any given $\ell \in {\mathbb Z}_{\geq 1}$
there is a constant $C_\ell > 0$ so that for any $\rho > 0$ there
exists a $C^\infty $-function $\gamma : {\mathbb R}
\rightarrow [0,1]$ with support in the open interval $(-\rho ,
\rho )$ satisfying $\sup _t |d^j_t \gamma | \leq C_r (\rho / 2)
^{-j}$ for $1 \leq j \leq \ell $
and $\gamma (t) = 1$ for $|t| \leq \rho / 3$.

\begin{figure}[h]
 \begin{center}
 \includegraphics[width=0.6\linewidth,clip]{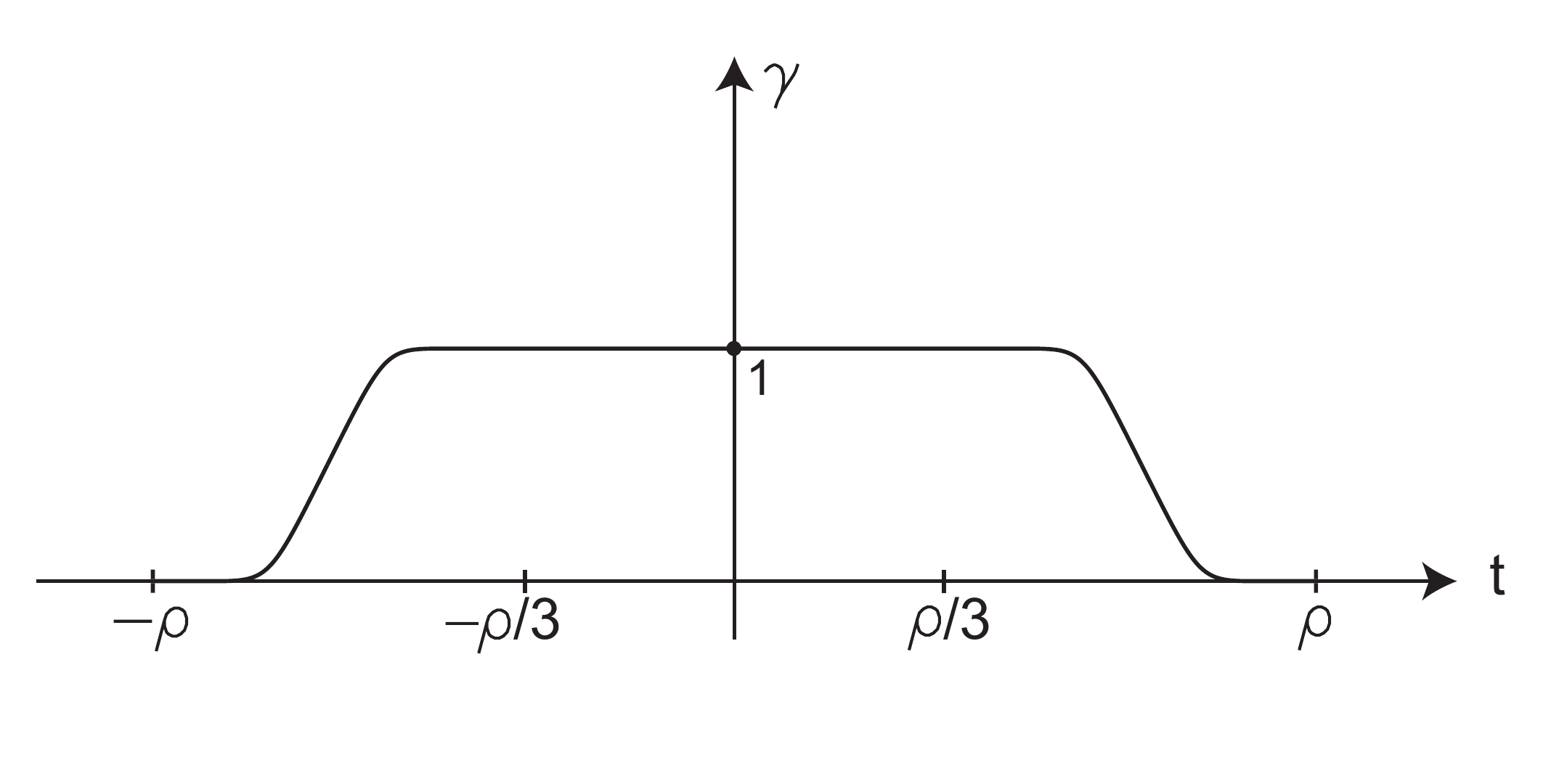}
  \caption{\small Graph of $\gamma $}
  \protect\label{Figure5}
 \end{center}
\end{figure}
\end{lemma}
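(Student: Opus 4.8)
The plan is to construct $\gamma$ by a single rescaling of one fixed universal bump function, so that the constant $C_\ell$ is manifestly independent of $\rho$. First I would fix, once and for all, a $C^\infty$ function $\chi : {\mathbb R} \rightarrow [0,1]$ with $\mbox{supp}(\chi) \subseteq (-1,1)$ and $\chi(t) = 1$ for $|t| \leq 1/3$; such a $\chi$ exists by the standard mollifier construction (take the convolution of the indicator of a suitable interval with a smooth bump, or quote any of \cite{Hi}, \cite{Mi}). Set $C_\ell := \max_{1 \leq j \leq \ell} \sup_t |d^j_t \chi(t)| \cdot 2^{\ell}$, a finite constant depending only on $\ell$ and the fixed choice of $\chi$ (the factor $2^\ell$ is a harmless overestimate to match the $(\rho/2)^{-j}$ normalization below).

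Next, for a given $\rho > 0$ I would define $\gamma(t) := \chi(t/\rho)$. Then $\gamma$ is $C^\infty$, takes values in $[0,1]$, is supported in $(-\rho,\rho)$ since $\chi$ is supported in $(-1,1)$, and $\gamma(t) = 1$ for $|t| \leq \rho/3$ because $|t/\rho| \leq 1/3$ there. For the derivative estimates, the chain rule gives $d^j_t \gamma(t) = \rho^{-j} (d^j_s \chi)(t/\rho)$, whence
\[
\sup_t |d^j_t \gamma(t)| \leq \rho^{-j} \sup_s |d^j_s \chi(s)| \leq 2^{-j} C_\ell \, \rho^{-j} \cdot 2^{j} \cdot 2^{-\ell} \cdot 2^{\ell} = C_\ell (\rho/2)^{-j}
\]
for $1 \leq j \leq \ell$, where I have absorbed the powers of $2$ into the definition of $C_\ell$; writing it cleanly, $\rho^{-j} = 2^{-j}(\rho/2)^{-j} \leq (\rho/2)^{-j}$, so in fact $\sup_t |d^j_t\gamma| \leq C_\ell (\rho/2)^{-j}$ already with $C_\ell = \max_{1\le j\le \ell}\sup_t|d^j_t\chi|$.

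There is essentially no obstacle here: the only point requiring any care is to make sure the constant genuinely does not depend on $\rho$, which is exactly what the fixed-profile-plus-rescaling argument guarantees, and to check that the rescaling $t \mapsto t/\rho$ sends the support and the plateau of $\chi$ to the required sets — both immediate. The statement of the lemma is a routine packaging of a standard cut-off, included here only to have the explicit scaling of all derivatives on record for the proof of Lemma~\ref{Lemma6.12}.
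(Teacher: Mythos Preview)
Your proof is correct and follows essentially the same rescaling idea as the paper: fix a universal smooth profile once and for all, rescale it by $\rho$, and read off the derivative bounds from the chain rule. The paper builds $\gamma$ as an even function from a smooth step $f:[0,1]\to[0,1]$ with transition on an interval of length exactly $\rho/2$ (so the factor $(\rho/2)^{-j}$ appears directly), whereas you rescale a fixed bump $\chi$ by $\rho$ and then use $\rho^{-j}\le(\rho/2)^{-j}$; the difference is cosmetic.
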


\begin{proof}
(Proof of Lemma~\ref{Lemma2.6}) Let $f : {\mathbb R}
\rightarrow [0,1]$ be a smooth increasing function with $f(t) =
0$ for $t \leq 0$ and $f(t) = 1$ for $t \geq 1$ and set
$C_\ell := \| f \| _{C^\ell } = \sup _{\underset{0 \leq j \leq \ell }
{t \in {\mathbb R}}} | d^j_t f|$. Then define $\gamma $ to be the even
function determined by
   \[ \gamma (t) := \begin{cases} 0  &t \leq -
                       \frac{11}{12} \rho \\
					f\left( \frac{\rho }{2} (t - \frac{11}{12}
					   \rho ) \right) &- \frac{11}{12} \rho < t
					   \leq - \frac {5}{12} \rho \\
					1 &- \frac{5}{12} \rho < t \leq 0 .
	   \end{cases}
   \]
The function $\gamma $ has all the required properties.
\end{proof}

\medskip

\begin{proof}
(Proof of Lemma~\ref{Lemma6.12}) The proof consists of three
parts: the construction of $Y'_0$, the verification of
the transversality property (ii), and the construction of
$g'_0$.

\medskip

{\it Construction of $Y'_0$:} Choose $0 < \eta \leq \eta _0$
arbitrarily small. Let $\pi $ denote the projection
of the submanifold $V^+_0 \subseteq {\mathbb S}^{k - 1}_\rho
\times {\mathbb R}^{n - k}$ on the second component, $\pi :
V^+_0 \rightarrow {\mathbb R}^{n - k}$. By Sard's
theorem \cite{Sa} there exists a regular value $a^+$
of $\pi $ with $0 < \| a^+ \|
\leq \min (\delta , \rho / 3)$.
Here $\delta > 0$ is given by Lemma~\ref{Lemma2.5} and
depends on the choice of $\eta $. Choose an orthonormal basis of
${\mathbb R}^{n - k}$ so that the corresponding
coordinates of $a^+$ are given by
$(\alpha ,0, \ldots ,0)$.
Note that $0 < \alpha < \rho / 3$.
Given these data, define the following vector field on $M_0$
   \[ Y'_0(s, p, \xi ) = - \frac {\partial }{\partial s} -
      \beta (s) \gamma
      (\| \xi \| ) \frac {\partial }{\partial \xi _1}
   \]
where $\beta \equiv \beta _\alpha $ and $\gamma $ are the cut-off
functions given in Lemma~\ref{Lemma2.5} and Lemma~\ref{Lemma2.6}
respectively.

\medskip

{\it Transversality property:} By the definition of $\gamma $,
$\gamma (\| \xi \| ) = 1$ for $\| \xi \| \leq \rho / 3$.
As $\alpha < \rho / 3$ and $\int
^{s_0}_{-s_0} \beta (\tau )d\tau = \alpha $, the solution $\Psi
^{(0)}_t (s_0, p, 0)$
of $\frac {d}{dt} \Psi ^{(0)}_t = Y'_0$ with initial data
$(s_0, p, 0) \in {\mathbb R} \times S^{k-1}_p \times \{ 0\} $, can be
easily computed. Note that $s(t) = s_0 - \int ^t_0 dt = s_0 - t$.
Hence for $t = 2s_0$ one gets
   \[ \Psi^{(0)}_{2s_0} \left( s_0, p, 0 \right) = \left( - s_0, p, \xi
      (s_0, p) \right)
   \]
where
   \[ \xi (s_0, p) = \left(  \int ^{2s_0}_{0} \beta (s_0 - t) dt ,
      0, \ldots , 0 \right) = (\alpha , 0, \ldots , 0) .
   \]
As $Y'_0 = Y_0$ on $M_0 \backslash {\mathcal B}$ one has
   \[ W^-_{Y'_0} \cap h^{-1}_0 \{ s_0 \} = W^-_{Y_0} \cap h^{-1}_0
                     \{ s_0 \} = \{ s_0 \} \times V^-
   \]
and hence
   \[ W^-_{Y'_0} \cap h^{-1}_0 \{ -s_0 \} = \Psi ^{(0)}_{2s_0}
      \left( W^-_{Y'_0} \cap h^{-1} \{ s_0 \} \right) .
   \]
Combined with $V^- = {\mathbb S}^{n-k}_\rho \times \{ 0 \} $
one sees that
  \[ W^-_{Y'_0} \cap h^{-1}_0 \{ -s_0 \} = \{ -s_0 \} \times
     {\mathbb S}^{k - 1}_\rho \times \{ a^+ \} .
   \]
Similarly, one has
   \[ W^+_{Y'_0} \cap h^{-1}_0 \{ -s_0 \} = W^+_{Y_0} \cap h^{-1}_0 \{
      -s_0 \} =
      \{ -s_0 \} \times V^+ .
   \]
As $a^+$ is a regular value of $\pi : V^+ \rightarrow {\mathbb R}
^{n - k}$
one concludes that $W^-_{Y'_0} \cap h^{-1}_0 \{ -s_0 \} $ and
$W^+_{Y'_0} \cap
h^{-1}_0 \{ -s_0\} $ intersect transversally inside $h^{-1}_0 \left(
\{ -s_0 \} \right) $, hence $W^-_{Y'_0}$ and $W^+_{Y'_0}$ intersect
transversally as well.

\medskip

{\it Construction of $g'_0$}:
To describe $g'_0$, it is convenient to reorder
the coordinates $(s, p, \xi _1, \ldots ,\xi _{n-k})$ so that in the
new coordinates $\zeta = (\zeta _1, \ldots , \zeta _n)$ one has
$\zeta _1 = s$ and $\zeta _2 = \xi _1$. With respect to these
coordinates, the coefficients $g^{'ij}_0$ are defined as follows
   \[ g^{'ij}_0(\zeta ):=
      \begin{cases} g^{ij}_0(\zeta ) & \mbox { if } (i,j) \not= (1,2)
      \mbox { or } (2,1) \\
      g^{ij}_0(\zeta ) + \beta (\zeta _1) \gamma (\| \xi
      \| ) & \mbox { if } (i,j) = (1,2) \mbox { or }
      (2,1) .
      \end{cases}
   \]
By Lemma~\ref{Lemma2.5}, $\beta \leq \eta $ and as $\eta \leq \eta _0$,
the matrix
$\left( g'^{ij}_0 \right) $ is positive definite, hence has an
inverse $(g'_{0ij})$ which defines a Riemannian metric on $M$. As $\beta
\leq \eta , |\dot \beta | \leq \eta $, and $0 < \eta \leq \eta _0$ can
be chosen arbitrarily small, $g'_0$ is arbitrarily close to $g_0$ in the
$C^\ell $-topology. The gradient $\mbox{grad}_{g'_0} h_0$ can be
easily computed. By definition,
   \[ \mbox{grad}_{g'_0} h_0(\zeta ) = \sum ^n_{i=1} \left( \sum ^n_{j=1}
      g'^{ij}_0 \frac {\partial h_0}{\partial \zeta _j} \right) \frac
	  {\partial }{\partial \zeta _i}
   \]
and $h_0(\zeta ) = \zeta _1 \ (= s)$. From $\mbox{grad} _{g_0} h_0 = \frac
{\partial }{\partial s}$ we read off that
$g^{i1}_0 = \delta _{1i}$. Hence
   \[ \mbox{grad}_{g'} h(\zeta ) = \frac{\partial }{\partial
      \zeta _1} + \beta ( \zeta _1) \gamma ( \|
      \xi \| ) \frac {\partial }{\partial \zeta _2}
   \]
or
   \[ - \mbox{grad}_{g'_0} h_0(\zeta ) = - \frac {\partial }{\partial s} -
      \beta (s) \gamma (\| \xi \| ) \frac {\partial }{\partial
      \xi _1} = Y'_0(\zeta )
   \]
as claimed. Further note that $g'_0$ coincides with $g_0$ in a
neighborhood of $M_0 \backslash {\mathcal B}$.
This completes the proof of Lemma~\ref{Lemma6.12}.
\end{proof}

\medskip

\begin{remark}
\label{Remark6.15} Comments on the proof of
Theorem~\ref{Theorem2.1}: (i) The hypothesis of being
$h$-admissible for the metric $g$ is
not used in the proof of Theorem~\ref{Theorem2.1}.
(ii) The proof of Theorem~\ref{Theorem2.1} could be shortened by
applying transversality theorems to make $W^-_v \cap M^-_k$
transversal to $W^+_w \cap M^-_k$. However, this has to be done
with care as $W^+_w \cap M^-_k$ is not necessarily a closed
subset of $M^-_k$.
(iii) A conceptually different proof of Theorem~\ref{Theorem2.1},
based on Fredholm theory, can be found in \cite{Sc}.
\end{remark}

\subsection{Spaces of broken trajectories}
\label{2.3Spaces of broken trajectories}

Let $M$ be a smooth manifold and $(h,X)$ a Morse-Smale pair. In
particular this means that $h$ is proper (cf
Definition~\ref{Definition1.1}). It is useful to define the
following partial ordering for critical points $w, v \in \mbox{\rm Crit}(h)$
   \[ w < v \ \mbox { iff } \ i(w) < i(v) \mbox { and } h(w) < h(v)
   \]
and
   \[ w \leq v \ \mbox { iff } \ w < v \mbox { or } w = v .
   \]
According to \eqref{1.7bis}, ${\mathcal T}(v,w) = (W^-_v \cap
W^+_w) / {\mathbb R}$ denotes the space of unbroken trajectories
from $v$ to $w$. For
$v, w \in \mbox{\rm Crit}(h)$ with $w < v$ introduce
   \begin{align*} {\mathcal B}(v,w)&:= \bigcup _{w < v_\ell < \ldots <
                     v_1 < v} {\mathcal T}(v,v_1) \times \ldots \times
{\mathcal T}(v_\ell , w) \\
\hat {W}^-_v&:= \bigcup _{ \underset {w \leq v}
{w \in {\rm Crit}(h)} }
{\mathcal B}(v,w) \times W^-_w
   \end{align*}
where ${\mathcal B}(v,v):= \{ v\}$. Further let $\hat {i}_v : \hat {W}^-_v
\rightarrow M$ be the map
whose restriction to ${\mathcal B}(v,w) \times W^-_w$ is
given by the projection onto the second component, composed with the
inclusion $i_w : W^-_w \hookrightarrow M$,
   \[ \hat{i}_v : {\mathcal B}(v,w) \times W^-_w \rightarrow W^-_w
      \hookrightarrow M .
   \]
Note that $\hat i _v$ is an extension of the inclusion $i
_v : W^-_v \hookrightarrow M$ as ${\mathcal B}(v,v) \times
W^-_v = \{ v \} \times W^-_v$.
Elements in ${\mathcal B}(v,w)$
are called trajectories connecting $v$ and $w$ whereas an element in
${\mathcal B}(v,w) \backslash {\mathcal T}(v,w)$ is referred to
as a broken trajectory.
Note that an element in  $\hat {W}^-_v$ is a (possibly
broken) trajectory from the critical point $v$ to a point $x$ on $M$
which is the image of that element by the map $\hat {i}_v$.

Our goal is to prove that $\hat {W}^-_v$
and ${\mathcal B}(v,w)$
have a canonical differentiable structure of a manifold with corners
so that the unstable manifold $W^-_v$ is the interior of $\hat {W}^-_v$,
${\mathcal T}(v,w)$ is the interior of ${\mathcal B}(v,w)$
and $\hat {i}_v : \hat {W}^-_v \rightarrow M$ is smooth and proper.
As $h$ is assumed to be smooth and proper it then follows that
   \[ \hat {h}_v:= h \circ \hat i_v
   \]
is smooth and proper as well.
In this subsection, as a first step, we describe for any given
$v \in \mbox{\rm Crit}(h)$
the topology of the set $\hat{W}^-_v$ and
then verify that $\hat {W}^-_v$ is a Hausdorff space
and $\hat {i}_v$ is continuous and proper. Let us briefly outline
how we will do this.

\begin{figure}[h]
 \begin{center}
  \includegraphics[width=0.5\linewidth,clip]{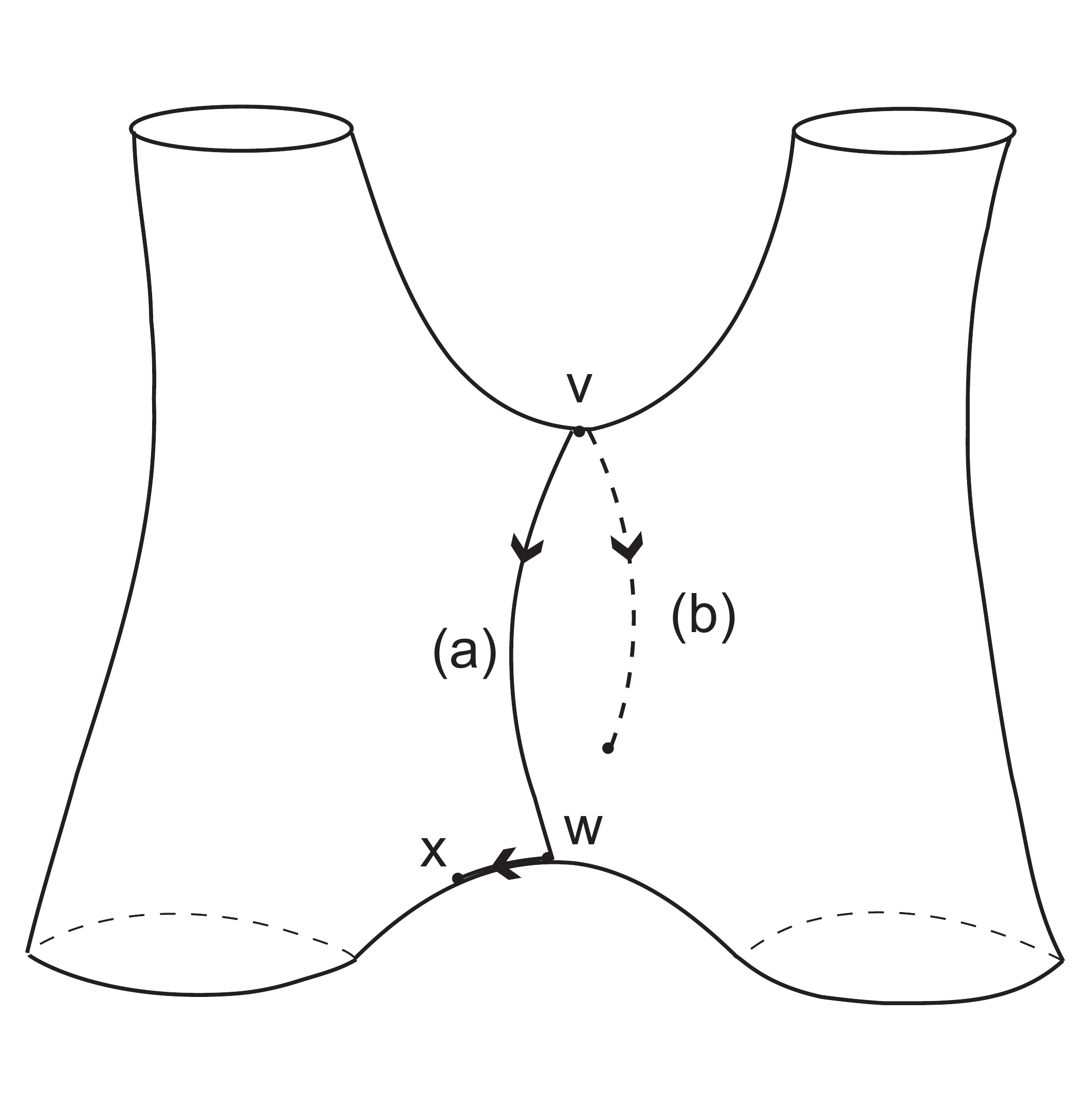}
  \caption{\small Examples of elements of $\hat {W}^-_v : (a), (b)$}
 \protect\label{Figure6}
\end{center}
\end{figure}

First observe that
${\mathcal T}(v,w) \subseteq {\mathcal B}(v,w)$ and for any $v,w $
with $v < w$ one has
${\mathcal T}(v,w) = \emptyset $.
The canonical parametrization of a
trajectory $\gamma \in {\mathcal B}(v,w)$, denoted also by
$\gamma $, is defined to be a continuous map $\gamma : [h(w),
h(v)] \rightarrow M$ so that $h \left( \gamma (s) \right) = s$
for any $h(w) \leq s \leq h(v)$. We note that away from the
critical points, $\gamma  \left( h(v) - t \right) $ is a
smooth solution for the rescaled vector field $Y$ introduced in
\eqref{3.6.11}. Similarly, an element $(\gamma , x) \in
{\mathcal B}(v,w) \times W^-_w \subseteq \hat {W}^-_v$
can be viewed as a broken trajectory connecting $v$ and $x$ and
its canonical, continuous parametrization
   \begin{equation}
   \label{3.1} \gamma _x : [h (x), h(v)] \rightarrow M
   \end{equation}
is determined by the property that $h \left( \gamma (s) \right) = s$
for any $h(x) \leq s \leq h(v)$. Recall that the critical values
of $h$ have been denoted by $\ldots < c_\ell < c_{\ell - 1} <
\ldots $. Assume that $h(v) = c_k$.
The topology of $\hat {W}^-_v$ will be
defined by the covering
   \[ \hat {h}^{-1}_v
      \left( [ c_{\ell + 1} + \delta _\ell , c_{\ell - 1} - \delta _\ell
      ] \right) , \quad \ell = k, k + 1, \ldots
   \]
where for any $\ell \geq k$, the positive number
$\delta _\ell $ is chosen sufficiently small --- see below.
The spaces $\hat {h}_v^{-1} \left( [c_{\ell + 1} +
\delta _\ell , c_{\ell - 1} - \delta _\ell ] \right) $ are
endowed with a topology so that they become compact Hausdorff
spaces as follows: for $ \ell = k$, it is identified with a compact
subset of $W^-_v$ whereas for $\ell \geq k + 1$ it is identified
with a compact subset in
   \[ \left( \prod ^{\ell - 1}_{j = k} h^{-1}( \{ c_j -
      \varepsilon \} ) \right) \times h^{-1} \left( [c_{\ell + 1}
      + \delta _\ell , c_{\ell - 1} - \delta _\ell ] \right)
   \]
by
associating to an element $(\gamma ,x) \in \hat {h}^{-1}_v \left(
[c_{\ell + 1} + \delta _\ell , c_{\ell - 1} - \delta _\ell ]
\right) $ the sequence of points $\left( (x^-_j)_j, x \right) $ on
$M$ with $x^-_j$ being the (unique) point on $\gamma $ with
$h(x_j) = c_j - \varepsilon $. The
parameter $\varepsilon > 0$ is chosen sufficiently small
so that $c_j - 2 \varepsilon > c_{j + 1}$ for any $j$. We
will show that the topology on $\hat {h}^{-1}_v \left( [c_{\ell + 1}
+ \delta _\ell , c_{\ell - 1} - \delta _\ell ] \right) $ is
independent of the choice of $\varepsilon $ and thus canonically
defined.

Let us now treat the above outlined construction in detail.
In a first step consider the set ${\mathcal B}(v,w)$ for given
critical points $v,w$ with $w < v$. Let $c_m \equiv
h(w) < \ldots < c_k = h(v)$ be the set of all critical
values of $h$ between $h(w)$ and $h(v)$. For any $k \leq j \leq
m$ introduce the level sets $M^-_j \equiv M^-_{j,\varepsilon }
= h^{-1}(\{
c_j - \varepsilon \} )$ with $\varepsilon > 0$ chosen as
above. Given any $0 < \varepsilon ' < \varepsilon $, the
flow $\Psi _t(x)$ defined in \eqref{3.6.12}
then provides a diffeomorphism $\Psi _{\varepsilon - \varepsilon '}
: M^-_{j,\varepsilon '} \rightarrow M^-_{j,\varepsilon }$. We
define
   \begin{align*} J_\varepsilon : {\mathcal B}(v,w) &\rightarrow
                     M^-_{k,\varepsilon } \times \ldots \times
                     M^-_{m-1,\varepsilon } \\
                  \gamma &\mapsto \left( \gamma (c_k - \varepsilon ),
                     \ldots , \gamma (c_{m-1} - \varepsilon )
                     \right) .
   \end{align*}

  \begin{figure}[h]
  \begin{center}
  \includegraphics[width=0.5\linewidth,clip]{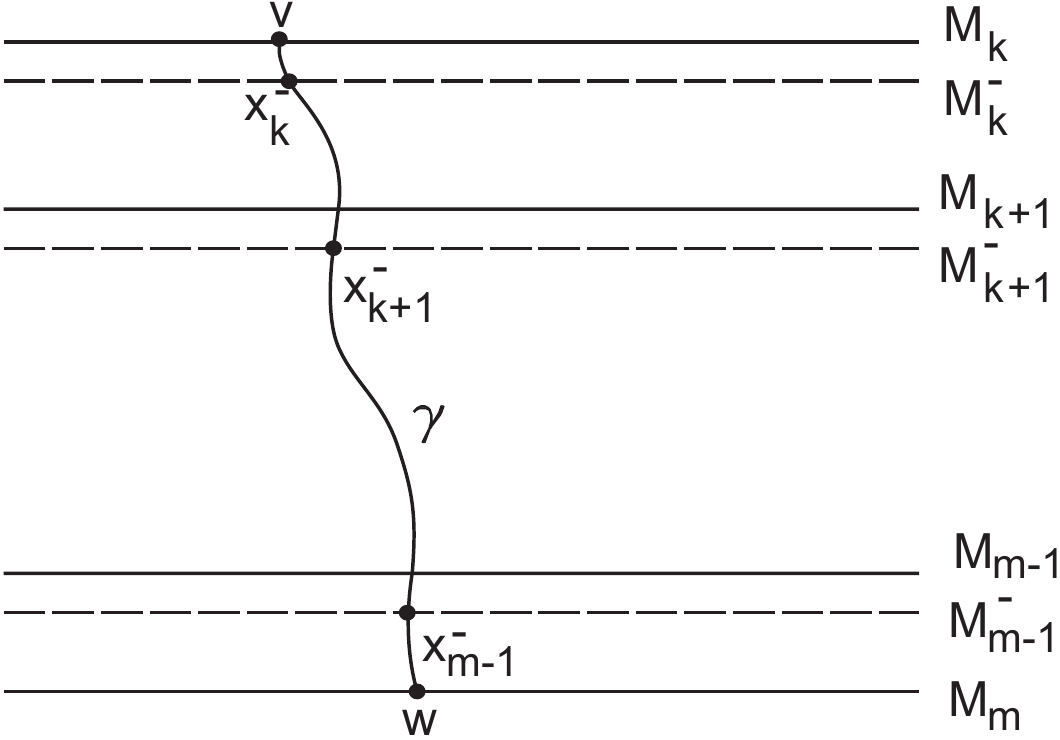}
  \caption{\small $J_\varepsilon (\gamma ) = (x^-_k, \ldots , x^-_{m - 1})$}
 \protect\label{Figure7}
 \end{center}
  \end{figure}

Using the flow $\Psi _t$ one sees that $J_\varepsilon $ is injective
and for any $0 < \varepsilon ' < \varepsilon $, we have
   \[ J_\varepsilon = \left( \Psi _{\varepsilon - \varepsilon '}
      \times \ldots \times \Psi _{\varepsilon - \varepsilon '}
      \right) \circ J_{\varepsilon '} .
   \]
Hence via the identification of ${\mathcal B}(v,w)$ with a subset
in $M^-_{k,\varepsilon } \times \ldots \times M^-_{m - 1, \varepsilon }$
by $J_\varepsilon $, the set ${\mathcal B}
(v,w)$ becomes a Hausdorff space whose topology is {\it independent} of
$\varepsilon $ and thus canonically defined.
As $h$ is assumed to be proper, the level sets $M^-
_{j, \varepsilon }$ are compact and hence $M^-_{k, \varepsilon } \times
\ldots \times M^-_{m-1, \varepsilon }$ is compact as well.
The compactness of ${\mathcal B}(v,w)$ then
follows from the following

\medskip

\begin{proposition}
\label{Proposition3.1} Let $v, w \in \mbox{\rm Crit}(h)$ with $h(v)
= c_k > h(w) = c_m$ and let $\varepsilon > 0$ be as above. Then
$J_\varepsilon ({\mathcal B}(v,w))$ is a closed
subset of $M^-_{k,\varepsilon } \times \ldots \times M^-_{m-1,
\varepsilon }$.
\end{proposition}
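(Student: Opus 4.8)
I want to show that $J_\varepsilon(\mathcal B(v,w))$ is closed in the compact product $P:=M^-_{k,\varepsilon}\times\cdots\times M^-_{m-1,\varepsilon}$. Equivalently, given a sequence $\gamma^{(n)}\in\mathcal B(v,w)$ such that $J_\varepsilon(\gamma^{(n)})=\big(x^{(n)}_k,\dots,x^{(n)}_{m-1}\big)\to(x_k,\dots,x_{m-1})\in P$, I must produce a broken trajectory $\gamma\in\mathcal B(v,w)$ with $J_\varepsilon(\gamma)=(x_k,\dots,x_{m-1})$. The idea is to reconstruct $\gamma$ piece by piece from the limiting slice points $x_j$, using the flow $\Psi_t$ of the rescaled field $Y$ to flow each $x_j$ up and down within the slab between consecutive critical values, and to use Lemma~\ref{diffeo-level1} to pass continuously through the critical levels $c_{j}$ and $c_{j+1}$. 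Along the way the limiting configuration may ``break'': some of the connecting pieces may degenerate onto stable/unstable manifolds of intermediate critical points, and the content of the proposition is precisely that the resulting object is still a (possibly more broken) element of $\mathcal B(v,w)$, i.e.\ the slice data are consistent.

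\textbf{Step 1: flowing the limit points into a candidate broken trajectory.} Fix $j$ with $k\le j\le m-1$. The point $x_j$ lies in the level $L_{c_j-\varepsilon}$. Flowing forward by $Y$, i.e.\ applying $\Psi_s(x_j)$ for increasing $s$, the trajectory either reaches the level $L_{c_{j+1}+\varepsilon}$ after finite time (if $x_j\notin\bigcup_{h(u)=c_{j+1}}W^+_u$) or converges to a critical point $u$ with $h(u)=c_{j+1}$ (by the definition of $W^+_u$ and Lemma~\ref{diffeo-level1}(i)); in the latter case $\Psi_s(x_j)\to u$ and we use the continuous extension of $\Psi_{\,\cdot\,}$ across $c_{j+1}$ to continue. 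Likewise flowing $x_j$ backward reaches $L_{c_j+\varepsilon}$ or converges to a critical point with value $c_j$; here Lemma~\ref{diffeo-level1}(ii) provides the continuous extension across $c_j$. Doing this for every $j$ and concatenating, together with the top piece in $W^-_v$ and the bottom piece in $W^-_w$, I obtain a continuous map $\gamma:[c_m,c_k]\to M$ with $h(\gamma(s))=s$ and $\gamma(c_j-\varepsilon)=x_j$ for each $j$. The only thing that is not automatic is that the endpoints match up at each critical level $c_j$, $k<j<m$: the point obtained by flowing $x_{j-1}$ down to level $c_j$ must agree (as a critical point, or as a point of $L_{c_j-\varepsilon}$ after one more $\varepsilon$-step) with the point obtained by flowing $x_j$ up to level $c_j$.

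\textbf{Step 2: matching at the intermediate critical levels.} This is the heart of the argument and the main obstacle. For each $n$, $\gamma^{(n)}$ is a genuine broken trajectory, so for every intermediate $j$ the ``up'' flow of $x^{(n)}_j$ and the ``down'' flow of $x^{(n)}_{j-1}$ do meet consistently at level $c_j$; concretely, within the single slab $c_{j+1}<s<c_{j-1}$ there is a well-defined composite map (flow up from $L_{c_j-\varepsilon}$, through $c_j$, down to $L_{c_j+\varepsilon}$ on one branch, and similarly for the other) and $x^{(n)}_{j-1}$, $x^{(n)}_j$ are related by it. I pass to the limit in this relation using continuity of all the maps involved: the finite-time flow $\Psi_t$ depends smoothly on initial data by \eqref{3.6.12}, and the extensions across a single critical level are continuous by Lemma~\ref{diffeo-level1}. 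The subtle point is uniform control of the flow times, which are the quantities $c_j-\varepsilon - h(\cdot)$ and hence bounded and convergent by \eqref{3.6.6ter}; so no escape to infinite time occurs except exactly when the limit point lands on a stable manifold of a critical point at level $c_j$, which is the case covered by the continuous extension. Taking the limit, the matching relations that held for each $\gamma^{(n)}$ pass to $\gamma$, so the pieces glue and $\gamma$ is a well-defined continuous section of $h$ over $[c_m,c_k]$ that is a reparametrized $Y$-trajectory away from critical points.

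\textbf{Step 3: identifying $\gamma$ as an element of $\mathcal B(v,w)$.} It remains to check that $\gamma$, read off as a chain of unbroken trajectories between consecutive critical points it visits, lies in $\mathcal B(v,w)$. Its initial point is $v$ (the top piece lies in $W^-_v$, unchanged since $\gamma^{(n)}\in\mathcal B(v,w)$ forces $x^{(n)}_k\to x_k\in \overline{W^-_v\cap L_{c_k-\varepsilon}}$ and one uses that this set is already closed in the relevant level, plus Lemma~\ref{Lemma1.2bis}); its terminal point is $w$ by the symmetric argument at the bottom. Any critical points $u$ the limiting trajectory passes through at intermediate levels satisfy $h(u)$ strictly between $h(w)$ and $h(v)$, and by the Morse--Smale ordering $w<\cdots<u<\cdots<v$, so $\gamma\in{\mathcal T}(v,u_1)\times\cdots\times{\mathcal T}(u_\ell,w)\subseteq\mathcal B(v,w)$ for the appropriate chain. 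By construction $\gamma(c_j-\varepsilon)=x_j$ for all $j$, i.e.\ $J_\varepsilon(\gamma)=(x_k,\dots,x_{m-1})$, which is the required limit point. Hence $J_\varepsilon(\mathcal B(v,w))$ is closed. Since $J_\varepsilon$-independence of the topology was already noted and the ambient product is compact, this also yields compactness of $\mathcal B(v,w)$, as promised in the text.
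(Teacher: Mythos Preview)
Your proof is correct and follows essentially the same route as the paper's: both argue sequential closedness, use the continuous extensions of the rescaled flow $\Psi_t$ through critical levels provided by Lemma~\ref{diffeo-level1} to pass the matching relations $\Psi_{(c_j-\varepsilon)-c_{j+1}}(x_j)=\Psi_{-\varepsilon}(x_{j+1})$ to the limit, and then read off the intermediate critical points from the limiting configuration to reconstruct an element of $\mathcal B(v,w)$. The paper's version is terser and writes the matching relations directly as equalities of the form $\Psi_{-\varepsilon}(a_k)=v$, $\Psi_{(c_j-\varepsilon)-c_{j+1}}(a_j)=\Psi_{-\varepsilon}(a_{j+1})$, $\Psi_{(c_{m-1}-\varepsilon)-c_m}(a_{m-1})=w$, but the content is the same as your Steps~2--3.
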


\begin{proof}
To prove that $J_\varepsilon \left( {\mathcal B}
(v,w) \right) $ is closed consider a sequence
$(\gamma _i)_{i\geq 1}$ in ${\mathcal B}(v,w)$ so that $\left(
J_\varepsilon (\gamma _i) \right) _{i\geq 1}$ is a
convergent sequence in $M^-_{k,\varepsilon } \times \ldots \times M^-_{m
-1,\varepsilon }$ with limit $(a_k, \ldots ,$ $a_{m-1})$. By the
chosen parametrization
of the curves $\gamma _i$ and the extension of
the flow $\Psi _t$ of
Lemma~\ref{diffeo-level1} one has
   \[ \Psi _{ - \varepsilon } \left( \gamma _i(c_k - \varepsilon )
      \right) = v ; \quad \Psi _{(c_{m-1} - \varepsilon ) - c_m} \left(
      \gamma _i(c_{m-1} - \varepsilon ) \right) = w
   \]
and, for any $k \leq j \leq m - 2$,

   \[ \Psi _{(c_j - \varepsilon ) - c_{j + 1} } \left( \gamma _i(c_j
      - \varepsilon ) \right) = \Psi _{-\varepsilon } \left( \gamma
     _i(c_{j + 1} - \varepsilon ) \right).
   \]
Hence, using the continuity of $\Psi _t(x)$ in $x$ and taking the limit
$i\rightarrow \infty $, one obtains for any $k \leq j \leq m - 2$
   \[ b_j := \Psi _{(c_j - \varepsilon ) - c_{j+1} }(a_j) = \Psi _{-
      \varepsilon } (a_{j + 1})
   \]
which is a point on the level set $L_{c_{j + 1}}$ as well as
   \[ \Psi _{-\varepsilon }(a_k) = v ; \ \Psi _{(c_{m-1} -
      \varepsilon) - c_m}
      (a_{m - 1}) = w .
   \]
Denote by $v_1, \ldots , v_\ell $ the critical points among the
elements $b_1, \ldots , b_{m-1}$ ordered so that $h(w) < h(v_\ell )
< \ldots < h(v_1) < h(v)$. Then $(a_k, \ldots , a_{m-1})$ defines
a unique trajectory $\gamma \in {\mathcal T}(v,v_1) \times
{\mathcal T}(v_1,v_2) \times \ldots \times {\mathcal T}(v_\ell ,w)$
with $J_\varepsilon (\gamma ) = (a_k, \ldots , a_{m-1})$.
\end{proof}

\medskip

Let us now turn our attention to $\hat {W}^-_v$. Assume again that
$h(v) = c_k$. Choose for any $j \geq k$,
   \begin{equation}
   \label{6.18bis} 0 < \delta _j < \frac {1}{2} \min (c_j - c_{j + 1},
                   c_{j-1} - c_j)
   \end{equation}
and introduce
   \[ \hat {W}^-_{v,j} \equiv \hat {W}^-_{v,j,\delta _j}
      := \hat {h}^{-1}_v ([c_{j + 1} + \delta _j , c_{j - 1} - \delta _j
      ]) .
   \]
Note that $\hat {W}^-_{v,k,\delta _k}$ is contained in $\{ v \}
\times W^-_v$
whereas for $j \geq k + 1, \hat {W}_{v,j,\delta _j}$
is the subset of $\hat {W}^-_v$ of elements $(\gamma ,
x)$ consisting of a (possibly broken) trajectory $\gamma \in {\mathcal B}
(v,w)$ for some $w \in \mbox{\rm Crit}(h)$ with $h(w) \leq h(v)$ and
$x \in W^-_w$ with $x$ satisfying $c_{j + 1} + \delta _j \leq
h(x) \leq c_{j - 1} - \delta _j$. Further define the map
   \begin{align*} \hat {J}_{\varepsilon ,j} : \hat {W}^-_{v,j}
                     &\rightarrow M^-_{k,\varepsilon } \times
                     \ldots \times M^-_{j-1,\varepsilon } \times
                     h^{-1}\left( [c_{j+1} + \delta _j , c_{j-1}
                     - \delta _j ] \right) \\
                  (\gamma , x) &\mapsto \left( J_{\varepsilon ,j}
                     (\gamma _x),x \right) ,
   \end{align*}
where
   \[ J_{\varepsilon ,j}(\gamma _x) := \left( \gamma _x (c_k -
      \varepsilon ), \ldots , \gamma _x(c_{j-1} - \varepsilon )
      \right)
   \]
with $\gamma _x$ denoting the (possibly broken) trajectory from
$v$ to $x$, defined by \eqref{3.1}. Again
one easily sees that $\hat {J}_{\varepsilon ,j}$ is injective and
that for any $0 < \varepsilon ' < \varepsilon $
   \[ \hat {J}_{\varepsilon ,j} = \left( \Psi _{\varepsilon -
      \varepsilon '} \times \ldots \times \Psi _{\varepsilon -
      \varepsilon '} \times Id \right)  \circ \hat {J}_{\varepsilon ',
      j} .
   \]
Hence via the identification by $\hat {J}_{\varepsilon ,j}, \hat {W}^-
_{v,j}$ becomes a compact Hausdorff space whose topology is
{\it independent} of $\varepsilon $ and hence canonically defined. The
sets $\hat {W}^-_{v,j}$ will be used to define a Hausdorff topology on
$\hat {W}^-_v$.

\medskip

\begin{proposition}
\label{Proposition3.2} Let $v \in \mbox{Crit}(h)$ with $h(v) = c_k$.
Then for
any $j \geq k$, the set $\hat {J}
_{\varepsilon , j}(\hat {W}_{v,j})$ is a closed subset of $M^-_{k,
\varepsilon } \times \ldots \times M^-_{j-1,\varepsilon } \times
h^{-1} \left( [c_{j + 1} + \delta _j , c_{j-1} - \delta _j ] \right) $
and the restriction of $\hat {i}_v$ to $\hat {W}^-_{v,j}$ is continuous.
For any $k \leq j$, $j'$ with $j \not= j'$, the
topologies induced on $\hat {W}^-_{v,j} \cap \hat {W}^-_{v,j'}$
by $\hat {W}^-_{v,j}$ and $\hat {W}^-_{v,j'}$ coincide and the
intersection is closed in both $\hat {W}^-_{v,j}$ and $\hat {W}^-_{v,j'}$.
\end{proposition}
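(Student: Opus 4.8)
The plan is to prove the three assertions of Proposition~\ref{Proposition3.2} in turn; the bulk of the work is the closedness statement, which is the natural extension to $\hat W^-_{v,j}$ of Proposition~\ref{Proposition3.1}, while the continuity of $\hat i_v$ and the comparison of topologies then follow with little extra effort. Throughout we assume, as we may, that $\varepsilon$ is small enough that $\varepsilon<\delta_\ell$ and $c_\ell-2\varepsilon>c_{\ell+1}$ for all relevant $\ell$. To prove that $\hat J_{\varepsilon,j}(\hat W_{v,j})$ is closed, first note that $\hat W^-_{v,j}$ is the union, over the finitely many critical points $w$ with $c_j\le h(w)\le c_k=h(v)$, of the sets $\{(\gamma,x):\gamma\in\mathcal B(v,w),\ x\in W^-_w,\ c_{j+1}+\delta_j\le h(x)\le c_{j-1}-\delta_j\}$ (the constraints $h(x)\le h(w)$ and $h(x)\ge c_{j+1}+\delta_j$ forcing $h(w)\ge c_j$). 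Let $(\gamma^{(n)},x^{(n)})$ be a sequence in $\hat W_{v,j}$ for which $\hat J_{\varepsilon,j}(\gamma^{(n)},x^{(n)})=(a^{(n)}_k,\dots,a^{(n)}_{j-1},x^{(n)})$ converges; since the target is a finite product of the compact level sets $M^-_{\ell,\varepsilon}$ with the compact set $h^{-1}([c_{j+1}+\delta_j,c_{j-1}-\delta_j])$, hence metrizable, it suffices to show the limit lies in the image. Passing to a subsequence we may assume all $\gamma^{(n)}\in\mathcal B(v,w)$ for a fixed $w$, $a^{(n)}_\ell\to a_\ell\in M^-_{\ell,\varepsilon}$, and $x^{(n)}\to x$ with $h(x)\in[c_{j+1}+\delta_j,c_{j-1}-\delta_j]$.

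Along each $\gamma^{(n)}_{x^{(n)}}$ the points $a^{(n)}_k,\dots,a^{(n)}_{j-1},x^{(n)}$ occur in order of decreasing $h$-value, and consecutive ones satisfy exactly the compatibility relations exploited in the proof of Proposition~\ref{Proposition3.1}: between two level sets enclosing a single critical value, either the two points are joined by an unbroken $\Psi$-segment, or (via the continuous extension of $\Psi$ from Lemma~\ref{diffeo-level1}) the upper one is carried to a critical point at that critical level while the lower one lies on that critical point's unstable manifold; the analogous dichotomy holds for the final segment joining $a^{(n)}_{j-1}$ to $x^{(n)}$, where a break may appear at $c_j$. Letting $n\to\infty$ and using the continuity of $\Psi_t$ in the initial point together with the continuity of the extensions of Lemma~\ref{diffeo-level1}, one extracts from $(a_k,\dots,a_{j-1},x)$ — exactly as in Proposition~\ref{Proposition3.1} — a critical point $w^\infty$ with $c_j\le h(w^\infty)\le c_k$, a possibly broken trajectory $\gamma\in\mathcal B(v,w^\infty)$, and the point $x\in W^-_{w^\infty}$ with $h(x)\in[c_{j+1}+\delta_j,c_{j-1}-\delta_j]$, such that $\hat J_{\varepsilon,j}(\gamma,x)=(a_k,\dots,a_{j-1},x)$. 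Since $(\gamma,x)\in\hat W^-_{v,j}$, the image is closed.

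The continuity of $\hat i_v$ on $\hat W^-_{v,j}$ is immediate: by construction the last coordinate of $\hat J_{\varepsilon,j}(\gamma,x)$ is $x$ itself, so under the identification defining the topology of $\hat W^-_{v,j}$ the map $\hat i_v$ is simply the projection of the product onto its last factor $h^{-1}([c_{j+1}+\delta_j,c_{j-1}-\delta_j])$ followed by the inclusion into $M$; in particular $\hat h_v=h\circ\hat i_v$ is continuous on each $\hat W^-_{v,j}$. For the comparison of topologies, recall $\hat W^-_{v,j}=\hat h^{-1}_v([c_{j+1}+\delta_j,c_{j-1}-\delta_j])$; by \eqref{6.18bis} the defining intervals of $\hat W^-_{v,j}$ and $\hat W^-_{v,j'}$ are disjoint when $|j-j'|\ge 2$, so the intersection is empty and nothing is to be proved, while for $j'=j+1$ it equals $\hat h^{-1}_v([c_{j+1}+\delta_j,\,c_j-\delta_{j+1}])$, which is contained in the critical-value-free region $\hat h^{-1}_v((c_{j+1},c_j))$. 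On this set both $\hat J_{\varepsilon,j}$ and $\hat J_{\varepsilon,j+1}$ are defined and the transition $\hat J_{\varepsilon,j+1}\circ\hat J^{-1}_{\varepsilon,j}$ merely inserts the additional coordinate $\gamma_x(c_j-\varepsilon)=\Psi_{h(x)-c_j+\varepsilon}(x)$, which depends continuously on $x$ by the smooth dependence of $\Psi$ on the initial data (every point occurring being regular) and whose presence does not affect the recovery of $x$; hence it is a homeomorphism and the two induced topologies coincide. Finally $\hat W^-_{v,j}\cap\hat W^-_{v,j'}$, being the $\hat h_v$-preimage of a closed interval, is closed in both $\hat W^-_{v,j}$ and $\hat W^-_{v,j'}$ by the continuity of $\hat h_v$.

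The principal obstacle is the closedness statement, and inside it the point that the limit $x$ still lies on an unstable manifold $W^-_{w^\infty}$ rather than merely on some level set. This is precisely where the local normal forms of $(h,X)$ near critical points from (GL2), combined with the continuity of the extended flow of Lemma~\ref{diffeo-level1}, are indispensable: together they guarantee, as already in Proposition~\ref{Proposition3.1}, that breaks can only be created and never destroyed in the limit, and that the non-compact tail, i.e. the points $x^{(n)}\in W^-_w$, cannot slip off the unstable manifold of the limiting last critical point but must converge onto it.
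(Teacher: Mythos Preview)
Your proof is correct and follows essentially the same approach as the paper's: a sequence argument for closedness that reduces to the mechanism of Proposition~\ref{Proposition3.1} via the continuity of $\Psi$ and Lemma~\ref{diffeo-level1}, continuity of $\hat i_v$ read off as projection onto the last factor, and the comparison of topologies on the overlap $\hat W^-_{v,j}\cap\hat W^-_{v,j+1}$ by exhibiting the transition $\hat J_{\varepsilon,j}\leftrightarrow\hat J_{\varepsilon,j+1}$ as insertion of the extra coordinate $\gamma_x(c_j-\varepsilon)=\Psi_{h(x)-c_j+\varepsilon}(x)$, with closedness of the overlap coming from continuity of $\hat h_v$. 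The only cosmetic difference is that you make explicit the preliminary reduction to a fixed $w$ by passing to a subsequence, which the paper leaves implicit.
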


\medskip

\begin{proof}
Note that $\hat {J}_{\varepsilon , k}(\hat {W}_{v,k})$
is a closed subset of
$\{ v \} \times h^{-1} \left( [ c_{k + 1} + \delta _k, c_{k - 1}
- \delta _k ] \right)$. To prove that for $j \geq k + 1$,
$\hat {J}_{\varepsilon , j}(\hat {W}_{v,j})$ is closed consider a
sequence $(\gamma _i, x_i)_{i\geq 1}$ in $\hat {W}^-_{v,j}$
so that $\left( \hat {J}_{\varepsilon ,j}(\gamma _i,x_i)
\right) _{i\geq 1}$ is a convergent sequence in
   \[ M^-_{k, \varepsilon } \times \ldots \times M^-_{j-1,\varepsilon }
      \times h^{-1} \big( [c_{j + 1} + \delta _j , c_{j-1} - \delta
_j ] \big)
   \]
with limit $(a_k, \ldots ,a_{j-1}, x)$. As $h$ is continuous,
   \[ c_{j+1} + \delta _j \leq h(x) \leq c_{j-1} - \delta _j .
   \]
Arguing as
in the proof of Proposition~\ref{Proposition3.1} one sees that
there exists $(\gamma , x) \in \hat {W}^-_v$ with
$\gamma \in {\mathcal B}(v,w)$ for some $w \in
\mbox{\rm Crit}(h)$ with $h(x) \leq h(w) \leq h(v)$ so that
   \[ \hat {J}_{\varepsilon , j}(\gamma , x) = (a_k, \ldots , a_{j-1},
      x) .
   \]

From the definition of $\hat {i}_v$ it follows that
the restriction of $\hat {i}_v$ to $\hat {W}^-_{v,j}$ is
continuous. Finally, let us consider the intersection
$\hat {W}_{v,j} \cap \hat {W}^-_{v,j'}$.
Let $j, j' \geq k$. For $\big\arrowvert j - j'
\big\arrowvert \geq 2$, one has $\hat {W}^-_{v,j} \cap \hat {W}^-_{v,j'}
= \emptyset $, hence it remains to consider the case $j' = j + 1$.
As by \eqref{6.18bis} $c_{j + 1} + \delta _j
< c_j - \delta _{j + 1}$, it follows that
$D_{j,j + 1} := \hat {W}^-_{v,j} \cap \hat {W}^-_{v,j + 1}$
is the set of elements $(\gamma ,x)$ in $\hat {W}^-_v$ with
$c_{j + 1} + \delta _j \leq h(x) \leq c_j - \delta _{j + 1} $. Note
that $\hat {J}_{\varepsilon ,j}(\gamma ,x) = \left( J_
{\varepsilon ,j}(\gamma _x), x \right) $ and
   \[ \hat {J}_ {\varepsilon , j + 1}(\gamma , x) = \left( J
      _{\varepsilon , j} (\gamma _x), \gamma _x(c_j - \varepsilon ),
      x \right) .
   \]
As $\gamma _x(c_j - \varepsilon )$ and $x$ are on the same
trajectory, one has
   \[ \gamma _x(c_j - \varepsilon ) = \Psi _{(c_j - \varepsilon ) -
      h(x) } (x) ,
   \]
thus
   \[ \hat {J}_{\varepsilon ,j}(\gamma ,x) \mapsto \left( J _
      {\varepsilon ,j}(\gamma _x), \Psi _{(c_j - \varepsilon ) - h(x)}
      (x), x \right)
   \]
is a homeomorphism from $\hat {J}_{\varepsilon ,j}$ $(D_{j,j
+ 1})$ onto $\hat {J}_{\varepsilon ,j + 1} (D_{j,j + 1 })$.

As the intersection $D_{j,j + 1}$ is equal to $\hat {i}^{-1}_v \left(
[ c_{j + 1} + \delta _j, c_j - \delta _{j + 1} ] \right) $ and the
restrictions of $\hat {i}_v$ to $\hat {W}^-_{v,j}$ and $\hat {W}^-
_{v,j + 1}$ are both continuous, $D_{j, j + 1}$ is closed in both
of these spaces.
\end{proof}

\medskip

Notice that $\hat {W}^-_v = \cup _{j \geq k} \hat {W}^- _{v,j}$.
By Proposition~\ref{Proposition3.2}, the covering $(\hat {W}^-_{v,j})
_{j \geq k}$ then defines a Hausdorff topology on $\hat {W}^-_v$,
and $\hat {i}_v : \hat {W}^-_v \rightarrow M$ is continuous.
We leave it to the reader to verify that the topology on $\hat {W}^-_v$
defined in this way is independent of the choice of the $\delta
_j \ (j \geq k)$. It can be done in a way similar to how we proved
that the topology is independent of $\varepsilon $.
Proposition~\ref{Proposition3.1} and ~\ref{Proposition3.2} then
lead to the following

\medskip

\begin{theorem}
\label{Theorem3.3} Assume that $M$ is a smooth manifold, $(h,X)$ a
Morse-Smale pair and $v,w$
arbitrary critical points of $h$ with $w < v$. Then

\begin{list}{\upshape }{
\setlength{\leftmargin}{9mm}
\setlength{\rightmargin}{0mm}
\setlength{\labelwidth}{13mm}
\setlength{\labelsep}{2.9mm}
\setlength{\itemindent}{0,0mm}}

\item[{\rm (i)}] ${\mathcal B}(v,w)$ is a compact Hausdorff space.

\medskip

\item[{\rm (ii)}] $\hat {W}^-_v$ is a Hausdorff space and both $\hat {i}_v$ and
$\hat {h}_v = h \circ \hat {i}_v$ are proper continuous maps. In
particular, if in addition, $M$ is compact so is $\hat {W}^-_v$.
\end{list}
\end{theorem}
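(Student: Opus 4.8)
The plan is to obtain both assertions by assembling Proposition~\ref{Proposition3.1} and Proposition~\ref{Proposition3.2} with the hypotheses that $h$ is proper and that its critical values form a discrete subset of $\mathbb{R}$. For (i): since $h$ is proper, each level set $M^-_{j,\varepsilon}=h^{-1}(\{c_j-\varepsilon\})$ is compact, so the finite product $M^-_{k,\varepsilon}\times\cdots\times M^-_{m-1,\varepsilon}$ is compact and Hausdorff. By construction $J_\varepsilon$ is a homeomorphism of $\mathcal{B}(v,w)$ onto its image in this product, and by Proposition~\ref{Proposition3.1} that image is closed; since a closed subset of a compact Hausdorff space is compact Hausdorff, $\mathcal{B}(v,w)$ is compact Hausdorff.

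For (ii), recall that $\hat{W}^-_v=\bigcup_{j\ge k}\hat{W}^-_{v,j}$ and that $\hat{W}^-_v$ carries the weak topology determined by this covering. By Proposition~\ref{Proposition3.2} each $\hat{W}^-_{v,j}$ is compact Hausdorff, the pairwise intersections $\hat{W}^-_{v,j}\cap\hat{W}^-_{v,j'}$ are closed in both and carry matching induced topologies, and the covering is point-finite since $\hat{W}^-_{v,j}\cap\hat{W}^-_{v,j'}=\emptyset$ whenever $|j-j'|\ge 2$. Since the restriction of $\hat{i}_v$ to each $\hat{W}^-_{v,j}$ is continuous (again by Proposition~\ref{Proposition3.2}), the weak topology forces $\hat{i}_v$, and hence $\hat{h}_v=h\circ\hat{i}_v$, to be continuous on all of $\hat{W}^-_v$.

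The one step that needs care is the Hausdorff property of $\hat{W}^-_v$. I would first observe that $h$ decreases along trajectories, so $\hat{h}_v\le h(v)=c_k$ everywhere, and that, as a routine consequence of \eqref{6.18bis} (the relevant point being that consecutive intervals overlap, $c_{j+1}+\delta_j<c_j-\delta_{j+1}$, with the evident conventions at the ends of the sequence of critical values), every value attained by $\hat{h}_v$ lies in the interior of one of the intervals $[c_{j+1}+\delta_j,\,c_{j-1}-\delta_j]$, $j\ge k$. Then, given distinct $p_1,p_2\in\hat{W}^-_v$: if $\hat{h}_v(p_1)\ne\hat{h}_v(p_2)$, separate them by $\hat{h}_v$-preimages of disjoint open intervals; if $\hat{h}_v(p_1)=\hat{h}_v(p_2)=a$, pick $j$ with $a$ interior to $[c_{j+1}+\delta_j,\,c_{j-1}-\delta_j]$, so that $\hat{h}_v^{-1}\big((c_{j+1}+\delta_j,\,c_{j-1}-\delta_j)\big)$ is open in $\hat{W}^-_v$, contained in the Hausdorff space $\hat{W}^-_{v,j}$ and open there; separating $p_1,p_2$ inside this set gives neighbourhoods open in $\hat{W}^-_v$. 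Independence of the resulting topology from the choice of the $\delta_j$ is verified exactly as was done for $\varepsilon$.

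Finally, properness and the compact case. Let $K\subseteq\mathbb{R}$ be compact; then $K$ is bounded, and since the critical values are discrete only finitely many of them, say $c_k,\dots,c_N$, meet $K\cap(-\infty,c_k]$, so $\hat{h}_v^{-1}(K)\subseteq\bigcup_{j=k}^{N}\hat{W}^-_{v,j}$, a compact set; being closed there by continuity, $\hat{h}_v^{-1}(K)$ is compact, and $\hat{h}_v$ is proper. If $C\subseteq M$ is compact, then $\hat{i}_v^{-1}(C)$ is a closed subset of $\hat{h}_v^{-1}(h(C))$, which is compact by what was just shown, so $\hat{i}_v^{-1}(C)$ is compact and $\hat{i}_v$ is proper. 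If in addition $M$ is compact, there are only finitely many critical values, contained in a compact interval $[c_m,c_k]$, and then $\hat{W}^-_v=\hat{h}_v^{-1}([c_m,c_k])$ is compact. The only genuine obstacle in all of this is the Hausdorff step, where one must exploit both the point-finiteness of the covering $\{\hat{W}^-_{v,j}\}$ and the continuity of $\hat{h}_v$; the rest is bookkeeping built on Propositions~\ref{Proposition3.1} and~\ref{Proposition3.2} and the properness of $h$.
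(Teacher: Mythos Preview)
Your proof is correct and follows essentially the same route as the paper: part (i) is exactly the paper's argument via Proposition~\ref{Proposition3.1} and properness of $h$, and part (ii) uses the same covering $\{\hat W^-_{v,j}\}$ together with Proposition~\ref{Proposition3.2} to get continuity and then properness via containment in a finite union of the compact pieces. The only differences are cosmetic: you spell out the Hausdorff step explicitly (the paper simply asserts that the closed covering by Hausdorff pieces with compatible intersections yields a Hausdorff topology), and you prove properness of $\hat h_v$ first and deduce that of $\hat i_v$, whereas the paper does it in the opposite order.
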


\begin{proof} 
(i) follows from Proposition~\ref{Proposition3.1}.
By Proposition~\ref{Proposition3.2}, $\hat {W}^-_v$ is a Hausdorff
space and $\hat {i}_v$,
and therefore $\hat {h}_v = h \circ \hat {i}_v$, are
continuous. If $\hat {i}_v$ is proper, so is $\hat {h}_v$.
To show that $\hat {i}_v$ is proper it remains to prove that for
any compact set $K \subseteq M, \hat {i}^{-1}_v(K)$ is {\it contained}
in a compact subset of $\hat {W}^-_v$. As $h$ is proper, $h^{-1}\left(
h(K) \right) $
is a compact set. Note that $K \subseteq h^{-1} \left( h(K) \right) $
and $\hat {i}^{-1}_v(K) \subseteq \hat {h}^{-1}_v \left( h(K) \right) $.
By the definition of the compact sets $\hat {W}^-_{v,j}$,
the preimage $\hat h^{-1}_v (h(K))$
is contained in the union of finitely many
$\hat {W}^-_{v,j}$ and hence contained in a compact subset of
$\hat {W}^-_v$. If, in addition, $M$ is compact then $\hat {W}^-_v
= \hat {i}^{-1}_v(M)$ is compact by the properness of
$\hat {i}_v$.
\end{proof}

 \section{Manifold with corners}
 \label{4 Manifold with corners}

 The notion of a manifold with corners is a generalization of the notion of a
 smooth manifold with boundary in the sense that the boundary of such a manifold
 is not required to be a smooth manifold. One of the main reasons to consider
 such a generalization is the fact that the product of two manifolds with
 boundary is not a manifold with boundary.
 The local model proposed for such
 a generalization is the positive quadrant ${\mathbb R}^n_{\geq 0}$, hence
 we first study smooth ${\mathbb R}^n_{\geq 0}$-manifolds -- see
 Subsection~\ref{4.1 manifolds} below. In
 Subsection~\ref{4.2 Manifolds with corners} we study manifolds
 with corners, a special class of ${\mathbb R}^n_{\geq 0}$-manifolds having
 the property that all their faces (see below for a precise definition) are
 again ${\mathbb R}^k_{\geq 0}$-manifolds for appropriate $k$.
 It turns out that the concepts, results and methods
 of the analysis on manifolds with boundary can be extended in a natural way
 to this class of manifolds. In Section~\ref{4. Smooth structure}
 we will show that the canonical compactification of the unstable
 manifolds  and of the space of trajectories associated to a Morse-Smale pair
 $(h, X)$ on a closed manifold
 have the structure of  oriented manifolds with corners.
 
 For further information on manifolds with corners and related topics see
 e.g. \cite{Ce}, \cite{Do}, \cite{Fa}, \cite{Me1}, \cite{Me2}, \cite{Mu}.

 \subsection{${\mathbb R}^n_{\geq 0}$-manifolds}
 \label{4.1 manifolds}

 Let us denote by ${\mathbb R}^n_{\geq 0}$ the positive quadrant in
 ${\mathbb R}^n$,
    \[ {\mathbb R}^n_{\geq 0} = {\mathbb R}_{\geq 0} \times \ldots \times
	   {\mathbb R}_{\geq 0} = \{ x = (x_1, \ldots , x_n) \in {\mathbb R}^n
	   \big\arrowvert x_i \geq 0 \quad \forall i \}
	\]
 endowed with the topology induced from ${\mathbb R}^n$. Recall that a map
 $f : U \rightarrow {\mathbb R}^m$ from an open subset $U$ of ${\mathbb R}^n
 _{\geq 0}$ into ${\mathbb R}^m$ is said to be $C^\infty $-smooth (or smooth,
 for short) if there exists an open neighborhood $V$ of $U$ in ${\mathbb R}^n$
 and a smooth map $g : V \rightarrow {\mathbb R}^m$ such that the restriction
 of $g$ to $U$ is $f$. For any $x \in U$, the differential $d_x f:= d_x g :
 {\mathbb R}^n \rightarrow {\mathbb R}^m$ is well defined, i.e. does {\it not}
 depend on the choice of the extension $g$ of $f$. Let $U, V$ be open subsets
 of ${\mathbb R}^n_{\geq 0}$. We say that $f : U \rightarrow V$ is a
 $C^\infty $-diffeomorphism (or, diffeomorphism for short) if $f$ is bijective
 and $f$ as well as $f^{-1}$ are smooth. For such a map, the Jacobian $d_x f :
 {\mathbb R}^n \rightarrow {\mathbb R}^n$ is bijective for any $x \in U$.
 More generally, a smooth map $f : U \rightarrow {\mathbb R}^m$ is said
 to be an immersion if $d_xf$ is $1 - 1$ for any $x \in U$ and it is an
 embedding if in addition, $f$ is a homeomorphism onto its image. Further
 we recall that a topological space is said to be paracompact if any
 covering by open sets has a locally finite refinement.

 A family ${\mathcal U} = \{ (U_\alpha , \varphi _\alpha ) \}$ of charts
 $(U_\alpha , \varphi _\alpha )$ is
 said to be an ${\mathbb R}^n_{\geq 0}$-atlas of a paracompact
 Hausdorff space $M$ if $\{ U_\alpha \} $ is an open cover of $M$
 and $\varphi _\alpha : U_\alpha \rightarrow V_\alpha $ is a
 homeomorphism onto an open subset $V_\alpha $ of ${\mathbb R}^n_{\geq 0}$
 so that any two charts $(U_\alpha , \varphi _\alpha ), (U_\beta ,
 \varphi _\beta )$ in ${\mathcal U}$ are $C^\infty $-compatible, i.e.
 $\varphi _\beta
 \circ \varphi ^{-1}_\alpha : \varphi _\alpha (U_\alpha \cap U_\beta )
 \rightarrow \varphi _\beta (U_\alpha \cap U_\beta )$ is a
 $C^\infty $-diffeomorphism.
 Adding additional compatible charts one obtain larger atlases. It is rather
 straightforward to
 verify that any atlas can be enlarged to a unique
 maximal atlas.
 \medskip

 \begin{definition}
 \label{Definition 4.4.1} A pair $(M, {\mathcal U})$ of a paracompact
 Hausdorff space
 $M$ equipped with a maximal ${\mathbb R}^n_{\geq 0}$-atlas is called  a smooth
 ${\mathbb R}^n_{\geq 0}$-manifold. 
 \end{definition}
  In view of the above observation, a pair $(M, {\mathcal U})$ with $M$ a paracompact
  Hausdorff space and $\mathcal U$ an atlas, not necessarily maximal, will specify a smooth
  ${\mathbb R}^n_{\geq 0}$-manifold structure. The ${\mathbb R}^n_{\geq 0}-$ manifold
  structure is defined by the maximal atlas  which contains $\mathcal U.$

 In the sequel, we often write $M$ instead of $(M, {\mathcal U})$ and
 refer to ${\mathcal U}$ as a
 ${\mathbb R}^n_{\geq 0}$-smooth or differential structure of $M$.

 A natural class of ${\mathbb R}^n
 _{\geq 0}$-manifold is defined in terms of a regular system of
 inequalities as follows. Let $\tilde {M}$ be a smooth manifold (without
 boundary) of dimension $n$, let $g_i : \tilde {M} \rightarrow {\mathbb R},
 1 \leq i \leq N$, be a family of $N \geq 1$ smooth functions and set
    \begin{equation}
	\label{4.4.0} M:= \{ x \in \tilde {M} \big\arrowvert g_i(x) \geq 0 \quad
	              \forall 1 \leq i \leq N \}  .
    \end{equation}
 For any $x \in M$ define $J(x) = \{ 1 \leq i \leq N \ | \ g_i(x) = 0 \} $
 and assume that the differentials
    \begin{equation}
	\label{4.4.0a} (d_x g_i)_{i \in J(x)} \mbox { are linearly independent
	              in } T^\ast _x \tilde {M} .
    \end{equation}
 If the interior $\overset {\circ }{M}$ is not empty then
 $M$ is a smooth ${\mathbb R}^n_{\geq 0}$-manifold when endowed with the
 ${\mathbb R}^n_{\geq 0}$-differentiable structure induced by the following
 atlas: for any $x \in M$, choose a (sufficiently small) coordinate chart
 $(U, \varphi )$ of $\tilde {M}$ so that $U$ is an open neighborhood of
 $x$ in $\tilde {M}$ satisfying for any $y \in U$
    \[ g_i(y) > 0 \quad \forall i \notin J(x)
	\]
 and
    \[ (d_y g_i)_{i \in J(x)} \mbox { linearly independent}.
	\]
 Notice that $|J(x)| \geq n$. We renumber the functions $g_i$ so that
 $J(x) = \{ 1, \ldots , m\}$ with $m \leq n$. Using a coordinate map $\varphi :
 U \rightarrow V \subseteq {\mathbb R}^n$ one can construct a family of
 smooth functions $h_i : U \rightarrow {\mathbb R}_{> 0}, i = m + 1, \ldots ,
 n$ so that
    \[ (g_i)_{1 \leq i \leq m} \times (h_i)_{m + 1 \leq i \leq n} : U \rightarrow
	   {\mathbb R}^n
	\]
 is a smooth embedding. In this way one obtains a smooth coordinate chart
 $(U_x, \varphi _x)$ where $U_x = U \cap M$ and $\varphi _x : U_x
 \rightarrow {\mathbb R}^n_{\geq 0}$ is given by the restriction of
 $(g_i)_{i \in J(x)} \times (h_i)_{i \notin J(x)}$ to $U_x$. One then verifies
 that $(U_x, \varphi _x)_{x \in M}$ is a ${\mathbb R}^n_{\geq 0}$-atlas
 for $M$.

 Figure~\ref {FigureA} shows an example of a
 ${\mathbb R}^2_{\geq 0}$-manifold
 of this type. The triangle $ABC$ on the sphere ${\mathbb S}^2 \subseteq
 {\mathbb R}^3$ can be thought of as the intersection of half spaces
 $\{ g_\alpha \geq 0 \} , \alpha \in \{ a, b, c \}$ where the smooth
 functions $g_\alpha : U \subseteq {\mathbb S}^2 \rightarrow {\mathbb R}$,
 defined on an open neighborhood $U$ of the triangle, are conveniently chosen
 so that the $g_\alpha $'s satisfy the regularity condition introduced
 above, the intersection $\bigcap _{\alpha \in \{ a, b, c \} } \{ g_\alpha
 \geq 0 \} $ is the triangle $ABC$ and for any $\alpha $ in $\{ a, b, c \} $,
 the zero set $\{ g_\alpha = 0 \} $ contains the side $\alpha $ of the
 triangle $ABC$.

\begin{figure}[h]
\begin{center}
\includegraphics[scale=.5]{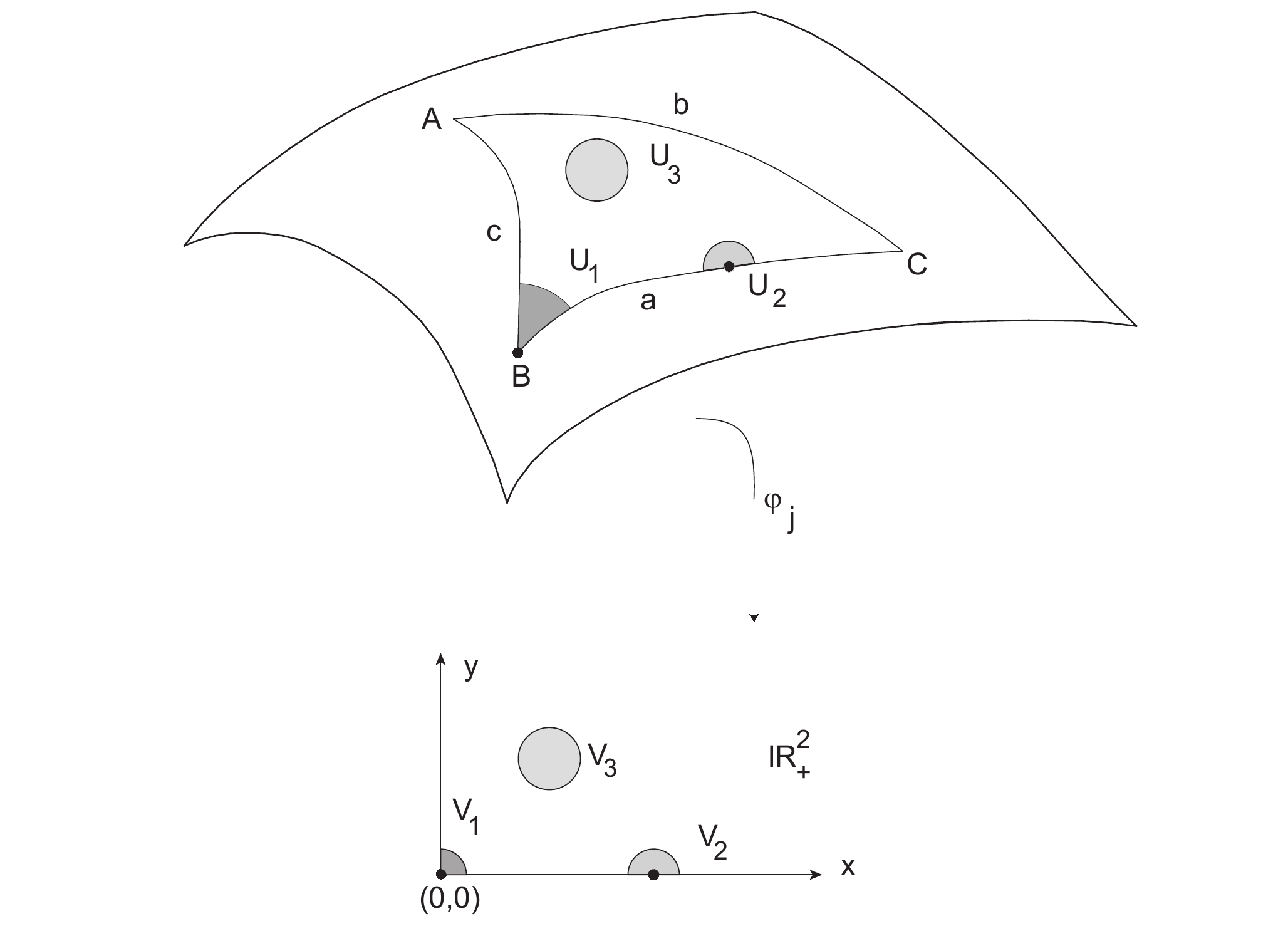}
\caption{\small Triangle $ABC$; a ${\mathbb R}^2_{\geq 0}$-manifold on a
piece of $S^2\subset \mathbb R^3$. $\partial _1 M=a \cup b\cup c$;
$\varphi_j: U_j\to V_j$ coordinate maps  }
\protect\label{FigureA}
\end{center}
\end{figure}

 Using coordinate charts one defines the notion of a smooth map, a
 diffeomorphism, an embedding, an immersion, etc. of a
 ${\mathbb R}^n_{\geq 0}$-manifold into a ${\mathbb R}^m_{\geq 0}$-manifold
 in the usual manner as well as the notion of a smooth ${\mathbb C}$-vector
 bundle (or ${\mathbb R}$-vector bundle) $E \rightarrow M$ over a
 ${\mathbb R}^n_{\geq 0}$-manifold and the space of smooth sections, $s :
 M \rightarrow E.$

 Next we want to introduce the notion of a tangent space for
 ${\mathbb R}^n_{\geq 0}$-manifolds. Let $M$ be a smooth ${\mathbb R}^n
 _{\geq 0}$-manifold and $\varphi _\alpha : U_\alpha \rightarrow V_\alpha $
 a chart. For $x \in U_\alpha $, denote by $J_\alpha (x)$ the subset
 of $\{ 1, \ldots , n \} $ given by
    \[ J_\alpha (x):= \{ 1 \leq i \leq n \big\arrowvert \varphi ^i_\alpha
	   (x) = 0 \}
	\]
 where $\varphi ^1_\alpha (x), \ldots , \varphi ^n_\alpha (x)$ denote
 the components of $\varphi _\alpha (x).$ 
 Introduce
    \begin{align*} {\mathcal C}_\alpha (x):= &\{ \xi \in {\mathbb R}^n \big
	                  \arrowvert \xi _i \geq 0 \quad \forall i \in J
					  _\alpha (x) \} \\
				   T_\alpha (x):= &\{ \xi \in {\mathbb R}^n \big
				      \arrowvert \xi _i = 0 \quad \forall i \in J_\alpha
					  (x) \}  .
	\end{align*}
 Then ${\mathcal C}_\alpha (x)$ is a closed, positive, convex cone and $T
 _\alpha (x)$ is a maximal linear subspace contained in ${\mathcal C}
 _\alpha (x)$. Its dimension is given by $n - \sharp J_\alpha (x)$.
 Now let $\varphi _\beta : U_\beta \rightarrow V_\beta $ be another chart
 of $M$ with $x \in U_\beta $. By definition, $d_{\varphi _\alpha (x)}(\varphi
 _\beta \circ \varphi ^{-1}_\alpha ) : {\mathbb R}^n \rightarrow {\mathbb R}
 ^n$ is a linear isomorphism. One easily verifies that it maps ${\mathcal C}
 _\alpha (x)$ bijectively onto ${\mathcal C}_\beta (x)$ and that its
 restriction to $T_\alpha (x)$ is a linear isomorphism onto $T_\beta (x)$.
 In particular one has
    \begin{equation}
	\label{4.4.2} \sharp J_\beta (x) = \sharp J_\alpha (x)
	\end{equation}
 and we write $j(x) = \sharp J_\alpha (x)$. To define the cone
 ${\mathcal C}(x)$ of directions at $x \in M$, tangent to $M,$
 we introduce an equivalence relation $\sim $ on the space
 $\Gamma _x$ of smooth paths $\gamma : [0, a] \rightarrow M$ issuing at $x$,
 i.e. $\gamma (0) = x$. Choose a coordinate map $\varphi _\alpha : U_\alpha
 \rightarrow V_\alpha $. We say that $\gamma _1 \sim \gamma _2$ if $\frac{d}
 {dt} \big\arrowvert _{t = 0} \varphi _\alpha (\gamma _1(t)) = \frac {d}
 {dt} \big\arrowvert _{t = 0} \varphi _\alpha (\gamma _2(t))$. It is
 easy to verify that this is indeed an equivalence relation and 
 it does not depend on the choice of
 the coordinate map $\varphi _\alpha $. Then ${\mathcal C}(x)$ is defined
 as the set of equivalence classes $[\gamma ] \subseteq \Gamma _x$. Note
 that $\varphi _\alpha $ defines a bijective map
    \[ {\mathcal C}(x) \rightarrow {\mathcal C}_\alpha (x) , \quad
	   [\gamma ] \mapsto \frac {d}{dt} \big\arrowvert _{t = 0}
	   \varphi _\alpha (\gamma (t))
	\]
 which we denote by $d_x \varphi _\alpha $. Then we define $T_xM$ to be
 the ${\mathbb R}$-vector space defined as the linear span of the elements
 $(d_x \varphi _\alpha )^{-1}(e_i) \in {\mathcal C}(x) \ (1 \leq i \leq n)$.
 Hence $T_x M$ is a ${\mathbb R}$-vector space of dimension $n$. Again it
 is easy to verify that the construction of $T_xM$ is independent of the
 choice of the coordinate map $\varphi _\alpha $. Moreover, 
$d_x \varphi _\alpha $ extends to a linear isomorphism
 between $T_x M$ and ${\mathbb R}^n$ and that ${\mathcal C}(x)$ is a
 closed, positive convex cone contained in $T_x M$, referred to as the
 cone of tangent directions to $M$.

 Using local coordinates it is easy
 to see that the vector spaces $T_xM$  give rise to a smooth vector bundle over $M$
 with fiber isomorphic to ${\mathbb R}^n$. It is referred to as the
 tangent bundle of $M$ and denoted by $TM$ with projection map $p : TM
 \rightarrow M$. In the usual way one then defines the cotangent
 bundle $T^\ast M \rightarrow M$. For any $x \in M$, the fiber of $T^\ast
 M \rightarrow M$ above $x$ is given by the dual $T^\ast _xM$ of $T_x M$.
 In particular it follows that exterior differential forms can be
 defined on a smooth ${\mathbb R}^n_{\geq 0}$-manifold and that the
 exterior calculus remains valid. 

In the case when $M$ is given as the subset of a smooth
 manifold $\tilde {M}$ satisfying a finite number of inequalities  -- see
 \eqref{4.4.0}, \eqref{4.4.1a} above -- $T_xM$ coincides with the tangent
 space $T_x \tilde {M}$ of $\tilde {M}$ at $x$ and ${\mathcal C}(x)$ is
 the closed, positive convex cone defined by
    \[ \{ \xi \in T_x \tilde {M} \big\arrowvert \langle d_x g_i, \xi \rangle
	   \geq 0 \quad \forall i \in J(x) \}
	\]
 where $g_1, \ldots , g_N$ are the smooth functions in \eqref{4.4.0} and
 $\langle \cdot , \cdot \rangle $ denotes the dual pairing between $T^\ast
 _x M$ and $T_x M$. With the notion of tangent space introduced as above it
 follows that a smooth map $f : M_1 \rightarrow M_2$ between ${\mathbb R}
 ^{m_i}_{\geq 0}$-manifolds $M_i$ a linear map $d_x f : T_x M_1 \rightarrow
 T_{f(x)} M_2$ satisfying $d_x f({\mathcal C}(x)) \subseteq {\mathcal C}
 (f(x))$.

 Let us now take a closer look at the structure of the set of points of
 a ${\mathbb R}^n_{\geq 0}$-manifold $M$ which are
 at the boundary. For any $0 \leq k \leq n$,
 define
    \begin{equation}
	\label{4.4.3} \partial _k M:= \{ x \in M \big\arrowvert j(x) = k \}
	\end{equation}
 where $j(x)$ has been introduced above. Note that the
 subsets $\partial _k M$ are pairwise disjoint and $M =
 \bigcup _k \partial _k M$. Using local coordinates it is easy to see that
 for any $0 \leq k \leq n, \partial _k M$ is a smooth manifold of dimension
 $n - k$ with
    \[ T_x(\partial _k M) \subseteq {\mathcal C}(x) \subseteq T_xM \quad
	   \forall x \in \partial _k M .
	\]
 In particular, $\partial _n M$ is a discrete set of points. The
 $n$-dimensional
 manifold $\partial _0 M$ is referred to as the interior of $M$
 whereas the manifold $\partial _k M \ (1 \leq k \leq n)$ is called the
 $k$-boundary of $M$, $k$ being the codimension of $\partial _k M$.
 The union
 $\partial M = \bigcup _{1 \leq k \leq n} \partial _k M$
 is referred to as the boundary of
 $M$. In the case where $M$ is given as a
 subset of a smooth manifold $\tilde {M}$ satisfying a finite number of
 inequalities (cf \eqref{4.4.0} - \eqref{4.4.1a}), $\partial _0 M = \overset
 {\circ }{M}$ is the interior of $M$ when viewed as a subset of $\tilde {M}$
 and $\partial M \subseteq \tilde {M}$ the boundary of $M$. In the example
 depicted in Figure~\ref{FigureA},
 $\partial _0 M$ is the interior of the triangle
 $ABC, \ \partial _1 M$ the union of the sides $a, b, c$ of the triangles
 without the end points $A, B, C$ and the $2$-boundary
 $\partial _2 M$ the set $\{ A, B, C \} $.

  \begin{figure}[h]
  \begin{center}
  \includegraphics[width=0.5\linewidth,clip]{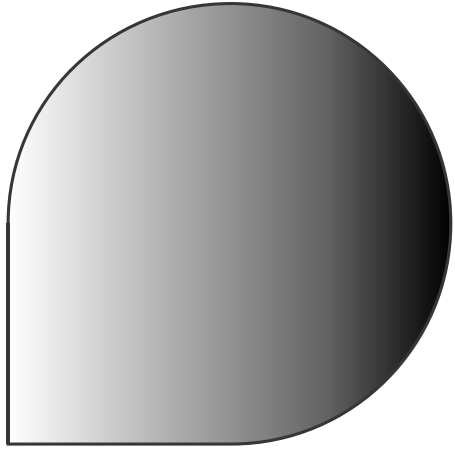}
  \caption{Example of a ${\mathbb R}^2_{\geq 0}$-manifold}
  \protect\label{FigureB}
 \end{center}
  \end{figure}

 \begin{definition}
 \label{Definition 4.4.2} The closure $F$ of a connected component of $\partial
 _k M$ in $M$ is called a $k$-face of $M$. The integer $0 \leq k \leq n$ is
 referred to as the codimension of $F$.
 \end{definition}

 \medskip

 In the ${\mathbb R}^2_{\geq 0}$-manifold depicted in Figure \ref{FigureB}, there is one
 $1$-face. It coincides with $\partial M$. Note that it is not a smooth
 manifold. The origin is the only $2$-face. In the
 ${\mathbb R}^2_{\geq 0}$-manifold depicted in Figure \ref{FigureA}, there are three
 $1$-faces. They are given by the three sides (with end points) of the
 triangle and are manifolds with boundary. More generally, for any smooth
 ${\mathbb R}^n_{\geq 0}$-manifold $M$ given as a subset of a smooth
 manifold $\tilde {M}$,
    \[ M = \{ x \in \tilde {M} \big\arrowvert g_i(x) \geq 0 \quad
	   \forall 1 \leq i \leq N \}
	\]
 where $(g_i)_{1 \leq i \leq N}$ satisfy \eqref{4.4.1a}, it can easily be
 shown that any $k$-face of $M$ is given by a connected component of
    \[ M \cap g^{-1}_{i_1} (\{ 0 \}) \cap \ldots \cap g^{-1}_{i_k}(\{ 0\})
	\]
 where $1 \leq i_1 < i_2 < \ldots < i_k \leq N$. It can be shown that
 this is a ${\mathbb R}^{n - k}_{\geq 0}$-manifold. This
 illustrates how restrictive the class of ${\mathbb R}^n_{\geq 0}$-manifolds
 is. Let $F$ be an arbitrary $(k + 1)$-face of an ${\mathbb R}^n_{\geq
 0}$-manifold. By definition, $F$ is the closure of a connected component
 $F_0$ of $\partial _{k + 1} M$. Using local coordinates one sees that
 there exists a $k$-face $F'$ of $M$ (not necessarily unique) so that
 $F$ is a $1$-face of $F'$.

 For $i = 1, 2$, let $M_i$ be a ${\mathbb R}^{n_i}_{\geq 0}$-manifold
 with ${\mathbb R}^{n_i}_{\geq 0}$-atlas ${\mathcal U}_i = \{ (U^{(i)}_\alpha ,
 \varphi ^{(i)}_\alpha ) \} $. Denote by ${\mathcal U}_1 \times
 {\mathcal U}_2$ the atlas
 given by the collection of charts $(U^{(1)}_\alpha \times U^{(2)}
 _\beta , \varphi ^{(1)}_\alpha \times \varphi ^{(2)}_\beta )$. In a
 straightforward way one obtains the following result.

 \medskip

 \begin{lemma}
 \label{Lemma 4.4.3}

 \renewcommand{\descriptionlabel}[1]%
             {\hspace{\labelsep}\textrm{#1}}
 \begin{description}
 \setlength{\labelwidth}{10mm}
 \setlength{\labelsep}{1.5mm}
 \setlength{\itemindent}{0mm}

 \item[{\rm (i)}] ${\mathcal U}_1 \times {\mathcal U}_2$ is a
 ${\mathbb R}^{n_1 + n_2}_{\geq
 0}$-atlas for $M_1 \times M_2$.

 \smallskip

 \item[{\rm (ii)}] For any $0 \leq k \leq n$
    \[ \partial _k (M_1 \times M_2) = \bigsqcup _{k = i + j} \partial _i
	   M_1 \times \partial _j M_2 .
	\]
 In particular, $\partial _0 (M_1 \times M_2) = \partial _0 M_1 \times
 \partial _0 M_2$ and
    \[ \partial _1 (M_1 \times M_2) = (\partial _1 M_1 \times \partial _0
	   M_2) \cup (\partial _0 M_1 \times \partial _1 M_2) .
	\]

 \smallskip

 \item[{\rm (iii)}] For $i = 1, 2$, let $F_i$ be a $k_i$-face of $M_i$. Then
 $F_1 \times F_2$ is a $(k_1 + k_2)$-face of $M_1 \times M_2$. Any $k$-face
 of $M_1 \times M_2$ is of this type.	
 \end{description}	
 \end{lemma}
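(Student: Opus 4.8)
The plan is to verify all three assertions by passing to product charts $\varphi^{(1)}_\alpha\times\varphi^{(2)}_\beta$, where each statement reduces to an elementary fact about the factors $\mathbb{R}^{n_i}_{\geq 0}$.

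For (i), first note that $M_1\times M_2$ is Hausdorff (a product of Hausdorff spaces) and paracompact (each $M_i$, being a paracompact, locally metrizable Hausdorff space, is metrizable; a product of two metrizable spaces is metrizable, hence paracompact). Since each $\varphi^{(i)}_\alpha$ is a homeomorphism onto an open subset $V^{(i)}_\alpha\subseteq\mathbb{R}^{n_i}_{\geq 0}$, the maps $\varphi^{(1)}_\alpha\times\varphi^{(2)}_\beta$ are homeomorphisms onto the open subsets $V^{(1)}_\alpha\times V^{(2)}_\beta$ of $\mathbb{R}^{n_1}_{\geq 0}\times\mathbb{R}^{n_2}_{\geq 0}=\mathbb{R}^{n_1+n_2}_{\geq 0}$ and cover $M_1\times M_2$; $C^\infty$-compatibility is immediate, since the transition map $(\varphi^{(1)}_{\alpha'}\times\varphi^{(2)}_{\beta'})\circ(\varphi^{(1)}_\alpha\times\varphi^{(2)}_\beta)^{-1}$ equals $(\varphi^{(1)}_{\alpha'}\circ(\varphi^{(1)}_\alpha)^{-1})\times(\varphi^{(2)}_{\beta'}\circ(\varphi^{(2)}_\beta)^{-1})$, a product of $C^\infty$-diffeomorphisms between open subsets of the $\mathbb{R}^{n_i}_{\geq 0}$.

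For (ii), the crucial point is the additivity of the index $j(\cdot)$ of \eqref{4.4.3} under products: in a product chart around $(x_1,x_2)$, after the obvious reindexing of the second block of coordinates, the set of vanishing coordinates of $\varphi^{(1)}_\alpha(x_1)\times\varphi^{(2)}_\beta(x_2)$ is $J^{(1)}_\alpha(x_1)\sqcup J^{(2)}_\beta(x_2)$, so that $j(x_1,x_2)=j(x_1)+j(x_2)$. This immediately yields the set-theoretic identity $\partial_k(M_1\times M_2)=\bigcup_{i+j=k}\partial_i M_1\times\partial_j M_2$, the union being disjoint because the $\partial_i M_1$ (resp.\ the $\partial_j M_2$) are pairwise disjoint, and hence also the two displayed special cases $k=0,1$. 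For use in (iii) I would further record the sharper local fact that near a point $(x_1,x_2)\in\partial_i M_1\times\partial_j M_2$ every point of $\partial_k(M_1\times M_2)$ again lies in $\partial_i M_1\times\partial_j M_2$: a nearby point has at most $i$ vanishing coordinates in the first block and at most $j$ in the second, and their sum being $k=i+j$ forces both counts to be exact. Consequently each piece $\partial_i M_1\times\partial_j M_2$ is open, and therefore also closed, in $\partial_k(M_1\times M_2)$.

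For (iii), using this last observation, $\partial_k(M_1\times M_2)$ is the \emph{topological} disjoint union $\bigsqcup_{i+j=k}\partial_i M_1\times\partial_j M_2$, so its connected components are exactly the products $C_1\times C_2$ of a component $C_1$ of some $\partial_i M_1$ with a component $C_2$ of the corresponding $\partial_j M_2$ (the connected components of a product of two spaces being the products of their components). Taking closures in $M_1\times M_2$ and using that closure commutes with products, $\overline{C_1\times C_2}=\overline{C_1}\times\overline{C_2}$, it follows that every $k$-face of $M_1\times M_2$ equals $F_1\times F_2$ with $F_1=\overline{C_1}$ an $i$-face of $M_1$ and $F_2=\overline{C_2}$ a $j$-face of $M_2$ for some $i+j=k$; conversely, given faces $F_1=\overline{C_1}$ and $F_2=\overline{C_2}$ of $M_1$ and $M_2$ of codimensions $k_1$ and $k_2$, the set $C_1\times C_2$ is a connected component of $\partial_{k_1+k_2}(M_1\times M_2)$ whose closure is $F_1\times F_2$, so $F_1\times F_2$ is a $(k_1+k_2)$-face. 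The only steps requiring a moment of care --- and thus the (mild) main obstacle --- are the three standard topological facts used above: that the decomposition in (ii) consists of open-and-closed pieces, that the connected components of a product of two spaces are products of components, and that closure commutes with finite products.
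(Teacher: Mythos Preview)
Your proof is correct. The paper does not actually supply a proof of this lemma; it simply introduces it with the phrase ``In a straightforward way one obtains the following result'' and moves on, so there is no argument to compare against. Your write-up fills in exactly the kind of elementary verification the authors had in mind: the additivity $j(x_1,x_2)=j(x_1)+j(x_2)$ in product charts, the open-and-closed nature of each piece $\partial_i M_1\times\partial_j M_2$ inside $\partial_k(M_1\times M_2)$, and the standard facts about components and closures in finite products.
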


 \medskip

 In the sequel, $M_1 \times M_2$ will always be endowed with the
 differentiable structure induced by ${\mathcal U}_1 \times
 {\mathcal U}_2$ and referred to as
 the (Cartesian) product of $M_1$ and $M_2$.

 \subsection{Manifolds with corners}
 \label{4.2 Manifolds with corners}

 In this subsection we study a useful class of ${\mathbb R}^n_{\geq 0}$-manifolds
 whose faces satisfy an additional condition.

 \medskip

 \begin{definition}
 \label{Definition 4.4.4} A smooth ${\mathbb R}^n_{\geq 0}$-manifold is said
 to be a $n$-dimensional manifold with corners if any $k$-face, $0 \leq
 k \leq n$, is a smooth ${\mathbb R}^{n - k}_{\geq 0}$-manifold.
 \end{definition}

 \medskip

 We have already observed that any
 ${\mathbb R}^n_{\geq 0}$-manifold given as a subset of points of a smooth
 manifold satisfying a finite number of inequalities (cf \eqref{4.4.0} -
 \eqref{4.4.0a}) is a manifold with corners whereas the ${\mathbb R}^2
 _{\geq 0}$-manifold depicted in Figure~\ref{FigureB}
 is {\it not} a manifold with
 corners.

 An important class of manifolds with corners is obtained by taking
 Cartesian products. From Lemma~\ref{Lemma 4.4.3} the following result
 can be easily deduced.

 \medskip

 \begin{corollary}
 \label{Corollary 4.4.5} Let $M_1$ and $M_2$ be smooth manifolds with
 corners. Then $M_1 \times M_2$ is a smooth manifold with corners.
 \end{corollary}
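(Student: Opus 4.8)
The plan is to deduce the statement directly from Definition~\ref{Definition 4.4.4} together with the three parts of Lemma~\ref{Lemma 4.4.3}. Write $n_i = \dim M_i$ and $n = n_1 + n_2$. By Lemma~\ref{Lemma 4.4.3}(i) the product atlas ${\mathcal U}_1 \times {\mathcal U}_2$ already turns $M_1 \times M_2$ into a smooth $\mathbb{R}^n_{\geq 0}$-manifold, so the only point left to verify is the defining property of a manifold with corners: for every $0 \leq k \leq n$, each $k$-face of $M_1 \times M_2$ is a smooth $\mathbb{R}^{n-k}_{\geq 0}$-manifold.

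First I would fix an arbitrary $k$-face $F$ of $M_1 \times M_2$. By Lemma~\ref{Lemma 4.4.3}(iii) one has $F = F_1 \times F_2$, where $F_i$ is a $k_i$-face of $M_i$ and $k_1 + k_2 = k$. Since $M_i$ is a manifold with corners, $F_i$ is, by Definition~\ref{Definition 4.4.4}, a smooth $\mathbb{R}^{n_i - k_i}_{\geq 0}$-manifold. Applying Lemma~\ref{Lemma 4.4.3}(i) to $F_1$ and $F_2$ --- i.e. using that the Cartesian product of an $\mathbb{R}^a_{\geq 0}$-manifold and an $\mathbb{R}^b_{\geq 0}$-manifold is an $\mathbb{R}^{a+b}_{\geq 0}$-manifold --- the space $F_1 \times F_2$ is then a smooth $\mathbb{R}^{(n_1 - k_1) + (n_2 - k_2)}_{\geq 0}$-manifold, and $(n_1 - k_1) + (n_2 - k_2) = n - k$, as required.

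The step I expect to need the most care is checking that the $\mathbb{R}^{n-k}_{\geq 0}$-structure just produced on $F_1 \times F_2$ --- the Cartesian product of the face structures of $F_1$ and $F_2$ --- actually coincides with the structure that $F = F_1 \times F_2$ carries as a $k$-face of $M_1 \times M_2$. To settle this I would argue in local coordinates. Recall that the atlas of a $k$-face $F'$ of an $\mathbb{R}^n_{\geq 0}$-manifold $N$ is obtained by restricting the charts of $N$: near a point of the manifold $\partial_k N$ (see \eqref{4.4.3}) the face is cut out inside a chart $\varphi_\alpha$ of $N$ by the vanishing of $k$ of the coordinate functions, and deleting those coordinates gives a chart of $F'$ into $\mathbb{R}^{n-k}_{\geq 0}$; the resulting charts are compatible because transition maps of $N$ preserve the index sets $J_\alpha(x)$, cf. \eqref{4.4.2}. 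Now, using Lemma~\ref{Lemma 4.4.3}(ii), the open stratum of $\partial_k(M_1 \times M_2)$ lying in $F_1 \times F_2$ is contained in $\partial_{k_1} M_1 \times \partial_{k_2} M_2$, and in a product chart $\varphi^{(1)}_\alpha \times \varphi^{(2)}_\beta$ the $k = k_1 + k_2$ coordinate functions that vanish along it are precisely the $k_1$ vanishing coordinates of $\varphi^{(1)}_\alpha$ together with the $k_2$ vanishing coordinates of $\varphi^{(2)}_\beta$. Deleting them, the restricted product chart equals (after the obvious reordering of coordinates, which is a diffeomorphism of $\mathbb{R}^{n-k}_{\geq 0}$) the product of the restricted charts of $F_1$ and of $F_2$. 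Hence the two $\mathbb{R}^{n-k}_{\geq 0}$-atlases on $F_1 \times F_2$ are mutually $C^\infty$-compatible, so they determine the same smooth structure. Combining this with the previous paragraph, every $k$-face of $M_1 \times M_2$ is a smooth $\mathbb{R}^{n-k}_{\geq 0}$-manifold, which is exactly what is needed to conclude that $M_1 \times M_2$ is a manifold with corners; appealing to Lemma~\ref{Lemma 4.4.3}(iii) once more guarantees that no $k$-face has been overlooked.
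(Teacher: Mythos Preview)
Your proposal is correct and follows exactly the route the paper indicates: the paper does not spell out a proof of Corollary~\ref{Corollary 4.4.5} but merely states that it ``can be easily deduced'' from Lemma~\ref{Lemma 4.4.3}, and your argument --- using part~(iii) to write each $k$-face as $F_1\times F_2$, invoking the hypothesis that each $M_i$ is a manifold with corners, and applying part~(i) to the factors --- is precisely that deduction. Your final paragraph on the compatibility of the two $\mathbb{R}^{n-k}_{\geq 0}$-structures on $F_1\times F_2$ is a careful point the paper leaves implicit.
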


 \medskip

 In the next subsection we will use Corollary~\ref{Corollary 4.4.5}
 to confirm that a finite Cartesian product of manifolds with boundary
 is a manifold with corners.

 In the category of manifolds with corners, the natural notion of a
 submanifold is the notion of a neat submanifold with corners \cite{Fa}.
 It is an extension of the notion of a neat submanifold with boundary,
 introduced by Hirsch \cite{Hi}. Let $M$ be a $n$-dimensional manifold
 with corners. A subset $N
 \subseteq M$ is said to be a {\it topological submanifold of} $M$
 of codimension $s$ if for every $x \in N$, there exists a coordinate
 chart $(U_x, \varphi _x)$ of $M$ with $x \in U_x$ where $\varphi _x :
 U_x \rightarrow V_x$ is a coordinate map between the open subsets
 $U_x \subseteq M$ and $V_x \subseteq {\mathbb R}^n_{\geq 0}$ so that
    \begin{equation}
	\label{4.4.3bis} \varphi _x(U_x \cap N) = V_x \cap ({\mathbb R}
	                 ^{n - s}_{\geq 0} \times \{ (0, \ldots , 0) \} ) .
	\end{equation}
 The topological submanifold $N$ of codimension $1$ of ${\mathbb R}^2
 _{\geq 0}$ depicted in Figure~\eqref{Figure13bis} is not a
 ${\mathbb R}^1_{\geq 0}$-manifold. The property
 of being ``neat'' is a {\it sufficient} condition for a topological
 submanifold of a manifold with corners to be a manifold with corners.

 \begin{figure}[htbp]
  \begin{center}
  \includegraphics[width=0.9\linewidth,clip]{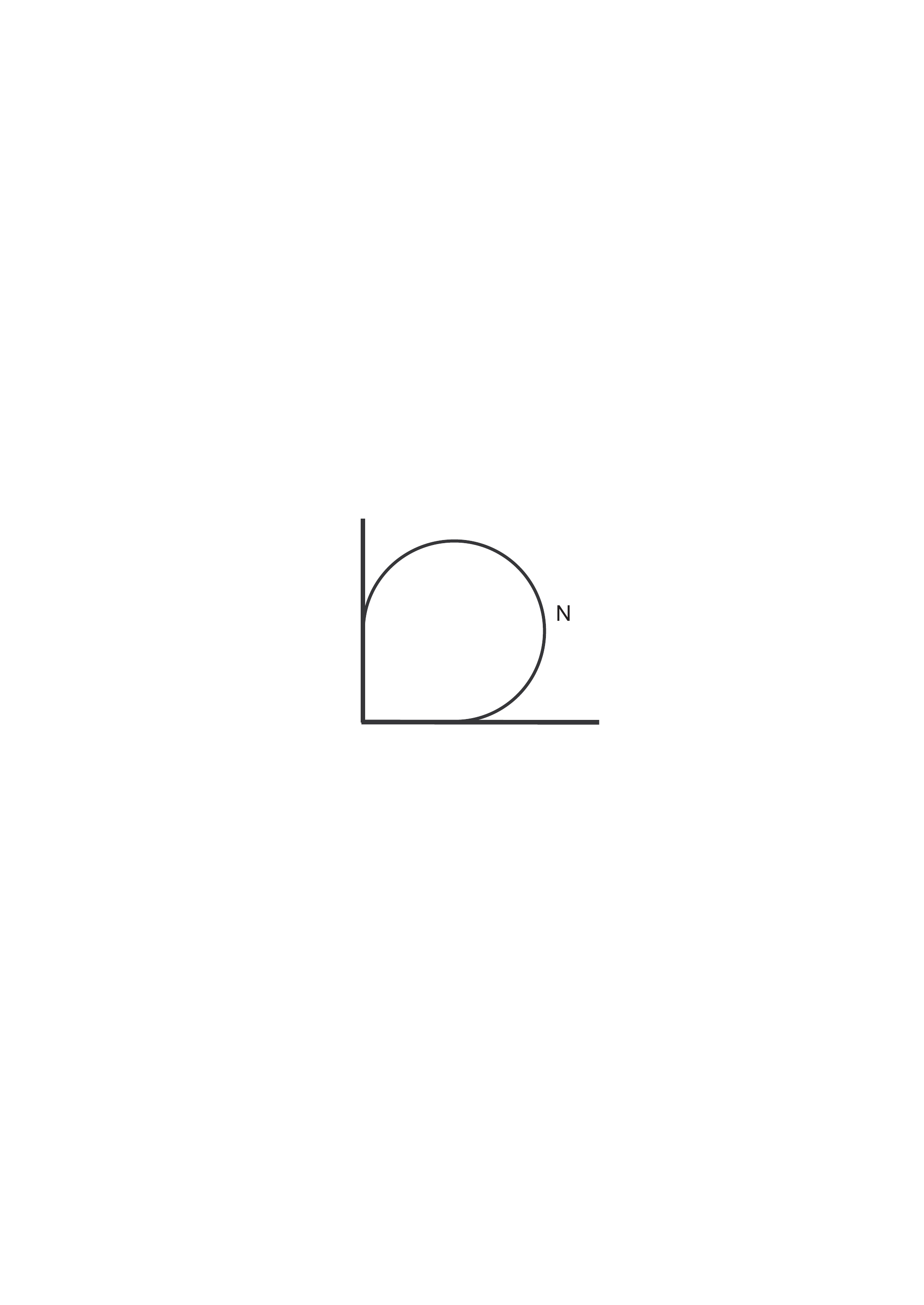}
  \caption{$N \subseteq {\mathbb R}^2_{\geq 0}$ not ${\mathbb R}^1_{\geq
  0}$-manifold}
  \protect\label{Figure13bis}
 \end{center}
 \end{figure}

 \begin{definition}
 \label{Definition 4.4.7} A subset $N$ of a $n$-dimensional manifold with
 corners $M$ is said to be a {\it neat submanifold with corners} of
 codimension $0 \leq s \leq n$ if for any $k > \dim N = n - s \quad N
 \cap \partial _k M = \emptyset $ and for any $0 \leq k \leq n - s$
 and $x \in N \cap \partial _k M$, there exists a chart $(U_x, \varphi
 _x)$ of $M$, $\varphi _x : U_x \rightarrow V_x$, where $U_x$ is an
 open neighborhood of $x$ in $M$ and $V_x$ is an open neighborhood
 in ${\mathbb R}^n_{\geq 0}$, diffeomorphic to ${\mathbb R}^k_{\geq 0}
 \times {\mathbb R}^{n - k}_{> 0}$ so that $\varphi _x(U_x \cap N)$
 is diffeomorphic to ${\mathbb R}^k_{\geq 0} \times {\mathbb R}^{n - s
 - k}_{> 0}$.
 \end{definition}

 \medskip

 Denote by $U_N$ the ${\mathbb R}^{n - s}_{\geq 0}$-atlas $\{ (U_x
 \cap N, \varphi _x \big\arrowvert _{U_x \cap N}) _{x \in N} \} $.
 Thus $(N, U_N)$ is a ${\mathbb R}^{n - s}_{\geq 0}$-manifold.
 Actually, more is true.

 \medskip

 \begin{lemma}
 \label{Lemma 4.4.8} Assume that $N$ is a neat submanifold with
 corners of $(M,U)$. Then $(N, U_N)$ is a manifold with corners.
 \end{lemma}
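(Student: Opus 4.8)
The plan is to show that each $j$-face of $(N,U_N)$ is a smooth $\mathbb R^{(n-s)-j}_{\geq 0}$-manifold, which is precisely what Definition~\ref{Definition 4.4.4} demands of $N$.

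First I would record how the boundary stratification of $N$ sits inside that of $M$. Fix $x\in N$, say $x\in N\cap\partial_m M$, and pick a neat chart $(U_x,\varphi_x)$ as in Definition~\ref{Definition 4.4.7}; in the resulting coordinates $z=(z_1,\dots,z_n)$ on $U_x$ only $z_1,\dots,z_m$ can vanish, $U_x\cap N$ is carried onto an open subset of the standard sub-corner $\{z:z_{n-s+1}=c_{n-s+1},\dots,z_n=c_n\}$ with all $c_i>0$, and the associated $\mathbb R^{n-s}_{\geq0}$-valued chart of $U_N$ at $x$ is $\varphi_x|_{U_x\cap N}$ followed by the projection onto $(z_1,\dots,z_{n-s})$. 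Reading off which coordinates vanish shows $j_N(y)=j_M(y)$ for every $y\in U_x\cap N$; since $x$ was arbitrary, $\partial_k N=N\cap\partial_k M$ for all $k$, and every chart of $U_N$ is, up to deleting $s$ coordinates that are constant on $N$, the restriction of a chart of $M$.

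Next, let $F$ be a $j$-face of $N$, so $F=\overline{F_0}$ for a connected component $F_0$ of $\partial_j N=N\cap\partial_j M$. Being connected, $F_0$ lies in a single connected component $G_0$ of $\partial_j M$, whence $F\subseteq G:=\overline{G_0}$, a $j$-face of $M$. I would then fix $x\in F$, pick a neat chart as above, and argue: since $M$ is a manifold with corners, $G$ is an $\mathbb R^{n-j}_{\geq 0}$-manifold, and this forces $\varphi_x(G\cap U_x)$ to be a single coordinate wall $\{z:z_i=0\ \forall i\in S_0\}$ for some $j$-element set $S_0\subseteq\{1,\dots,m\}$, with $G_0\cap U_x$ the corresponding codimension-$j$ stratum. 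Intersecting with $N$ and using the first step, $F_0\cap U_x$ is contained in the single stratum $\{w:w_i=0\iff i\in S_0\}$ $(i\le m)$ of the $U_N$-chart $w$; as this stratum is a connected subset of $\partial_j N$ meeting $F_0$, it lies in $F_0$, hence equals $F_0\cap U_x$. Passing to closures, the $U_N$-chart identifies $F\cap U_x$ with an open subset of $\{w:w_i=0\ \forall i\in S_0\}\cong\mathbb R^{m-j}_{\geq 0}\times\mathbb R^{(n-s)-m}$, i.e. with an open subset of $\mathbb R^{(n-s)-j}_{\geq 0}$. These pieces form an atlas of $F$ (taking $m=j$ at points of $F_0$ itself); their transition maps are restrictions of the smooth transition maps of $U_N$ followed by deletion of constant coordinates, hence smooth; and $F$, being a closed subset of the paracompact Hausdorff space $N$, is again paracompact and Hausdorff. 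Thus $F$ is a smooth $\mathbb R^{(n-s)-j}_{\geq 0}$-manifold, and since $F$ and $j$ were arbitrary, $(N,U_N)$ is a manifold with corners.

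The step I expect to be the main obstacle is the claim that a $j$-face $G$ of $M$ meets each chart domain in a single coordinate wall, rather than in a union $\bigcup_{S\in\mathcal S}\{z_i=0\ \forall i\in S\}$ with $|\mathcal S|\ge2$ (which could happen for a mere $\mathbb R^n_{\geq 0}$-manifold if a face ``wrapped around''). That $\varphi_x(G\cap U_x)$ is \emph{some} such union is automatic, since $G_0\cap U_x$ is clopen in $\partial_j M\cap U_x$ and the connected components of the latter are precisely the codimension-$j$ coordinate strata. To get $|\mathcal S|=1$ I would use that a diffeomorphism between open subsets of $\mathbb R^n_{\geq 0}$ preserves the strata $\partial_k$ and induces a bijection of the local codimension-one walls through a point with those through its image; hence the number of walls of $\varphi_x(G\cap U_x)$ through $x$ is a chart-independent quantity, and were it $\ge2$ the differentiable structure that $G$ inherits from $M$ (extending that of the stratum $G_0$ and compatible with $G\hookrightarrow M$) could not be that of an $\mathbb R^{n-j}_{\geq0}$-manifold, contradicting the hypothesis on $M$. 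Making this compatibility precise is the one point requiring care; everything else is bookkeeping with the neat charts.
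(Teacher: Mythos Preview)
Your argument is correct and follows the same line as the paper's: establish $\partial_k N = N\cap\partial_k M$, so that each $k$-face of $N$ sits inside (indeed, is a connected component of the intersection with) a $k$-face $F$ of $M$, and then read off the $\mathbb R^{(n-s)-k}_{\ge0}$-charts for the face of $N$ from the neat charts. The paper compresses all of this into the single sentence ``any $k$-face $F_N$ of $N$ is a connected component of a set of the form $F\cap N$ where $F$ is a $k$-face of $M$'' and leaves the rest to the reader; you have unpacked it and, in doing so, correctly isolated the one nontrivial step---that a face $G$ of $M$ meets a neat chart in a \emph{single} coordinate wall---which is precisely where the hypothesis that $M$ is a manifold with corners (and not merely an $\mathbb R^n_{\ge0}$-manifold) is used, and which the paper's proof does not make explicit.
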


 \begin{proof}
 We have already seen that $N$ is a smooth
 ${\mathbb R}^{n - s}_{\geq 0}$-manifold. Further, any $k$-face
 $F_N$ of $N$ is a connected component of a set of the form
 $F \cap N$ where $F$ is a $k$-face of $M$.
 \end{proof}

 \begin{figure}[htbp]
 \begin{center}
  \includegraphics[width=1.0\linewidth,clip]{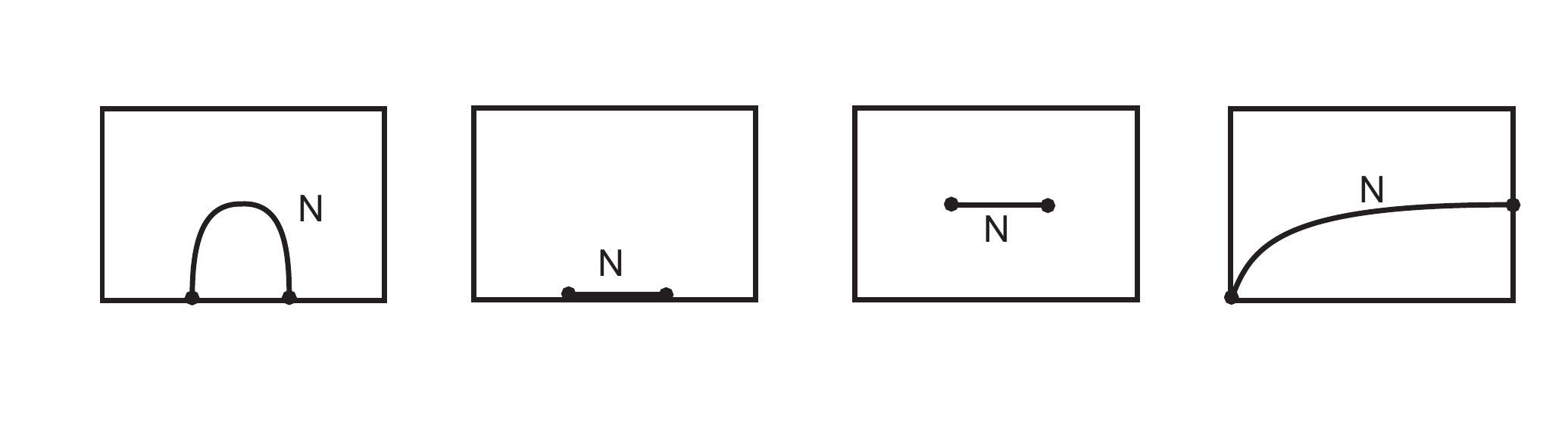}
  \caption{Figures A - D}
  \end{center}
 \end{figure}

 Note that among the examples depicted in Figure~\ref{Figure14A-D}, only in
 Figure~\ref{Figure14A-D} A is a neat submanifold with corners (of
 codimension $1$) of the unit square, whereas in the examples depicted
 in Figure~\ref{Figure14bisA-B} A, only the cylinder in
 Figure~\ref{Figure14bisA-B} B
 is a neat submanifold with corners of codimension $1$ of the unit
 cube in ${\mathbb R}^3_{\geq 0}$.

 \begin{figure}[htbp]
  \begin{center}
  \includegraphics[width=0.8\linewidth,clip]{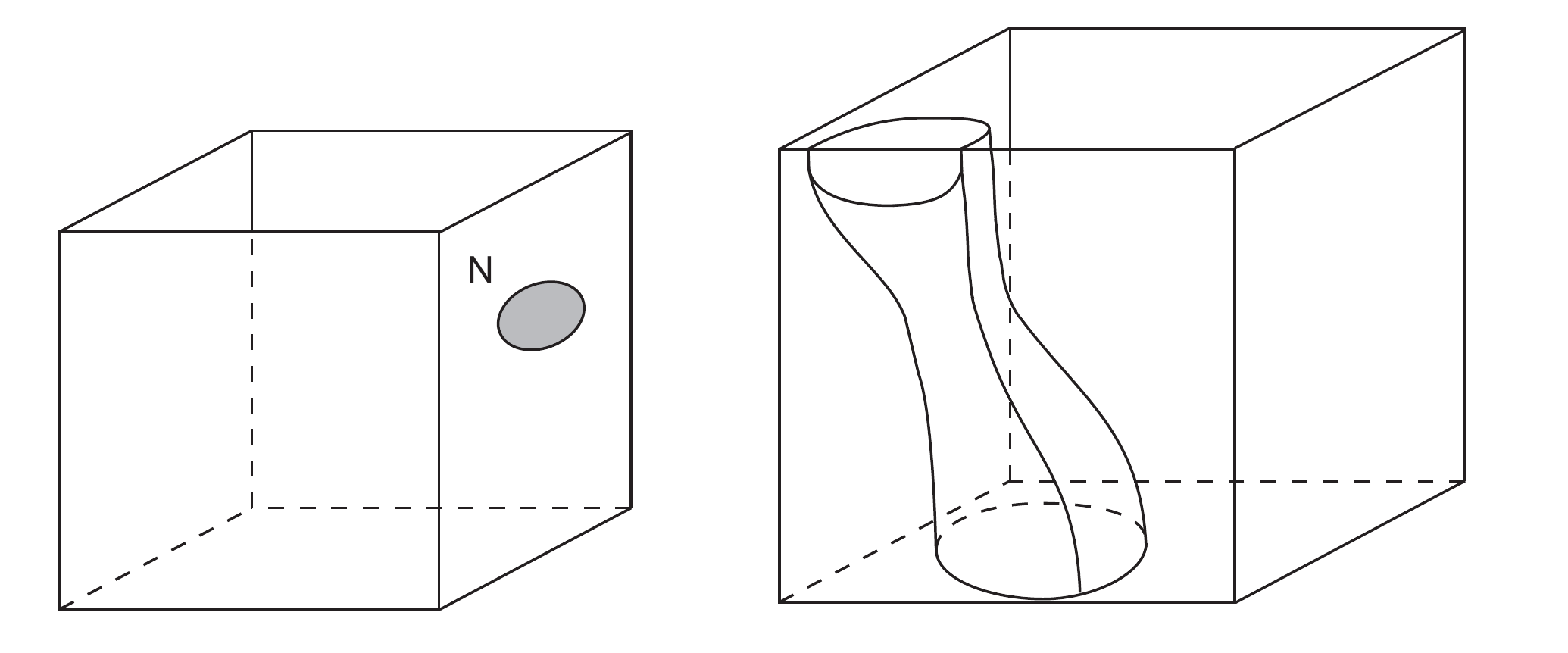}
  \caption{Figures A - B}
  \protect\label{Figure14A-D}
  \end{center}
 \end{figure}

 Another way of constructing manifolds with corners is based on the
 transversality theorem, properly extended to the situation at hand.
 Let $f : P \rightarrow M$ be a smooth map from a manifold
 with corners $P$ to a manifold $M$. The map $f$ is said
 to be {\it transversal} to a submanifold $N$ of $M$ if for any $x \in
 \partial _k P$ with $0 \leq k \leq \dim P$
 and $f(x) \in N$
    \[ T_{f(x)} M = T_{f(x)} N + d_x f(T_x \partial _k P).
	\]
 In words, it means that there exists a complement of $T_{f(x)} N$ in
 $T_{f(x)}M$ spanned by certain elements which are the image of elements
 in the tangent space at $x$ to the $k$-boundary $\partial _k P$ of $P$.

 \medskip

 \begin{lemma}
 \label{Lemma 4.4.9} Let $f : P \rightarrow M$ be a smooth map from a
 $p$-dimensional manifold with corners to an $n$-dimensional manifold
 $M$. If $f$ is transversal to a submanifold $N$ of $M$ of
 codimension $s$, then $f^{-1}(N)$, if not empty, is a topological
 submanifold of $P$ of codimension $s$ with
 the property that for any $0 \leq k \leq p - s$
    \[ \partial _k f^{-1}(N) = f^{-1}(N) \cap \partial _k P .
	\]
 Hence $f^{-1}(N)$ is a neat submanifold with corners of $P$.
 In particular, for any $k$ with $p - s + 1 \leq k \leq p$
    \[ f^{-1} (N) \cap \partial _k P = \emptyset .
	\]
 \end{lemma}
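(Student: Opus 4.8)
The plan is to establish everything locally, around a fixed point $x \in f^{-1}(N)$, by straightening $N$ in a chart and invoking the inverse function theorem; the one delicate point will be that the straightening change of coordinates must be chosen so as to preserve the corner structure of $P$. Let $k$ be the unique integer with $x \in \partial _k P$. Around $f(x)$ I would pick a submanifold chart $(W,\psi )$ of $M$, so that $\psi : W \to {\mathbb R}^n$ sends $f(x)$ to $0$ and $W \cap N$ onto $\psi (W) \cap ({\mathbb R}^{n-s} \times \{ 0\} )$; with $\pi : {\mathbb R}^n \to {\mathbb R}^s$ the projection onto the last $s$ coordinates we then have $W \cap N = (\pi \circ \psi )^{-1}(0)$ and $T_{f(x)}N = \ker d_{f(x)}(\pi \circ \psi )$. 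Around $x$ I would pick a chart $(U_x,\varphi _x)$ of $P$ with $f(U_x)\subseteq W$ which, after renumbering the coordinates vanishing at $x$ and translating the remaining ones, carries $U_x$ onto an open neighbourhood $V_x$ of $0$ in ${\mathbb R}^k_{\geq 0}\times {\mathbb R}^{p-k}$ in such a way that, for $j\le k$, $\varphi _x(U_x\cap \partial _j P)$ consists of the points of $V_x$ exactly $j$ of whose first $k$ coordinates vanish. Putting $G := \pi \circ \psi \circ f \circ \varphi _x^{-1}$, which extends to a smooth ${\mathbb R}^s$-valued map on an open neighbourhood of $0$ in ${\mathbb R}^p$, the problem is reduced to analysing the zero set $G^{-1}(0) = \varphi _x(U_x\cap f^{-1}(N))$.

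Next I would translate the transversality hypothesis into these coordinates. Here $T_x\partial _k P$ corresponds to $\{ 0\}^k\times {\mathbb R}^{p-k}$, and transversality of $f$ to $N$ at $x$ with respect to the stratum $\partial _k P$ says exactly that the restriction of $d_0G$ to $\{ 0\}^k\times {\mathbb R}^{p-k}$ is onto ${\mathbb R}^s$. This forces $p-k\ge s$, i.e. $k\le p-s$, which already proves the ``in particular'' assertion: if $k>p-s$ there can be no point of $f^{-1}(N)$ in $\partial _k P$. Moreover, since $d_0G$ is already surjective when restricted to the $p-k$ coordinate directions tangent to $\partial _k P$, after a permutation of the coordinates $x_{k+1},\ldots ,x_p$ (corner preserving, as it fixes $x_1,\ldots ,x_k$) I may assume that the $s\times s$ block $\big(\partial G_a/\partial x_{p-s+b}(0)\big)_{1\le a,b\le s}$ is invertible.

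Then I would straighten $G^{-1}(0)$ by the map $\Phi (x_1,\ldots ,x_p) := (x_1,\ldots ,x_{p-s},G_1(x),\ldots ,G_s(x))$. Its differential at $0$ is block triangular with invertible diagonal blocks, hence invertible; and, crucially, since $k\le p-s$ the first $k$ components of $\Phi $ are $x_1,\ldots ,x_k$, so $\Phi $ preserves the quadrant ${\mathbb R}^k_{\geq 0}\times {\mathbb R}^{p-k}$ together with all its strata. By the inverse function theorem applied to a smooth extension of $\Phi $, after shrinking $U_x$ the composite $\Phi \circ \varphi _x$ is a chart of $P$ compatible with its manifold-with-corners structure, and in it $f^{-1}(N)$ is the coordinate sub-locus $\{ z_{p-s+1}=\cdots =z_p=0\}$. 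From this one reads off directly that $f^{-1}(N)$ is a topological submanifold of $P$ of codimension $s$ (of the form \eqref{4.4.3bis} after a further shrinking of the chart), that it is a neat submanifold with corners in the sense of Definition~\ref{Definition 4.4.7} (hence, by Lemma~\ref{Lemma 4.4.8}, a manifold with corners in its own right), and that $\partial _j f^{-1}(N) = f^{-1}(N)\cap \partial _j P$ for every $0\le j\le p-s$. Together with the emptiness of $f^{-1}(N)\cap \partial _k P$ for $k>p-s$ established above, this gives all assertions.

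The main obstacle — and essentially the only non-routine step — is the choice of $\Phi $: one must arrange the straightening so that it fixes precisely the coordinates that cut out the corner strata through $x$, and this is possible exactly because transversality is imposed with respect to the stratum $\partial _k P$ (which guarantees the $s$ directions ``normal'' to $f^{-1}(N)$ can be taken among the $p-k$ directions tangent to $\partial _k P$, and forces $k\le p-s$ along the way). Everything else — independence of the construction from the chosen smooth extension of $G$, compatibility of the charts built at different points of $f^{-1}(N)$, identification of the faces, and the minor bookkeeping of shrinking $U_x$ so the chart matches \eqref{4.4.3bis} literally — is straightforward.
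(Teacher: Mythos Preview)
Your proof is correct and follows essentially the same approach as the paper: reduce to local coordinates, use the transversality condition on the stratum $\partial_k P$ to see that the $s$ ``normal'' directions to $f^{-1}(N)$ can be taken among the $p-k$ directions tangent to $\partial_k P$ (yielding $k\le p-s$ along the way), and then straighten. The only cosmetic difference is that the paper invokes the implicit function theorem to write the first $s$ of the $p-k$ stratum-tangent coordinates as a graph over the remaining ones, whereas you build the straightening diffeomorphism $\Phi$ directly and apply the inverse function theorem; these are equivalent formulations of the same step.
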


 \medskip

 \begin{proof}
 First we show that $f^{-1}(N)$ is a topological submanifold
 of $P$ of codimension $s$. Without loss of generality we may assume that
 $P$ is the open subset $U \subseteq {\mathbb R}^p_{\geq 0}, M$ is
 the open subset $V \subseteq {\mathbb R}^n$ and $N$ is given by
    \[ W:= V \cap (\{ 0 \} \times {\mathbb R}^{n - s}) \subseteq
	   {\mathbb R}^s \times {\mathbb R}^{n - s} .
	\]
 This means that
    \[ f^{-1}(W) = \{ x \in U \big\arrowvert f_j(x) = 0 \quad
	   \forall 1 \leq j \leq s \}
	\]
 where $f = (f_1, \ldots , f_n)$. We want to apply the implicit function
 theorem to construct a ${\mathbb R}^{p-s}_{\geq 0}$-atlas of
 $f^{-1}(W)$. The assumption of $f : P \rightarrow M$ being transversal
 to $N$ says that for any $x \in \partial _k U$ with $f(x) \in W$,
    \begin{equation}
	\label{4.4.5ab} {\mathbb R}^n = \{ 0 \} \times {\mathbb R}^{n - s} +
	              d_x f(T_x \partial _k U) .
    \end{equation}
 Hence $\dim (d_x f(T_x \partial _k U)) \geq s$. On the other hand,
 $\dim (T_x \partial _k U) = p - k$, so that $p - k \geq s$, i.e. $k
 \leq p - s$. It means that $f^{-1}(W) \cap \partial _k U = \emptyset $
 if $p - s + 1 \leq k \leq p$. Let $z \in f^{-1}(W) \cap \partial _k
 U$ be given. By renumbering the coordinates if needed we may assume
 that $z$ is of the form $(z_1, \ldots , z_{p - k}, 0, \ldots , 0)$.
 In view of \eqref{4.4.5ab} we may further assume that $\big( \frac{
 \partial f_j}{\partial x_i}(z) \big) _{1 \leq j, i \leq s}$ is
 invertible. By the implicit function theorem applied to the system
 of $s$ equations $f_j(x) = 0 \ (1 \leq j \leq s)$ with $p$ unknowns
 $x_1, \ldots , x_p$ near $x = z$, the first $s$ components $x_1,
 \ldots , x_s$ can be expressed in terms of $(x_{s + 1}, \ldots , x_p)$.
 More precisely, there exist an open neighborhood $U_z = U_1 \times
 U_2 \times U_3$ of $z = (z^{(1)}, z^{(2)}, z^{(3)})$ in $U \subseteq
 {\mathbb R}^s_{> 0} \times {\mathbb R}^{p - s - k}_{> 0} \times
 {\mathbb R}^k_{\geq 0}$ and a smooth map
    \[ g : U_2 \times U_3 \rightarrow U_1, \quad (x^{(2)}, x^{(3)})
	   \mapsto x^{(1)} = g(x^{(2)}, x^{(3)})
	\]
 so that $z^{(1)} = g(z^{(2)}, z^{(3)})$ and
    \[ f^{-1}(W) \cap U_z = \{ (g(x^{(2)}, x^{(3)}), x^{(2)}, x^{(3)})
	   \big\arrowvert (x^{(2)}, x^{(3)}) \in U_2 \times U_3 \} .
	\]
 Now define
    \begin{align*} \varphi _z : &f^{-1} (W) \cap U_z \rightarrow V_z :=
	                  U_2 \times U_3 \subseteq {\mathbb R}^{p - s - k}
					  _{> 0} \times {\mathbb R}^k_{\geq 0} \\
				   &(g(x^{(2)}, x^{(3)}), x^{(2)}, x^{(3)}) \mapsto (x
				      ^{(2)}, x^{(3)}) .
	\end{align*}
 Then $(f^{-1}(W) \cap U_z, \varphi _z)$ is a coordinate chart of $f^{-1}
 (W)$ containing the point $z \in \partial _k f^{-1}(W)$. Hence $\{ (U_z,
 \varphi _z)_{z \in f^{-1}(W)} \} $ is an ${\mathbb R}^{p - s}_{\geq
 0}$-atlas for $f^{-1}(W)$ making $f^{-1}(W)$ into a topological
 submanifold of $P$ of codimension $s$. Further, for $z$ as above,
	\begin{align*} (\partial _k f^{-1}(W)) \cap U_z &= \{ (g(x^{(2)},
	                  0), x^{(2)}, 0) \big\arrowvert x^{(2)} \in U_2 \} \\
				   &= f^{-1} (W) \cap \partial _k U_z .
	\end{align*}
 As the point $z$ is arbitrary it then follows that $f^{-1}(W)$ is a
 neat submanifold of $U$ of codimension $s$ as claimed.
 \end{proof}

 \medskip

 Next we introduce the notion of orientation of a manifold with corners.
 To do so one could use local coordinates, extending the familiar definition of
 orientation given in \cite {Hi} 
 for smooth manifolds
 to manifolds with corners. For convenience we consider here the
 following equivalent definition. Let $M$ be a $n$-dimensional manifold
 with corners. Denote by $\det(M) \rightarrow M$ the vector
 bundle of rank $1$ whose fibre at $x \in M$ is the $n$'th exterior
 product $\Lambda ^n T_x M$ of the tangent space $T_x M$.

 \medskip

 \begin{definition}
 \label{Definition 4.4.10} The manifold $M$ with corners is said to be
 orientable if $\det (M) \rightarrow M$ admits a smooth nowhere
 vanishing section $\sigma : M \rightarrow \det(M)$. An orientation
 ${\mathcal O}$ of $M$ is an equivalence class of nowhere vanishing
 sections where two smooth sections $\sigma _j : M \rightarrow \det
 (M) \ (j = 1,2)$ are equivalent if there exists a smooth function
 $\lambda : M \rightarrow {\mathbb R}_{> 0}$ so that $\sigma _1(x) =
 \lambda (x) \sigma _2(x)$ for any $x \in M$.
 \end{definition}

 \medskip

 Given a smooth metric $g$ on $M$, an orientation ${\mathcal O}$
 contains a unique normalized section, i.e. a section $\sigma :
 M \rightarrow \det (M)$ with $\| \sigma (x) \| = 1 \ \forall
 x \in M$ where $\| \sigma (x) \| ^2 = \langle \sigma (x),
 \sigma (x) \rangle $ and $\langle \cdot , \cdot \rangle $ denotes
 the fiberwise scalar product  on $\det (M)$ induced by $g$. Given any
 orthonormal basis $e_1(x), \ldots , e_n(x)$ of $T_xM, \sigma (x)$
 is of the form
    \[ \sigma (x) = \pm e_1 (x) \wedge \ldots \wedge e_n(x) .
	\]
 For later reference we state a few elementary facts about the
 orientation of a manifold with corners.

 \medskip

 \begin{lemma}
 \label{Lemma 4.4.11} Assume that $M$ is a manifold with corners.

 \renewcommand{\descriptionlabel}[1]%
             {\hspace{\labelsep}\textrm{#1}}
 \begin{description}
 \setlength{\labelwidth}{10mm}
 \setlength{\labelsep}{1.5mm}
 \setlength{\itemindent}{0mm}

 \item[{\rm (i)}] If $M$ is orientable and connected, then $M$
 has two different orientations.

 \smallskip

 \item[{\rm (ii)}] $M$ is orientable if and only if the interior
 $\partial _0 M$
 of $M$ is orientable; the orientations of $M$ and $\partial _0 M$
 are in bijective correspondence.

 \smallskip

 \item[{\rm (iii)}] An orientation of $M$ determines in a canonical
 way an orientation on any $1$-face of $M$.

 \smallskip

 \item[{\rm (iv)}] If $M$ is orientable so is any $k$-face of $M$.
 \end{description}	
 \end{lemma}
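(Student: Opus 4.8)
The plan is to establish the four assertions in turn, using throughout that the interior of any $\mathbb{R}^{m}_{\geq 0}$-manifold is an open dense submanifold of full dimension $m$, on which the tangent space and the tangent cone $\mathcal{C}(x)$ both coincide with the ambient tangent space, so that $\det$ restricts over the interior to the determinant line bundle of the interior. For (i): since $M$ is connected and $\det(M)\to M$ is a line bundle, any two nowhere vanishing sections $\sigma_1,\sigma_2$ satisfy $\sigma_1=\lambda\sigma_2$ with $\lambda\colon M\to\mathbb{R}$ smooth and nowhere zero; connectedness forces $\lambda$ to have constant sign, so either $\sigma_1\sim\sigma_2$ or $\sigma_1\sim-\sigma_2$, while the classes of $\sigma$ and $-\sigma$ are distinct because $-1<0$. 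Hence $M$ has exactly two orientations.

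For (ii): restriction of nowhere vanishing sections gives a map $\mathcal{O}\mapsto\mathcal{O}\big|_{\partial_0 M}$ on orientation classes. It is injective: if $\sigma_1=\mu\sigma_2$ on $M$ with $\mu$ nowhere zero and $\mu>0$ on the dense set $\partial_0 M$, then $\{\mu>0\}$ and $\{\mu<0\}$ are open with $\{\mu>0\}$ dense, so $\{\mu<0\}=\emptyset$, i.e. $\mu>0$ everywhere. For surjectivity, fix a Riemannian metric $g$ on $M$ (paracompactness) and let $\sigma_0$ be the $g$-normalized representative of a given orientation of $\partial_0 M$. Passing to an atlas whose charts have connected interior part (obtained by taking the chart images convex), on each chart domain $U_\alpha$ one has $\sigma_0=\varepsilon_\alpha\omega_\alpha$ on $U_\alpha\cap\partial_0 M$, where $\varepsilon_\alpha\in\{+1,-1\}$ is constant and $\omega_\alpha$ is the normalized pullback of the standard volume form, a section defined on all of $U_\alpha$. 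With a subordinate partition of unity $\{\rho_\alpha\}$, set $\sigma:=\sum_\alpha\rho_\alpha\varepsilon_\alpha\omega_\alpha$; since on $U_\alpha\cap U_\beta$ the sections $\varepsilon_\alpha\omega_\alpha$ and $\varepsilon_\beta\omega_\beta$ differ by a nowhere vanishing function which is positive on the dense set $U_\alpha\cap U_\beta\cap\partial_0 M$, hence positive everywhere, $\sigma$ is nowhere vanishing and restricts to $\sigma_0$. Therefore restriction is a bijection between the orientations of $M$ and those of $\partial_0 M$.

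For (iii): let $F=\overline{F_0}$ with $F_0$ a connected component of $\partial_1 M$; by Definition~\ref{Definition 4.4.4} $F$ is an $\mathbb{R}^{n-1}_{\geq 0}$-manifold, and in local coordinates $\partial_0 F=F_0$. For $x\in F_0$ one has $j(x)=1$, so the convex cone $\mathcal{C}(x)\subseteq T_x M$ is a half-space whose bounding hyperplane is $T_x F_0$, both intrinsic to $x$; the component of $T_x M\setminus T_x F_0$ disjoint from $\mathcal{C}(x)$ consists of the outward codirections. Declaring a basis $(f_1,\dots,f_{n-1})$ of $T_x F_0$ positive exactly when $(\nu,f_1,\dots,f_{n-1})$ represents the orientation of $T_x M$, $\nu$ any outward vector, yields an orientation of $F_0$ independent of the choice of $\nu$ and of charts which, being $\pm$ the coordinate volume form of a slice in any oriented chart, varies smoothly. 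By the argument of (ii) applied to $F$ it extends uniquely to an orientation of $F$, determined by that of $M$. Finally (iv) follows by induction on the codimension: every $1$-face of $M$ — indeed of any orientable $\mathbb{R}^{m}_{\geq 0}$-manifold — is orientable by (iii); and if every $k$-face of $M$ is orientable, then, as any $(k+1)$-face $F$ is a $1$-face of some $k$-face $F'$, which is an orientable $\mathbb{R}^{n-k}_{\geq 0}$-manifold, (iii) applied to $F'$ orients $F$.

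I expect the surjectivity half of (ii) — promoting an orientation of the open interior to an orientation of $M$ across the corner strata — to be the one point needing genuine care: it rests on the density of $\partial_0 M$, on the fact that a nowhere vanishing continuous function that is positive on a dense subset is positive everywhere, and on the preliminary passage to an atlas whose charts have connected interior part. Granting (ii), parts (i), (iii) and (iv) are comparatively formal.
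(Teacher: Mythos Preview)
Your proof is correct and follows essentially the same approach as the paper's: (i) via the sign of the ratio of sections on a connected space, (ii) by extending a normalized section from the dense interior using local chart frames, (iii) by contracting against a normal direction along $\partial_1 M$, and (iv) by induction through successive $1$-faces. The only differences are cosmetic: the paper uses the \emph{inward} unit normal $\nu(x)\in\mathcal{C}(x)$ placed in the last slot (so $\sigma_1=\iota_{\nu^\ast}((-1)^{n-1}\sigma)$), whereas you use an outward vector in the first slot --- a harmless convention change --- and in (ii) the paper argues the extension directly via normalized orthonormal frames rather than writing out the partition-of-unity sum, but the content is the same (your explicit remark about passing to charts with connected interior is a detail the paper leaves implicit).
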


 \medskip

 \begin{proof}
 For the whole proof fix an arbitrary Riemannian
 metric on $M$.

 \smallskip

 (i) As $M$ is orientable, there exists a normalized smooth section
 $\sigma : M \rightarrow \det(M)$ in the sense defined as above. Any
 other normalized smooth section $\sigma ' : M \rightarrow
 \det (M)$
 is then of the form $\sigma '
 (x) = \lambda (x) \sigma (x)$ where $\lambda : M \rightarrow
 {\mathbb R}$ is smooth and satisfies $\lambda (x) \in \{ \pm 1\} $.
 As $M$ is connected the claim follows.

 \smallskip

 (ii) By restriction, the orientability of $M$ implies the
 orientability of $\partial _0 M$. Conversely, assume that
 $\partial _0 M$ is orientable. Hence there exists a normalized
 section $\sigma : \partial _0 M \rightarrow \det (\partial _0 M)$.
 On a chart $(U_\alpha , \varphi _\alpha )$ of $M$ $\sigma $ takes
 the form
    \[ \sigma (x) = \varepsilon _\alpha e^{(\alpha )}_1(x) \wedge
	   \ldots \wedge e^{(\alpha )}_n(x) \quad \forall x \in U_\alpha
	   \cap \partial _0 M
	\]
 where $\varepsilon _\alpha \in \{ \pm 1 \} $ and $(e^{(\alpha )}_j
 (x))_{1 \leq j \leq n}$ is an orthonormal basis of $T_x M$
 smoothly varying with $x \in U_\alpha $. In this way one sees
 that $\sigma $ has a unique smooth extension $\overline
 {\sigma } : M \rightarrow \det M$ with $\| \overline {\sigma }
 (x) \| = 1$ for any $x \in M$ hence $M$ is orientable. By the
 same token, the second part of claim (ii) is proved.

 \smallskip

 (iii) Let ${\mathcal O}$ be the orientation of $M$. For any
 $x \in \partial _1 M$ denote by $\nu (x)$ the unique
 element of norm $1$ which is orthogonal to $T_x \partial _1 M$
 and contained in the cone ${\mathcal C}(x)$ of tangent directions
 to $M$ at $x$. Further denote by $\nu ^\ast (x)$ the unique
 element in $T^\ast _x M$ so that $\langle \nu ^\ast (x), \nu
 (x) \rangle = 1$ and the restriction $\nu ^\ast (x)$ to
 $T_x \partial _1 M$ vanishes where $\langle \cdot , \cdot
 \rangle $ denotes the dual pairing. Using local coordinates one
 sees that both $\nu : \partial _1 M \rightarrow TM \big\arrowvert
 _{\partial _1 M}$ and $\nu ^\ast : \partial _1 M \rightarrow T^\ast
 M \big\arrowvert _{\partial _ M}$ are smooth sections. Let
 $\sigma $ be a smooth normalized section representing the
 orientation ${\mathcal O}$. For any $x_0 \in \partial _1 M$,
 choose a chart $(U, \varphi )$ of $M$ with $x_0 \in U$ and for
 any $x \in U \cap \partial _1 M$ an orthonormal basis $(e_j(x))
 _{1 \leq j \leq n}$ of $T_x M$ with $e_n(x) = \nu (x)$ varying
 smoothly with $x$. Then $(e_j(x))_{1 \leq j \leq n - 1}$ is an
 orthonormal basis of $T_x \partial _1 M$. Now define for any
 $x \in U \cap \partial _1 M$,
    \[ \sigma _1(x):= e_1(x) \wedge \ldots \wedge e_{n - 1}
	   (x) \in \Lambda ^{n - 1} (T_x \partial _1 M) .
	\]
 Note that $\sigma _1(x)$ is a smooth normalized section, $\sigma
 _1 : U \cap \partial _1 M \rightarrow \Lambda ^{n - 1}(T_x \partial
 _1$ $M) \big\arrowvert _{U \cap \partial _1 M}$. As
    \[ \sigma _1(x) = \iota _{\nu ^\ast (x)}((-1)^{n - 1}
	   \sigma (x))
	\]
 where $\iota _{\nu ^\ast (x)}$ is the contraction by $\nu ^\ast
 (x)$, it follows that $\sigma _1(x)$ is well defined i.e.
 it does not depend on the choice of the orthonormal basis
 $(e_j(x))_{1 \leq j \leq n - 1}$ of $T_x \partial _1 M$ used
 to represent $\sigma (x), \sigma (x) = e_1(x) \wedge \ldots
 \wedge e_{n - 1}(x) \wedge \nu (x)$. Since the point $x_0
 \in \partial _1 M$ is arbitrary, we conclude that $\sigma _1$
 defines a normalized smooth section of $\det (\partial _1 M)$
 and hence an orientation of $\partial _1 M$ in a canonical
 way. By (ii) and the fact that $M$ is a manifold with corners
 it then follows that any $1$-face of $M$ is oriented in a
 canonical way.

 \smallskip

 (iv) The claimed statement is proved by induction. The statement
 for $k = 1$ is implied by the statement in (iii). So let us
 assume that $F$ is an orientable $(k + 1)$-face where $1 \leq k
 \leq n$. Then there exists a $k$-face $F'$ (not necessarily unique)
 so that $\partial _0 F \subseteq \partial _1 F'$. By the induction
 hypothesis, $F'$ is orientable. Hence it follows from (iii) that
 $\partial _0 F$ and thus by (ii) $F$ itself are orientable.
 \end{proof}

 \medskip

 We remark that it follows from the proof of statement (iii)
 in Lemma~\ref{Lemma 4.4.9} that the normal bundle on $\partial
 _1 M$ whose fibre at $x \in \partial _1 M$ is the linear span
 of ${\mathcal C}(x) / T_x \partial _1 M$ is trivial. Further we
 point out that statement (iv) of Lemma~\ref{Lemma 4.4.9} is no
 longer true for smooth ${\mathbb R}^n_{\geq 0}$-manifolds as the
 following example of a smooth orientable ${\mathbb R}^4_{\geq
 0}$-manifold $M$ with a non-orientable $2$-face illustrates.
	
 In the sequel, we will also consider products of {\it oriented}
 manifolds with corners. Let $M_j \ (j = 1, 2)$ be oriented
 manifolds with corners of dimension $n_j$. Let $g_j$ be a Riemannian
 metric on $M_j$ and denote by $\sigma _j : M_j \rightarrow
 \det M_j$ the normalized smooth section in ${\mathcal O}_j$. As
 $T(M_1 \times M_2) \cong T M_1 \times T M_2$ one concludes
 that $\det (M_1) \otimes \det (M_2) \cong \det (M_1 \times M_2)$
 by the fusion isomorphism defined for $v_i \in \Lambda ^{n_1}
 TM_1 \ (1 \leq i \leq n_1), w_i \in \Lambda ^{n_2}(TM_2) \ (1
 \leq i \leq n_2)$
    \[ (v_1 \wedge \ldots \wedge v_{n_1}) \otimes (w_1 \wedge
	   \ldots \wedge w_{n_2}) \mapsto (v_1, 0) \wedge \ldots
	   \wedge (v_{n_1}, 0) \wedge (0, w_1) \wedge \ldots \wedge
	   (0, w_{n_2}) .
    \]
 Hence
    \[ \sigma _1 \otimes \sigma _2 : M_1 \times M_2 \rightarrow
	   \det M_1 \otimes \det M_2 , \ (x,y) \mapsto \sigma _1
	   (x) \otimes \sigma _2 (y)
	\]
 defines a smooth section with
    \[ \| \sigma _1 \otimes \sigma _2 (x, y) \| = \| \sigma _1(x)\|
	   \| \sigma _2 (y) \| = 1 .
	\]
 The orientation determined by this normalized section is
 referred to as the product orientation and denoted by ${\mathcal O}
 _1 \otimes {\mathcal O}_2$.

 By the same arguments used for oriented manifolds with boundary
 -- see \cite{MT} --
 can prove a version of Stokes's theorem for oriented manifolds
 with corner. 

 \medskip

 \begin{theorem}
 \label{Theorem 4.4.12} (Stokes's theorem) Assume that $M$ is a compact
 orientable manifold with corners of dimension $n$. Then for any
 smooth $(n - 1)$-form $\omega $ on $M$,
    \[ \int _M d\omega = \int _{\partial _1 M} \iota ^\ast \omega
	\]
 where the $n$-form $d\omega $ denotes the exterior differential
 of $\omega $ and $\iota ^\ast \omega $ is the pull back of $\omega $
 by the inclusion $\iota : \partial _1 M \hookrightarrow M$. Here
 $\partial _1 M$ is endowed with the canonical orientation
 induced by the orientation on $M$ (cf Lemma~\ref{Lemma 4.4.11} (ii)).
 \end{theorem}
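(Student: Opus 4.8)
The plan is to follow the classical proof of Stokes's theorem for manifolds with boundary, adapted to the corner setting; the only additional bookkeeping concerns the faces of codimension $\geq 2$ and the precise normalization of the induced orientation on $\partial_1 M$ fixed in the proof of Lemma~\ref{Lemma 4.4.11}(iii). First I would choose a finite atlas $(U_\alpha,\varphi_\alpha)$ of $M$, each $\varphi_\alpha(U_\alpha)$ an open subset of $\mathbb{R}^n_{\geq 0}$ (possible since $M$ is compact), together with a smooth partition of unity $(\rho_\alpha)$ subordinate to $(U_\alpha)$. Writing $\omega=\sum_\alpha \rho_\alpha\omega$ and using linearity of $d$, of $\iota^\ast$, and of both integrals, it suffices to establish the identity when $\omega$ has compact support contained in a single chart $U_\alpha$. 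Pushing forward by $\varphi_\alpha$ and shrinking if necessary, we are reduced to the model statement: for a smooth $(n-1)$-form $\omega$ with compact support contained in a box $B=[0,b_1)\times\cdots\times[0,b_m)\times(a_{m+1},b_{m+1})\times\cdots\times(a_n,b_n)$ inside $\mathbb{R}^n_{\geq 0}$ (so that exactly the first $m$ coordinates are constrained), one has $\int_B d\omega=\int_{\partial_1 B}\iota^\ast\omega$.

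For the model statement write $\omega=\sum_{i=1}^n(-1)^{i-1}f_i\,dx_1\wedge\cdots\wedge\widehat{dx_i}\wedge\cdots\wedge dx_n$ with the $f_i$ smooth and compactly supported in $B$, so that $d\omega=\bigl(\sum_{i=1}^n \partial f_i/\partial x_i\bigr)\,dx_1\wedge\cdots\wedge dx_n$. Fubini's theorem lets us integrate the $i$-th summand first in the variable $x_i$; by the fundamental theorem of calculus the contribution vanishes for $i>m$ (the variable runs over all of $\mathbb{R}$ and $f_i$ has compact support) and, for $1\leq i\leq m$, equals $-\int f_i(x_1,\dots,x_{i-1},0,x_{i+1},\dots,x_n)\,dx_1\cdots\widehat{dx_i}\cdots dx_n$, an integral over the piece $\{x_i=0\}$ of $\partial_1 B$. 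Thus $\int_B d\omega$ is a sum of $m$ such face integrals. On the other side, $\partial_1 B$ is the disjoint union of the open pieces of the hyperplanes $\{x_i=0\}$, $1\leq i\leq m$, while the loci where two or more of these coordinates vanish constitute $\bigcup_{k\geq 2}\partial_k B$, of dimension $\leq n-2$, hence negligible for integration. On $\{x_i=0\}$ the inward conormal is $dx_i$, and unwinding the contraction formula $\sigma_1=\iota_{\nu^\ast}\bigl((-1)^{n-1}\sigma\bigr)$ used to define the canonical orientation in Lemma~\ref{Lemma 4.4.11}(iii) shows that $\iota^\ast\omega$ integrated against that orientation on $\{x_i=0\}$ is exactly $-\int f_i|_{x_i=0}$; summing over $i$ gives $\int_{\partial_1 B}\iota^\ast\omega$, proving the model case.

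Finally I would reassemble: since the canonical orientation of $\partial_1 M$ is defined chart by chart precisely via that contraction formula and is coherent under coordinate changes by Lemma~\ref{Lemma 4.4.11}, the local identities for the $\rho_\alpha\omega$ add up to the global one. I expect the only genuinely delicate point to be the orientation bookkeeping in the second paragraph --- checking that the sign $(-1)^{i-1}$ produced when $dx_i$ is reinserted into the $i$-th slot, the minus sign from the fundamental theorem of calculus, and the normalization of the induced boundary orientation (which uses the inward normal placed last, together with the $(-1)^{n-1}$ in the contraction formula) are mutually consistent, and that this consistency is preserved by the transition maps so that the pieces patch. The remaining ingredients --- existence of the partition of unity, Fubini and the fundamental theorem of calculus, and the fact that $\bigcup_{k\geq2}\partial_k M$ has measure zero in $\partial M$ --- are entirely routine.
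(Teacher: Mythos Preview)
The paper does not actually give its own proof of this theorem: it simply remarks, just before the statement, that ``by the same arguments used for oriented manifolds with boundary --- see \cite{MT} --- [one] can prove a version of Stokes's theorem for oriented manifolds with corner.'' Your proposal is exactly that standard argument (partition of unity, reduction to a coordinate box in $\mathbb{R}^n_{\geq 0}$, Fubini plus the fundamental theorem of calculus, with the codimension $\geq 2$ strata discarded as measure zero), so it is correct and in fact supplies the details the paper chooses to omit. Your caution about the orientation bookkeeping is well placed: the paper's boundary orientation in Lemma~\ref{Lemma 4.4.11}(iii) uses the \emph{inward} normal placed last (equivalently, $\sigma_1=\iota_{\nu^\ast}\bigl((-1)^{n-1}\sigma\bigr)$), which differs from the more common ``outward normal first'' convention, so the sign check you single out as the only delicate point really is the one place requiring care.
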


\section{Smooth structure on $\hat W^-_v$ and $\hat {\mathcal B}(v,w)$}
\label{4. Smooth structure}

 \begin{figure}[h]
 \begin{center}
  \includegraphics[width=0.8\linewidth,clip]{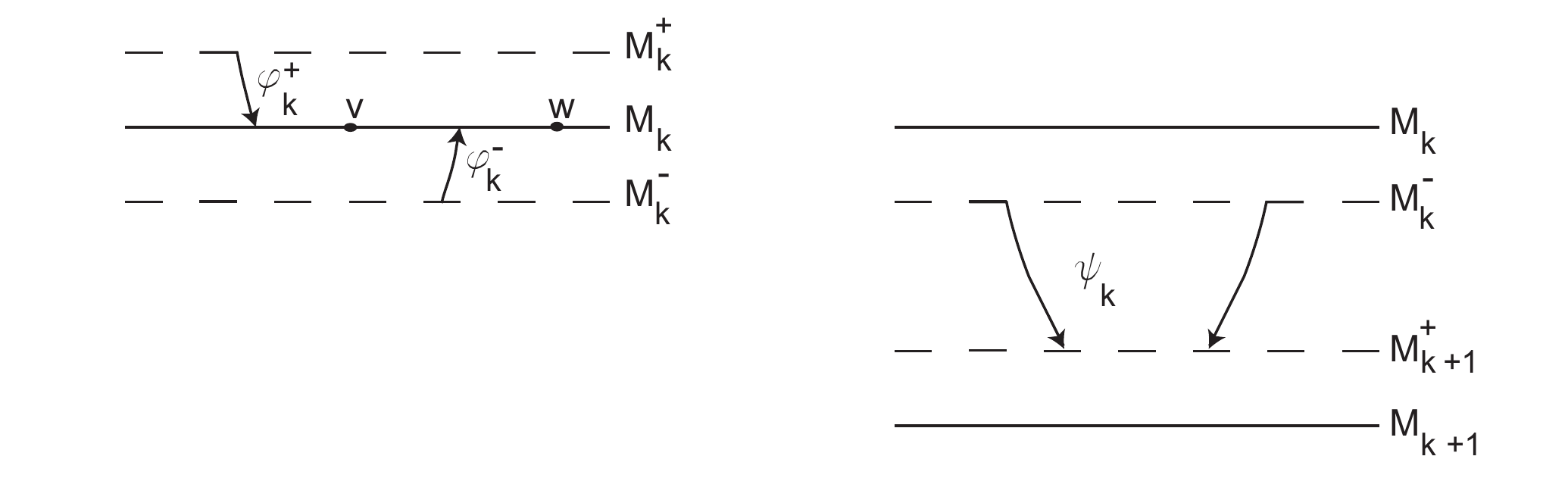}
  \caption{\small Illustration of the maps $\varphi ^\pm _k$ and $\psi _k$}
   \protect\label{Figure9}
  \end{center}
  \end{figure}

Let $(h,X)$ be a Morse-Smale pair and $v$ a critical point of $h$.
In this section our aim is to prove that the Hausdorff
spaces ${\mathcal B}(v,w)$ and
$\hat {W}^-_v$ (cf Theorem~\ref{Theorem3.3}) have a canonical
 structure of smooth  manifolds with corners with
${\mathcal T}(v,w)$ and, respectively, the
unstable manifold $W^-_v$ as their  interiors. 

We will do this by realizing ${\mathcal B}(v,w)$ as a subset of a smooth manifold with
corners and
realizing $\hat W^-_v$ locally as a subset of a smooth manifold with corners, both
much simpler to describe.  The smooth manifold with corners in the first case will
be a product of smooth manifolds with boundary of type $P_k$  and in the second a
product of several manifolds with boundary of type $P_k$ and one of type $Q_k.$
The manifold with boundary $P_k$ will be defined as a smooth submanifold with
boundary of $M^+_k \times M^-_k$ while $Q_k$ as a smooth submanifold with boundary
of $M^+_k \times h^{-1}(c_{k+1,}c_{k-1}).$

\subsection{Preliminary constructions}
\label{3.2Preliminary constructions}

In this subsection  we introduce some notation and analyze two collections
$\{ P_k \}$ and $\{ Q_k \}$ of manifolds with boundary which will be used
to prove that ${\mathcal B}(v,w)$ and $\hat {W}^-_v$ are manifolds with
corners.

Let $(h,X)$
be a Morse-Smale pair and $(U_v, \varphi _v), v \in \mbox{\rm Crit}(h)$, a
collection of standard charts. For any $k$, let $M
_k$ and $M^\pm _k$ denote the level sets
   \[ M_k:= h^{-1}(c_k) ; \ M^\pm _k := h^{-1} \{ c_k \pm
      \varepsilon \}
   \]
where $\varepsilon > 0$ is chosen sufficiently small (cf
\eqref{3.1.8}). Note that
$M^\pm _k$ and $M_k \backslash \mbox{\rm Crit}(h)$, if not empty,
 are smooth manifolds and of dimension $n - 1$.
On the other hand, $M_k$ is not a smooth manifold.
The flow $\Psi _t$
corresponding to the rescaled vector field $Y = - \frac {1}{X
(h)} X$, introduced in \eqref{3.6.11}, defines the maps
   \begin{alignat*}{2}
              \varphi ^\pm _k : M^\pm _k &\rightarrow M_k , & \quad
                 & x \mapsto \Psi _{\pm \varepsilon } (x) \\
              \psi _k : M^-_k &\rightarrow M^+_{k + 1} , & \quad
                 &x \mapsto \Psi _b(x)
   \end{alignat*}
where $b:= c_k - c_{k + 1} - 2 \varepsilon $. By Lemma~\ref{Lemma2.8}, $\varphi ^\pm
_k$ are continuous and $\psi _k$ are diffeomorphisms.
For any $v \in \mbox{\rm Crit}(h) \cap M_k$, define
   \[ S^\pm _v:= W^\pm _v \cap M^\pm _k ; \ S_v:= S^+_v \times S^-_v
   \]
and let
   \[ S^\pm _k:= \bigsqcup _{h(v) = c_k} S^\pm _v ; \ S_k :=
      \bigsqcup _{h(v) = c_k}  S_v .
   \]
Note that $S^\pm _v$ are smooth spheres with $\dim (S^-_v) = i(v) - 1$
and $\dim(S^+_v) = n - i(v) - 1$. As $\varepsilon > 0$ has been
chosen sufficiently small they are contained in the standard
chart $U_v$. The product $S_v = S^+_v \times S^-_v$ and hence $S_k$
are smooth submanifolds of dimension $n - 2$ of $M^+_k \times
M^-_k$. For any $0 \leq k \leq n$, define
   \[ P_k := \{ (x^+, x^-) \in M^+_k \times M^-_k \big\arrowvert
      \varphi ^+_k(x^+) = \varphi ^-_k(x^-) \}
   \]
together with the subset $P'_k \subseteq P_k$,
   \[ P'_k:= \{ (x^+, x^-) \in P_k \big\arrowvert x^\pm \in M^\pm _k
     \backslash S^\pm _k \} .
   \]
Notice that $P_k = P'_k \cup S_k$ and that an element $(x^+, x^-) \in
M^+_k \times M^-_k$ is in $P_k$ iff $x^+$ and $x^-$ are connected by
a (possibly broken) trajectory. More precisely, $(x^+, x^-)$ is in
$P'_k$ iff $x^+$ and $x^-$ are connected by an {\it unbroken}
trajectory whereas $(x^+, x^-)$ is in $S_k$ iff $x^+$ and $x^-$ are
connected by a {\it broken} trajectory. As $P'_k$ is the graph of
the diffeomorphism
   \begin{equation}
   \label{1.16bis} \varphi _k : M^+_k \backslash S^+_k \rightarrow
                   M^-_k \backslash S^-_k , \ x \mapsto \Psi
                   _{2\varepsilon } (x) ,
   \end{equation}
it is a manifold of dimension $n - 1$. As already mentioned above,
$S_k$ is a manifold of dimension $n - 2$.

  \begin{figure}[h]
  \begin{center}
  \includegraphics[width=0.75\linewidth,clip]{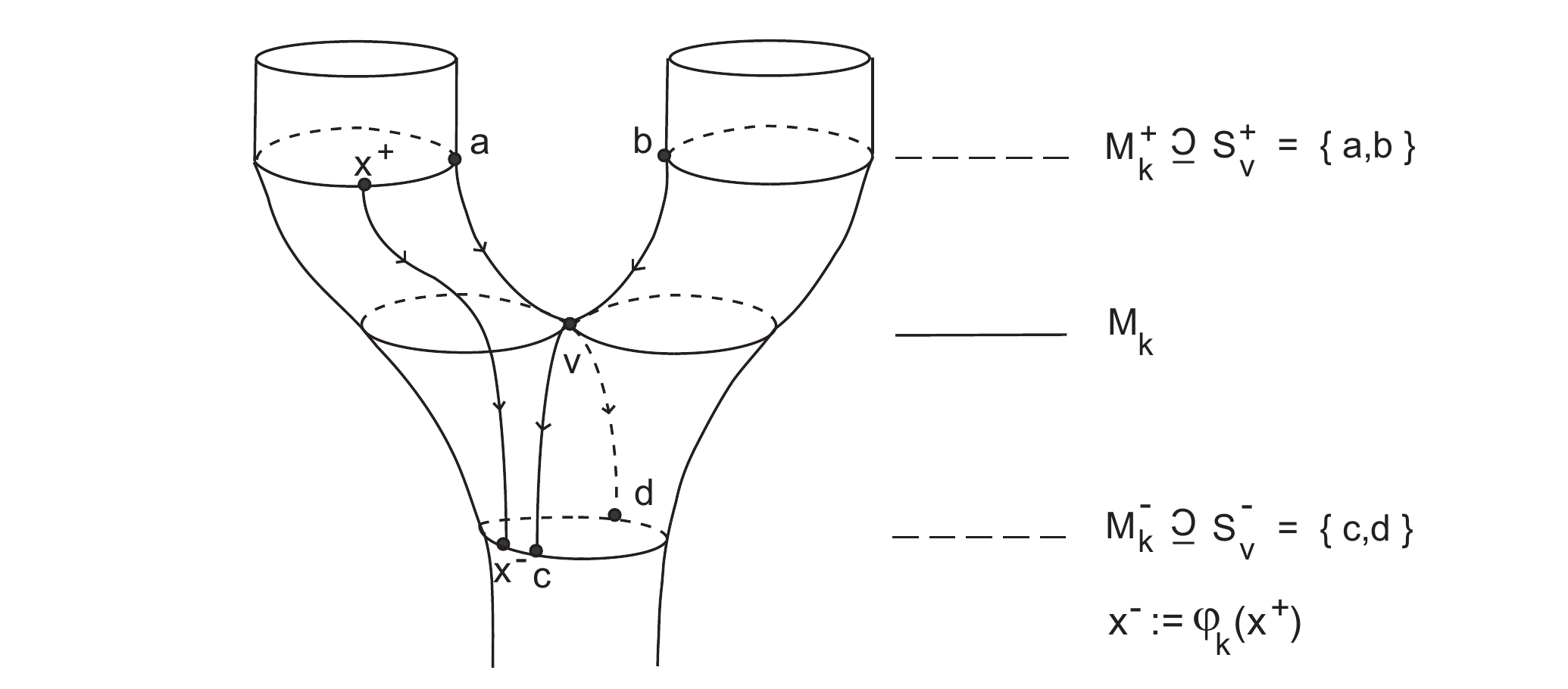}
   \caption[Illustration of data used in definition of $P_k$]
   {\small Illustration of data used in definition of $P_k$:
                  $M^+_k \cong {\mathbb S}^1 \sqcup {\mathbb S}^1 $;
				  $M^-_k = {\mathbb S}^1$}
  \protect\label{Figure10}
  \end{center}
  \end{figure}

\begin{lemma}
\label{Lemma4.4} For any $0 \leq k \leq n$, $P_k$ is a $(n-1)$-dimensional
manifold with boundary whose interior $\partial _0 P_k$ is given by
$P'_k$ and whose boundary $\partial _1 P_k$ is $S_k$, i.e.
   \[ \partial _0 P_k = P'_k ; \ \partial _1 P_k = S_k .
   \]
If $p^\pm _k : M^+_k \times M^-_k \rightarrow M^\pm _k$
denote the canonical
projections, then the restrictions $p^\pm _k : \partial _0 P_k
\rightarrow M^\pm _k \backslash S^\pm _k$ are diffeomorphisms and
$p^+_k \times p^-_k : \partial _1 P_k \rightarrow S_k$ is the
identity.
\end{lemma}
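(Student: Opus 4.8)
The plan is to cover $P_k$ by two kinds of charts and to check their $C^\infty$-compatibility. On the open subset $P'_k$ I use the smooth $(n-1)$-manifold structure already available from the remark preceding the lemma, namely that $P'_k$ is the graph of the diffeomorphism $\varphi_k=\Psi_{2\varepsilon}:M^+_k\setminus S^+_k\to M^-_k\setminus S^-_k$ of \eqref{1.16bis}; and near each product of spheres $S_v=S^+_v\times S^-_v$ ($v\in\mbox{\rm Crit}(h)$, $h(v)=c_k$) I build an explicit half-space chart from the standard chart $\varphi_v:B_r\to U_v$. First I would record the set-theoretic decomposition $P_k=P'_k\sqcup S_k$, $S_k=\bigsqcup_{h(v)=c_k}S_v$: by Lemma~\ref{Lemma2.8} the maps $\varphi^\pm_k$ send $S^\pm_v$ to $v$ and restrict to diffeomorphisms from $M^\pm_k\setminus S^\pm_k$ onto $M_k\setminus\mbox{\rm Crit}(h)$, so for $(x^+,x^-)\in P_k$, if $x^+\notin S^+_k$ then $\varphi^+_k(x^+)\notin\mbox{\rm Crit}(h)$, whence $x^-\notin S^-_k$ and $x^-=(\varphi^-_k)^{-1}\varphi^+_k(x^+)=\varphi_k(x^+)$, while if $x^+\in S^+_v$ then $\varphi^-_k(x^-)=v$ forces $x^-\in S^-_v$. (When $S_k=\emptyset$, e.g. if $k=0$, $k=n$, or $M^\pm_k=\emptyset$, this already gives the assertion with $\partial_1P_k=\emptyset$.)

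For the chart near $S_v$ I work in the standard chart with $h(v)=c_k$, so that $M^\pm_v:=\varphi_v^{-1}(U_v\cap M^\pm_k)=\{(x^-,x^+):\|x^+\|^2-\|x^-\|^2=\pm2\varepsilon\}$ and $S^+_v$, $S^-_v$ are the spheres of radius $\sqrt{2\varepsilon}$ in $\{0\}\times\mathbb{R}^{n-k}$, $\mathbb{R}^k\times\{0\}$. Set $A(\rho):=\big(\varepsilon+\sqrt{\varepsilon^2+\rho^2(\rho^2+2\varepsilon)}\big)^{1/2}$ and $B(\rho):=\sqrt{\rho^2+2\varepsilon}/A(\rho)$, both smooth, even and positive with $A(0)=\sqrt{2\varepsilon}$, $A'(0)=0$, $B(0)=1$, and for small $\rho_0>0$ define on the manifold with boundary $[0,\rho_0)\times S^{k-1}\times S^{n-k-1}$
\[
\Xi_v(\rho,\omega^-,\omega^+):=\Big(\varphi_v\big(\rho\,\omega^-,\sqrt{\rho^2+2\varepsilon}\,\omega^+\big),\ \varphi_v\big(A(\rho)\,\omega^-,\rho\,B(\rho)\,\omega^+\big)\Big).
\]
Using the model flow $\Phi_t(x^-,x^+)=(e^tx^-,e^{-t}x^+)$ of \eqref{1.5} and the elementary fact that the trajectory issuing from a point $(x^-_0,x^+_0)\in M^+_v$ with $\rho:=\|x^-_0\|>0$ meets $M^-_v$ at the time $t^\ast$ solving $e^{2t}\rho^2-e^{-2t}(\rho^2+2\varepsilon)=2\varepsilon$, for which $e^{t^\ast}\rho=A(\rho)$ (convexity of $t\mapsto e^{2t}\rho^2+e^{-2t}(\rho^2+2\varepsilon)$ keeps that segment inside $B_r$ when $\rho_0$ is small, since $\varepsilon<(r/2)^2$, cf. \eqref{3.1.8}), one checks that for $\rho>0$ the two entries of $\Xi_v(\rho,\omega^-,\omega^+)$ lie on one common unbroken trajectory, one in $M^+_v$ and one in $M^-_v$, so $\Xi_v(\rho,\omega^-,\omega^+)\in P'_k$, while $\Xi_v(0,\omega^-,\omega^+)=\big(\varphi_v(0,\sqrt{2\varepsilon}\,\omega^+),\varphi_v(\sqrt{2\varepsilon}\,\omega^-,0)\big)\in S_v$. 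Then I would verify that $\Xi_v$ is injective with injective differential — at $\rho=0$ the relations $A'(0)=0$, $B(0)=1$ make $\partial_\rho$, $TS^{k-1}$ and $TS^{n-k-1}$ map to independent directions — hence an embedding, and that, for $\rho_0$ and the neighborhood chosen small, its image equals $P_k\cap N_v$ for an open set $N_v\subseteq M^+_k\times M^-_k$ containing $S_v$; so it is open in $P_k$.

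To assemble the atlas, on the overlap (where $\rho>0$) the map $\Xi_v$ factors through $(\rho,\omega^-,\omega^+)\mapsto x^+:=\varphi_v(\rho\omega^-,\sqrt{\rho^2+2\varepsilon}\,\omega^+)$, which is a diffeomorphism onto a relatively open subset of $M^+_k\setminus S^+_k$ (polar coordinates in the $\mathbb{R}^k$-variable), followed by $x^+\mapsto(x^+,\varphi_k(x^+))$; hence $\Xi_v^{-1}$ is $C^\infty$-compatible with the graph chart $p^+_k$ of $P'_k$, and the charts $\Xi_v^{-1}$ for distinct $v$ have disjoint domains since the $U_v$ are disjoint. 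Thus $P_k$ is an $(n-1)$-dimensional manifold with boundary, its interior is the set $\{\rho>0\}=P'_k$, and $\partial_1P_k=\bigcup_v\Xi_v(\{0\}\times S^{k-1}\times S^{n-k-1})=\bigcup_vS_v=S_k$, the smooth structure read off from $\Xi_v|_{\rho=0}$ being precisely that of $S^+_v\times S^-_v$. Finally $p^+_k|_{P'_k}:(x^+,x^-)\mapsto x^+$ has smooth inverse $x^+\mapsto(x^+,\varphi_k(x^+))$, hence is a diffeomorphism onto $M^+_k\setminus S^+_k$; $p^-_k|_{P'_k}=\varphi_k\circ p^+_k|_{P'_k}$ is then a diffeomorphism onto $M^-_k\setminus S^-_k$; and since $\partial_1P_k=S_k$ as a subset of $M^+_k\times M^-_k$, the restriction of $p^+_k\times p^-_k$ to $\partial_1P_k$ is by construction the identity of $S_k$.

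I expect the main obstacle to be the construction and analysis of $\Xi_v$: the naive graph of $\varphi_k$ does not close up to a manifold with boundary inside $M^+_k\times M^-_k$, and one has to recognize that passing to polar coordinates in the $x^-$-variable on the $M^+_k$ side is the right resolution — this is what the factors $A(\rho)$ and $\rho B(\rho)$ encode — and then check that $\Xi_v$ is a genuine embedding of a manifold with boundary (immersivity at $\rho=0$) whose image captures all of $P_k$ near $S_v$, which rests on the smallness of $\varepsilon$ ensuring the relevant trajectory segments do not leave $U_v$.
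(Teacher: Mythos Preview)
Your proof is correct and follows essentially the same strategy as the paper: both construct an explicit collar of each $S_v$ inside $P_k$ using the standard chart $\varphi_v$ and the linear model flow $\Phi_t(x^-,x^+)=(e^tx^-,e^{-t}x^+)$, and then check compatibility with the graph parametrization of $P'_k$. The only difference is the choice of collar parameter: the paper takes the symmetric parameter $s\in[0,1/2)$ and the cleaner formula
\[
x^+=(1-s^2)^{-1/2}(y^+,sy^-),\qquad x^-=(1-s^2)^{-1/2}(sy^+,y^-),
\]
whereas you use $\rho=\|x^-\text{-component of }\varphi_v^{-1}(x^+)\|$, which forces the more involved functions $A(\rho),B(\rho)$; these parametrizations are related by the smooth change $\rho=(1-s^2)^{-1/2}s\sqrt{2\varepsilon}$, so the two collars agree up to reparametrization.
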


  \begin{figure}[h]
\begin{center}
  \includegraphics[width=0.7\linewidth,clip]{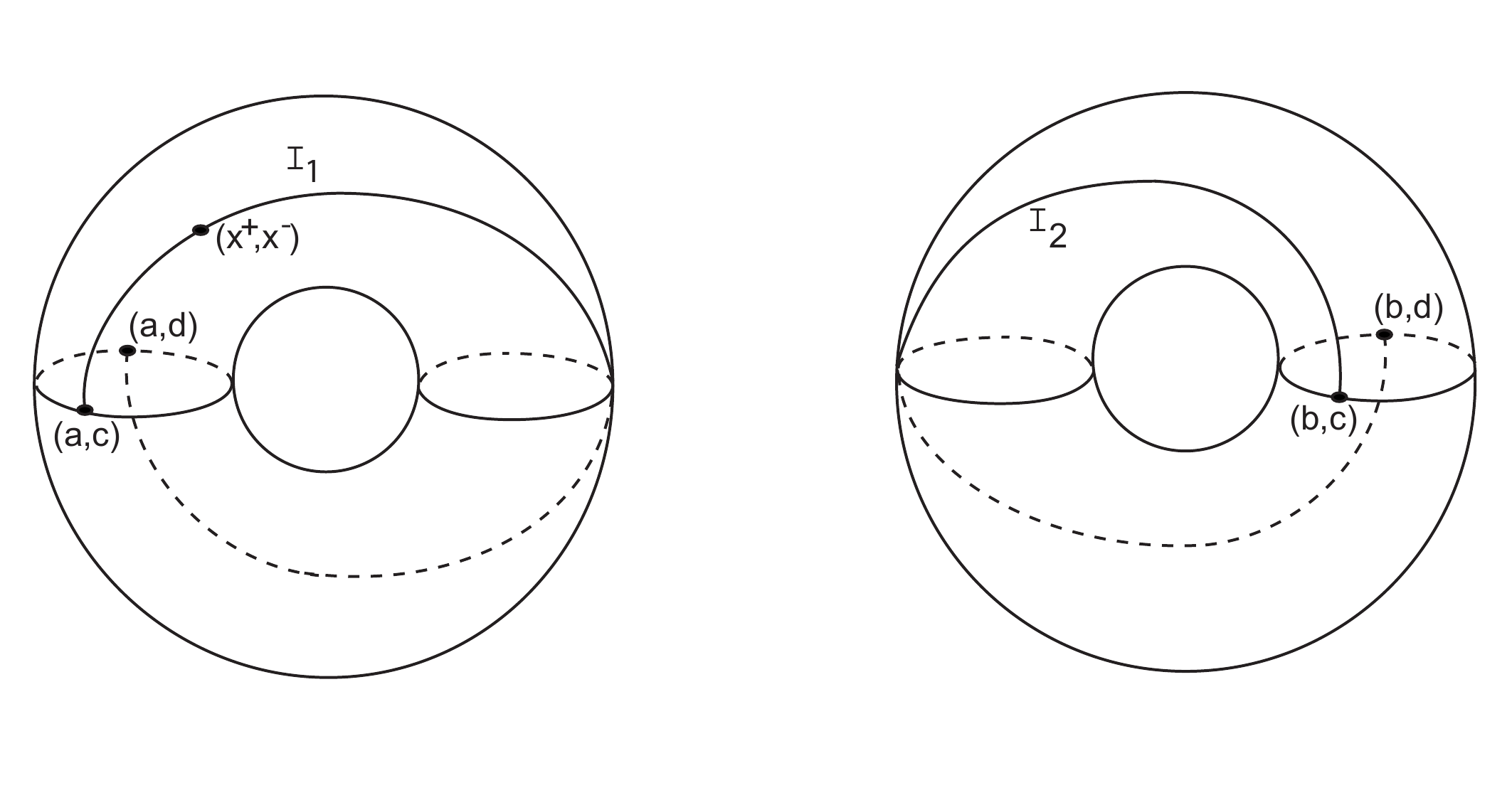}
  \caption[Illustration of $M^+_k \times M^-_k$]
          {\small Illustration of $P_k \subseteq M^+_k \times M^-_k$ :
          $M^+_k \times M^-_k \cong ({\mathbb S}^1 \times {\mathbb S}^1)
		  \sqcup ({\mathbb S}^1 \times
          {\mathbb S}^1)$, $ P_k = I_1 \sqcup I_2; \ \partial _1 P_k = \partial
          I_1 \sqcup \partial I_2$ with $\partial I_1 = \{ (a,c),
          (a,d) \} , \partial I_2 = \{ (b,c), (b,d) \}  $ (cf
          Figure~\ref{Figure10})}
  \protect\label{Figure11}
 \end{center}
  \end{figure}

\begin{proof}
Let us first verify the statement of Lemma~\ref{Lemma4.4}
for the standard model, defined as follows. Let $0 \leq \ell
\leq n$ and let $M$ be ${\mathbb R}
^{n-\ell } \times {\mathbb R}^\ell $, endowed with the Euclidean metric
$g$ and define for $y = (y^+, y^-) \in {\mathbb R}^{n-\ell } \times
{\mathbb R}^\ell $,
   \[ h_\ell (y) = \frac {1}{2} \left( \| y ^+\| ^2 - \| y^- \| ^2
      \right) .
   \]
Clearly, in this model $0 \in {\mathbb R}$ is the only critical
value of $h_\ell $ and the origin in ${\mathbb R}^{n-\ell } \times
{\mathbb R}^\ell $ its only critical point. Its index is given by
$\ell $. Let $S^\pm $ be the spheres
   \[ S^+:= \{ x^+ = (y^+, 0) \big\arrowvert \| y^+ \| ^2 =
      2\varepsilon \} ;
      \ S^-:= \{ x^- = (0,y^-) \big\arrowvert \| y^-\| ^2 =
	  2\varepsilon \}
   \]
and the subsets of ${\mathbb R}^n \times {\mathbb R}^n$,
   \begin{align*} P&:= \{ (x^+, x^-) \in {\mathbb R}^n \times
                     {\mathbb R}^n \big\arrowvert h_\ell (x^\pm ) =
                     \pm \varepsilon ; \ \varphi ^+_\ell (x^+) = \varphi
					 ^- _\ell (x^-) \} \\
                 P'&:= \{ (x^+, x^-) \in P \big\arrowvert x^\pm
                     \notin S^\pm \}
   \end{align*}
where $\varphi ^\pm _\ell = \Psi _{\pm \varepsilon }$ with
$\Psi _t$ denoting the flow corresponding to the normalized
vector field (cf \eqref{1.10})
   \[ Y^{(\ell )} = \sum ^\ell _{j = 1} \frac {y_j}{\| y\|^2}
      \frac {\partial }{\partial y_j} - \sum ^n_{j = \ell + 1} \frac
      {y_j}{\| y\| ^2} \frac {\partial }{\partial y_j} .
   \]

Being a graph with base $\{ x^+ \in {\mathbb R}^n
\backslash S^+ \big\arrowvert h_\ell (x^+) = \varepsilon \} $, $P'$ is a
$(n-1)$ dimensional submanifold of ${\mathbb R}^n \times {\mathbb R}^n$.
To show that $P'$ is the interior of $P$ and $S:= S^+ \times S^-$
its boundary we provide a collar of $S$ in $P$. For this
purpose define
   \[ \theta : S \times [0,1/2) \rightarrow {\mathbb R}^n \times
      {\mathbb R}^n, \left( (y^+, 0), (0, y^-), s
      \right) \mapsto (x^+, x^-)
   \]
with $x^\pm \equiv x^\pm (s; y^+, y^-)$ given by
   \[ x^+:= (1 - s^2)^{-1/2} (y^+, sy^-) \ ; \quad x^-:= (1 -
      s^2)^{-1/2} (sy^+, y^-) .
   \]
The scaling factor $(1 - s^2)^{-1/2}$ has been chosen
in such a way that $h(x^\pm ) = \pm \varepsilon $.
According to \eqref{1.5}, the
point $x^-$ is on the trajectory $\Phi _t(x^+)$ of the gradient
vector field $-\mbox{grad}_g h_\ell $. This shows that the range of
$\theta $ is contained in $P$. Clearly,
$\theta $ is a smooth embedding into ${\mathbb R}^n \times
{\mathbb R}^n$, the restriction of $\theta $ to
$S \times (0, 1/2)$ is a diffeomorphism onto its
image in $P'$, and the restriction of $\theta $ to $S \times \{ 0 \}$
is the standard inclusion. This proves the
statement of Lemma~\ref{Lemma4.4} for the standard model.
To prove Lemma~\ref{Lemma4.4} in the general case,
we proceed in a similar fashion. Let $0 \leq k \leq n$.
We already know that $P'_k$ and $S_k = \sqcup _{h(v) = c_k}
S_v$ are smooth submanifolds of $M^+_k \times M^-_k$ of dimension
$n - 1$ and $n - 2$, respectively. To show that $P'_k$ is the interior
of $P_k$ and $S_k$ its boundary, we provide for any $v \in \mbox{\rm
Crit}(h)$
with $h(v) = c_k$, a smooth embedding $\theta _v : S_v \times
[0, 1 / 2 ) \rightarrow M^+_k \times M^-_k$ so that

\medskip

\begin{list}{\upshape }{
\setlength{\leftmargin}{9mm}
\setlength{\rightmargin}{0mm}
\setlength{\labelwidth}{13mm}
\setlength{\labelsep}{2.9mm}
\setlength{\itemindent}{0,0mm}}
\item[(i)] $\theta _v \big\arrowvert _{S_v \times \{ 0\} }$ is
the standard inclusion,

\smallskip

\item[(ii)] $\theta _v(S_v \times [0, 1 / 2)) \subseteq P_k$

\smallskip

\item[(iii)] $\theta _v(S_v \times (0, 1 / 2)) \subseteq P'_k$.
\end{list}

  \begin{figure}[h]
 \begin{center}
  \includegraphics[width=0.8\linewidth,clip]{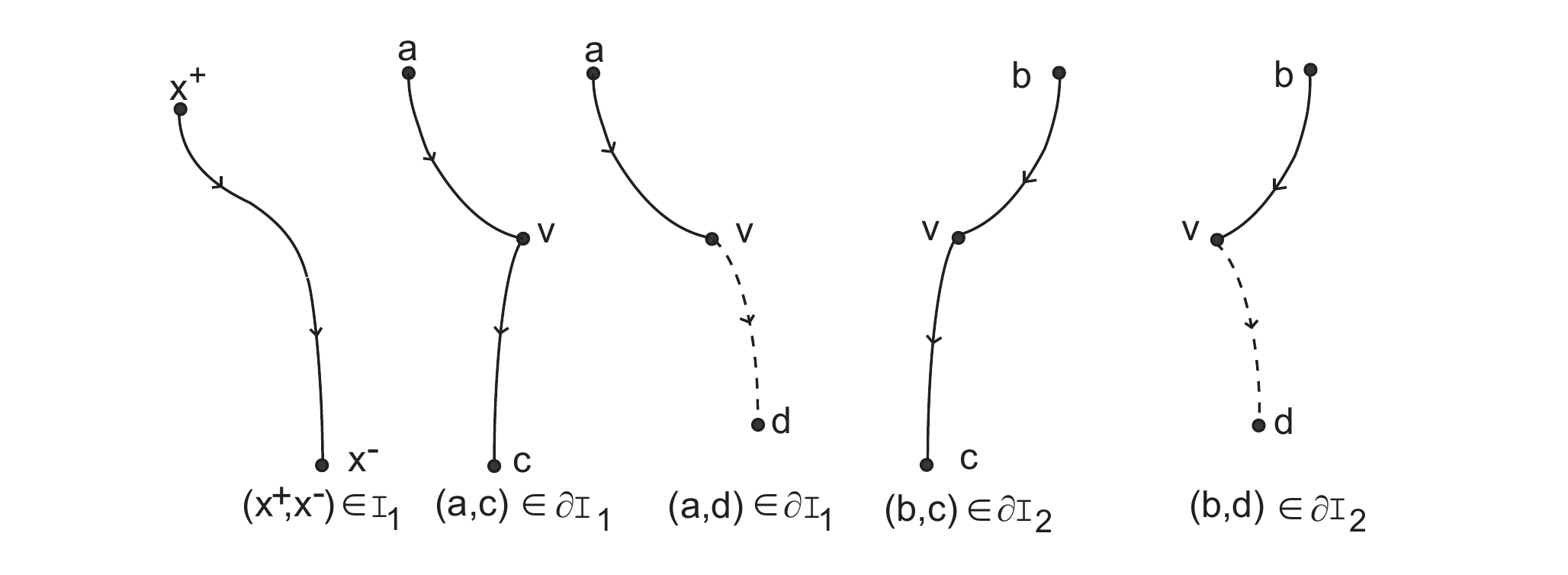}
  \caption[Trajectories corresponding to points in $P_k$]
          {\small Trajectories corresponding to points in $P_k$ (cf
         Figure~\ref{Figure10})} \protect\label{Figure11bis}
 \end{center}
  \end{figure}

Recall that we have chosen $\varepsilon > 0$ sufficiently small so
that
$S^\pm _v$ are contained in the standard chart $U_v$
Hence the map
$\theta _v$ can be defined in terms of the standard coordinates.
Note that for $S^\pm $ given as above with $\ell = i(v)$,
$\varphi _v : B_r \rightarrow
U_v$ maps $S^\pm $ onto $S^\pm _v$ and $x^\pm (s)$, defined as
above, are elements in $B_r$ as for $\varepsilon >0$
sufficiently small,
   \[ \| x^\pm (s)\| ^2 = 2 \varepsilon \frac {1 + s^2}{1 - s^2}
      \leq 2\varepsilon \frac{5}{3} < r^2
   \]
for any $(y^+, 0) \in S^+, (0,y^-) \in S^-$, and $0 \leq s < 1/2$.
Hence for $y^+, y^-$, and $s$ as above one can define
   \[ \theta _v \left( \varphi _v(y^+, 0), \varphi _v(0,y^-) , s \right)
      := \left( \varphi _v(x^+(s)) , \varphi _v(x^- (s)) \right) .
   \]
The map $\theta _v$ then satisfies the claimed properties (i) -
(iii) as by construction, $\theta $ satisfies the corresponding
ones for the standard model.
The statements on the projections $p^\pm _k$ are
verified in a straight forward way.
\end{proof}

\medskip

To introduce the second collection $\{ Q_k \} $
denote for any $k < \ell $ by $M_{\ell ,k}$ the inverse image
of the open interval $(c_\ell , c_k)$ by $h$
   \[ M_{\ell , k} := \{ x \in M \mid c_\ell < h(x) < c_k \} .
   \]

  \begin{figure}[h]
  \begin{center}
  \includegraphics[width=0.8\linewidth,clip]{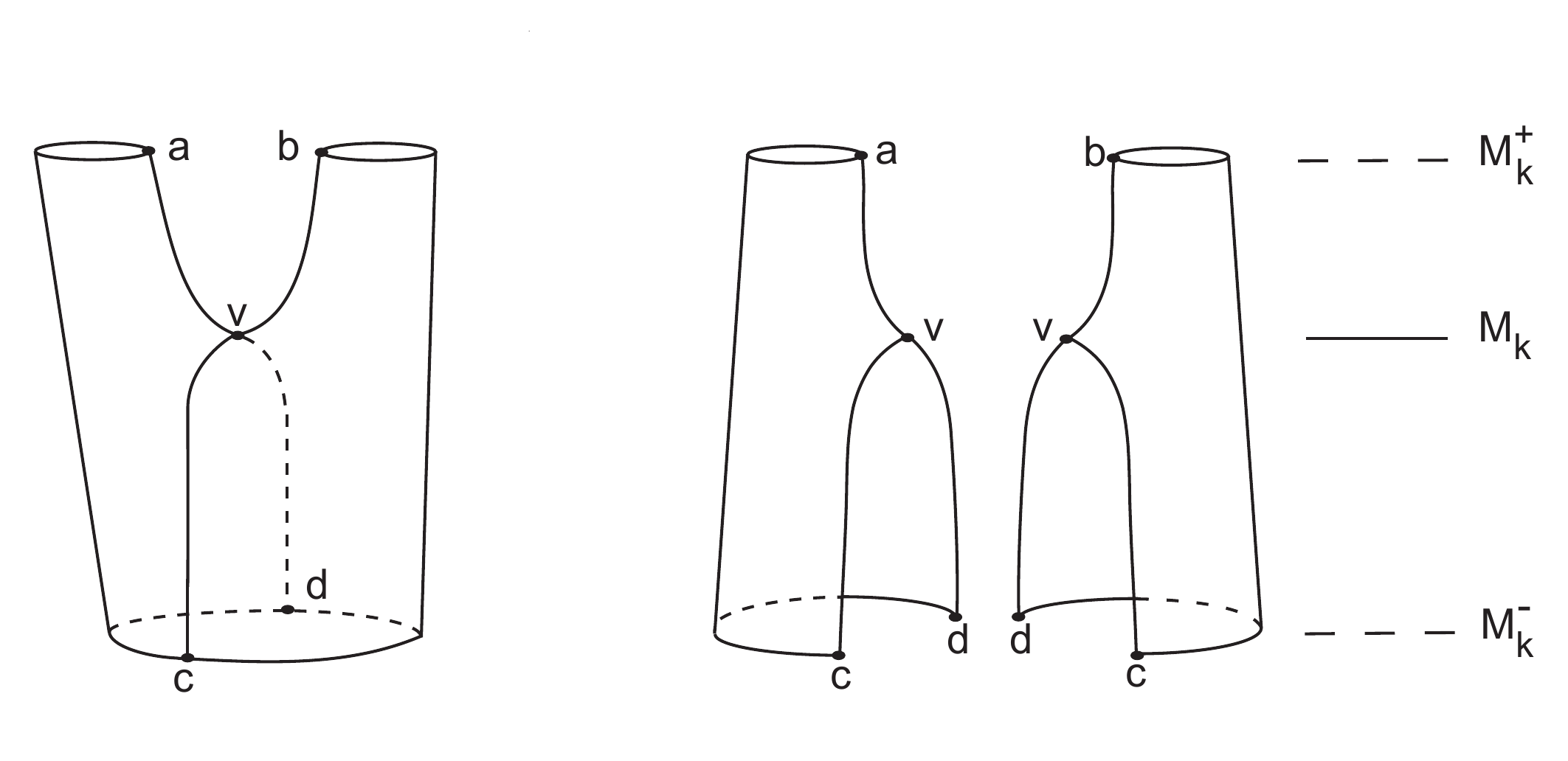}
  \caption{\small Illustration of $Q_k : M^+_k = S^1 \sqcup S^1; \
  M^-_k = S^1$}
  \protect\label{Figure13}
 \end{center}
  \end{figure}

For any $k$ let
   \[ Q_k:= \{  (x^+, x) \in M^+_k \times M_{k + 1, k - 1}
      \big\arrowvert x^+ \sim x \}
   \]
where $x^+ \sim x$ means that $x^+$ and $x$ lie on the
same (possibly broken) trajectory. Further let
$W^-_k:= \bigsqcup _{h(v) = c_k} W^-_v$, and define
   \[ Q'_k:= \{ (x^+, x) \in Q_k \big\arrowvert x \in M_{k + 1,
      k - 1} \backslash W^-_k \}
   \]
and $T_k:= \bigsqcup _{h(v) = c_k} T_v$ where
   \[ T_v:= S^+_v \times (W^-_v \cap M_{k + 1, k - 1}).
   \]
Notice that $Q_k = Q'_k \cup T_k$ and an element $(x^+, x) \
M^+_k \times M_{k + 1, k - 1}$ is in $Q'_k$ iff $x^+$ and $x$
are connected by an {\it unbroken} trajectory
and $x$ is not a critical point of $h$ whereas $(x^+, x)$
is in $T_k$ iff $x^+$ and $x$ are connected by a {\it broken}
trajectory or $x \in \mbox{\rm Crit}(h) \cap M_k$.
Note that $Q'_k$ is the graph of the smooth map
   \begin{equation}
   \label{5.1bis} \eta ^+_k : M_{k + 1, k - 1} \backslash W^-_k
                  \rightarrow M^+_k, x \mapsto x^+_k
   \end{equation}
where $x^+_k$ is defined to be the unique point of $M^+_k$ on the
trajectory $\Phi _\cdot (x)$. Hence it is a manifold of dimension $n$.
Clearly, $T_k$ is a manifold of dimension $n - 1$.

  \begin{figure}[h]
  \begin{center}
  \includegraphics[width=0.8\linewidth,clip]{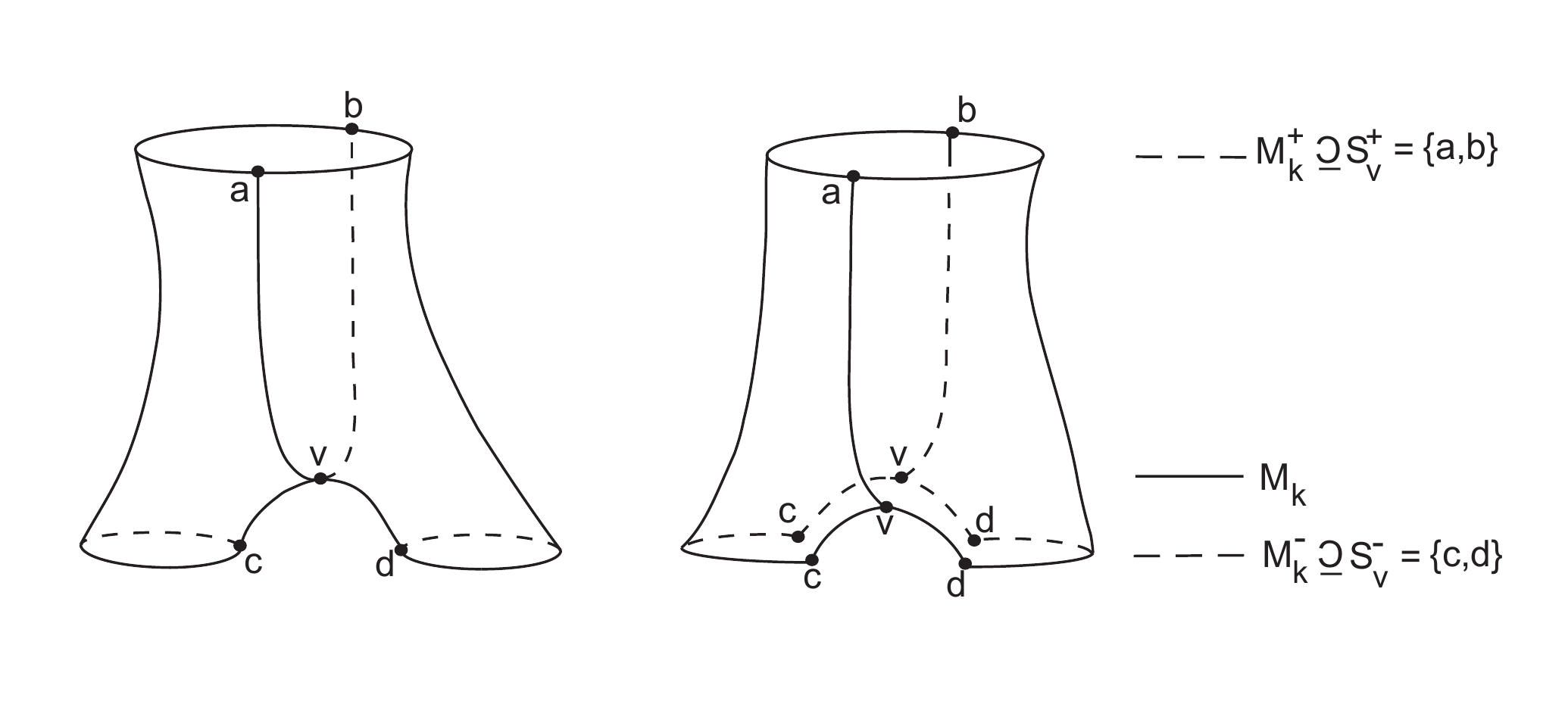}
  \caption{\small Illustration of $Q_k : M^+_k = S^1; \ M^-_k = S^1
  \sqcup S^1$}
  \protect\label{Figure14}
  \end{center}
  \end{figure}

\begin{lemma}
\label{Lemma4.5} For any $k$, $Q_k$ is a $n$-dimensional
manifold with boundary whose interior is given by $Q'_k$ and whose
boundary is $T_k$,
   \[ \partial _0 Q_k = Q'_k ; \ \partial _1 Q_k = T_k .
   \]
If $p^+_k : M^+_k \times M_{k + 1, k - 1} \rightarrow M^+_k$ and
$q_k : M^+_k \times M_{k + 1, k - 1} \rightarrow M_{k + 1, k - 1}$
denote the canonical projections, then the restriction $p^+_k :
Q'_k \rightarrow M^+_k \backslash S^+_k$ is a smooth bundle map
with fibre diffeomorphic to $(0,1)$, the restriction $q_k : Q'_k
\rightarrow M_{k + 1, k - 1} \backslash W^-_k$ is a diffeomorphism,
and the restriction $p^+_k \times q_k : T_k \rightarrow \sqcup _{h
(v) = c_k} S^+_v \times W^-_v$ is the identity.
\end{lemma}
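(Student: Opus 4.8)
The plan is to follow the proof of Lemma~\ref{Lemma4.4} step by step: first one establishes the assertions for the standard model attached to a single critical point, by producing an explicit collar of the boundary of $Q_k$, and then one carries the resulting local picture over to the general situation with the help of the standard charts $\varphi _v$ and of the flow $\Psi _t$ of the rescaled vector field $Y$. Since the text preceding the lemma already records that $Q'_k$ is an $n$-dimensional manifold (the graph of $\eta ^+_k$) and that $T_k$ is an $(n-1)$-dimensional manifold, what is left to do is to show that $Q_k$, with the topology induced from $M^+_k\times M_{k+1,k-1}$, is a manifold with boundary having $\partial _0Q_k=Q'_k$ and $\partial _1Q_k=T_k$, and then to read off the behaviour of the projections.

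\emph{Standard model.} Fix $0\le\ell\le n$, put $M={\mathbb R}^{n-\ell}\times{\mathbb R}^\ell$ with the Euclidean metric and $h_\ell(y^+,y^-)=\tfrac12(\|y^+\|^2-\|y^-\|^2)$, so that $0$ is the only critical value, the origin the only critical point (of index $\ell$), $W^-_0=\{0\}\times{\mathbb R}^\ell$, and $S^+=W^+_0\cap h_\ell^{-1}(\varepsilon)=\{(w,0)\mid\|w\|^2=2\varepsilon\}$. Put
\[ Q:=\{(x^+,x)\in{\mathbb R}^n\times{\mathbb R}^n\mid h_\ell(x^+)=\varepsilon,\ x^+\sim x\},\qquad Q':=\{(x^+,x)\in Q\mid x\notin W^-_0\},\qquad T:=S^+\times W^-_0, \]
where $x^+\sim x$ means that $x^+$ and $x$ lie on one and the same (possibly broken) trajectory of $-\mathrm{grad}_gh_\ell$. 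Using \eqref{1.5} one sees, as in Lemma~\ref{Lemma4.4}, that $Q'$ is the graph of the smooth map carrying $x$ to the unique point of $h_\ell^{-1}(\varepsilon)$ on the trajectory through $x$. To realize $Q'$ as the interior of $Q$ and $T$ as its boundary, I would exhibit the collar
\[ \theta :T\times[0,\tfrac12)\longrightarrow{\mathbb R}^n\times{\mathbb R}^n,\qquad \big((w,0),(0,u),s\big)\longmapsto\Big(\big(\mu ^{-1}w,\ s\mu u\big),\ (sw,u)\Big), \]
with $\mu=\mu(s,u):=\big(2\big/\big(1+\sqrt{1+2\|u\|^2s^2/\varepsilon}\,\big)\big)^{1/2}$. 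A direct computation (using $\|w\|^2=2\varepsilon$) gives $h_\ell(\mu ^{-1}w,s\mu u)=\varepsilon$ and shows that $(sw,u)$ and $(\mu ^{-1}w,s\mu u)$ lie on the trajectory $t\mapsto(e^{-t}sw,e^tu)$; moreover $\mu$ is smooth and nowhere zero (in particular $\mu(0,u)=1$), $\theta (\cdot,\cdot,0)$ is the standard inclusion of $T$, and for $s>0$ the first block $sw$ of $(sw,u)$ is nonzero, so $(sw,u)\notin W^-_0$ and $\theta (T\times(0,\tfrac12))$ is an open subset of $Q'$. Hence $\theta $ is a smooth embedding whose image is a neighbourhood of $T$ in $Q$, and the lemma is proved for the standard model, with $\partial _0Q=Q'$, $\partial _1Q=T$.

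\emph{General case.} Let $v\in\mathrm{Crit}(h)$ with $h(v)=c_k$ and $(p_0,q_0)\in T_v=S^+_v\times(W^-_v\cap M_{k+1,k-1})$. If $q_0=v$, then a small enough neighbourhood of $(p_0,v)$ in $Q_k$ is contained in $M^+_k\times U_v$, and on it $h$, $X$, and therefore the relation $x^+\sim x$, take their standard form in the coordinates $\varphi _v$ (by \eqref{1.1}, \eqref{1.1bis}, \eqref{1.5} and the smallness of $\varepsilon$); hence this neighbourhood is identified with a neighbourhood of the corresponding point of the standard model $Q$, and the first step applies. If $q_0\ne v$, then $q_0$ is a regular point with $h(q_0)\in(c_{k+1},c_k)$; flowing $q_0$ to the level $c_k-\varepsilon$ produces a point $\bar q_0\in S^-_v\subseteq\partial _1P_k$, and by the flow-box theorem for the nonvanishing vector field $Y$ near $q_0$, the map $(x^-,t)\mapsto\Psi _t(x^-)$ identifies a neighbourhood of $\bar q_0$ in $M^-_k$ times an interval with a neighbourhood of $q_0$ in $M_{k+1,k-1}$. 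Since flowing between the levels $c_k-\varepsilon$ and $h(q_0)$ never meets a critical point (Lemma~\ref{diffeo-level}), the map $\Lambda:(x^+,x^-,t)\mapsto(x^+,\Psi _t(x^-))$ carries a neighbourhood of $(p_0,\bar q_0)$ in $P_k$ times an interval diffeomorphically onto a neighbourhood of $(p_0,q_0)$ in $Q_k$, taking $P'_k\times(\text{interval})$ to $Q'_k$ and $S_v\times(\text{interval})$ to $T_v$; by Lemma~\ref{Lemma4.4} and Corollary~\ref{Corollary 4.4.5} the source is a manifold with boundary, hence so is $Q_k$ near $(p_0,q_0)$, with interior $Q'_k$ and boundary $T_v$. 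The charts obtained in the two cases, together with the graph charts on $Q'_k$, form an atlas: on overlaps they restrict to charts of the fixed smooth $n$-manifold $Q'_k$ in the interior and to the standard inclusion of $T_v$ on the boundary, so the transitions are smooth. Finally, $q_k$ restricts to a diffeomorphism $Q'_k\to M_{k+1,k-1}\backslash W^-_k$ because $Q'_k$ is the graph of $\eta ^+_k$; $p^+_k\times q_k$ is the identity on $T_k\subseteq M^+_k\times M_{k+1,k-1}$; and, over the open set $M^+_k\backslash S^+_k$, the projection $p^+_k$ restricts on $Q'_k$ to a trivial fibre bundle whose fibre is a fixed open interval, a trivialization being $(x^+,x)\mapsto(x^+,t)$ with $t=(c_k+\varepsilon)-h(x)$ (so that $x=\Psi _t(x^+)$), the admissible range of $t$ over $M^+_k\backslash S^+_k$ being $\big((c_k+\varepsilon)-c_{k-1},\,(c_k+\varepsilon)-c_{k+1}\big)$.

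\emph{Main obstacle.} The crux of the argument is the standard-model step, namely the construction of the collar $\theta $. In contrast with Lemma~\ref{Lemma4.4}, where both coordinates of a point of $P_k$ vary over level sets, here the second coordinate varies over the $n$-dimensional slab $M_{k+1,k-1}$, and --- decisively --- $W^-_v\cap M_{k+1,k-1}$ contains the critical point $v$ itself. The collar must therefore "resolve" this cone point; this is the role of the factor $\mu(s,u)$, which is smooth and non-vanishing including at $u=0$. It is also the reason why near $(p_0,v)$ one must argue through the standard chart $\varphi _v$ rather than through the flow-box reduction to $P_k$ used when $q_0\ne v$.
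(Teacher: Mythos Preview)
Your proof is correct and follows essentially the same strategy as the paper's: reduce to the standard model via $\varphi_v$ near the critical point and reduce to $P_k\times(\text{interval})$ via the flow $\Psi$ away from it, then invoke Lemma~\ref{Lemma4.4}. The only cosmetic differences are that the paper organizes the argument by a three-piece cover $M_{k+1,k-1}=U_1\cup U_2\cup U_3$ (with the standard-model step applied on $U_3=h^{-1}((c_k-\varepsilon,c_k+\varepsilon))$ rather than just at $q_0=v$), and that the paper's explicit collar uses the scaling factor $f(s)=(1-s^2\|y^-\|^2/2\varepsilon)^{-1/2}$ applied to \emph{both} coordinates, whereas your $\mu$ adjusts only the $M^+_k$-coordinate; both collars parametrize the same germ of $Q_k$ along $T_k$.
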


\medskip

\begin{proof}
First note that $M_{k + 1, k - 1} = U_1 \cup U_2 \cup U_3$
with $U_j$ being the open subsets of $M$ given by
   \[ U_1:= M_{k + 1, k} ; \ U_2:= M_{k,k - 1} ; \ U_3:= h^{-1}\big( (c_k
      - \varepsilon , c_k + \varepsilon ) \big) .
   \]
It suffices to show that for any $1 \leq j \leq 3, Q_k \cap (M^+_k
\times U_j)$ is a submanifold with boundary of $M^+_k
\times M_{k + 1, k - 1}$ where its boundary is given by
$T_k \cap (M^+_k \times U_j)$.

$\underline {Q_k \cap (M^+_k \times M_{k + 1, k}):}$ Consider the
diffeomorphism
   \[ \Theta : M^+_k \times M^-_k \times (c_{k + 1} , c_k)
      \rightarrow M^+_k \times M_{k + 1, k} ,
   \]
defined by $\Theta (x^+, x^-, s):= \left( x^+, \Psi _{s - c_k +
\varepsilon } (x^-) \right) $ where $\Psi _s(x)$ denotes the flow
of the normalized vector field $Y$, defined in \eqref{3.6.11} .
It is easy to see that $\Theta $ maps $P_k \times
(c_{k + 1}, c_k)$ diffeomorphically onto
   \[ Q_k \cap \left( M^+_k \times M_{k + 1, k} \right)
   \]
and $S_k \times (c_{k + 1}, c_k)$ onto $T_k \cap
(M^+_k \times M_{k + 1,k})$. By Lemma~\ref{Lemma4.4},
$P_k \times (c_{k + 1}, c_k)$ is a smooth manifold with boundary
   \[ \partial (P_k \times (c_{k + 1}, c_k)) = S_k \times
      (c_{k + 1}, c_k) .
   \]
Hence the claimed statement is established in this case.

$\underline {Q_k \cap (M^+_k \times M_{k,k - 1}):}$ In this case,
$T_k \cap (M^+_k \times M_{k,k - 1}) = \emptyset $ and
   \[ Q_k \cap (M^+_k \times M_{k,k - 1}) = Q'_k \cap (M^+_k
      \times M_{k,k - 1})
   \]
is a smooth manifold.

$\underline {Q_k \cap \left( M^+_k \times h^{-1}( (c_k -
\varepsilon , c_k + \varepsilon ) ) \right):}$ In this case we argue
similarly as in the proof of Lemma~\ref{Lemma4.4} and first establish
the claimed result for the canonical model where $M$
is given by ${\mathbb R}^{n - \ell} \times {\mathbb R}^\ell $,
$0 \leq \ell \leq n$, endowed
with the Euclidean metric, and $h$ by $h_\ell (y) = \frac {1}{2}
\left( \| y^+ \| ^2 - \| y^- \| ^2 \right) $. Then $0$ is
the only critical point of $h_\ell $ and its index is $\ell $.
Let $S^+:= \{ (y^+,
0) \big\arrowvert \| y^+\| ^2 = 2 \varepsilon \} $ and define
   \begin{align*} Q:&= \{ (x^+, x) \in {\mathbb R}^n \times
                     {\mathbb R}^n \big\arrowvert h_\ell (x^+) =
                     \varepsilon ; \ \| x\| ^2 < 2 \varepsilon ; \
                     x^+ \sim x \} \\
                  Q':&= \{ (x^+, x) \in Q \big\arrowvert
                     x = (y^+, y^-) \mbox { with } \| x\| ^2
                     < 2 \varepsilon \mbox { and } y ^+ \not= 0 \} \\
                  T:&= S^+ \times \{ (0,y^-) \big\arrowvert \| y^-\|
                     ^2 < 2 \varepsilon \} .
   \end{align*}
Define the map
   \[ \theta : T \times [0, 1/2) \rightarrow {\mathbb R}^n \times
      {\mathbb R}^n , \left( (y^+, 0), (0,y^-), s \right) \mapsto
      (x^+, x)
   \]
with $x^+(s) = x^+ (s; y^+, y^-)$ and $x(s) \equiv x(s; y^+, y^-)$
given by
   \[ x^+(s):= f(s) (y^+, s y^-) ; \ x(s):= f(s)(s y^+, y^-)
   \]
and $f(s):= (1 - s^2 \| y ^-\| ^2 / 2 \varepsilon )^{-1/2}$. As $\|
y^- \| ^2 < 2 \varepsilon $, and $0 \leq s < 1/2, f(s)$ is well
defined and satisfies $f(s) \leq \sqrt{4/3}$.
Note that $(x^+(s), x(s)) \in T$ only for $s = 0$ where $(x^+(s),
x(s))$ is given by $((y^+, 0), (0, y^-))$. The point $x^+(s)$ is
defined in such a way that $h_\ell (x^+(s)) = \varepsilon $ whereas
$x(s)$ is defined so that $x(s) \sim x^+(s)$ for any $0 \leq s
< 1/2$, i.e. $x(s)$ lies on the (possibly broken) trajectory
of the gradient vector field $-\mbox{grad}_g h_\ell $ going
through $x^+(s)$.
This shows
that the range of $\theta $ is contained in $Q$ and the one
of the restriction $\theta \big\arrowvert _{T \times (0,1/2)}$ in
$Q'$. Clearly $\theta $ is a smooth embedding and the restriction
$\theta \big\arrowvert _{T \times \{ 0 \} }$ is the standard
inclusion. Hence for the standard model, the case under
consideration is proved. To prove the considered case in the
general situation we provide for any $v \in \mbox{\rm Crit}(h)$ with
$h(v) = c_k$ and $\mbox{index}(v) = \ell $ a smooth embedding
   \[ \theta _v : T_v \cap (M^+_k \times U_3)
      \times [0, 1 / 2) \rightarrow
      M^+_k \times U_3
   \]
where $U_3 = h^{-1}((c_k - \varepsilon , c_k + \varepsilon ))$ such that

\begin{list}{\upshape }{
\setlength{\leftmargin}{9mm}
\setlength{\rightmargin}{0mm}
\setlength{\labelwidth}{13mm}
\setlength{\labelsep}{2.9mm}
\setlength{\itemindent}{0,0mm}}

\item[(i)] $\theta _v \big\arrowvert _{T_v \cap (M^+_k \times U_3)
\times \{ 0 \} }$
is the standard inclusion,

\smallskip

\item[(ii)] $\theta _v \left( T_v \cap (M^+_k \times U_3)
\times [0, 1/2) \right) \subseteq Q_k$,

\smallskip

\item[(iii)] $\theta _v \left( T_v \cap (M^+_k \times U_3)
\times (0,1/2) \right) \subseteq Q'_k$.
\end{list}

\medskip

As $T_v \cap (M^+_k \times U_3)$
is contained in the standard chart $U_v$, the map
$\theta _v$ can be expressed in terms of the standard
coordinate map $\varphi _v : B_r \rightarrow U_v$.
Consider the standard model with $\ell = \mbox{index}(v)$. Note
that $T \subseteq B_r, \varphi _v(S^+) = S^+_v, \varphi _v \left( (0,y^-)
\right) \in W^-_v \cap U_v$ for any $y^- \in {\mathbb R}^k$
with $\| y^-\| ^2 < 2 \varepsilon $, and hence $\varphi _v(T) =
T_v \cap (M^+_k \times U_3)$. Further, $x^+(s)$ and $x(s)$ as defined
above, are elements in $B_r$ as for any $\left( (y^+, 0), (0, y^-)
\right) \in T$ and $0 \leq s < 1/2$.
   \[ \| x^+ (s) \| ^2 = f(s)^2 \left( \| y^+ \| ^2 + s^2 \|
      y^- \| ^2 \right) < 4 \varepsilon / 3
   \]
and
   \[ \| x(s) \| ^2 = f(s) ^2 \left( s^2 \| y^+ \| ^2 +
      \| y^- \| ^2 \right) < 4 \varepsilon
   \]
and $4\varepsilon < r^2$ for $\varepsilon $ sufficiently small.
Hence for $y^+, y^-$ and $s$ as above one can define
   \[ \theta _v \left( \varphi _v(y^+, 0), \varphi _v(0,y^-), s \right)
      := \left( \varphi _v(x^+ (s)), \varphi _v(x(s)) \right) .
   \]
The map $\theta _v$ then satisfies the claimed properties (i) -
(iii) as, by construction, $\theta $ satisfies the corresponding
ones. The statements on the maps $p_k$ and $q_k$ are verified
in a straight forward way.
\end{proof}

\subsection{Spaces of trajectories}
\label{3.3Spaces of trajectories}

In this subsection we prove that for any $v, w \in \mbox{\rm Crit}(h)$, the
topological spaces ${\mathcal B}(v,w)$ (Theorem~\ref{Theorem1.4.12})
and $\hat {W}^-_v$ (Theorem~\ref{Theorem1.4.13}) have a
canonical structure of a smooth
manifold with corners with interior ${\mathcal T}
(v,w)$ and $W^-_v$, respectively -- see Section~\ref{4 Manifold with
corners} for the notion of a manifold with corners $M$ and the smooth
submanifolds $\partial _k M$ of $M$ of codimension $k$ introduced
there.
Further we show that $\hat {i}_v
: \hat {W}^-_v \rightarrow M$ is a smooth extension of the
inclusion $i_v : W^-_v \rightarrow M$. Versions of Theorem~\ref{Theorem1.4.12} and
Theorem~\ref{Theorem1.4.13} can be found in \cite{Lat}.

\medskip

\begin{theorem}
\label{Theorem1.4.12} Assume that $M$ is a smooth manifold, $(h,X)$
a Morse-Smale pair and  $v,w$ any critical points of $h$ with
$w < v$. Then

\begin{list}{\upshape }{
\setlength{\leftmargin}{9mm}
\setlength{\rightmargin}{0mm}
\setlength{\labelwidth}{13mm}
\setlength{\labelsep}{2.9mm}
\setlength{\itemindent}{0,0mm}}

\item[{\rm (i)}] ${\mathcal B}(v,w)$ is compact and has a canonical structure
of a smooth manifold with corners.

\smallskip

\item[{\rm (ii)}] ${\mathcal B}(v,w)$ is of dimension $i(v) - i(w) - 1$ and
for any $0 \leq k \leq \dim {\mathcal B}(v,w)$,
   \[ \partial _k {\mathcal B}(v,w) = \bigsqcup _{w < v_k < \ldots <
      v_1 < v} {\mathcal T} (v,v_1) \times \ldots \times {\mathcal T}
     (v_k, w) .
   \]
In particular,
   \[ \partial _0 {\mathcal B}(v,w) = {\mathcal T} (v,w) .
   \]
\end{list}
\end{theorem}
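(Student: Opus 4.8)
The plan is to realize $\mathcal B(v,w)$ as a subset of a product of the manifolds with boundary $P_k$ constructed in Lemma~\ref{Lemma4.4}, and to transport the smooth structure along. Fix critical points $w<v$ with $h(v)=c_k$, $h(w)=c_m$, and let $c_k>c_{k+1}>\dots>c_m$ be the critical values in between. Recall from Subsection~\ref{2.3Spaces of broken trajectories} that the map $J_\varepsilon$ embeds $\mathcal B(v,w)$ as a closed subset of $M^-_{k,\varepsilon}\times\dots\times M^-_{m-1,\varepsilon}$ (Proposition~\ref{Proposition3.1}), and that this topology is canonical. First I would reparametrize: instead of recording $\gamma(c_j-\varepsilon)$ alone, record for each intermediate level $c_j$ ($k\le j\le m-1$) \emph{both} $\gamma(c_j+\varepsilon)=:x^+_j\in M^+_j$ and $\gamma(c_j-\varepsilon)=:x^-_j\in M^-_j$, together with the endpoint data forcing $x^-_k$ to sit on $S^-_v$ (i.e. $\Psi_{-\varepsilon}(x^+_k)=v$) and $x^+_m$ on $S^+_w$. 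Because consecutive pieces are glued by the diffeomorphism $\psi_j:M^-_j\to M^+_{j+1}$ of Subsection~\ref{3.2Preliminary constructions}, this richer encoding embeds $\mathcal B(v,w)$ as a \emph{closed} subset of $P_k\times P_{k+1}\times\dots\times P_{m-1}$ (a manifold with corners by Corollary~\ref{Corollary 4.4.5} applied to Lemma~\ref{Lemma4.4}), cut out by the endpoint incidence conditions with $v$ and $w$ and by the gluing conditions $\psi_j(x^-_j)=x^+_{j+1}$ — though if one arranges the levels $M^\pm_j$ so that $\psi_j$ is built into the identification, the only remaining constraints are the two endpoint conditions at $v$ and $w$. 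These conditions are transversality-type constraints cutting out the preimage of submanifolds $S^\pm$; by Lemma~\ref{Lemma 4.4.9}, applied with the inclusion/flow maps, the resulting locus is a neat submanifold with corners, hence a manifold with corners by Lemma~\ref{Lemma 4.4.8}. Compactness is Theorem~\ref{Theorem3.3}(i).

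Concretely, the cleanest route is: let $N:=P_k\times\dots\times P_{m-1}$, a manifold with corners of dimension $(m-k)(n-1)$, and consider the smooth map that flows each factor's $M^+$-component down to $M_j$ and its $M^-$-component up, i.e. record the points on the level sets $L_{c_j}$; then impose $(m-k)$ incidence conditions gluing level $c_j$ to level $c_{j+1}$ plus the endpoint conditions at $v$ (forcing the top piece into $W^-_v$) and at $w$ (forcing the bottom piece into $W^+_w$). The Morse--Smale condition (MS2) is exactly what guarantees these conditions are transversal in the sense required by Lemma~\ref{Lemma 4.4.9}: at a point of $\mathcal T(v,v_1)\times\dots\times\mathcal T(v_\ell,w)$ the transversality $W^-_{v_i}\pitchfork W^+_{v_{i+1}}$ makes the relevant differentials surjective. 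Lemma~\ref{Lemma 4.4.9} then also computes the corner strata: $\partial_k(f^{-1}(N'))=f^{-1}(N')\cap\partial_k N$, and since $\partial_*(P_k\times\dots\times P_{m-1})$ decomposes by Lemma~\ref{Lemma 4.4.3}(ii) into products of the $\partial_0 P_j=P'_j$ (unbroken) and $\partial_1 P_j=S_j$ (broken) factors, a $k$-corner point corresponds to exactly $k$ of the intermediate levels being ``broken,'' i.e. to lying over $k$ additional critical points $v_1<\dots<v_k$ strictly between $v$ and $w$. This reproduces the asserted formula
\[
\partial_k\mathcal B(v,w)=\bigsqcup_{w<v_k<\dots<v_1<v}\mathcal T(v,v_1)\times\dots\times\mathcal T(v_k,w),
\]
and in particular $\partial_0\mathcal B(v,w)=\mathcal T(v,w)$, which is the interior. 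The dimension count: $\dim\mathcal B(v,w)=\dim W^-_v+\dim W^+_w-n-1=i(v)-i(w)-1$ follows from the transversal-intersection dimension formula for $W^-_v\cap W^+_w$ and quotienting by the $\mathbb R$-action.

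The independence of the construction from the auxiliary parameter $\varepsilon$ (and from the choice of how $U_v$-coordinates enter $\theta_v$ in Lemma~\ref{Lemma4.4}) must be checked so that the smooth structure is genuinely \emph{canonical}: two choices $\varepsilon,\varepsilon'$ are related by the diffeomorphisms $\Psi_{\varepsilon-\varepsilon'}$, which are compatible with the $P_k$-structure because $P'_k$ is the graph of $\Psi_{2\varepsilon}$ (see \eqref{1.16bis}) and these intertwine correctly on the boundary spheres $S^\pm_k$; on the topological level this independence is already established in Subsection~\ref{2.3Spaces of broken trajectories}, and one checks it upgrades to a diffeomorphism. I expect the main obstacle to be precisely the transversality bookkeeping near the broken strata: verifying that the gluing/incidence map is transversal to the diagonal-type submanifolds \emph{uniformly up to the corners} — that is, that the hypothesis of Lemma~\ref{Lemma 4.4.9} holds at points of every stratum $\partial_k N$, not just the interior — so that $f^{-1}$ of the incidence locus is neat rather than merely a topological submanifold. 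This is where the Morse--Smale condition is used in full strength, and where one must be careful that $W^+_w\cap M^-_j$ need not be closed (cf. Remark~\ref{Remark6.15}(ii)), so the transversality has to be argued pointwise on each compact piece $\hat h^{-1}_v([\,\cdot\,])$ rather than globally.
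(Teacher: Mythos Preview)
Your approach is exactly the paper's: set $\mathcal P=\prod_j P_j$, embed it via $\mathfrak f$ into $\mathcal M=\prod_j M^+_j\times M^-_j$, encode the $\psi_j$-gluing and the two endpoint constraints $W^-_v\cap M^+_{\ell_0}$, $W^+_w\cap M^-_\ell$ as a submanifold $\mathcal N\subset\mathcal M$, identify $\mathcal B(v,w)=\mathfrak f^{-1}(\mathcal N)$, and invoke Lemma~\ref{Lemma 4.4.9}; the transversality check at every corner stratum (which you correctly flag as the crux) is precisely what Subsection~\ref{3.4Transversality properties} does, splitting Diagram~1 along the $\sigma(j)=1$ factors and using Morse--Smale on each piece. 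One indexing slip: the product must run over the critical levels \emph{strictly between} $h(v)=c_k$ and $h(w)=c_m$, i.e.\ $P_{k+1}\times\cdots\times P_{m-1}$ rather than starting at $P_k$, since a trajectory issuing from $v$ never meets $M^+_k=h^{-1}(c_k+\varepsilon)$ and $\gamma(c_k+\varepsilon)$ is undefined.
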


\begin{remark} Note that for $v,w \in  \mbox{\rm Crit}(h)$ not satisfying
$w \leq v, {\mathcal B}(v,w) = \emptyset $ whereas for $w = v,
{\mathcal B} (v,w) = \{ v\} $.
\end{remark}

\begin{proof}
(i) Let $\ell _0 - 1 \leq \ell $ be the integers
satisfying $h(v) = c_{\ell _0 - 1}$ and $h(w) = c_{\ell + 1}$
respectively. If $\ell _0 - 1 = \ell $, then $h(w) = c_{\ell _0}$
and hence ${\mathcal B}(v,w)
= {\mathcal T}  (v,w)$ which is a smooth manifold -- see
\eqref{1.7bis}. For $\ell \geq
\ell _0$ we want to use Lemm~\ref{Lemma 4.4.9} and
Lemma~\ref{Lemma4.4} to obtain a canonical
differentiable structure of a manifold with corners for
${\mathcal B}(v,w)$. To this end introduce
   \begin{align*} &{\mathcal P} \equiv {\mathcal P}_{\ell _0 \ell }:=
                     \prod ^\ell _{j = \ell _0} P_ j \\
                  &{\mathcal M} \equiv {\mathcal M}_{\ell _0 \ell }:=
                     \prod ^\ell _{j = \ell _0} M^+_j \times M^-_j \\
                 &{\mathcal N}' \equiv {\mathcal N}'_{vw}:= \left( W^-
                     _v \cap M^+_{\ell _0} \right) \times \prod ^{\ell
                     -1} _{j = \ell _0} M^-_j \times \left( W^+_w
                     \cap M^-_\ell \right)
   \end{align*}
where we recall that
   \[ M^\pm _j = h^{-1} \left( \{ c_j \pm \varepsilon \} \right) ; \quad
      P_j:= \big\{ (x^+_j, x^-_j) \in M^+_j \times M^-_j \big\arrowvert
      \varphi ^+_k(x^+_j) = \varphi ^-_k(x^-_j) \big\} .
   \]
By Lemma~\ref{Lemma4.4}, $P_j$ is a $(n-1)$ dimensional manifold with
boundary, hence, by Corollary~\ref{Corollary 4.4.5}, ${\mathcal P}$
a manifold
with corners of dimension $(\ell - \ell _0 + 1)(n - 1)$ with
$(0 \leq k \leq \dim {\mathcal P})$
   \begin{equation}
   \label{4.3.1} \partial _k {\mathcal P} = \bigsqcup _{|\sigma  | = k}
                 \prod ^\ell _{j = \ell _0} \partial _{\sigma  (j)} P_j
   \end{equation}
where $\sigma  = \left( \sigma  (j) \right) _j$ is a sequence of
elements $\sigma  (j) \in \{ 0,1\} $ and $|\sigma  |: = \sum _j \sigma
(j)$. Further, ${\mathcal M}$ is a smooth manifold of dimension $2
(\ell - \ell _0 + 1)(n - 1)$ and, with $f_j : P_j \hookrightarrow
M^+_j \times M^-_j$ denoting the inclusion of $P_j \subseteq M^+_j
\times M^-_j$, the map
   \[ {\mathfrak f} := \prod ^\ell _{j = \ell _0} f_j : {\mathcal P}
      \rightarrow {\mathcal M}
   \]
is a smooth embedding. Finally ${\mathcal N}'$ is a smooth manifold of
dimension $i(v) - i(w) - 1 + (\ell - \ell _0 + 1)(n - 1)$ and can
be canonically identified with a submanifold of ${\mathcal M}$ as
follows. Introduce
   \[ \theta : {\mathcal N}'_{vw} \rightarrow {\mathcal M}_{\ell _0\ell }
      , \quad \left( x^+_v, (x^-_j)_j , x^-_w \right) \mapsto \left(
      x^+_v, \left( x^-_j , \psi _j(x^-_j) \right) _j , x^-_w \right) .
   \]
As the maps $\psi _j : M^-_j \rightarrow M^+_{j + 1}$, defined in terms
of the flow $\Psi _t$ (cf section~\ref{3.2Preliminary constructions}),
are diffeomorphisms it follows that $\theta $ is a smooth embedding,
hence
   \[ {\mathcal N} \equiv {\mathcal N}_{vw}:= \theta ({\mathcal N}')
   \]
is a submanifold of ${\mathcal M}$. As in Subsection~\ref{2.3Spaces of
broken trajectories}
one sees that ${\mathcal B}(v,w)$ can
be identified with the image in ${\mathcal P}$ of the
following smooth embedding
   \[ J : \gamma \mapsto (x^+_j, x^-_j)_j
   \]
where $x^\pm _j$ denote the points of intersection of the (possibly
broken) trajectory $\gamma $ with the level sets $M^\pm _j$. Clearly,
the image of $J$ coincides with ${\mathfrak f}^{-1}({\mathcal N})$.
Therefore ${\mathcal B}(v,w)$ and ${\mathfrak f}^{-1}({\mathcal N})$
are identified as topological spaces and a differentiable
structure of ${\mathfrak f}^{-1}({\mathcal N})$ provides a
differentiable structure on ${\mathcal B}(v,w)$. Next we prove that
${\mathfrak f}^{-1}({\mathcal N})$ is a manifold with
corners. In view of Lemma~\ref{Lemma 4.4.9} this is the case if
${\mathfrak f}$ is transversal to ${\mathcal N}$, i.e. for
any $0 \leq k \leq \dim {\mathcal P}$ and ${\mathfrak x} \in \partial _k
{\mathcal P}$ with ${\mathfrak f}({\mathfrak x}) \in {\mathcal N}$
   \begin{equation}
   \label{4.3.2} T_{{\mathfrak f}({\mathfrak x})} {\mathcal M} =
                 T_{{\mathfrak f}({\mathfrak x})}{\mathcal N} +
				 d_{\mathfrak x} {\mathfrak f}
                 (T _{\mathfrak x} \partial _k {\mathcal P}) .
   \end{equation}
Using that $X$ satisfies the Morse-Smale condition the transversality
condition \eqref{4.3.2} will be verified in
the subsequent subsection.
As in Subsection~\ref{2.3Spaces of broken trajectories}
one argues that the induced differentiable structure on ${\mathcal B}
(v,w)$ is independent of $\varepsilon $, hence canonical.

\medskip

(ii) In view of Lemma~\ref{Lemma 4.4.9},
   \begin{align*} \dim {\mathfrak f}^{-1} ({\mathcal N}) &= \dim
                     {\mathcal P} - \mbox{codim } {\mathcal N} \\
				  &= \dim {\mathcal P} - \dim {\mathcal M} +
				     \dim {\mathcal N}' \\
				  &= i(v) - i(w) - 1
   \end{align*}
and for any $0 \leq k \leq \dim {\mathfrak f}^{-1}({\mathcal N})$
   \[ \partial _k {\mathfrak f}^{-1}({\mathcal N}) = {\mathfrak f}^{-1}
      ({\mathcal N}) \cap \partial _k {\mathcal P}
   \]
with $\partial _k {\mathcal P}$ given by \eqref{4.3.1}. Using the
identification of ${\mathcal B}(v,w)$ with ${\mathfrak f}^{-1}
({\mathcal N})$ one sees that
   \[ \partial _k {\mathcal B}(v,w) = \bigsqcup _{w < v_k < \ldots <
      v_1 < v} {\mathcal T} (v,v_1) \times \ldots \times {\mathcal T}
     (v_k, w) . 
   \]
\end{proof}

The smooth structure on $\hat {W}^-_v$ is more elaborate.
Recall that according to the definition in Subsection~\ref{2.3Spaces of broken
trajectories} $\hat {W}^-_v = \underset
{w \leq v}{\sqcup } {\mathcal B}(v,w) \times W^-_w$. For $\ell _0$ with $h(v) =
c_{\ell _0-1}$, introduce the open
covering $\left( \hat {W}^-_{v,\ell } \right) _{\ell \geq \ell _0 -
1}$ of $\hat {W}^-_v$ given by
   \[  \hat {W}^-_{v, \ell }
      := \{ (\gamma , x)
      \in \hat {W}^-_v \big\arrowvert c_{\ell + 1} < \hat {h}_v
     (\gamma ,x) < c
     _{\ell - 1} \}
   \]
and $\hat {h}_v = h \circ \hat {i}_v$. The differentiable structure
of $\hat {W}^-_v$ is defined by providing for any $\ell \geq \ell _0 - 1$ a
differentiable structure on $\hat {W}^-_{v,\ell }$ so that these
structures are compatible on the intersections. We note that
$\hat {W}^-_{v,\ell } \cap \hat {W}^-_{v,\ell '} \not= \emptyset $
iff $|\ell - \ell '| \leq 1$ and that $\hat W^-_{v,\ell _0-1}$ is an open
subset of $W^-_v$, hence a manifold.

For any $\ell
\geq \ell _0$, $\hat {W}^-_{v, \ell }$ consists of (possibly broken)
trajectories from $v$ to a point $x \in M$ satisfying
$c_{\ell + 1} < h(x) < c_{\ell - 1}$. Denote by $\gamma _x$
the canonical parametrization \eqref{3.1} of the
trajectory from $v$ to $x$. To describe the differentiable structure
of $\hat W^-_{v,\ell }$ we have to use a more complicated identification
of $\gamma _x$ than the one introduced in Subsection~\ref{2.3Spaces of
broken trajectories}.

For an arbitrary element $\gamma _x$ in $\hat {W}^-_{v,\ell }$ let
   \[ \hat {J}_\ell (\gamma _x):= \left( (x^+_j, x^-_j)_j , (x^+_\ell ,
      x) \right) \in {\mathcal M}_{\ell _0 \ell }
   \]
where
   \[ {\mathcal M}_{\ell _0 \ell }:= \left( \prod ^{\ell - 1}_{j = \ell
      _0} M^+_j \times M^-_j \right) \times M^+_\ell \times h^{-1}
      \left( (c_{\ell + 1}, c_{\ell - 1}) \right)
   \]
and $x^\pm _j$ are the points of intersection of $\gamma _x$ with
the level sets
   \[ M^\pm _j := h^{-1} \left( \{ c_j \pm \varepsilon \} \right)
   \]
with $\varepsilon > 0$ being chosen sufficiently small. Clearly,
$\hat {J}_\ell : \hat {W}^-_{v, \ell } \rightarrow {\mathcal M}_{v,
\ell }$ is an embedding.
The component $(x^+_j, x^-_j)$ of $\hat {J}_\ell (\gamma _x)$ is an
element of $P_j$ and the component $(x^+_\ell , x)$ is in $Q_\ell .$

\begin{theorem}
\label{Theorem1.4.13} Assume that $M$ is a smooth manifold, $(h,X)$ a
Morse-Smale pair and $v \in \mbox{\rm Crit}(h)$. Then,

\begin{list}{\upshape }{
\setlength{\leftmargin}{9mm}
\setlength{\rightmargin}{0mm}
\setlength{\labelwidth}{13mm}
\setlength{\labelsep}{2.9mm}
\setlength{\itemindent}{0,0mm}}

\item[{\rm (i)}] $\hat {W}^-_v$ has a canonical structure of a smooth manifold
with corners.

\smallskip

\item[{\rm (ii)}] $\hat {W}^-_v$ is of dimension $i(v)$ and for any $1 \leq
k \leq \dim \hat {W}^-_v$
   \[ \partial _k \hat {W}^-_v = \bigsqcup _{w < v} \partial _{k - 1}
      {\mathcal B}(v,w) \times W^-_w .
   \]
whereas $\partial _0 \hat {W}^-_v = W^-_v$.

\smallskip

\item[{\rm (iii)}] The extension $\hat {i}_v : \hat {W}^-_v \rightarrow M$ of
the inclusion $i_v : W^-_v \rightarrow M$ is smooth where $\hat {i}_v$ is
given on ${\mathcal B}(v,w) \times W^-_w$ for any $w < v$ by the
composition of the projection ${\mathcal B}(v,w) \times W^-_w \rightarrow
W^-_w$ with the inclusion $W^-_w \hookrightarrow M$.
\end{list}
\end{theorem}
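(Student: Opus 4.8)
The plan is to imitate the proof of Theorem~\ref{Theorem1.4.12}, now feeding the family $\{Q_k\}$ of Lemma~\ref{Lemma4.5} into the argument alongside the family $\{P_k\}$ of Lemma~\ref{Lemma4.4}. Fix $\ell_0$ with $h(v)=c_{\ell_0-1}$. The chart $\hat W^-_{v,\ell_0-1}$ is an open subset of the smooth manifold $W^-_v$, so only $\ell\ge\ell_0$ needs attention. For such $\ell$ I would set
\[
\mathcal P\equiv\mathcal P_{\ell_0\ell}:=\Big(\prod_{j=\ell_0}^{\ell-1}P_j\Big)\times Q_\ell ,
\]
which by Lemma~\ref{Lemma4.4}, Lemma~\ref{Lemma4.5} and Corollary~\ref{Corollary 4.4.5} is a manifold with corners of dimension $(\ell-\ell_0)(n-1)+n$, with $\partial_k\mathcal P$ the disjoint union, over $(\sigma(\ell_0),\dots,\sigma(\ell-1),\tau)\in\{0,1\}^{\ell-\ell_0+1}$ with $\sum_j\sigma(j)+\tau=k$, of $\prod_j\partial_{\sigma(j)}P_j\times\partial_\tau Q_\ell$. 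Let $\mathfrak f:\mathcal P\to\mathcal M_{\ell_0\ell}$ be the product of the inclusions $P_j\hookrightarrow M^+_j\times M^-_j$ and $Q_\ell\hookrightarrow M^+_\ell\times h^{-1}\big((c_{\ell+1},c_{\ell-1})\big)$; it is a smooth embedding. Let $\mathcal N\subseteq\mathcal M_{\ell_0\ell}$ be the image of
\[
\mathcal N':=\big(W^-_v\cap M^+_{\ell_0}\big)\times M^-_{\ell_0}\times\cdots\times M^-_{\ell-1}\times h^{-1}\big((c_{\ell+1},c_{\ell-1})\big)
\]
under the embedding encoding the gluing relations $x^+_{j+1}=\psi_j(x^-_j)$ via the diffeomorphisms $\psi_j$ of Subsection~\ref{3.2Preliminary constructions}; it is a submanifold of $\mathcal M_{\ell_0\ell}$. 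Exactly as in Subsection~\ref{2.3Spaces of broken trajectories}, one checks that $\hat J_\ell$ identifies $\hat W^-_{v,\ell}$ with $\mathfrak f^{-1}(\mathcal N)$ as topological spaces, so it suffices to equip $\mathfrak f^{-1}(\mathcal N)$ with a canonical structure of manifold with corners.

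By Lemma~\ref{Lemma 4.4.9} this holds as soon as $\mathfrak f$ is transversal to $\mathcal N$ along every corner stratum of $\mathcal P$. This transversality, again forced by the Morse-Smale condition, is the crux of the argument; I would verify it in the subsequent subsection, by the same method and simultaneously with the transversality \eqref{4.3.2} used for $\mathcal B(v,w)$. Granting it, Lemma~\ref{Lemma 4.4.9} gives $\dim\mathfrak f^{-1}(\mathcal N)=\dim\mathcal P-(\dim\mathcal M_{\ell_0\ell}-\dim\mathcal N')=i(v)$ and $\partial_k\mathfrak f^{-1}(\mathcal N)=\mathfrak f^{-1}(\mathcal N)\cap\partial_k\mathcal P$. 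Translating the pieces of $\partial_k\mathcal P$ back through $\hat J_\ell$ — a factor $\partial_1P_j=S_j$ encodes a break at the critical value $c_j$, while $\partial_1Q_\ell=T_\ell$ encodes the endpoint lying on the unstable manifold of a critical point with value $c_\ell$ — and recognizing the intermediate breaks as strata of $\mathcal B(v,w)$ via Theorem~\ref{Theorem1.4.12}(ii), I would identify $\partial_k\hat W^-_{v,\ell}$ with $\big(\bigsqcup_{w<v}\partial_{k-1}\mathcal B(v,w)\times W^-_w\big)\cap\hat W^-_{v,\ell}$; the shift $k\mapsto k-1$ is produced by the break separating the $\mathcal B(v,w)$-part from the $W^-_w$-part, present for every $w<v$, so that $\partial_0\hat W^-_v=W^-_v$. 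It then remains to glue the charts: on $\hat W^-_{v,\ell}\cap\hat W^-_{v,\ell+1}=\{c_{\ell+1}<\hat h_v<c_\ell\}$ the transition maps are assembled from the flow $\Psi_t$ and the diffeomorphisms $\psi_j$ (the passage from the $Q_\ell$-factor of one chart to the $P_\ell$-factor followed by the $Q_{\ell+1}$-factor of the next being handled by Lemma~\ref{Lemma4.4} and Lemma~\ref{Lemma4.5}), hence are diffeomorphisms; and one checks, exactly as for the topology in Subsection~\ref{2.3Spaces of broken trajectories}, that the resulting differentiable structure is independent of $\varepsilon$ and of the auxiliary numbers $\delta_\ell$. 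This yields (i) and (ii).

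For (iii), under the chart $\hat J_\ell$ the restriction of $\hat i_v$ to $\hat W^-_{v,\ell}$ equals the composition of $\hat J_\ell$ with the projection $\mathcal M_{\ell_0\ell}\to h^{-1}\big((c_{\ell+1},c_{\ell-1})\big)\hookrightarrow M$ onto the last factor, hence is smooth; since for $w<v$ this last factor is precisely the point $x\in W^-_w$, this recovers the description of $\hat i_v$ in the statement, and the charts patch to a smooth map $\hat W^-_v\to M$. The step I expect to be the real obstacle is the transversality of $\mathfrak f$ to $\mathcal N$ at the boundary faces of $\mathcal P$, i.e.\ showing that the Morse-Smale condition propagates to the required transversality on every corner stratum, where the presence of the bundle (rather than graph) factor $Q_\ell$ demands extra care; the combinatorial identification of the strata $\partial_k\hat W^-_v$ is the secondary, essentially bookkeeping, difficulty.
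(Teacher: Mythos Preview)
Your proposal is correct and follows essentially the same approach as the paper: the same open covering $(\hat W^-_{v,\ell})_\ell$, the same auxiliary spaces $\mathcal P_{\ell_0\ell}=\big(\prod_{j=\ell_0}^{\ell-1}P_j\big)\times Q_\ell$, $\mathcal M_{\ell_0\ell}$, $\mathcal N'$, the same appeal to Lemma~\ref{Lemma 4.4.9} with the transversality \eqref{4.3.3} deferred to Subsection~\ref{3.4Transversality properties}, the same dimension count, the same gluing on $\hat W^-_{v,\ell}\cap\hat W^-_{v,\ell+1}$ via the flow $\Psi_t$, and the same identification of $\hat i_v$ with a projection. Your reading of the index shift $k\mapsto k-1$ in the stratification is the right one, and your assessment that the transversality at the corner strata of $\mathcal P$ is the genuine obstacle (the $Q_\ell$-factor being a bundle rather than a graph) is exactly where the paper spends its effort in Subsection~\ref{3.4Transversality properties}.
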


\medskip

\begin{remark} Combined with Theorem~\ref{Theorem3.3},
Theorem~\ref{Theorem1.4.13} implies that $\hat {i}_v$ and $\hat {h}_v:= h
\circ \hat {i}_v$ are smooth, proper maps.
\end{remark}

\begin{proof}
As outlined above
we consider the open covering
$(\hat {W}^-_\ell )_{\ell \geq \ell _0 - 1}$ of $\hat {W}^-_v$ given
by
   \[ \hat {W}^-_\ell := \big\{ \gamma _x \in \hat {W}^-_v \big\arrowvert
      c_{\ell + 1} < h(x) < c_{\ell - 1} \big\}
   \]
where $\ell _0$ is the integer with $h(v) = c_{\ell _0 - 1}$. First we
define a differentiable structure of a manifold with corners for each
of the open sets $\hat {W}^-_\ell $ so that the restrictions of $\hat
{i}_v$ to $\hat {W}^-_\ell $ is a smooth map. In a second step we then
check that for any $\ell , \ell ' \geq \ell _0 - 1$, $\hat {W}^-_\ell $
and $\hat {W}^-_{\ell '}$ induce the same differentiable structure on
the intersection $\hat {W}^-_\ell \cap \hat {W}^-_{\ell '}$. This
then proves that $\hat {W}^-_v$ has a structure of a
smooth manifold with corners and that
$\hat {i}_v$ is smooth. To define a differentiable structure on $\hat
{W}^-_\ell $ we proceed in a similar way as for ${\mathcal B}(v,w)$
(cf proof of Theorem~\ref{Theorem1.4.12}).

First note that $\hat {W}^-_{\ell _0 - 1}$ is an open subset of
$W^-_v$, hence a smooth manifold. For $\ell \geq \ell _0$ introduce
- with a view towards an application of Lemma~\ref{Lemma 4.4.9} - the
following spaces
   \begin{align*} &{\mathcal P} \equiv {\mathcal P}_{\ell _0 \ell }:=
                     \left( \prod ^{\ell - 1} _{j = \ell _0} P_ j
					 \right) \times Q_\ell \\
                  &{\mathcal M} \equiv {\mathcal M}_{\ell _0 \ell }:=
                     \prod ^{\ell - 1} _{j = \ell _0} \left( M^+_j
					 \times M^-_j \right)
                     \times \left( M^+_\ell \times M_{\ell + 1, \ell -
                     1} \right)  \\
                  &{\mathcal N}' \equiv {\mathcal N}'_{v, \ell }:= \left(
                     W^- _v \cap M^+_{\ell _0} \right) \times
					 \left( \prod ^{\ell -1 } _{j = \ell _0} M^-_j \right)
					 \times M_{\ell + 1, \ell - 1}
   \end{align*}
where we recall that
   \[ M_{\ell + 1, \ell - 1} = \left\{ x \in M \big\arrowvert c_{\ell
      + 1} < h(x) < c_{\ell - 1} \right\}
   \]
and
   \[ Q_\ell = \left\{ (x^+, x) \in M^+_\ell \times M_{\ell + 1, \ell - 1}
      \big\arrowvert x^+ \sim x \right\} .
   \]

By Lemma~\ref{Lemma4.4}, Lemma~\ref{Lemma4.5}, and
Corollary~\ref{Corollary 4.4.5},
${\mathcal P}$ is a
manifold with corners of dimension $(\ell - \ell _0 + 1)(n - 1) + 1$ with
($0 \leq k \leq \dim {\mathcal P}$)
   \[ \partial _k {\mathcal P} = \bigsqcup _{|\sigma  | = k}
      \left( \prod ^{\ell
      - 1}_{j = \ell _0} \partial _{\sigma  (j)} P_j \right)
	  \times \partial _{\sigma  (\ell )} Q_\ell
   \]
where $\sigma = \left( \sigma (j) \right) _{\ell _0 \leq j \leq \ell } ,
\sigma (j) \in \{ 0,1\} $ and $|\sigma | = \sum ^\ell _{j = \ell _0}
\sigma (j)$. Further, ${\mathcal M}$ is a smooth manifold of dimension
$2(\ell - \ell _0 + 1)(n - 1) + 1$ and
   \[ {\mathfrak f}:= \left( \prod ^{\ell - 1}_{j = \ell _0} f_j \right)
      \times g_\ell :
      {\mathcal P} \rightarrow {\mathcal M}
   \]
is a smooth embedding where $f_j : P_j \rightarrow M^+_j \times M^-_j
\ (\ell _0
\leq j \leq \ell - 1)$ and $g_\ell : Q_\ell \rightarrow M^+_\ell
\times M_{\ell + 1, \ell - 1}$ denote the natural inclusions.

Finally, ${\mathcal N}'$ is a smooth manifold of dimension $i(v) +
(\ell - \ell _0 + 1)(n - 1)$ and can be canonically identified with a
submanifold of ${\mathcal M}$ as follows: introduce
   \[ \theta : {\mathcal N}' \rightarrow {\mathcal M} , \ \left( x^+
      _v, (x^-_j)_j , x \right) \mapsto \left( x^+_v , \left( x^-_j,
      \psi _j (x^-_j) \right) _j , x \right)  .
   \]
As $\psi _j : M^-_j \rightarrow M^+_{j + 1}$ are diffeomorphisms,
$\theta $ is a smooth embedding and thus
   \[ {\mathcal N}_\ell \equiv {\mathcal N}_{v, \ell } := \theta
      ({\mathcal N}'_{v, \ell })
   \]
is a submanifold of ${\mathcal M}$. ${\mathfrak f}^{-1}
({\mathcal N}_\ell )$
can be canonically identified with $\hat {W}^-_\ell $, being the image
of the embedding $\hat {J}_\ell : \hat {W}^-_\ell \rightarrow {\mathcal P}$
defined by
   \[ \hat {J}_\ell (\gamma _x) = \left( (x^+_j, x^-_j) _j, (x^+_\ell ,
      x) \right)
   \]
where $x^\pm _j$ are the points of intersection of $\gamma _x$ with the
level sets
   \[ M^\pm _j := h^{-1} \left( \{ c_j \pm \varepsilon \} \right) .
   \]
Hence we have to prove that ${\mathfrak f}^{-1}({\mathcal N}_\ell )$
is a manifold with corners.
In view of Lemma~\ref{Lemma 4.4.9}
this is the case if ${\mathfrak f}$ is transversal to ${\mathcal N}_\ell $,
 i.e. for any $0 \leq k \leq \dim {\mathcal P}$ and
${\mathfrak x} \in \partial _k {\mathcal P}$
   \begin{equation}
   \label{4.3.3} T_{{\mathfrak f}({\mathfrak x})} {\mathcal M} = T_{
                 {\mathfrak f}({\mathfrak x})} {\mathcal N}_\ell +
				 d_{\mathfrak x}
                 {\mathfrak f} (T _{\mathfrak x}\partial _k {\mathcal P}) .
   \end{equation}
Again, these transversality conditions will be verified
in the subsequent subsection using the assumption that
the vector field $X$ satisfies the Morse-Smale condition.

As in Subsection~\ref{2.3Spaces of broken trajectories} one
argues that the induced differentiable structure on $\hat {W}^-_\ell $
is independent of $\varepsilon $.
By Lemma~\ref{Lemma 4.4.9}, for any $\ell \geq \ell _0$
   \begin{align*} \dim {\mathfrak f}^{-1} ({\mathcal N}_\ell ) &=
                     \dim {\mathcal P} - \mbox{codim } {\mathcal N}
					 _\ell \\
                  &= \dim {\mathcal P} - \dim {\mathcal M} +
				     \dim {\mathcal N}'_\ell \\
				  &= \iota (v)
   \end{align*}
and for any $0 \leq k \leq \dim {\mathfrak f}^{-1}({\mathcal N}_\ell )$,
   \[ \partial _k {\mathfrak f}^{-1}({\mathcal N}_\ell ) = {\mathfrak f}
      ^{-1}
      ({\mathcal N}_\ell ) \cap \partial _k {\mathcal P} .
   \]
Using the identification $\hat {J}_\ell : \hat {W}_\ell \rightarrow
{\mathcal P}$ one sees that ${\mathfrak f}^{-1} ({\mathcal N}_\ell )
\cap \partial _k {\mathcal P}$ corresponds to
   \[ \partial _k \hat {W}^-_\ell = \underset {\underset {h(w) \geq c
      _\ell }{w \leq v}}{\bigsqcup } \partial _k {\mathcal B}(v,w)
      \times (W^-_w \cap M_{\ell + 1, \ell -1 }) .
   \]
In particular, for $k = 0$, the interior $\partial _0 \hat {W}^-_\ell $
of $\hat {W}^-_\ell $ is given by $W^-_v \cap M_{\ell + 1, \ell - 1}$.
Further, note that
   \[ \xymatrix{ \hat {W}^-_\ell \ar[dr]_-{\hat {i} _v  \mid _{\hat {W} ^-
         _\ell }} \ar[rr]^-{\hat {J}_\ell } &&  \hspace{ 10 pt}{\mathfrak f}
        ^{-1}
       ({\mathcal N}_\ell ) \ar[dl]^-{\pi  \mid _{{\mathfrak f}^{-1}
       ({\mathcal N}_\ell )}}   \\
       & M_{\ell + 1, \ell - 1} }
   \]
is commutative where
   \[ \pi : {\mathcal P} \rightarrow M , \ \left( (x^+_j , x^-_j) _{\ell _0
      \leq j \leq
      \ell - 1} , x^+_\ell , x \right) \mapsto x
   \]
denotes the projection onto the last component of ${\mathcal P}$. Hence
$\hat {i}_v \big\arrowvert _{\hat {W}^-_\ell }$ is a composition of smooth
maps, hence smooth.

In a second step we now
prove that $\hat {W}^-_\ell $ and $\hat {W}^-_{\ell '}$ induce
the same differentiable structure on the intersection $\hat {W}^-_\ell
\cap \hat {W}^-_{\ell '}$. Arguing as in the proof of
Proposition~\ref{Proposition3.2} first note that $\hat {W}^-_\ell
\cap \hat {W}^-_{\ell '} = \emptyset $ for $|\ell - \ell '| \geq 2$.
Hence it remains to consider the case where
$\ell \geq \ell _0 - 1$ and $\ell ':= \ell + 1$. Then
$D_\ell := \hat {W}^-_\ell \cap \hat {W}^-_{\ell + 1}$ is the set
of elements $\gamma _x \in \hat {W}^-_v$ with $c_{\ell + 1} < h(x)
< c_\ell $. First let us treat the case $\ell \geq \ell _0$. Then
   \[ \hat {J}_{\ell + 1} (\gamma _x) = \left( (x^+_j, x^-_j) _{\ell _0
      \leq j \leq
      \ell - 1} , \ x^+_\ell , x^-_\ell , x^+_{\ell + 1}, x \right)
   \]
and
   \[ \hat {J}_\ell (\gamma _x) = \left( (x^+_j, x^-_j)_{\ell _0
      \leq j \leq \ell - 1}, x^+_\ell , x \right) .
   \]
Note that the points $x^-_\ell , x^+_{\ell + 1}$ and $x$ are on the
trajectory $\gamma _x$ and contained in $M_{\ell + 1, \ell }$, hence
   \begin{align*} \gamma _x(c_{\ell + 1} + \varepsilon ) &= \Psi _{c_{\ell
                     + 1} + \varepsilon - h(x)} (x) \\
                  \gamma _x(c_\ell  - \varepsilon ) &= \Psi _{c_\ell
                     - \varepsilon - h(x)} (x) .
   \end{align*}
From the properties of the flow $\Psi _t(x)$ in the region $M_{\ell + 1,
\ell }$ one concludes that
   \[ \hat {J}_\ell (\gamma _x) \mapsto \hat {J}_{\ell + 1}(\gamma _x)
      = \left( (x^+_j, x^-_j) _{j \leq \ell - 1} , \ x^+_\ell , \Psi
      _{c_\ell - \varepsilon - h(x)} (x), \Psi _{c_{\ell + 1} +
      \varepsilon - h(x)} (x),x \right)
   \]
is a diffeomorphism from $\hat {J}_\ell (D_\ell )$ onto $\hat {J}_{\ell + 1}
(D_\ell )$. This shows that for $\ell \geq \ell _0$,
$\hat {W}^-_\ell $ and $\hat {W}^-_{\ell + 1}$
induce the same differentiable structure on the intersection $\hat {W}^-
_\ell \cap \hat {W}^-_{\ell + 1}$. The case $\ell = \ell _0 - 1$ is
treated in a similar fashion
and thus (i) is proved. Statements (ii) and
(iii) follow easily from the considerations above.
\end{proof}

\subsection{Transversality properties}
\label{3.4Transversality properties}

In this subsection we verify the transversality conditions \eqref{4.3.2}
and \eqref{4.3.3} stated in Subsection~\ref{3.3Spaces of trajectories}
which allow to apply Lemma~\ref{Lemma 4.4.9} and hence
to conclude that ${\mathcal B}
(v,w)$ and, respectively, $\hat {W}^-_\ell (v)$ are manifolds with
corners. Without further explanations we use the notation
from the previous sections.

\medskip

{\it Transversality condition}  \eqref{4.3.2}:
To illustrate our arguments let us first verify \eqref{4.3.2} for
${\mathfrak x} = (x^+_j, x^-_j) _{\ell _0 \leq j \leq \ell } \in
\partial _0 {\mathcal P}$. For such a point the image
$d_{\mathfrak x}{\mathfrak f} (T_
{\mathfrak x}{\mathcal P})$ of the tangent space $T_{\mathfrak x}
{\mathcal P} =
T_{\mathfrak x} \partial _0 {\mathcal P}$ by the differential $d_{\mathfrak x}
{\mathfrak f} :
T_{\mathfrak x} {\mathcal P} \rightarrow T_{{\mathfrak f} ({\mathfrak x})}
{\mathcal M}$ consists of vectors of the form
   \begin{equation}
   \label{4.4.1} ( \xi _j , d \varphi _j \cdot \xi _j) _{\ell _0
                 \leq j \leq \ell }
   \end{equation}
where $\xi _j \in T_{x^+_j } M^+_j$ and $\varphi _j$ is given by
\eqref{1.16bis} and $d \varphi _j \equiv d_{x^+_j} \varphi _j$. One computes
   \[ \dim d _{\mathfrak x}{\mathfrak f} (T_{\mathfrak x}
     {\mathcal P}) = \sum ^\ell
      _{j = \ell _0} \dim M^+_j = (n - 1) (\ell - \ell _0 + 1) .
   \]
The tangent space $T_{{\mathfrak f} ({\mathfrak x} )} {\mathcal N}$ consists
of all vectors of the form
   \begin{equation}
   \label{4.4.2} \left( \xi , \left( \zeta _j , d \psi _j (\zeta _j )
                 \right) _{\ell _0 \leq j \leq \ell - 1} , \zeta
                 \right)
   \end{equation}
where $\xi \in T_{x^+_{\ell _0 }} (W^-_v \cap M^+_{\ell _0}) , \ \zeta _j
\in T_{x^-_j} M^-_j$, $d \psi _j \equiv d_{x_j^-} \psi _j$
and $\zeta \in T_{x^-_\ell }(W^+_w \cap M^-
_\ell )$. It is of dimension
   \begin{align*} \dim T_{{\mathfrak f} ({\mathfrak r} )} {\mathcal N}
                     &= i(v) - 1 + \sum ^{\ell - 1}_{j = \ell _0} \dim
					 M^-_j + n - i(w) - 1 \\
                  &= i(v) - i(w) + (n - 1) (\ell - \ell _0 + 1) .
   \end{align*}
As $\dim {\mathcal M} = 2(n - 1) (\ell - \ell _0 + 1)$ it then follows
that
   \[ \dim d_{\mathfrak x}{\mathfrak f} (T_{\mathfrak x} {\mathcal P}) + \dim
      T_{{\mathfrak f}
      ({\mathfrak x})} {\mathcal N} - \dim {\mathcal M} = i(v) - i(w) - 1 .
   \]
To show the claimed transversality at the point ${\mathfrak x}$, it remains to
verify that
   \[ \dim \left( d_{\mathfrak x}{\mathfrak f} (T_{\mathfrak x}
      {\mathcal P}) \cap T_
      {{\mathfrak f}({\mathfrak x})}
      {\mathcal N} \right) = i(v) - i(w) - 1 .
   \]
In view of \eqref{4.4.1} - \eqref{4.4.2} and of the fact that $d_{x^-_j}
\psi _j$ and $d_{x^+_j} \varphi _j$ are isomorphisms (cf
Lemma~\ref{Lemma4.4}), the linear space $d_{\mathfrak x}
{\mathfrak f}  (T_{\mathfrak x}
{\mathcal P}) \cap T_{{\mathfrak f} ({\mathfrak x} )} {\mathcal N}$ is
linearly
isomorphic to the space of all elements $(\xi , \zeta )$ in $T_{x^+_{\ell _0}}
(W^-_v \cap M^+_{\ell _0}) \times T_{x^-_\ell } (W^+_w \cap M^-_\ell ) $
satisfying
   \[ \zeta = d \varphi _\ell \circ d \psi _{\ell - 1} \circ \ldots
      \circ d \varphi _{\ell _0} \xi .
   \]
Hence $d_{\mathfrak x}{\mathfrak f} (T_{\mathfrak x}
{\mathcal P}) \cap T_{{\mathfrak f}
({\mathfrak x})} {\mathcal N}$ is linearly isomorphic to the graph of
   \[ d \varphi _\ell \circ d \psi _{\ell - 1} \circ
      \ldots \circ d \varphi _{\ell _0 } : T
      _{x^+_{\ell _0}} (W^+_w \cap W^-_v \cap M^+_{\ell _0})
      \rightarrow T_{x^-_\ell } (W^+_w \cap W^-_v \cap M^-_\ell ) .
   \]
As, by assumption, $X$ is Morse-Smale, it follows that in the case
where $W^+_w \cap W^-_v \not= \emptyset $
   \[ \dim \left( d_{\mathfrak x} {\mathfrak f}  (T_{\mathfrak x}
      {\mathcal P}) \cap
      T_{{\mathfrak f} ({\mathfrak x} )} {\mathcal N}' \right)  = i(v) -
      i(w) - 1 .
   \]
To prove the transversality condition \eqref{4.3.2} for ${\mathfrak x} $
in $\partial _k {\mathcal P}$ with $1 \leq k \leq \dim {\mathfrak f} ^{-1}
({\mathcal N})$, let us first introduce some more notation. For any
$1 \leq k \leq \dim {\mathfrak f} ^{-1}({\mathcal N})$ and any $\sigma =
\left( \sigma (j) \right) _{\ell _0 \leq j \leq \ell }$ with $\sigma (j)
\in \{ 0,1 \} $ and $|\sigma |
= \sum _j \sigma (j) = k$, choose any element
   \[ {\mathfrak r} = (x^+_j, x^-_j)_j \in \prod ^\ell _{j = \ell _0}
      \partial _{\sigma (j)} P_j \subseteq \partial _k
      {\mathcal P} .
   \]
As
   \[ \partial _1 P_j = \bigsqcup _{h(u) = c_j} S^+_u \times S^-_u
   \]
for any $\ell _0 \leq j \leq \ell $ such that $\sigma (j) = 1$ there
exists a critical point $u_j
\in M_j$ with $x^\pm _j \in S^\pm _{u_j}$. Hence the tangent space
$T_{\mathfrak r} \partial _k {\mathcal P}$ is of the form $\prod ^\ell _{j
= \ell _0} E_j$ where
   \[ E_j = \begin{cases}
            T_{(x^+_j, x^-_j)} P_j = (Id \times d \varphi _j) \cdot
               T_{x^+_j } M^+_j \quad &\mbox{ if } \sigma (j) = 0 \\
            T_{x^+_j} S^+_{u_j} \times T_{x^-_j} S^-_{u_j} \quad
               &\mbox{ if } \sigma (j) = 1 .
   \end{cases}
   \]
Further, write $T_{\mathfrak x} {\mathcal M} = \prod ^\ell _{j = \ell _0} F_j$
where $F_j:= F^+_j \times F^-_j$ with
   \[ F^\pm _j := T_{x^\pm _j} M^\pm _j ,
   \]
and let $\alpha := d_{\mathfrak x} {\mathfrak f} = \prod ^\ell _{j =
\ell _0} \alpha _j$
where $\alpha _j = \alpha ^+_j \times
\alpha ^-_j$ is given by the canonical projections,
   \[ \alpha ^\pm _j : T_{(x^+_j, x^-_j)} P_j \longrightarrow T_{x^\pm _j}
      M^\pm _j
   \]
when $\sigma (j) = 0$ whereas $\alpha _j = \mbox{diag}(\alpha ^+_j,
\alpha ^-_j)$ with $\alpha ^\pm _j$ denoting now the natural
inclusions
   \[ \alpha ^\pm _j : T_{x^\pm _j} S^\pm _{u_j} \rightarrow T
      _{x^\pm _j} M^\pm _j
   \]
when $\sigma (j) = 1$. In the sequel we will not distinguish between
   \[ \alpha ^\pm _j : T_{x^\pm _j} S^\pm _{u_j} \rightarrow T
      _{x^\pm _j} M^\pm _j
   \]
and its trivial extension
   \[ \alpha ^\pm _j : T_{x^+_j} S^+_{u_j} \times T_{x^-_j} S^+
      _{u_j} \rightarrow T_{x^\pm _j} M^\pm _j .
   \]
Finally, $T_{{\mathfrak f}({\mathfrak x} )} {\mathcal N}$ is
isomorphic to $\prod ^\ell _{j = \ell _0 - 1} G_j$ where
   \[ G_j:= \begin{cases}
            T_{x^+_{\ell _0}} (W^-_v \cap M^+_{\ell _0}) \quad
            &j = \ell _0 - 1 \\
      T_{x^-_j} (M^-_j) \quad &\ell _0 \leq j \leq \ell - 1 \\
      T_{x^-_\ell } (W^+_w \cap M^-_\ell ) \quad &j = \ell .
   \end{cases}
   \]
The linear map $\beta : \prod ^\ell _{j = \ell _0 - 1} G_j
\rightarrow T_{{\mathfrak f}({\mathfrak x}) }{\mathcal M}$
identifying $\prod
^\ell _{j = \ell _0 - 1} G_j$ with $T_{{\mathfrak f}
({\mathfrak x})} {\mathcal N}$ is given by
$\beta = \beta _{\ell _0 - 1} \times \prod ^{\ell - 1}
_{j = \ell _0} \overline {\beta }_j \times \beta _\ell $ where
$\beta _{\ell _0 -
1} : G_{\ell _0 - 1} \hookrightarrow T_{x^+_{\ell _0 }} M^+_{\ell
_0}$ and $\beta _\ell : G_\ell \hookrightarrow T_{x^-_\ell } M^-
_\ell $ are the natural inclusions; for $\ell _0 \leq j \leq \ell
-1$
   \[ \overline {\beta }_j : G_j \rightarrow T_{x^-_j} M^-_j
      \times T_{x^+_{j + 1}} M^+_{j + 1}
   \]
is given by
   \[ \overline {\beta }_j:= Id \times \beta _j \ ; \quad \beta _j :=
      d _{x^-_j } \psi _j .
   \]
The situation at hand can best be described with the following
diagram

\footnotesize

  \medskip

   \[ \xymatrix{&  E_{\ell _0} \ar[dl]_-{\alpha ^+_{\ell _0}}
                    \ar[dr]^-{\alpha ^-_{\ell _0}} &&&&
                    E_{\ell _0 + 1} \ar[dl]_-{\alpha ^+_{\ell _0 + 1}}
                    \ar[dr]^-{\alpha ^-_{\ell _0 + 1}} && ..... \\
                    F^+_{\ell _0} && F^-_{\ell _0}  && F^+_{\ell _0 + 1}
                    &&& ..... \\
                    G_{\ell _0 - 1} \ar[u]^-{\beta _{\ell _0 - 1}}
                    &&& G_{\ell _0} \ar[ul]^-{Id} \ar[ur]_-{\beta
                   _{\ell _0}} &&&& ..... }
   \]

\vskip 1 cm

   \[ \xymatrix{ ..... &&&& E_\ell  \ar[dl]_-{\alpha ^+_\ell }
                    \ar[dr]^-{\alpha ^-_\ell} \\
                    ..... & F^-_{\ell - 1} && F^+_\ell && F^-_\ell \\
                    ..... && G_{\ell - 1} \ar[ul]^-{Id} \ar[ur]_-{\beta
                   _{\ell - 1}} &&& G_\ell \ar[u]^-{\beta _\ell } }
   \]

\bigskip

\centerline{Diagram 1}

\vskip 1 cm

\normalsize

To prove \eqref{4.3.2} in the case $1 \leq k \leq \dim {\mathfrak f}^{-1}
({\mathcal N})$ it is to show that Diagram 1 satisfies the
transversality condition
   \begin{equation}
   \label{4.4.3} \alpha \left( \prod ^\ell _{j = \ell _0} E_j \right)
                 + \beta \left( \prod ^\ell _{j = \ell _0 - 1} G_j
                 \right) = \prod ^\ell _{j = \ell _0} F_j .
   \end{equation}
From the definition of $\alpha _j$ one sees that Diagram 1 {\it splits}
at any $E_j$ with $\sigma (j) = 1$. As we treat the case
$|\sigma | = k \geq 1$
this implies that Diagram 1 splits up into Diagram 2 (beginning),
$|\sigma | - 1$ diagrams of the type of Diagram 3 (middle pieces)
and Diagram 4 (end).

\scriptsize

   \[ \xymatrix{&  E_{\ell _0} \ar[dl]_-{\alpha ^+_{\ell _0}}
                    \ar[dr]^-{\alpha ^-_{\ell _0}} &&&&
                    ..... &&& T_{x^+_j} S^+_{u_j} \ar[d]_-{\alpha ^+_j} \\
                    F^+_{\ell _0} && F^-_{\ell _0}  && F^+_{\ell _0 + 1}
                   & ..... &F^-_{j - 1} && F^+ _j \\
                   G_{\ell _0 - 1} \ar[u]^-{\beta _{\ell _0-1}}
                   &&& G_{\ell _0} \ar[ul]^-{Id} \ar[ur]_-{\beta
                   _{\ell _0}} && .....
                   && G_{j - 1} \ar[ul]^-{Id} \ar[ur]_-{\beta
                   _{j - 1}}  }
   \]

\bigskip

\centerline{Diagram 2}

\vskip 1 cm

   \[ \xymatrix{T_{x^-_j}S^-_{u_j} \ar[dr]^-{\alpha ^-_j} &&&& ..... &&&&
                   T_{x^+_i}S^+
                   _{u_i} \ar[dl]^-{\alpha ^+_i } \\
                   & F^-_j && F^+_{j + 1} & ..... &F^-_{i - 1}&& F^+_i \\
                   && G_j \ar[ul]^-{Id} \ar[ur]_-{\beta _j} && .....
                   && G_{i - 1} \ar[ul]^-{Id} \ar[ur]_-{\beta _{i - 1}}
               }
   \]

\bigskip

\centerline{Diagram 3}

\vskip 1 cm

  \[ \xymatrix{T_{x^-_{j'}}S^-_{u_{j'}} \ar[dr]^-{\alpha ^-_{j'}}
                  &&&& ..... &&&& E
                   _{\ell } \ar[dl]_-{\alpha ^+_\ell } \ar[dr]
                  ^-{\alpha^-_\ell } \\
                  & F^-_{j'} && F^+_{j' + 1} & ..... &F^-_{\ell - 1}
				  && F^+_\ell && F^-_\ell \\
                  && G_{j'} \ar[ul]^-{Id} \ar[ur]_-{\beta _{j'}} && .....
                  && G_{\ell - 1} \ar[ul]^-{Id} \ar[ur]_-{\beta
                  _{\ell - 1}} &&& G_\ell \ar[u]^-{\beta _\ell }  }
   \]

\bigskip

\centerline{Diagram 4}

\bigskip

\normalsize

In each of the latter three diagrams all maps are linear isomorphisms
(cf Lemma~\ref{Lemma4.4}) except for $\beta _{\ell _0}$ and
$\beta _\ell $ which
are both $1-1$. As in the case $k = 0$ treated above,
the transversality of these diagrams then all follow
from the assumption that $X$ is Morse-Smale, i.e. that for any $u, u'
\in \mbox{\rm Crit}(h)$ and $j$,
   \[ \left( W^-_u \cap M^\pm _j \right) \pitchfork \left( W^+_{u'}
      \cap M^\pm _j \right) .
   \]
Hence \eqref{4.3.2} is proved for any $0 \leq k \leq \dim f^{-1}
({\mathcal N})$.

{\it Transversality condition}  \eqref{4.3.3}:
The proof of \eqref{4.3.3} is very similar to the one for
\eqref{4.3.2}. The only difference is that the last component of
${\mathcal P},
{\mathcal M}$, and ${\mathcal N}$ get changed from $P_\ell , M^-
_\ell $ and $W^+_w \cap M^-_\ell $ to $Q_\ell , M_{\ell + 1, \ell -
1}$ and $M_{\ell + 1, \ell - 1}$, respectively.

For the purpose of illustration let us again first verify
\eqref{4.3.3} for
   \[  {\mathfrak x} = \left( (x^+_j, x^-_j) _{\ell _0 \leq j \leq
      \ell - 1}, x^+_\ell , x \right) \in \partial _0 {\mathcal P} .
   \]
Note that the image $d_{\mathfrak x}{\mathfrak f}(T_{\mathfrak x}
{\mathcal P})$ of the
tangent space $T_{\mathfrak x} {\mathcal P} = T_{\mathfrak x} \partial _0
{\mathcal P}$ by the differential $d_{\mathfrak x}{\mathfrak f}$
consists of vectors of the form
   \begin{equation}
   \label{4.4.4} \left( (\xi _j, d \varphi _j \cdot \xi _j) _{\ell _0
                 \leq j \leq \ell - 1} , \ (d \eta ^+_\ell \cdot
                 \xi _\ell , \xi _\ell ) \right)
   \end{equation}
where $\xi _j \in T_{x^+_j} M^+_j (\ell _0 \leq j \leq \ell - 1),
\xi _\ell \in T_x M$, and $d \eta ^+_\ell \equiv d_x \eta ^+_\ell $
with
   \[ \eta ^+_\ell : M_{\ell + 1, \ell - 1} \backslash W^-_\ell
      \rightarrow M^+_\ell , x \mapsto x^+_\ell
   \]
defined in terms of the flow $\Psi _t$ -- see \eqref{5.1bis} in
Subsection~\ref{3.2Preliminary constructions}. One computes
   \begin{align*} \dim d_{\mathfrak x}{\mathfrak f}(T_{\mathfrak x}
   {\mathcal P}) &= \sum
                     ^{\ell - 1} _{j = \ell _0} \dim M^+_j + \dim
                     Q_\ell \\
                   &= (n - 1) (\ell - \ell _0 + 1) + 1 .
   \end{align*}
The space $T_{{\mathfrak f}({\mathfrak x} )}{\mathcal N}$ consists of all
vectors of the form
   \begin{equation}
   \label{4.4.5} \left( \xi , (\zeta _j, d\psi _j \cdot \zeta _j)
                 _{\ell _0 \leq j \leq \ell - 1} , \zeta \right)
   \end{equation}
where $\xi \in T_{x^+_{\ell _0}} (W^-_v \cap M^+_{\ell _0}), \ \zeta
_j \in T_{x^-_j} M^-_j$, and $\zeta \in T_x M_{\ell + 1, \ell - 1}$.
It is of dimension
   \begin{align*} \dim T_{{\mathfrak f}({\mathfrak x} )} {\mathcal N}
                  &= i(v) - 1 + \sum ^{\ell - 1}_{j = \ell _0} \dim M^-_j
                  + n \\
                  &= i(v) + (n - 1) (\ell - \ell _0 + 1) .
   \end{align*}
As $\dim {\mathcal M} = 2(n - 1)(\ell - \ell _0 + 1) + 1$ it then
follows that
   \[ \dim d_{\mathfrak x}{\mathfrak f}(T_{\mathfrak x}
      {\mathcal P}) + \dim T_{{\mathfrak f}
      ({\mathfrak x})}{\mathcal N} - \dim {\mathcal M} = i(v) .
   \]
Hence to show the claimed transversality at ${\mathfrak x}$ it remains to
verify that
   \[ \dim \left( d_{\mathfrak x}{\mathfrak f}(T_{\mathfrak x} {\mathcal P})
      \cap T
      _{{\mathfrak f}({\mathfrak x} )} {\mathcal N} \right) = i(v) .
   \]
In view of \eqref{4.4.4} - \eqref{4.4.5} and the fact that $d_{x^-_j}
\psi _j$ and $d_{x^+_j} \varphi _j$ are isomorphisms (cf
Lemma~\ref{Lemma4.4}) and $d_x \eta ^+_\ell $ is onto (cf
Lemma~\ref{Lemma4.5}) the linear space $d_{\mathfrak x}{\mathfrak f}
(T_{\mathfrak x}
{\mathcal P}) \cap T_{{\mathfrak f}({\mathfrak x} )}{\mathcal N}$ is
linearly isomorphic to the subspace of
   \[ T_{x^+_{\ell _0}} (W^-_v \cap M^+_{\ell _0}) \times T_x M_{\ell -
      1, \ell + 1}
   \]
consisting of elements $(\xi , \zeta )$ satisfying
   \[ d\psi _{\ell - 1} \circ \cdots \circ d\varphi _{\ell _0} \xi
      = d_x \eta ^+_\ell \zeta .
   \]
As $\dim \left( T_{x^+_{\ell _0}} (W^-_v \cap M^+_{\ell _0}) \right) =
i(v) - 1$ and $d_x \eta ^+_\ell $ has a one dimensional null space
it follows that
   \[ \dim \left( d_{\mathfrak x}{\mathfrak f} (T_{\mathfrak x}
      {\mathcal P}) \cap T
      _{{\mathfrak f}({\mathfrak x} )} {\mathcal N} \right) = i(v) .
   \]
To prove the transversality condition \eqref{4.3.3} for
${\mathfrak x} =
((x^+_j , x^-_j)_{\ell _0 \leq j \leq \ell - 1}, x^+_\ell , x)$
in $\partial _k {\mathcal P}$ with $1 \leq k \leq \dim {\mathfrak f}^{-1}
({\mathcal N})$ we introduce first some more notation. Recall that $Q
_\ell $ is a manifold with boundary and that the boundary $\partial _1 Q
_\ell $ is given by
   \[ \partial _1 Q_\ell = \underset {\underset {h(w) = c_\ell }{w \in
      \mbox{\scriptsize \rm Crit}(h)}}{\bigsqcup } S^+_w \times (W^-_w
	  \cap M_{\ell + 1,
      \ell - 1}) .
   \]
The tangent space $T_{\mathfrak x} \partial _k {\mathcal P}$ is again of
the form $\prod ^\ell _{j = \ell _0} E_j$ as defined above
except that the last component
$E_\ell $ is now given by
   \[ E_\ell = \begin{cases}
               T_{(x^+_\ell ,x)} Q_\ell = (d_x \eta ^+_\ell \times Id)
               T_x M_{\ell + 1, \ell - 1} \quad &\mbox { if } \sigma
               (\ell ) = 0 \\
                T_{x^+_\ell } S^+_w \times T_x(W^-_w \cap M_{\ell + 1,
               \ell - 1}) \quad &\mbox { if } \sigma (\ell ) = 1
   \end{cases}
   \]
where $w \in \mbox{\rm Crit}(h)$ is the critical point so that
   \[ (x^+_\ell , x) \in S^+_w \times (W^-_w \cap M_{\ell + 1, \ell - 1}
      ) .
   \]
Similarly, $T_{\mathfrak x} {\mathcal M} = \prod ^\ell _{j = \ell _0} F_j$
except that $F^-_\ell $ in $F_\ell = F^+_\ell \times F^-_\ell $ is now
given by
   \[ F^-_\ell = T_x M_{\ell + 1, \ell - 1}
   \]
and $\alpha := d_{\mathfrak x} {\mathfrak f} = \prod ^\ell _{j = \ell _0}
\alpha _j$ with the exception that $\alpha ^-_
\ell $ in $\alpha _\ell = \alpha ^+_\ell \times \alpha ^-_\ell $
in the case $\sigma (\ell ) = 0$ is given
by the canonical projection on the last component
   \[ \alpha ^-_\ell : T_{(x^+_\ell , x)} Q_\ell \rightarrow T_x M
      _{\ell + 1, \ell - 1}
   \]
whereas when $\sigma (\ell ) = 1$,
$\alpha ^-_\ell $ in $\alpha
_\ell = diag (\alpha ^+_\ell , \alpha ^-_\ell )$ is given by the
natural inclusion
   \[ \alpha ^-_\ell : T_x W^-_w \rightarrow T_x M_{\ell + 1, \ell - 1} .
   \]
Furthermore, $T_x {\mathfrak f}^{-1}({\mathcal N}) = \prod ^\ell _{j =
\ell _0 - 1} G_j$ where $G_\ell $ is now given by
   \[ G_\ell := T_x M_{\ell + 1, \ell - 1 } .
   \]
Finally the map $\beta = \beta _{\ell _0 - 1} \times
\prod ^{\ell - 1}_{j = \ell _0} \overline {\beta _j} \times
\beta _\ell $ is the same as above with the exception that now
$\beta _\ell : G_\ell \rightarrow T_x M_{\ell + 1, \ell - 1}$ is
the identity map. With these changes made one then argues as above to
prove the transversality conditions \eqref{4.3.3}.

\section{Geometric  complex and integration map}
\label{9. Geometric complex and integration map}

In this section we introduce the geometric complex
associated to a Morse-Smale pair $(h,X).$
 Using that the unstable manifolds
of the vector field $X$ admit a
compactification with the structure of a smooth manifold with corners
we then define a morphism between the de Rham complex and the geometric
complex by integrating forms on unstable manifolds. This map can be proven  to
induce an isomorphism in cohomology.

\subsection{Coherent orientations and coverings}
\label{4.1Orientations and coverings}

In this subsection we discuss some additional notions and results needed
for defining the geometric complex.

{\it Orientation of a manifold with corners:}
Let $M$ be a smooth manifold with corners of dimension $n$. Denote
by $\det (M) \rightarrow M$ the vector bundle of rank 1
whose fibre at $x \in M$ is the $n$'th exterior product $\Lambda
^n T_x M$ of the tangent space $T_x M$. 
As already mentioned in Subsection~\ref{4.2 Manifolds with corners}
an {\rm orientation}
${\mathcal O}$ of $M$ is an equivalence class of a nowhere
vanishing sections where two smooth sections $\sigma _j : M
\rightarrow \det (M) \ (j = 1,2)$ are said to be equivalent
if there exists a positive smooth function $\lambda : M \rightarrow
{\mathbb R}_{> 0}$ so that $\sigma _1(x) = \lambda (x) \sigma _2(x)$
for any $x \in M$. Moreover, at the end of
Subsection~\ref{4.2 Manifold with corners} we have
seen that the Cartesian product $M_1 \times M_2$ of smooth
manifolds with corners $M_j$ with orientation
${\mathcal O}_j \ (j = 1,2)$ admits a canonical orientation
${\mathcal O}_1 \otimes {\mathcal O}_2$, referred to as the
product orientation of ${\mathcal O}_1$ and ${\mathcal O}_2$.

{\it Coherent orientations:}
Let $(h,X)$ be a Morse-Smale pair. Recall that for any $v,w
\in \mbox{\rm Crit}(h)$ with $W^-_v \cap W^+_w \not= \emptyset , \
{\mathcal B}(v,w)$ is defined as the space of (broken and unbroken)
trajectories from $v$ to $w$ and is endowed with a canonical
structure of a smooth manifold with corners -- see
Theorem~\ref{Theorem1.4.12}. Its interior $\partial _0
{\mathcal B}(v,w)$ is given by the space ${\mathcal T}
(v,w)$ of unbroken trajectories from $v$ to $w$.

\medskip

\begin{lemma}
\label{Proposition4.13} In the above set-up, ${\mathcal T}(v,w)$
and hence ${\mathcal B}(v,w)$ are orientable.
\end{lemma}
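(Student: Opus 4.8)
The plan is to reduce the orientability of $\mathcal{T}(v,w)$ to the elementary fact that a submanifold of an orientable manifold whose normal bundle is trivial is itself orientable, exploiting that the stable manifold $W^+_w$ is contractible. Recall first the linear-algebra observation underlying this: if $A$ and $B$ are submanifolds of a manifold $M$ with $A \pitchfork B$ and $C := A \cap B$, then at each $p \in C$ one has $T_p A / T_p C = T_p A/(T_p A \cap T_p B) \cong (T_p A + T_p B)/T_p B = T_p M / T_p B$, so that the normal bundle of $C$ inside $A$ is canonically isomorphic to the restriction to $C$ of the normal bundle of $B$ inside $M$.

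I would then apply this with $A = W^-_v$, $B = W^+_w$ and $C = W^-_v \cap W^+_w$, which is legitimate since $(h,X)$ is Morse-Smale, so $W^-_v \pitchfork W^+_w$ by (MS2). By Lemma~\ref{Lemma1.2bis}, $W^+_w$ is diffeomorphic to $\mathbb{R}^{n - i(w)}$, hence contractible, so its normal bundle in $M$ is trivial, and a fortiori so is its restriction to $C$. Thus the normal bundle $\nu$ of $C$ inside $W^-_v$ is trivial. Since $W^-_v \cong \mathbb{R}^{i(v)}$ is orientable and $\det(TW^-_v)\big|_C \cong \det(TC) \otimes \det(\nu)$ with both $\det(TW^-_v)$ and $\det(\nu)$ trivializable, it follows that $\det(TC)$ is trivializable, i.e. $W^-_v \cap W^+_w$ is orientable. (No orientability assumption on $M$ is needed anywhere, which is essential for the Morse-complex application.)

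Finally I would pass to the quotient. The flow $\Phi$ acts freely on $W^-_v \cap W^+_w$ (cf.~\eqref{1.6bis}), and since $X$ is tangent to this submanifold and $X(h)<0$ there, the restriction of $h$ is a submersion on $W^-_v \cap W^+_w$; hence for any $a$ with $h(w) < a < h(v)$ the slice $(W^-_v \cap W^+_w) \cap L_a$ is a global section of the $\mathbb{R}$-action, giving a diffeomorphism $W^-_v \cap W^+_w \cong \mathbb{R} \times \big((W^-_v \cap W^+_w) \cap L_a\big)$ whose second factor is canonically $\mathcal{T}(v,w)$. An orientation of the (orientable) product together with the standard orientation of $\mathbb{R}$ induces one on $\mathcal{T}(v,w)$. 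Since, by Theorem~\ref{Theorem1.4.12}, $\mathcal{T}(v,w) = \partial_0 \mathcal{B}(v,w)$ is the interior of the manifold with corners $\mathcal{B}(v,w)$, Lemma~\ref{Lemma 4.4.11}(ii) then yields that $\mathcal{B}(v,w)$ is orientable as well.

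The only delicate point — the ``hard part'' — is the normal-bundle identification combined with the remark that contractibility of $W^+_w$ forces its normal bundle to be trivial; once this is in place, the remaining steps are routine bookkeeping with determinant line bundles and with the flow box around a regular level set.
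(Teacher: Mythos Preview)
Your argument is correct, but it is not the route the paper takes. The paper exploits the manifold-with-corners compactification $\hat W^-_v$: since $W^-_v \cong \mathbb{R}^{i(v)}$ is orientable and equals $\partial_0 \hat W^-_v$, Lemma~\ref{Lemma 4.4.11}(ii)--(iv) make $\hat W^-_v$ and all its $k$-boundaries orientable; in particular $\mathcal{T}(v,w) \times W^-_w \subseteq \partial_1 \hat W^-_v$ is orientable, and since $W^-_w$ is orientable so is $\mathcal{T}(v,w)$. Your proof instead stays entirely at the level of the open intersection $W^-_v \cap W^+_w$, using the standard transversality identification of its normal bundle in $W^-_v$ with the restricted normal bundle of $W^+_w$ in $M$, which is trivial by contractibility of $W^+_w$. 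This is more elementary in that it does not invoke the compactification theorems for the orientability of $\mathcal{T}(v,w)$ itself; on the other hand, the paper's approach has the payoff that the orientation it produces is \emph{precisely} the one induced from $\hat W^-_v$ via the boundary, which is exactly the orientation $\mathcal{O}_{vw}$ needed for the coherence statement in Proposition~\ref{Theorem4.14} and for Stokes' theorem in Proposition~\ref{Theorem4.17bis}. Both proofs finish the passage to $\mathcal{B}(v,w)$ the same way, via Lemma~\ref{Lemma 4.4.11}(ii).
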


\begin{proof}
Recall that the unstable manifold $W^-_v$ is diffeomorphic
to ${\mathbb R}^{i(v)}$, and hence orientable. Further recall that
$W^-_v$ is the interior of $\hat W^-_v, \partial _0 \hat W^-_v =
W^-_v$. By Lemma~\ref{Lemma 4.4.11} it then follows that
$\hat {W}^-_v$ as
well as $\partial _k \hat {W}^-_v \ (k \geq 1)$ are
orientable. As ${\mathcal T}(v,w)
\times W^-_w$ is contained in $\partial _1 \hat {W}^-_v$
and ${\mathcal T}(v,w)$ is the interior of ${\mathcal B}(v,w)$ it
follows that  ${\mathcal T}(v,w)$ and hence ${\mathcal B}(v,w)$
are orientable as well.
\end{proof}

\medskip

The following concept of coherent orientations
will be important in Subsection~\ref{4.3Geometric complex}
for constructing the geometric  complex.

\medskip

\begin{definition}
\label{Definition4.15} A collection $\{ {\mathcal O}_{uw}\}$
of orientations
${\mathcal O}_{uw}$ of ${\mathcal T}(u,w)$ (or equivalently of
${\mathcal B}(u,w)$)
for $u,w$ in $\mbox{\rm Crit}(h)$
with ${\mathcal T} (u,w) \not= \emptyset $ is said to be
a collection of {\rm coherent
orientations}
   \footnote{The concept of coherent orientations has been used
             in the framework of Floer theory by Floer and Hofer
\cite{FH}. }
if for any three critical points $u, v, w$ of $h$
with ${\mathcal T}(u,v),
{\mathcal T}(v, w)$, and ${\mathcal T}(u, w)$ nonempty, the product
orientation ${\mathcal O}_{uv} \otimes {\mathcal O}_{vw}$ on
${\mathcal T}(u, v) \times {\mathcal T}(v, w)$
is the opposite of the one canonically induced by the
orientation ${\mathcal O}_{uw}$ on ${\mathcal B}(u, w)$ when
viewing ${\mathcal T}(u,v) \times {\mathcal T}(v,w)$ as a
subset of $\partial _1 {\mathcal B}(u,w)$.
\end{definition}

Choose for any unstable manifold $W^-_u$ an orientation ${\mathcal O}
^-_u$. By the procedure explained in the proof of
Lemma~\ref{Proposition4.13}, ${\mathcal O}^-_u$ induces in a
canonical way an orientation on ${\mathcal T}
(u, w)$ for any $w \in \mbox{\rm Crit}(h)$ with ${\mathcal T} (u,w) \not=
\emptyset $. In the sequel we denote this orientation by
${\mathcal O}_{uw} \equiv {\mathcal O}_{uw}({\mathcal O} ^-_u)$ to indicate
that it is derived from the orientation ${\mathcal O}^-_u$of $W^-_u$.

\medskip

\begin{proposition}
\label{Theorem4.14} Assume that $(h,X)$ is a Morse-Smale pair
and choose for any $u \in \mbox{\rm Crit}(h)$ an orientation
${\mathcal O}^-_u$ of $W^-_u$.
Then $\{ {\mathcal O}_{uw} \equiv {\mathcal O}_{uw}({\mathcal O}
^-_u) \} $
is a collection of coherent orientations.
\end{proposition}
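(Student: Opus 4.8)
The plan is to deduce the coherence relation of Definition~\ref{Definition4.15} from the boundary structure of the compactified unstable manifolds $\hat W^-_u$, using Theorem~\ref{Theorem1.4.12}(ii) and Theorem~\ref{Theorem1.4.13}(ii). Recall first how $\mathcal{O}_{uw}$ is produced from $\mathcal{O}^-_u$: by Lemma~\ref{Lemma 4.4.11}(ii) the orientation $\mathcal{O}^-_u$ of $W^-_u=\partial_0\hat W^-_u$ extends uniquely to an orientation $\hat{\mathcal{O}}_u$ of $\hat W^-_u$, and by Lemma~\ref{Lemma 4.4.11}(iii)--(iv) this induces canonical orientations on every face of $\hat W^-_u$; on the component $\mathcal{T}(u,w)\times W^-_w$ of $\partial_1\hat W^-_u$ (cf.\ Theorem~\ref{Theorem1.4.13}(ii)) the induced orientation, written as a product orientation with $\mathcal{O}^-_w$ sitting in the $W^-_w$-factor, is by definition $\mathcal{O}_{uw}\otimes\mathcal{O}^-_w$. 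The idea is now to fix three critical points $w<v<u$ with $\mathcal{T}(u,v)$, $\mathcal{T}(v,w)$ and $\mathcal{T}(u,w)$ all nonempty, to look at one codimension $2$ corner of $\hat W^-_u$ that can be reached along two different chains of faces, and to compare the two orientations it thereby inherits.

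Concretely (passing to connected components if necessary), the closure in $\hat W^-_u$ of $\mathcal{T}(u,w)\times W^-_w$ is a $1$-face $F_1$ of $\hat W^-_u$ which, as a manifold with corners, is identified with $\mathcal{B}(u,w)\times\hat W^-_w$; likewise the closure of $\mathcal{T}(u,v)\times W^-_v$ is a $1$-face $F_2\cong\mathcal{B}(u,v)\times\hat W^-_v$. By the descriptions of $\partial_1\hat W^-_u$, $\partial_2\hat W^-_u$, $\partial_1\mathcal{B}(u,w)$ and $\partial_1\hat W^-_v$ in Theorems~\ref{Theorem1.4.12}--\ref{Theorem1.4.13} together with Lemma~\ref{Lemma 4.4.3}(ii), the closure $C$ of $\mathcal{T}(u,v)\times\mathcal{T}(v,w)\times W^-_w$ is a $2$-face of $\hat W^-_u$, identified with $\mathcal{B}(u,v)\times\mathcal{B}(v,w)\times\hat W^-_w$, and $C$ is a $1$-face of both $F_1$ (sitting in the $\partial_1\mathcal{B}(u,w)\times\hat W^-_w$ part of $\partial_1 F_1$) and $F_2$ (sitting in the $\mathcal{B}(u,v)\times\partial_1\hat W^-_v$ part of $\partial_1 F_2$).

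I would then induce the orientation of $\hat{\mathcal{O}}_u$ on $C$ along the two chains $\hat W^-_u\supset F_1\supset C$ and $\hat W^-_u\supset F_2\supset C$. Along the first: the induced orientation on $F_1$ is $\mathcal{O}_{uw}\otimes\hat{\mathcal{O}}_w$ --- it agrees with the defining product orientation of $\mathcal{O}_{uw}$ on the interior $\mathcal{T}(u,w)\times W^-_w$ of $F_1$, hence everywhere, an orientation of a face being determined by its restriction to the interior (Lemma~\ref{Lemma 4.4.11}(ii)); and the orientation it in turn induces on $C$ is $(\partial\mathcal{O}_{uw})\otimes\hat{\mathcal{O}}_w$, where $\partial\mathcal{O}_{uw}$ denotes the orientation $\mathcal{O}_{uw}$ induces on the component $\mathcal{T}(u,v)\times\mathcal{T}(v,w)$ of $\partial_1\mathcal{B}(u,w)$ --- this being the ``first factor'' boundary of a product, no extra sign appears. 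Along the second chain: the induced orientation on $F_2$ is $\mathcal{O}_{uv}\otimes\hat{\mathcal{O}}_v$, the orientation $\hat{\mathcal{O}}_v$ induces on the component $\mathcal{B}(v,w)\times\hat W^-_w$ of $\partial_1\hat W^-_v$ is $\mathcal{O}_{vw}\otimes\hat{\mathcal{O}}_w$, and since $C$ is a ``second factor'' boundary of $\mathcal{B}(u,v)\times\hat W^-_v$, the induced orientation on $C$ is $(-1)^{\dim\mathcal{B}(u,v)}\,\mathcal{O}_{uv}\otimes\mathcal{O}_{vw}\otimes\hat{\mathcal{O}}_w$. Finally, a short computation in the local model ${\mathbb R}^{n-2}\times{\mathbb R}^2_{\geq 0}$ shows that a corner of codimension $2$ inherits opposite orientations from its two adjacent $1$-faces; equating the two expressions up to this overall sign and cancelling the common factor $\hat{\mathcal{O}}_w$ produces exactly the required relation between $\mathcal{O}_{uv}\otimes\mathcal{O}_{vw}$ and the orientation induced by $\mathcal{O}_{uw}$ on $\mathcal{T}(u,v)\times\mathcal{T}(v,w)\subset\partial_1\mathcal{B}(u,w)$.

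The main obstacle is the sign bookkeeping. One must commit, once and for all, to (a) the convention of Lemma~\ref{Lemma 4.4.11}(iii) for the orientation induced on $\partial_1$ of a manifold with corners, (b) the product orientation and the precise sign by which $\partial_1(M_1\times M_2)$ splits into $\partial_1 M_1\times M_2$ and $M_1\times\partial_1 M_2$, and (c) the normalization by which $\mathcal{O}_{uw}$ is extracted from $\hat{\mathcal{O}}_u$; and one must then verify that, with these conventions, the dimension-dependent signs contributed by the two chains cancel and combine with the corner sign to leave precisely the single minus sign of Definition~\ref{Definition4.15}. The remaining ingredients --- the identification of the faces $F_1$, $F_2$, $C$ with the stated products, the principle that an orientation of a face is determined on its interior, and the ``two chains give opposite orientations'' statement --- are routine given Section~\ref{4 Manifold with corners} and Theorems~\ref{Theorem1.4.12}--\ref{Theorem1.4.13}.
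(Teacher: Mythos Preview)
Your approach is essentially the same as the paper's: both look at the codimension-$2$ stratum $\mathcal T(u,v)\times\mathcal T(v,w)\times W^-_w\subset\partial_2\hat W^-_u$, reach it through the two adjacent $1$-faces (the closure of $\mathcal T(u,w)\times W^-_w$ and the closure of $\mathcal T(u,v)\times W^-_v$), and exploit that the two routes induce opposite orientations on a $2$-corner. The paper bypasses your product--boundary bookkeeping by choosing, at a point $\mathfrak x$ of the corner, explicit vectors $a\in T_{\mathfrak x}(\mathcal B(u,w)\times W^-_w)\cap\mathcal C(\mathfrak x)$ and $b\in T_{\mathfrak x}(\mathcal T(u,v)\times\hat W^-_v)\cap\mathcal C(\mathfrak x)$ transversal to the corner, and then reading off directly that $\sigma=\tau^{(1)}\wedge a\wedge b=\tau^{(2)}\wedge b\wedge a$; this delivers $\tau^{(1)}=-\tau^{(2)}$ without ever naming a dimension-dependent sign, so the issue you flag as the ``main obstacle'' simply does not arise in the paper's execution.
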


\begin{proof} 
Let $u, v, w \in \mbox{Crit}(h)$ so that ${\mathcal T}
(u,v), {\mathcal T}(v,w)$ and ${\mathcal T}(u,w)$ are not empty.
Denote by ${\mathcal O}_{uvw}$ the orientation on
${\mathcal T} (u,v) \times {\mathcal T} (v,w)
\subseteq \partial _1 {\mathcal B}(u,w)$ induced from
${\mathcal B}(u,w)$ in a canonical way as explained in the proof of
Lemma~\ref{Lemma 4.4.11}
(iii) by viewing ${\mathcal T} (u,v) \times
{\mathcal T} (v,w)$ as a subset of
$\partial _1 {\mathcal B}(u,w)$.
It is to prove that ${\mathcal O}_{uvw} = - {\mathcal O}_{uv} \otimes
{\mathcal O}_{vw}$. The manifold ${\mathcal T}
(u,v) \times {\mathcal T} (v,w) \times W^-_w$,
being a subset of $\partial _2 \hat W^-_u$, is contained in
$\partial _1 \left( {\mathcal B}(u,w) \right)
\times W^-_w$ as well as in ${\mathcal T} (u,v) \times \partial _1
\hat {W}^-_v$. Denote by ${\mathcal O}^{(1)}$ and ${\mathcal O}^{(2)}$
the orientations on ${\mathcal T}(u,v) \times {\mathcal T}(v,w)
\times W^-_w$ induced from the orientations on $\partial _1({\mathcal
B}(u,w)) \times W^-_w$ and ${\mathcal T}(u,v) \times \partial _1
\hat {W}^-_v$, respectively.
Following the procedure explained in the proof of
Lemma~\ref{Lemma 4.4.11} (iii) one sees, that ${\mathcal O}^{(2)} =
{\mathcal O}_{uv} \otimes {\mathcal O}_{vw} \otimes
{\mathcal O}^-_w$ whereas ${\mathcal O}^{(1)}
= {\mathcal O}_{uvw} \otimes {\mathcal O}^-_w$. Hence ${\mathcal O}
_{uvw} = - {\mathcal O}_{uv} \otimes {\mathcal O}_{vw}$ if and only if
${\mathcal O}^{(1)} = - {\mathcal O}^{(2)}$. To prove
the latter identity, choose $\tau ^{(j)} \in {\mathcal O}^{(j)} \
(j = 1,2)$ and let ${\mathfrak x} \in {\mathcal T}
(u,v) \times {\mathcal T}(v,w) \times W^-_w$ be an arbitrary
point. Then there exist
   \[ a \in T_{\mathfrak x}({\mathcal B}(u,w) \times W^-_w) \cap
      {\mathcal C}({\mathfrak x}) \subseteq T_{\mathfrak x}(\hat W
	  ^-_u)
   \]
and
   \[ b \in T_{\mathfrak x}({\mathcal T}(u,v) \times \hat W^-_v) \cap
      {\mathcal C}({\mathfrak x}) \subseteq T_{\mathfrak x}(\hat W
	  ^-_u)
   \]
so that both, $a$ and $b$, are transversal to $T_{\mathfrak x}
({\mathcal T}(u,v) \times {\mathcal T}(v,w) \times W^-_w)$. Here
${\mathcal C}({\mathfrak x})$ denotes the cone of directions tangent to
$\hat W^-_u$ at ${\mathfrak x}$ -- see
Subsection~\ref{4.1 manifolds}.
As $T_{\mathfrak x}({\mathcal B}(u,w) \times W^-_w) \not= T_{\mathfrak
x}({\mathcal T}(u,v) \times \hat W^-_v)$, $a$ and $b$ are linearly
independent. Hence, by the definition of ${\mathcal O}^{(j)}$, there
exist $t_j > 0$ so that $\sigma ({\mathfrak x}) = t_1 \tau ^{(1)}
({\mathfrak x}) \wedge a \wedge b$ and $\sigma ({\mathfrak x}) =
t_2 \tau ^{(2)} ({\mathfrak x}) \wedge b \wedge a$ where
$\sigma \in {\mathcal O}^-_u$. Thus $\tau ^{(1)}({\mathfrak x}) =
- \tau ^{(2)}({\mathfrak x})$. As ${\mathfrak x} \in {\mathcal T}
(u,v) \times {\mathcal T}(v,w) \times W^-_w$ is arbitrary we have
shown that $\tau ^{(1)} = - \tau ^{(2)}$.
\end{proof}

\medskip

Using Proposition~\ref{Theorem4.14}, Stokes' theorem as stated in
Theorem~\ref{Theorem 4.4.12} leads to a formula which we will use
below. Recall that
for any given $v, w \in \mbox{\rm Crit}(h)$ with
$i(v) = q$ and $i(w) = q - 1, \ {\mathcal T}(v,w)$,
if not empty, is a smooth compact
manifold of dimension $i(v) - i(w) - 1 = 0$. Hence
it consists of finitely
many elements and the determinant bundle $\det ({\mathcal T}(v,w))
\rightarrow {\mathcal T}(v,w)$ is canonically isomorphic to the
trivial line bundle ${\mathcal T}(v,w) \times {\mathbb R}
\rightarrow {\mathcal T}
(v,w)$. In this case an orientation of ${\mathcal T}(v,w)$ is represented
by a function ${\mathcal T}(v,w) \rightarrow \{ \pm 1 \} $.
Denote by $\varepsilon (\gamma ) \in \{ \pm 1 \} $ the sign
representing the
orientation ${\mathcal O}_{vw}$ at $\gamma \in {\mathcal T}(v,w)$ as
given by Proposition~\ref{Theorem4.14}.

\medskip

\begin{proposition}
\label{Theorem4.17bis} Assume that $M$ is a smooth manifold and
$(h,X)$ a Morse-Smale pair.
Let $v \in \mbox{\rm Crit}(h)$ be a critical point of index $q$ and
let $\omega $ be a smooth $(q - 1)$-form on $M$. Then
   \begin{equation}
   \label{4.12} \int _{W^-_v} i^\ast _v(d \omega ) = \sum _
                {\underset{i(w) = q - 1}{w < v}}
				\sum _{\gamma \in {\mathcal T}(v,w)}
                \varepsilon (\gamma )
                \int _{W^-_w} i^\ast _w \omega
  \end{equation}
where $i_v : W_v \rightarrow M$ and $i_w : W_w \rightarrow M$ are
the natural inclusions.
\end{proposition}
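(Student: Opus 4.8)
The plan is to apply Stokes' theorem (Theorem~\ref{Theorem 4.4.12}) to the compactification $\hat W^-_v$, which by Theorem~\ref{Theorem1.4.13} is a compact oriented manifold with corners of dimension $q = i(v)$, together with the smooth proper map $\hat i_v : \hat W^-_v \to M$. The form to integrate is $\hat\omega := \hat i_v^\ast \omega$, a smooth $(q-1)$-form on $\hat W^-_v$. Stokes gives
\[
   \int_{\hat W^-_v} d\hat\omega = \int_{\partial_1 \hat W^-_v} \iota^\ast \hat\omega ,
\]
where $\partial_1\hat W^-_v$ carries the boundary orientation induced from the chosen orientation $\mathcal O^-_v$ of $W^-_v = \partial_0 \hat W^-_v$. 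Since $\hat i_v$ extends $i_v$ and $W^-_v$ is the interior of $\hat W^-_v$, the left-hand side equals $\int_{W^-_v} i_v^\ast(d\omega)$, because $d\hat\omega = \hat i_v^\ast(d\omega)$ and the boundary $\partial\hat W^-_v$ has measure zero. So the whole content is to identify the right-hand side with the stated sum.

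First I would invoke Theorem~\ref{Theorem1.4.13}(ii): $\partial_1 \hat W^-_v = \bigsqcup_{w<v} \partial_0 \mathcal B(v,w) \times W^-_w = \bigsqcup_{w<v} \mathcal T(v,w)\times W^-_w$. The dimension of $\mathcal T(v,w)\times W^-_w$ is $(i(v)-i(w)-1) + i(w) = q-1$, matching the degree of $\hat\omega$, so only these pieces contribute. On the piece $\mathcal T(v,w)\times W^-_w$ the map $\hat i_v$ is, by Theorem~\ref{Theorem1.4.13}(iii), the projection onto $W^-_w$ followed by $i_w$; hence $\iota^\ast\hat\omega$ is the pullback under this projection of $i_w^\ast\omega$, and by Fubini for the product orientation,
\[
   \int_{\mathcal T(v,w)\times W^-_w} \iota^\ast\hat\omega
   = \Big(\int_{\mathcal T(v,w)} 1\Big)\Big(\int_{W^-_w} i_w^\ast\omega\Big),
\]
where the first factor makes sense only when $i(w)=q-1$, i.e.\ $\dim\mathcal T(v,w)=0$; for $i(w)<q-1$ the factor $\int_{\mathcal T(v,w)}1$ is an integral of a form of degree $<\dim\mathcal T(v,w)$ over a positive-dimensional manifold, hence zero, while $i(w)=q$ is excluded by $w<v$. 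So only critical points $w<v$ with $i(w)=q-1$ survive, and for these $\mathcal T(v,w)$ is a finite set of points $\gamma$, so $\int_{\mathcal T(v,w)}1 = \sum_{\gamma\in\mathcal T(v,w)} \varepsilon(\gamma)$ once we account for orientation.

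The main obstacle — and the place requiring genuine care — is the bookkeeping of orientations: showing that the sign $\varepsilon(\gamma)$ appearing in the formula is exactly the one attached to the boundary orientation that Stokes' theorem forces. Here is where Proposition~\ref{Theorem4.14} enters. The orientation on the boundary face $\mathcal T(v,w)\times W^-_w \subseteq \partial_1\hat W^-_v$ induced by Stokes is, by the construction in Lemma~\ref{Lemma 4.4.11}(iii), the one compatible with $\mathcal O^-_v$. On the other hand, the coherent orientation $\mathcal O_{vw} = \mathcal O_{vw}(\mathcal O^-_v)$ is by definition (see the paragraph preceding Proposition~\ref{Theorem4.14}) precisely the orientation on $\mathcal T(v,w)$ derived from $\mathcal O^-_v$ via that same boundary-orientation procedure — more precisely, from the proof of Lemma~\ref{Proposition4.13}, $\mathcal O^-_v$ induces the boundary orientation on $\partial_1\hat W^-_v$, whose restriction to $\mathcal T(v,w)\times W^-_w$ is the product of $\mathcal O_{vw}$ with the fixed orientation $\mathcal O^-_w$ of $W^-_w$. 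Thus the Fubini factorization respects signs with no extra correction, and $\int_{\mathcal T(v,w)}1 = \sum_{\gamma}\varepsilon(\gamma)$ with $\varepsilon(\gamma)\in\{\pm 1\}$ as defined just before the statement. Assembling the pieces,
\[
   \int_{W^-_v} i_v^\ast(d\omega) = \sum_{\substack{w<v\\ i(w)=q-1}} \sum_{\gamma\in\mathcal T(v,w)} \varepsilon(\gamma)\int_{W^-_w} i_w^\ast\omega,
\]
which is \eqref{4.12}. A small technical point to address along the way is that $\int_{W^-_v} i_v^\ast\omega$ and the right-hand integrals converge and that Stokes applies: this is exactly why one works on the compactification $\hat W^-_v$ rather than on $W^-_v$ directly, and properness of $\hat i_v$ (Theorem~\ref{Theorem3.3}(ii), restated in the Remark after Theorem~\ref{Theorem1.4.13}) together with compactness of $\hat W^-_v$ when $M$ is closed guarantees everything is finite.
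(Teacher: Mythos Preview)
Your proof is correct and follows essentially the same route as the paper's: pass to the compactification $\hat W^-_v$, apply Stokes' theorem (Theorem~\ref{Theorem 4.4.12}), decompose $\partial_1\hat W^-_v$ via Theorem~\ref{Theorem1.4.13}(ii)--(iii), and use the definition of $\mathcal O_{vw}$ to match signs. Two small remarks: the vanishing for $i(w)<q-1$ is more cleanly phrased (as the paper does) by noting that $i_w^\ast\omega$ is a $(q-1)$-form on a manifold of dimension $<q-1$ and hence zero, rather than via a Fubini factor; and Proposition~\ref{Theorem4.14} (coherence for triples) is not actually needed here---only the \emph{definition} of $\mathcal O_{vw}$ as the orientation making $\mathcal O_{vw}\otimes\mathcal O^-_w$ the induced boundary orientation on $\mathcal T(v,w)\times W^-_w\subseteq\partial_1\hat W^-_v$.
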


\begin{proof}
As $i^\ast _v d \omega = di ^\ast _v \omega $ one has
   \[ \int _{W^-_v} i^\ast _v d \omega = \int _{W^-_v} d(i ^\ast _v \omega )
      = \int _{\hat {W}^-_v} d (\hat {i}^\ast _v \omega )
   \]
where $\hat {i}_v : \hat {W}^-_v \rightarrow M$ is the smooth extension of
the embedding $i_v : W^-_v \hookrightarrow M$ - see
Theorem~\ref{Theorem1.4.13} (iii).
By Theorem~\ref{Theorem1.4.12} (ii) and Theorem~\ref{Theorem1.4.13} (ii),
$\partial _1 \hat {W}^-_v$ is given by the disjoint union $\sqcup _{w < v}
{\mathcal T}(v,w) \times W^-_w$. Hence by Theorem~\ref{Theorem 4.4.12}
(Stokes' theorem)
   \begin{align}
   \begin{split}
   \label{4.14} \int _{W^-_v} i^\ast _v d \omega &= \int _{\partial _1
                   \hat {W}^-_v} i^\ast _{\partial _1 \hat {W}^-_v} (\hat
                   {i}^\ast _v \omega ) \\
                &= \sum _{w < v} \int _{{\mathcal T}(v,w) \times W^-_w}
                   i^\ast _{{\mathcal T}(v,w) \times W^-_w} (\hat {i}^\ast _v
                   \omega ) .
   \end{split}
   \end{align}
By Theorem~\ref{Theorem1.4.13} (iii) one has
   \begin{equation}
   \label{4.15} i^\ast _{{\mathcal T}(v,w) \times W^-_w } \circ \hat {i}^\ast
                _v = p^\ast _{vw} \circ i^\ast _w
   \end{equation}
where $p_{vw} : {\mathcal T}(v,w) \times W^-_w \rightarrow W^-_w$ denotes the
projection onto the second component of the product ${\mathcal T}(v,w) \times
W^-_w$. Hence for any critical point $w < v$ with $\dim (W^-_w) \leq q - 2$,
one has $i^\ast _{{\mathcal T}(v,w) \times W^-_w} \circ \hat {i}^\ast _v
\omega = 0$ as $i^\ast _w \omega = 0$, being a $(q - 1)$-form on a manifold
of dimension strictly smaller than $q - 1$.
As a consequence, we need only to take the sum in
\eqref{4.14} over all critical points $w < v$ with $i(w) = q - 1$.
As noted above it then follows that ${\mathcal T}(v,w)$ is
a $0$-dimensional compact manifold, hence a finite set.
By the definition of the orientation of ${\mathcal O}_{vw}$ on
${\mathcal T}(v,w)$ it follows that ${\mathcal O}_{vw} \otimes
{\mathcal O}^-_w$ coincides with the orientation induced from
${\mathcal O}^-_v$ on $W^-_v$. Using \eqref{4.15}
one then obtains
   \begin{align*} &\int _{{\mathcal T}(v,w) \times W^-_w} i^\ast _{{\mathcal
                     T}(v,w) \times W^-_w} (\hat {i}^\ast _v \omega ) \\
                  &= \sum _{\gamma \in {\mathcal T}(v,w)} \varepsilon
                     (\gamma ) \int _{W^-_w} i^\ast _w \omega
   \end{align*}
where $\varepsilon (\gamma ) \in \{ \pm 1 \}$ defines the orientation
${\mathcal O}_{vw}$ at $\gamma \in {\mathcal T}(v,w)$. Combining this with
\eqref{4.14}, the claimed formula follows.
\end{proof}

\medskip

We remark that the manifolds $W^-_v$ and ${\mathcal T}(v,w)$ as well as
their orientations are the same for equivalent Morse-Smale pairs. In
particular they do not depend on the Morse function but only on the
vector field.

{\it Coverings:}
Throughout this paragraph, let $\tilde {M}$ be a smooth manifold
and let $G$ be a discrete group, i.e. a group with countably many
elements, endowed with the discrete topology. Assume
that $G$ acts on $\tilde {M}$ by diffeomorphisms and that this
action, denoted by $\mu $,
   \[ \mu : G \times \tilde {M} \rightarrow \tilde {M} , \
      (g,x) \mapsto \mu (g,x) \equiv g \cdot x
   \]
is free
and properly discontinuous. It means that for any $x, y \in \tilde {M}$
with $y \notin G \cdot x$ there exist neighborhoods $U_x$ of $x$ and
$V_y$ of $y$ in $\tilde M$ so that $U_x \cap G \cdot V _y =
\emptyset $ and $U_x \cap g \cdot U_x = \emptyset $ for any $g \not=
e$ where $e$ is the neutral element of $G$. It then follows
that $\tilde M / G$ is a smooth manifold and the canonical projection
$p : \tilde {M} \rightarrow \tilde {M} / G$ is a local diffeomorphism.

\medskip

\begin{definition}
\label{Definition4.17} $\pi : \tilde {M} \rightarrow M$ is the
{\rm principal $G$-covering} of a smooth manifold $M$,
associated to $\mu $, if there exists a diffeomorphism $\theta :
\tilde {M} / G \rightarrow M$ so that $\pi = \theta \circ p$.
\end{definition}

\medskip

Throughout the remainder of this paragraph assume that $\pi :
\tilde {M} \rightarrow M$ is a principal $G$-covering. We note
that for any $x \in M$, there are an open connected
neighborhood $U$ of $x$ and an open connected set $\tilde {U}$ in
$\tilde {M}$ so that $\pi ^{-1}(U) = \bigcup _{g \in G} g \cdot
\tilde {U}$ is a decomposition of $\pi ^{-1}(U)$ into its
(open) connected components and $\pi : g \cdot \tilde {U} \rightarrow U$
is a diffeomorphism for any $g \in G$.
\medskip

Given a Morse-Smale pair $(h, X)$ on $M$, let
$\tilde {h}:= h \circ \pi $ be the pullback of the Morse function
$h$ to $\tilde M$ and denote by $\tilde {X}:=
\pi ^\ast X$ the pullback of the vector field $X$ to $\tilde M$.
Then $\tilde {h}$ is a smooth Morse function, albeit not
necessarily proper, with $\pi \left( \mbox{Crit}(\tilde {h}) \right)
= \mbox{Crit}(h)$ and $i(\tilde {v}) = i \left( \pi (\tilde v) \right) $
for any $\tilde v \in  \mbox{\rm Crit}(\tilde {h})$. In addition,
$\tilde {X}(\tilde {h}) =
X(h) \circ \pi $. In particular, $\tilde X(\tilde h)(x) < 0$ for any
$x$ in $\tilde M \backslash \mbox{Crit}(\tilde h)$. Hence $(\tilde
h, \tilde X)$ satisfies condition (MS1) of
Definition~\ref{Definition1.1}.
Denote by $\tilde {\Phi }_t(\tilde {x})$
the lift of the solution $\Phi _t(x) \ (t \in {\mathbb R},
x \in M)$ of
   \[ \frac {d}{dt} \Phi _t(x) = X \left( \Phi _t(x) \right) ;
      \ \Phi _0(x) = x
   \]
with the property that $\tilde {\Phi }_0 (\tilde {x}) = \tilde
{x}$. Then $\tilde {\Phi }_t(\tilde {x})$ is defined for all
$t \in {\mathbb R}$ and solves $\frac {d}{dt} \tilde {\Phi }
_t(\tilde {x}) = \tilde {X} \left( \tilde {\Phi }_t(\tilde {x})
\right) $. Hence we may introduce the stable and unstable
manifolds, $W^+_{\tilde {v}}$ and $W^-_{\tilde {v}}$, of
any critical point $\tilde {v} \in \mbox{\rm Crit}(\tilde {h})$.
We claim that $(\tilde h, \tilde X)$ satisfies the Morse-Smale
condition (MS2) of Definition~\ref{Definition1.1}. To see
it first note that for any $\tilde v \in \mbox{Crit}(\tilde h),
\pi \big\arrowvert _{W^\pm _{\tilde v}} : W^\pm _{\tilde v}
\rightarrow W^\pm _{\pi (\tilde v)}$ is a diffeomorphism as paths
on $M$ originating from $\pi (\tilde v)$ can be lifted to
paths originating from $\tilde v$ in a unique way. Further,
for any $w \in \mbox{Crit}(h)$, $\pi $ maps the disjoint union
$\sqcup _{\tilde w \in \pi ^{-1}(w)} W^-_{\tilde v} \times
W^+_{\tilde w}$ bijectively onto $W^-_{\pi (\tilde v)} \cap
W^+_w$. Hence $\{ \pi (W^-_{\tilde v} \cap W^+_{\tilde w})
| \tilde w \in \pi ^{-1}(w) \} $ are disjoint components of
$W^-_v \cap W^+_w$. (Note that for some $\tilde w \in \pi
^{-1}(w), W^-_{\tilde v} \cap W^+_{\tilde w}$ might be empty.)
As $(h,X)$ is assumed to be a Morse-Smale pair, $W^-_{\pi
(\tilde v)} \cap W^+_w$, if not empty, is a smooth manifold
of dimension $i(\pi (\tilde v)) - i(w)$. Therefore it follows that
for any $\tilde w \in \pi ^{-1}(w)$ with $W^-_{\tilde v} \cap
W^+_{\tilde w} \not= \emptyset , W^-_{\tilde v} \cap W^+
_{\tilde w}$ is a smooth manifold of dimension $i(\pi
(\tilde v)) -
i(w) = i(\tilde v) - i(\tilde w)$ and we conclude that $W^-
_{\tilde v}$ and $W^+_{\tilde w}$ intersect transversally.
Hence $(\tilde h, \tilde X)$ satisfies (MS2). Together
with the considerations above we conclude that $(\tilde
h, \tilde X)$ is a Morse-Smale pair except for the fact
that $\tilde h$ might not be proper. Further we conclude
that ${\mathcal T}(\tilde v, \tilde w):= (W^-_{\tilde v}
\cap W^+_{\tilde w}) / {\mathbb R}$ is a smooth
manifold of dimension $i(\tilde v) - i(\tilde w) - 1$ and
   \[ \Pi : \bigsqcup _{\tilde w \in \pi ^{-1}(w)}
      {\mathcal T}(\tilde v, \tilde w) \rightarrow
	  {\mathcal T}(\pi (\tilde v), w), \quad [\gamma ]
	  \rightarrow [\pi \circ \gamma ]
   \]
is a diffeomorphism as well. Finally we introduce the set of
(possibly broken) trajectories
${\mathcal B}(\tilde {v}, \tilde {w})$ from $\tilde {v}$ to
$\tilde {w}$ where $\tilde {v}, \tilde {w} \in \mbox{\rm Crit}(\tilde
{h})$
   \[ {\mathcal B}(\tilde {v}, \tilde {w}):= \bigsqcup _{\tilde
      {w} < \tilde {v}_\ell < \ldots < \hat {v}_1 < \hat {v}}
      {\mathcal T}(\tilde {v}, \tilde {v}_1) \times \ldots
      \times {\mathcal T}(\tilde {v}_\ell , \tilde {w})
   \]
and the set of (possibly broken) trajectories originating
from $\tilde {v}$,
   \[ \hat {W}^-_{\tilde {v}}:= \bigsqcup _{\tilde {w} \leq
      \tilde {v}} {\mathcal B}(\tilde {v}, \tilde {w}) \times
     W^-_{\tilde {w}}
   \]
where for any $\tilde v, \tilde w \in \mbox{Crit}(\tilde h)$
the relation $\tilde w < \tilde v$ means that $i(\tilde w)
< i(\tilde v)$ and $\tilde h(\tilde w) < \tilde h(\tilde v)$
whereas $\tilde w \leq \tilde v$ says that $\tilde w < \tilde
v$ or $\tilde w = \tilde v$ hold.
For any given $\tilde {v} \in \mbox{\rm Crit}(\tilde {h})$
and $w \in \mbox{\rm Crit}(h)$, the map $\Pi $ defined above
can then be extended
in an obvious way to the bijective maps
   \[ \Pi _{\tilde {v} w} : \bigsqcup _{\tilde {w} \in \pi
      ^{-1}(w)} {\mathcal B}(\tilde {v},\tilde{w}) \rightarrow
      {\mathcal B}(v,w)
   \]
and
   \[ \Pi _{\tilde {v}} : \hat {W}^-_{\tilde {v}} \rightarrow
      \hat {W}^-_v
   \]
where as above, $v = \pi (\tilde {v})$. In this way ${\mathcal B}
(\tilde {v}, \tilde {w})$ and $\hat {W}^-_{\tilde {v}}$ become
compact smooth manifolds with corners and the extension $\hat
{i}_{\tilde {v}} : \hat {W}^-_{\tilde {v}} \rightarrow \tilde
{M}$ of the inclusion $W^-_{\tilde {v}} \hookrightarrow \tilde
{M}$ is smooth.

Let us summarize our results in the following
proposition.

\medskip

\begin{proposition}
\label{Theorem4.18} Assume that $\pi : \tilde {M} \rightarrow M$ is
the principal $G$-covering of a smooth manifold $M$ of a discrete
group $G$ acting on $\tilde {M}$ by diffeomorphisms. Further let
$(h,X)$ be a Morse-Smale pair, $\tilde {h}:= h \circ \pi $ the pullback
of $h$ by $\pi $ and $\tilde {v}, \tilde {w}$ any critical points of
$\tilde h$ with $\tilde {w} < \tilde {v}$. Then

\begin{list}{\upshape }{
\setlength{\leftmargin}{9mm}
\setlength{\rightmargin}{0mm}
\setlength{\labelwidth}{13mm}
\setlength{\labelsep}{2.9mm}
\setlength{\itemindent}{0,0mm}}

\item[{\rm (i)}] ${\mathcal B}(\tilde {v}, \tilde {w})$, unless empty, is
compact and has a canonical structure of a smooth manifold with corners.

\smallskip

\item[{\rm (ii)}] ${\mathcal B}(\tilde {v}, \tilde {w})$, unless empty, is of
dimension
$i(\tilde {v}) - i(\tilde {w}) - 1$ and for any $0 \leq k \leq \dim
{\mathcal B}(\tilde {v}, \tilde {w})$ the $k$-boundary is given by
   \[ \partial _k {\mathcal B}(\tilde {v}, \tilde {w}) = \bigsqcup _{\tilde
      {w}
      < \tilde {v}_k < \ldots < \tilde {v}_1 < \tilde {v} } {\mathcal T}
      (\tilde {v}, \tilde {v}_1) \times \ldots \times {\mathcal T}(\tilde
      {v}_k, \tilde {w}) .
   \]

\smallskip

\item[{\rm (iii)}] For any critical points $v,w$ with $w < v$ and any $\tilde {v}
\in \pi ^{-1}(v)$
   \[ \Pi : \bigsqcup _{\tilde {w} \in \pi ^{-1}(w)} {\mathcal T}(\tilde {v},
      \tilde {w}) \rightarrow {\mathcal T}(v,w),
   \]
defined by associating to a solution $\tilde {\Phi }_\cdot (\tilde {x})$ of
the vector field $\tilde {X}$ on $\tilde {M}$ its projection $\Phi _\cdot
(\pi (\tilde {x}))$ on $M$, is a diffeomorphism.

\smallskip

\item[{\rm (iv)}] For any given $\tilde {v} \in \mbox{\rm Crit}(\tilde {h})$ and
$w \in \mbox{\rm Crit}(h)$ with
$w < v:= \pi (\tilde {v})$, the above map $\Pi $ can be extended in an obvious
way to the bijective map
   \[ \Pi \equiv \Pi _{\tilde {v} w} : \bigsqcup _{\tilde {w} \in \pi ^{-1}
      (w)}{\mathcal B}(\tilde {v}, \tilde {w}) \rightarrow {\mathcal B}(v,w)
   \]
which is also a diffeomorphism.

\smallskip

\item[{\rm (v)}] The set $\hat {W}^-_{\tilde {v}}:= \sqcup _{\tilde {w} \leq
\tilde {v}}
{\mathcal B}(\tilde {v}, \tilde {w}) \times W^-_{\tilde {w}}$ is a smooth
compact manifold with corners, the natural extension of $\Pi $ to $\hat
{W}^-_{\tilde {v}}, \ \Pi _{\tilde {v}} : \hat {W}^-_{\tilde {v}} \rightarrow
\hat {W}^-_v$, is a diffeomorphism and the extension $\hat {i}_{\tilde {v}} :
\hat {W}^-_{\tilde {v}} \rightarrow \tilde {M}$ of the inclusion $W^-_{\tilde
{v}} \hookrightarrow \tilde {M}$ is smooth. In particular there
are at most finitely many critical points $\tilde w$ of $\tilde h$ for
which there is a (possibly broken) trajectory from $\tilde v$ to
$\tilde w$.
\end{list}
\end{proposition}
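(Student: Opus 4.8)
The plan is to reduce everything to the corresponding statements already proved on the base, namely Theorem~\ref{Theorem3.3}, Theorem~\ref{Theorem1.4.12} and Theorem~\ref{Theorem1.4.13}, transported along the maps $\Pi$, $\Pi_{\tilde v w}$ and $\Pi_{\tilde v}$ described before the statement. The discussion preceding the proposition has already verified that $(\tilde h, \tilde X)$ satisfies (MS1) and (MS2), that $\pi\big\arrowvert_{W^\pm_{\tilde v}} : W^\pm_{\tilde v} \to W^\pm_{\pi(\tilde v)}$ are diffeomorphisms, and that the manifolds $W^-_{\tilde v} \cap W^+_{\tilde w}$, $\tilde w \in \pi^{-1}(w)$, are exactly the lifts of the connected components of $W^-_v \cap W^+_w$; passing to ${\mathbb R}$-quotients this is precisely statement (iii). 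It remains to put a smooth manifold-with-corners structure on ${\mathcal B}(\tilde v, \tilde w)$ and $\hat W^-_{\tilde v}$, to prove the dimension and boundary formulas in (i), (ii) and (v), to prove compactness and the finiteness assertion, and to check that $\Pi_{\tilde v w}$, $\Pi_{\tilde v}$ are diffeomorphisms and $\hat i_{\tilde v}$ smooth.

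For the differentiable structure and the boundary formulas I would simply repeat the proofs of Theorem~\ref{Theorem1.4.12} and Theorem~\ref{Theorem1.4.13} with $(h, X)$ replaced by $(\tilde h, \tilde X)$. Concretely, one builds the manifolds with boundary $\tilde P_k \subseteq \tilde M^+_k \times \tilde M^-_k$ and $\tilde Q_k \subseteq \tilde M^+_k \times \tilde M_{k+1, k-1}$ as in Subsection~\ref{3.2Preliminary constructions} (the arguments of Lemma~\ref{Lemma4.4} and Lemma~\ref{Lemma4.5} are local near the lifted critical points and along the flow of the rescaled field $\tilde Y$, hence carry over unchanged), forms the products ${\mathcal P}$, embeds ${\mathcal N}_{vw}$ into ${\mathcal M}$, and checks that ${\mathfrak f}$ is transversal to ${\mathcal N}$ exactly as in Subsection~\ref{3.4Transversality properties}, the only ingredient being the Morse--Smale property of $(\tilde h, \tilde X)$. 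Crucially, properness of the Morse function entered the proofs of Theorems~\ref{Theorem1.4.12} and~\ref{Theorem1.4.13} only through the conclusion that ${\mathcal B}(v,w)$ and $\hat W^-_v$ are compact; the construction of the manifold-with-corners structure, the computation of the dimension, and the description of the $k$-boundaries use nothing beyond (MS1)--(MS2), so they transfer verbatim and give the structures in (i), (ii) and (v), with interiors ${\mathcal T}(\tilde v, \tilde w)$ and $W^-_{\tilde v}$. Since $\pi$ is a local diffeomorphism it intertwines the level-set embeddings $J_\varepsilon$ and $\hat J_\varepsilon$ upstairs and downstairs, so $\Pi_{\tilde v w}$ and $\Pi_{\tilde v}$ are smooth with smooth local inverses obtained by path lifting; being bijective (unique lifting of the canonical parametrizations \eqref{3.1}) they are diffeomorphisms, which gives (iv) and the extension parts of (iii) and (v). Smoothness of $\hat i_{\tilde v}$ then follows from $\pi \circ \hat i_{\tilde v} = \hat i_v \circ \Pi_{\tilde v}$, the right-hand side being smooth by Theorem~\ref{Theorem1.4.13}(iii) and $\pi$ a covering projection.

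Compactness and the finiteness assertion are imported from the base. For fixed $v$ there are only finitely many $w$ with ${\mathcal B}(v,w) \ne \emptyset$ --- for instance because, $M$ being closed, $\hat W^-_v$ is compact and $\hat h_v$ proper, so $\hat h_v$ meets only finitely many critical values below $h(v)$. Fix such a $w$. By Theorem~\ref{Theorem1.4.12}(i), ${\mathcal B}(v,w)$ is compact Hausdorff, and $\Pi_{\tilde v w} : \bigsqcup_{\tilde w \in \pi^{-1}(w)} {\mathcal B}(\tilde v, \tilde w) \to {\mathcal B}(v,w)$ is a homeomorphism by the previous step. Each summand ${\mathcal B}(\tilde v, \tilde w)$ is open and closed in the domain, hence its image is an open and closed subset of the compact space ${\mathcal B}(v,w)$ and is therefore compact; as these images are pairwise disjoint and cover ${\mathcal B}(v,w)$, only finitely many are non-empty. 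This yields compactness in (i), (ii) together with the statement that only finitely many $\tilde w$ admit a (possibly broken) trajectory from $\tilde v$. Finally $\hat W^-_{\tilde v} = \bigsqcup_{\tilde w \le \tilde v} {\mathcal B}(\tilde v, \tilde w) \times W^-_{\tilde w}$ is then a finite union, $\Pi_{\tilde v} : \hat W^-_{\tilde v} \to \hat W^-_v$ is a diffeomorphism, and since $\hat W^-_v$ is compact by Theorem~\ref{Theorem3.3}(ii) so is $\hat W^-_{\tilde v}$; this completes (v).

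The one point I expect to require genuine care is the proof that $\Pi_{\tilde v w}$ and $\Pi_{\tilde v}$ are homeomorphisms, that is, that projecting (possibly broken) trajectories down to $M$ is not merely continuous and bijective but also open with respect to the topologies defined by the embeddings $J_\varepsilon$, $\hat J_\varepsilon$. This is exactly where the covering property of $\pi$ is used in an essential way: path lifting furnishes the set-theoretic inverse, and local triviality of $\pi$ over small neighbourhoods of points of the level sets $M^\pm_k$ furnishes continuity of that inverse. Everything else is the bookkeeping observation that the constructions of Subsections~\ref{2.3Spaces of broken trajectories}, \ref{3.3Spaces of trajectories} and \ref{3.4Transversality properties} are insensitive to the possible failure of properness of $\tilde h$ except in their compactness conclusions, which are instead supplied by $M$ through $\Pi$.
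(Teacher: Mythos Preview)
Your argument is correct, but it takes a longer route than the paper does. The paper's proof is contained entirely in the paragraph preceding the proposition (``Let us summarize our results in the following proposition''). There, after establishing (MS1), (MS2) for $(\tilde h,\tilde X)$ and the bijections $\Pi$, $\Pi_{\tilde v w}$, $\Pi_{\tilde v}$, the paper simply \emph{declares} the smooth manifold-with-corners structure on ${\mathcal B}(\tilde v,\tilde w)$ and $\hat W^-_{\tilde v}$ to be the one transported from the base via these bijections: ``In this way ${\mathcal B}(\tilde v,\tilde w)$ and $\hat W^-_{\tilde v}$ become compact smooth manifolds with corners''. Compactness, the dimension count, the boundary formulas, and smoothness of $\hat i_{\tilde v}$ then come for free from Theorems~\ref{Theorem1.4.12} and~\ref{Theorem1.4.13} on $M$, with no need to rebuild $\tilde P_k$, $\tilde Q_k$ or reverify transversality upstairs.

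Your approach instead reruns the entire construction of Subsections~\ref{3.2Preliminary constructions}--\ref{3.4Transversality properties} on $\tilde M$, obtains an \emph{a priori} smooth structure upstairs, and only then checks that $\Pi_{\tilde v w}$, $\Pi_{\tilde v}$ are diffeomorphisms between the two independently constructed structures. This buys something the paper's shortcut does not make explicit: it shows that the manifold-with-corners structure on ${\mathcal B}(\tilde v,\tilde w)$ is intrinsic to $(\tilde h,\tilde X)$ rather than merely borrowed from the base, and your observation that properness enters the proofs of Theorems~\ref{Theorem1.4.12}--\ref{Theorem1.4.13} only through the compactness conclusions is exactly the point needed to justify this. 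Your open-and-closed argument for compactness of each ${\mathcal B}(\tilde v,\tilde w)$ inside ${\mathcal B}(v,w)$ also fills in a step the paper leaves implicit. So: same destination, your route is more laborious but also more informative about what is canonical.
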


\subsection{Geometric complex}
\label{4.3Geometric complex}

Let $(h,X)$ be a Morse-Smale pair in the sense of
Definition~\ref{Definition1.1},
consisting of a Morse function $h : M \rightarrow {\mathbb R}$ and a smooth
vector field $X$ on a {\it closed} manifold $M$ of dimension $n$ and let $\pi :
\tilde {M} \rightarrow M$ be a $G$-principal covering where $G$
is a discrete group. According to Definition~\ref{Definition4.17}, this
means that
there exists a smooth, free action of $G$ on $\tilde {M}$ such that
$\pi $ can be identified with the projection $\tilde {M} \rightarrow
\tilde {M} / G$.

In this subsection we define the geometric complex complex. Recall that a
cochain complex $A^\bullet = (A^i, d^i)$
   \[ \cdots \rightarrow A^{i - 1} \overset {d^{i - 1}}{\longrightarrow }
      A^i \overset {d^i}{\longrightarrow } A^{i + 1} \rightarrow \cdots ,
   \]
consists of a sequence of vector spaces $A^i$
(possibly of infinite dimension)
and linear maps $d^i : A^i \rightarrow A^{i + 1}$
satisfying $d^{i + 1} \circ d^i = 0$. A morphism $f : A^\bullet \rightarrow
B^\bullet $ between two chain complexes $A^\bullet = (A^i, d^i_A)$ and
$B^\bullet = (B^i, d^i_B)$ consists of a family $f = \{ f^i \} $
of linear maps $f^i : A^i
\rightarrow B^i$ satisfying $d^i_B \circ f^i = f^{i + 1} \circ d^i_A$
     \[ \xymatrix{ \cdots \ar[r] &A^{i - 1} \ar[r]^-{d^{i - 1}_A} \ar[d]^-{f
        ^{i - 1}} & A^i \ar[r]^-{d^i_A} \ar[d]^-{f^i}
        & A^{i + 1} \ar[r] \ar[d]^-{f^{i + 1}} & \cdots \\
        \cdots \ar[r] &B^{i - 1} \ar[r]^-{d^{i - 1}_B} & B^i
        \ar[r]^-{d^i_B} & B^{i + 1} \ar[r] & \cdots  }
      \] 	

We denote by ${\mathcal X}_q:= \mbox{Crit}_q(h)$ the set of critical
points of index
$q$ of the Morse function $h$. For any point $v \in {\mathcal X}_q$, one
has the canonical embeddings $i^\pm _v : W^\pm _v \rightarrow M$ of the
stable and unstable manifolds of $v$ into $M$. By
Theorem~\ref{Theorem1.4.13}
(iii), these embeddings extend to smooth maps $\hat {i}^\pm _v : \hat
{W}^\pm _v \rightarrow M$. In what follows we will often suppress the
minus superscript, so e.g. we will write $i_v$ for $i^-_v$ and
$W_v$ instead of $W^-_v$.

Let $\tilde {h}:= h \circ \pi $ be the lifting of the function $h$ to
$\tilde
{M}$ and $\tilde {X}:= \pi ^\ast X$ the pullback of the vector field
$X$ to
$\tilde {M}$. Let by $\tilde {\mathcal X }_q := \mbox{\rm Crit}_q
(\tilde {h})$.
Clearly $\tilde {\mathcal X }_q = \pi ^{-1} \left( \mbox{\rm Crit}_q
(h) \right) $
and the projection $\pi $ establishes a diffeomorphism between the
unstable
manifold $W_{\tilde {v}} \subseteq \tilde {M}$ of a critical point
$\tilde
{v}$ of $\tilde {h}$ and $W_v \subseteq M$ with $v = \pi (\tilde {v})$.

To construct the geometric complex  associated to a given Morse-Smale pair
$(h,X)$ we introduce for any $0 \leq q \leq n$ the incidence functions $I_q :
{\mathcal X}_q \times {\mathcal X}_{q - 1} \rightarrow {\mathbb Z}$ and
$\tilde {I}_q : \tilde {\mathcal X }_q \times \tilde {\mathcal X}_{q - 1}
\rightarrow {\mathbb Z}$ as follows. According to Theorem~\ref{Theorem1.4.12},
the space ${\mathcal T}(v,w)$ of trajectories from a critical point $v \in
{\mathcal X}_q$ to a critical point $w \in {\mathcal X}_{q - 1}$
is in case ${\mathcal T}(v,w) \not= \emptyset $, a manifold of
dimension $0$ and precompact.
Hence ${\mathcal T}
(v,w)$ consists of at most finitely many trajectories. Assume that
$\{ {\mathcal O}^-_v | \, v \in \mbox{Crit}(h)\} $ are orientations of
$\{ W^-_v | \, v \in \mbox{Crit}(h) \}$ and denote by ${\mathcal O}
_{vw}$ the orientation on ${\mathcal T}(v,w)$ so that the product
orientation on ${\mathcal T}(v,w) \times W^-_w$ coincides with
the orientation induced from $\hat W^-_v$ by viewing ${\mathcal T}
(v,w) \times W^-_w$ as a subset of the $1$-boundary $\partial _1
\hat W^-_v$. By Proposition~\ref{Theorem4.14}, $\{ {\mathcal O}
_{vw} \} $ is a collection of coherent orientations, i.e. the
product orientation on ${\mathcal T}(u,v) \times {\mathcal T}
(v,w)$ is the opposite to the one induced from the $1$-boundary
$\partial _1 {\mathcal B}(u,w)$.
For any $(v,w) \in {\mathcal X}_q \times
{\mathcal X}_{q - 1}$ with ${\mathcal T}(v,w) \not= \emptyset $ and $\gamma
\in {\mathcal T}(v,w)$ let ${\mathcal O}_\gamma $ be the orientation induced
on the element $\gamma $ by the direction of the flow $\Phi _t$. We then
define $\varepsilon (\gamma ) \in \{ 1 , - 1 \} $ by
   \[ {\mathcal O}_\gamma = \varepsilon (\gamma )
      {\mathcal O}_{vw} | _\gamma .
   \]
The incidence functions $I_q$ and $\tilde {I}_q$ are then given by
   \begin{equation}
   \label{6.1} I_q(v,w):= \sum _{\gamma \in {\mathcal T}(v,w)} \varepsilon
               (\gamma )
   \end{equation}
and for any $\tilde {v} \in \pi ^{-1}(v) , \tilde {w} \in \pi ^{-1}(w)$
   \begin{equation}
   \label{6.2} \tilde {I}_q(\tilde {v},\tilde {w}):= \sum _{\tilde {\gamma }
               \in {\mathcal T}(\tilde {v},\tilde {w})} \varepsilon (\pi
               \circ \tilde {\gamma }) .
   \end{equation}
The sums in \eqref{6.1} and \eqref{6.2} count the (finite) number
of unbroken trajectories between $v$ and $w$, respectively $\tilde
v$ and $\tilde w$, in an algebraic way. Recall that by
Proposition~\ref{Theorem4.18}
   \[ \left\{ \pi \circ \tilde {\gamma } \mid \tilde {\gamma } \in
      {\mathcal T}(\tilde {v}, \tilde {w}) \right\} \subseteq {\mathcal T}
	  (v,w) .
   \]
The following proposition states the basic
properties of $I_q$ and $\tilde {I}_q$.

\medskip

\begin{proposition}
\label{Proposition6.3}

\medskip

\begin{list}{\upshape }{
\setlength{\leftmargin}{9mm}
\setlength{\rightmargin}{0mm}
\setlength{\labelwidth}{13mm}
\setlength{\labelsep}{2.9mm}
\setlength{\itemindent}{0,0mm}}

\item[{\rm (i)}] For any $(\tilde {v}, \tilde {w}) \in \tilde
{\mathcal X}_q \times \tilde {\mathcal X}_{q-1}$ and $g \in G$,
   \begin{equation}
   \label{6.3} \tilde {I}_q(g \tilde {v}, g \tilde {w}) = \tilde {I}_q
               (\tilde {v}, \tilde {w}) .
   \end{equation}

\smallskip

\item[{\rm (ii)}] For any $\tilde {v} \in \tilde {\mathcal X}_q$, the set of
critical points $\tilde {w} \in \tilde {\mathcal X}_{q - 1}$ with $\tilde
{I}_q (\tilde {v}, \tilde {w}) \not= 0$ is finite.

\smallskip

\item[{\rm (iii)}] For any $(v,w) \in {\mathcal X}_q \times {\mathcal X}_{q - 1}$
and $(\tilde {v}, \tilde {w}) \in \pi ^{-1}(v) \times \pi ^{-1}(w)$
   \begin{equation}
   \label{6.4} I_q(v,w) = \sum _{g \in G} \tilde {I}_q(\tilde {v},g
               \tilde {w})
   \end{equation}
and
   \begin{equation}
   \label{6.5} I_q(v,w) = \sum _{g \in G} \tilde {I}_q (g \tilde {v},
               \tilde {w}) .
   \end{equation}

\smallskip

\item[{\rm (iv)}] For any $(u,w) \in {\mathcal X}_q \times {\mathcal X}_{q - 2}$
   \begin{equation}
   \label{6.6} \sum _{v \in {\mathcal X}_{q - 1}} I_q(u,v) I_{q - 1}(v,w)
               = 0
   \end{equation}
and for any $(\tilde {u}, \tilde {w}) \in \tilde {\mathcal X}_q \times \tilde
{\mathcal X}_{q- 2}$
   \begin{equation}
   \label{6.7} \sum _{\tilde {v} \in \tilde {\mathcal X}_{q - 1}} \tilde {I}
               _q(\tilde
               {u}, \tilde {v}) \tilde {I}_{q-1}(\tilde {v},\tilde {w}) = 0 .
   \end{equation}
\end{list}
\end{proposition}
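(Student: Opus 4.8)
The plan is to dispatch (i)--(iv) one by one, each reducing to structural facts already established; items (i),(ii) are essentially formal, (iii) is a fibrewise counting argument, and (iv) is the usual ``$\partial^2=0$'' argument, namely that a compact oriented $1$--manifold has signed boundary cardinality zero. For (i) I would use that $\pi\circ g=\pi$ for every $g\in G$, so the lifted data satisfy $\tilde h\circ g=\tilde h$ and $dg(\tilde X)=\tilde X$; hence $g$ commutes with the flow $\tilde\Phi_t$ and induces a bijection $\mathcal T(\tilde v,\tilde w)\to\mathcal T(g\tilde v,g\tilde w)$, $\tilde\gamma\mapsto g\cdot\tilde\gamma$, along which the sign $\varepsilon(\pi\circ\tilde\gamma)$ is unchanged because $\pi\circ(g\cdot\tilde\gamma)=\pi\circ\tilde\gamma$; summing over this bijection gives \eqref{6.3}. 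For (ii) I would simply invoke Proposition~\ref{Theorem4.18}~(v): for fixed $\tilde v$ only finitely many $\tilde w$ admit a (possibly broken) trajectory from $\tilde v$, a fortiori only finitely many have $\mathcal T(\tilde v,\tilde w)\ne\emptyset$, hence only finitely many have $\tilde I_q(\tilde v,\tilde w)\ne0$.

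For (iii) I would fix $\tilde v\in\pi^{-1}(v)$ and $\tilde w\in\pi^{-1}(w)$ and use that $G$ acts freely and transitively on the fibre $\pi^{-1}(w)$, so $\pi^{-1}(w)=\{g\tilde w\mid g\in G\}$ without repetition; then by Proposition~\ref{Theorem4.18}~(iii) the map $\bigsqcup_{g\in G}\mathcal T(\tilde v,g\tilde w)\to\mathcal T(v,w)$, $\tilde\gamma\mapsto\pi\circ\tilde\gamma$, is a bijection of finite sets, and substituting it into the definition \eqref{6.1} of $I_q$ and comparing with \eqref{6.2} yields \eqref{6.4}. I would then deduce \eqref{6.5} from \eqref{6.4} and \eqref{6.3} via $\sum_{g\in G}\tilde I_q(g\tilde v,\tilde w)=\sum_{g\in G}\tilde I_q(\tilde v,g^{-1}\tilde w)=\sum_{g'\in G}\tilde I_q(\tilde v,g'\tilde w)=I_q(v,w)$.

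For (iv), assuming $i(u)=q$, $i(w)=q-2$, I would use Theorem~\ref{Theorem1.4.12} to get that $\mathcal B(u,w)$ is a compact manifold with corners of dimension $i(u)-i(w)-1=1$ with $\partial_1\mathcal B(u,w)=\bigsqcup_{w<v<u}\mathcal T(u,v)\times\mathcal T(v,w)$; since $w<v<u$ forces $i(v)=q-1$, and terms with an empty factor drop out, this is exactly the index range occurring in \eqref{6.6}. By Lemma~\ref{Proposition4.13} and Lemma~\ref{Lemma 4.4.11}, $\mathcal B(u,w)$ is orientable; fixing an orientation $\mathcal O_{uw}$ and applying Stokes's theorem (Theorem~\ref{Theorem 4.4.12}) to the constant $0$--form shows the signed count of the points of $\partial_1\mathcal B(u,w)$, with respect to the boundary orientation induced by $\mathcal O_{uw}$, is $0$. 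At a point $(\gamma_1,\gamma_2)$ the coherence relation (Proposition~\ref{Theorem4.14}, Definition~\ref{Definition4.15}) identifies this induced sign with the negative of the product sign of $\mathcal O_{uv}\otimes\mathcal O_{vw}$, which, taking the flow orientation $\mathcal O_\gamma$ as reference, equals $\varepsilon(\gamma_1)\varepsilon(\gamma_2)$. Hence
\[ \sum_{v\in\mathcal X_{q-1}} I_q(u,v)\,I_{q-1}(v,w)=\sum_{v\in\mathcal X_{q-1}}\ \sum_{\gamma_1\in\mathcal T(u,v)}\ \sum_{\gamma_2\in\mathcal T(v,w)}\varepsilon(\gamma_1)\varepsilon(\gamma_2)=0, \]
giving \eqref{6.6}. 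For \eqref{6.7} I would run the same argument for $\mathcal B(\tilde u,\tilde w)$, which by Proposition~\ref{Theorem4.18}~(i),(ii) is again a compact $1$--manifold with corners with $\partial_1\mathcal B(\tilde u,\tilde w)=\bigsqcup_{\tilde w<\tilde v<\tilde u}\mathcal T(\tilde u,\tilde v)\times\mathcal T(\tilde v,\tilde w)$; since $\pi$ is a local diffeomorphism, the diffeomorphism $\Pi$ of Proposition~\ref{Theorem4.18}~(iv),(v) identifies $\mathcal B(\tilde u,\tilde w)$ with a union of connected components of $\mathcal B(u,w)$, so $\mathcal O_{uw}$, its coherence, and the Stokes count transfer with $\varepsilon(\pi\circ\tilde\gamma)$ in place of $\varepsilon(\gamma)$; the resulting finite sum (finiteness by (ii)) is $\sum_{\tilde v\in\tilde{\mathcal X}_{q-1}}\tilde I_q(\tilde u,\tilde v)\,\tilde I_{q-1}(\tilde v,\tilde w)=0$.

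The hard part will be the orientation bookkeeping in (iv): pinning down, once and for all, the sign with which the boundary orientation induced by $\mathcal O_{uw}$ marks each $0$--dimensional face $\mathcal T(u,v)\times\mathcal T(v,w)\subseteq\partial_1\mathcal B(u,w)$ --- as described by Lemma~\ref{Lemma 4.4.11}~(iii) and Definition~\ref{Definition4.15} --- and checking that it is exactly $-\varepsilon(\gamma_1)\varepsilon(\gamma_2)$, and then making sure this coherence, together with the compactness and boundary description of $\mathcal B(\tilde u,\tilde w)$, carries over to the lifted pair $(\tilde h,\tilde X)$ (which need not be proper) via Proposition~\ref{Theorem4.18} in place of Theorems~\ref{Theorem1.4.12} and \ref{Theorem1.4.13}. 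Once the conventions are fixed, (iv) is the classical fact that a compact oriented $1$--manifold has signed boundary cardinality zero.
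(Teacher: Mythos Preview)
Your proposal is correct and follows essentially the same line as the paper for parts (i)--(iii): the paper too uses the $G$-equivariance $\pi\circ g=\pi$ for (i), invokes Proposition~\ref{Theorem4.18}~(v) for (ii), and deduces \eqref{6.5} from \eqref{6.4} and \eqref{6.3} for (iii).

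For (iv) there is a minor structural difference worth noting. The paper reverses your order: it proves \eqref{6.7} first and then derives \eqref{6.6} algebraically by substituting \eqref{6.4} and \eqref{6.5} into the left-hand side of \eqref{6.6}, so the topological argument is run only once (on the cover). You instead run the $1$-manifold argument twice, once on $M$ and once on $\tilde M$; this is perfectly valid since both $\mathcal B(u,w)$ and $\mathcal B(\tilde u,\tilde w)$ are compact. As for the core argument, the paper phrases the ``signed boundary count is zero'' fact not via Stokes on a constant $0$-form but by the explicit classification of compact $1$-manifolds with corners: it lists the interval components $[\xi_j^-,\xi_j^+]$ of $\mathcal B(\tilde u,\tilde w)$ and shows, interval by interval, that the contributions from the two endpoints cancel. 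The orientation bookkeeping you flag as ``the hard part'' is carried out there in exactly the way you anticipate, using the coherence relation $\mathcal O_{\tilde u\tilde v\tilde w}=-\mathcal O_{\tilde u\tilde v}\otimes\mathcal O_{\tilde v\tilde w}$ together with the observation that the inward-pointing directions at the two ends of an interval are opposite. Your Stokes formulation and the paper's interval-endpoint formulation are equivalent; the paper's version makes the pairwise cancellation slightly more explicit, while yours is cleaner to state.
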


\begin{proof}
(i) Any element $g \in G$ induces a bijection between
${\mathcal T}(\tilde {v},\tilde {w})$ and ${\mathcal T}(g \tilde {v}, g
\tilde {w})$. As $\pi \circ g \tilde {\gamma } = \pi \tilde {\gamma }$ it
follows from the definition \eqref{6.2}
of $\tilde I_q$ that $\tilde {I}_q(g \tilde
{v}, g \tilde {w}) = \tilde {I}_q(\tilde {v}, \tilde {w})$.

\medskip

(ii) By Proposition~\ref{Theorem4.18} (v) one has
   \[ \sharp \left\{ \tilde {w} \in \tilde {\mathcal X}_{q - 1} \mid
      {\mathcal T}(\tilde {v}, \tilde {w}) \not= \emptyset \right\} <
      \infty .
   \]

\medskip

(iii) By Proposition~\ref{Theorem4.18} (v) the projection
$\pi $ induces a bijection between the disjoint union $\sqcup _{g \in
G} {\mathcal T}(\tilde {v}, g \tilde {w})$ and ${\mathcal T}(v,w)$,
the identity \eqref{6.4} follows from the definitions of $I_q$ and
$\tilde I_q$. Formula \eqref{6.5} is easily obtained from
\eqref{6.3} and \eqref{6.4}.

\smallskip

(iv) The identity \eqref{6.6} follows from \eqref{6.7} by
substituting \eqref{6.4} and \eqref{6.5} into the left hand
side of \eqref{6.6}. Hence it remains to prove \eqref{6.7}. Let
$(\tilde u, \tilde w) \in \tilde {\mathcal X} _q \times \tilde
{\mathcal X} _{q - 2}$. According to Proposition~\ref{Theorem4.18},
${\mathcal B}(\tilde u, \tilde w)$ is a smooth compact manifold
with corners of dimension $1$. Hence the (finitely many) connected
components of ${\mathcal B}(\tilde u, \tilde w)$ consist of
circles and closed intervals. Denote the family of intervals in
${\mathcal B}(\tilde u, \tilde w)$ by $[\xi ^-_j, \xi ^+_j],
j \in J$. As these intervals are pairwise disjoint,
the broken trajectories $\xi ^+_j, \xi ^-_j, j \in
J$, are all different. The $1$-boundary $\partial _1
{\mathcal B}(\tilde u, \tilde w)$ of ${\mathcal B}(\tilde u,
\tilde w)$, given by the (finite) set $\Xi = \{ \xi ^+_j,
\xi ^-_j | j \in J \} $, is thus in bijective
correspondance to $\bigcup
_{\underset{\scriptstyle {\mathcal T}(\tilde v,
\tilde w) \not= \emptyset }{\scriptstyle{\mathcal T}(\tilde u,
\tilde v) \not= \emptyset }} {\mathcal T}(\tilde u, \tilde v)
\times {\mathcal T}(\tilde v, \tilde w)$. (Note that for
$\tilde v \in
\mbox{Crit}(\tilde h)$, with ${\mathcal T}(\tilde u, \tilde v) \not=
\emptyset $ and ${\mathcal T}(\tilde v, \tilde w) \not= 0$,
it follows that $\tilde v \in {\mathcal X}_{q - 1}$.) Hence
   \begin{align*} &\sum _{\tilde v \in {\mathcal X}_{q - 1}}
                     \tilde I_q(\tilde u, \tilde v) \tilde I
					 _{q - 1}(\tilde v, \tilde w) = \\
				  &= \sum _{\underset{\scriptstyle {\mathcal T}(\tilde v,
				     \tilde w) \not= \emptyset }{\scriptstyle {\mathcal T}
					 (\tilde u, \tilde v) \not= \emptyset }} \
					 \sum _{\underset{\scriptstyle \delta \in {\mathcal T}
					 (\tilde v, \tilde w)}{\scriptstyle \gamma \in {\mathcal T}
					 (\tilde u, \tilde v)}} \ \varepsilon
					 (\pi \circ \gamma ) \cdot \varepsilon (\pi
					 \circ \delta ) \\
				  &= \sum _{j \in J} (\varepsilon (\pi
				     \circ \gamma _j ^+) \cdot \varepsilon (\pi \circ
					 \delta ^+_j) + \varepsilon (\pi \circ
					 \gamma ^-_j) \cdot \varepsilon (\pi \circ \delta
					 ^-_j))
   \end{align*}
where for any $j \in J, (\gamma ^\pm _j, \delta ^\pm
_j):= \xi ^\pm _j$. We now prove the identity \eqref{6.7} by
showing that for any $j \in J$,
   \begin{equation}
   \label{6.10bis}  \varepsilon (\pi
				    \circ \gamma _j ^+) \cdot \varepsilon (\pi \circ
				    \delta ^+_j) + \varepsilon (\pi \circ
				    \gamma ^-_j) \cdot \varepsilon (\pi \circ \delta
				    ^-_j)) = 0 .
   \end{equation}
To make notation lighter we suppress the subscript $j$ in the
sequel. Then $(\gamma ^\pm , \delta ^\pm )$ is an element
${\mathcal T}(\tilde u, \tilde v ^\pm ) \times {\mathcal T}
(\tilde v^\pm , \tilde w)$. Viewing ${\mathcal T}(\tilde u,
\tilde w)$ as a subset of ${\mathcal T}(\pi (\tilde u), \pi
(\tilde w))$ we denote by ${\mathcal O}_{\tilde u \tilde w}$
the restriction of the orientation on ${\mathcal T}(\pi
(\tilde u), \pi (\tilde w))$ to ${\mathcal T}(\tilde u, \tilde
w)$. It induces in a canonical way an orientation on
${\mathcal T}(\tilde u, \tilde v^\pm ) \times {\mathcal T}
(\tilde v^\pm , \tilde w) \subseteq \partial _1 {\mathcal B}
(\tilde u, \tilde w)$ which we denote by ${\mathcal O}_{\tilde
u \tilde v ^\pm \tilde w}$. As $\{ {\mathcal O}_{uw}\} $
is a collection of coherent orientations (cf
Proposition~\ref{Theorem4.14}) one has
   \[ {\mathcal O}_{\tilde u \tilde v ^\pm \tilde w} = -
      {\mathcal O}_{\tilde u \tilde v^\pm } \otimes
	  {\mathcal O}_{\tilde v ^\pm \tilde w} .
   \]
Further, by definition, we have
   \[ {\mathcal O}_{\tilde u \tilde v^\pm } \big\arrowvert _{\gamma
      ^\pm } = \varepsilon (\gamma ^\pm ) {\mathcal O}_{\gamma ^\pm }
      \ \mbox { and } \
      {\mathcal O}_{\tilde v ^\pm \tilde w} \big\arrowvert _{\delta
      ^\pm } = \varepsilon (\delta ^\pm ) {\mathcal O}_{\delta ^\pm }
   \]
where ${\mathcal O}_{\gamma ^\pm}$ denotes the orientation at $\gamma ^\pm $
given by the flow $\tilde \Phi $.
Hence
   \begin{equation}
   \label{6.11} {\mathcal O}_{\tilde u \tilde v^\pm \tilde w}
                \big\arrowvert _{(\gamma ^\pm , \delta ^\pm )}
				= - \varepsilon (\gamma ^\pm )\cdot \varepsilon (\delta
				^\pm ) \cdot {\mathcal O}_{\gamma ^\pm } \otimes
				{\mathcal O}_{\delta ^\pm } .
   \end{equation}
On the other hand, ${\mathcal O}_{\tilde u \tilde v^\pm \tilde w}
\big\arrowvert _{(\gamma ^\pm , \delta ^\pm )}$ is determined in a
canonical way by ${\mathcal O}_{\tilde u \tilde w}$,
   \begin{equation}
   \label{6.12} {\mathcal O}_{\tilde u \tilde v^\pm \tilde w}
                \big\arrowvert _{(\gamma ^\pm , \delta ^\pm )}
				= \sigma ^\pm {\mathcal O}_{\gamma ^\pm } \otimes
				{\mathcal O}_{\delta ^\pm }
   \end{equation}
where $\sigma ^\pm \in \{ \pm 1 \} $. As the orientation
${\mathcal O}_{\tilde u \tilde v^\pm \tilde w} \big\arrowvert
_{(\gamma ^\pm , \delta ^\pm )}$ is defined by using directions
at $\xi ^\pm $ which are pointing inwards of the interval
$[\xi ^-, \xi ^+]$ and as ${\mathcal O}_{\gamma ^\pm }$ and
${\mathcal O}_{\delta ^\pm }$ are defined by the flow $\tilde \Phi $
it follows that $\sigma ^+ + \sigma ^- = 1$. Thus, by
combining \eqref{6.11} and \eqref{6.12} one obtains
   \[ \varepsilon (\gamma ^+) \cdot \varepsilon (\delta ^+) +
      \varepsilon (\gamma ^-) \cdot \varepsilon (\delta ^-) = 0
   \]
and identity \eqref{6.7} is established.
\end{proof}

\medskip

Let $E$ be a finite dimensional $k$-vector space, with $k$ denoting either
the field of real or complex numbers, and let $\rho : G \rightarrow
GL(E)$ be a representation of the group $G $. For any $0 \leq q \leq n$
denote by $\tilde {\mathcal C}^q_E$ the $k$-vector space of maps from $\tilde
{\mathcal X}_q$ to $E$. The group $G $ acts on $\tilde {\mathcal C}^q
_E$
   \[  \rho _\ast : G \times \tilde {\mathcal C}^q_E \rightarrow
       \tilde {\mathcal C}^q_E, \ (g,f) \mapsto g \cdot f
   \]
where for any $\tilde {v} \in \tilde {\mathcal X}_q$
   \begin{equation}
   \label{6.14} (g \cdot f)(\tilde {v}):= \rho (g) f(g^{-1} \cdot
                \tilde {v}) .
   \end{equation}
We denote by ${\mathcal C} ^q_\rho $ the subspace of $\tilde {\mathcal C}
^q_E$ consisting of all $\rho _\ast $-invariant functions, i.e. $g \cdot
f = f$ for any $g \in G$. As ${\mathcal X}_q$ is finite,
${\mathcal C}^q _\rho $ is finite dimensional. In the case where $E$
is the $1$-dimensional vector space $k$ and $\rho $ is the trivial
representation, we write simply $\tilde
{\mathcal C} ^q$ and ${\mathcal C}^q$ instead of $\tilde {\mathcal C}^q_E$
and ${\mathcal C}^q _\rho $. Clearly, ${\mathcal C}^q$ can
be interpreted as the (finite dimensional) vector space of
all functions $f : {\mathcal X}_q \rightarrow k$.

Furthermore, introduce the linear map
$\tilde {\delta }^q$.
$\tilde {\mathcal C}^q_E \rightarrow \tilde {\mathcal C}^{q + 1}_E$
defined for $f \in \tilde {\mathcal C}^q_E, \ \tilde {v}
\in \tilde {\mathcal X}_{q + 1}$ by
   \begin{equation}
   \label{6.18} \tilde {\delta }^q (f)(\tilde {v}) := \sum _{\tilde {w}
                   \in \tilde {\mathcal X}_q} \tilde {I}_{q + 1} (\tilde
				   {v}, \tilde {w}) f(\tilde {w}).
   \end{equation}
In a straightforward way it follows from \eqref{6.4} - \eqref{6.5}
that the maps $\tilde \delta ^q$ commute with the
action $\rho _\ast $ of the group $G$. Hence they induce linear
maps $\delta^q : {\mathcal C}^q_\rho \rightarrow {\mathcal C}^{q + 1}
_\rho $ between these vector spaces
of $G$-invariant functions.
By formula \eqref{6.7} of Proposition~\ref{Proposition6.3},
   \begin{equation}
   \label{6.17bis} \tilde {\delta }^{q + 1} \circ \tilde {\delta }^q = 0.
   \end{equation}

\medskip

We summarize the results obtained so far in this subsection in the
following proposition.

\begin{proposition}
\label{Proposition6.11bis} Assume that $M$ is a closed manifold, $\pi :
\tilde M \rightarrow M$ a $G$-principal covering where $G$ is a
discrete group, $(h,X)$ a Morse-Smale pair and $\{ {\mathcal O}^-_v
| v \in \mbox{\rm Crit}(h) \} $ a collection of orientations of
the unstable manifolds $\{ W^-_v | v \in \mbox{\rm Crit}(h) \} $
of the vector field $X$. Further assume that $E$ is a finite
dimensional $k$-vector space $(k = {\mathbb R}$ or ${\mathbb C}$)
and $\rho : G \rightarrow GL(E)$ a representation of $G$. Then
$\tilde {\mathcal C}^\bullet = (\tilde {\mathcal C}^q_E,
\tilde \delta )$ is a cochain complex of $G$-representations and
${\mathcal C}^\bullet _\rho = ({\mathcal C}^q_\rho , \sigma )$ is a
finite dimensional subcomplex.
\end{proposition}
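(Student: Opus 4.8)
The plan is to assemble the proposition from the properties of the incidence functions $\tilde I_q$ already established in Proposition~\ref{Proposition6.3} together with routine linear algebra; no new geometric input is required. First I would check that the formula \eqref{6.14} defines an honest left action $\rho_\ast$ of $G$ on $\tilde{\mathcal C}^q_E$ by $k$-linear automorphisms: linearity in $f$ is immediate, and the cocycle identity $(g_1g_2)\cdot f=g_1\cdot(g_2\cdot f)$ follows by a direct computation from the fact that $\rho$ is a group homomorphism and that $G$ permutes the set $\tilde{\mathcal X}_q=\pi^{-1}({\mathcal X}_q)$ of index-$q$ critical points of $\tilde h$. Hence each $\tilde{\mathcal C}^q_E$ is a $G$-representation.

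Next I would verify that $\tilde\delta^q\colon\tilde{\mathcal C}^q_E\to\tilde{\mathcal C}^{q+1}_E$ of \eqref{6.18} is well defined: by Proposition~\ref{Proposition6.3}(ii), for a fixed $\tilde v\in\tilde{\mathcal X}_{q+1}$ only finitely many $\tilde w\in\tilde{\mathcal X}_q$ satisfy $\tilde I_{q+1}(\tilde v,\tilde w)\neq 0$, so the sum is finite and clearly $k$-linear in $f$. Then I would show $\tilde\delta^q$ is $G$-equivariant. Unwinding the definitions, $\tilde\delta^q(g\cdot f)(\tilde v)=\sum_{\tilde w}\tilde I_{q+1}(\tilde v,\tilde w)\,\rho(g)f(g^{-1}\tilde w)$, whereas $(g\cdot\tilde\delta^q f)(\tilde v)=\rho(g)\sum_{\tilde w}\tilde I_{q+1}(g^{-1}\tilde v,\tilde w)f(\tilde w)$; reindexing the second sum by $\tilde w\mapsto g^{-1}\tilde w$ and invoking the invariance $\tilde I_{q+1}(g^{-1}\tilde v,g^{-1}\tilde w)=\tilde I_{q+1}(\tilde v,\tilde w)$ from Proposition~\ref{Proposition6.3}(i) shows the two expressions agree. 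Together with $\tilde\delta^{q+1}\circ\tilde\delta^q=0$ — which is exactly \eqref{6.17bis}, i.e. formula \eqref{6.7} of Proposition~\ref{Proposition6.3} — this proves that $(\tilde{\mathcal C}^q_E,\tilde\delta)$ is a cochain complex of $G$-representations and $G$-equivariant differentials.

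For the second assertion, since $\tilde\delta^q$ commutes with $\rho_\ast$ it carries $\rho_\ast$-invariant functions to $\rho_\ast$-invariant functions, hence restricts to a linear map $\delta^q\colon{\mathcal C}^q_\rho\to{\mathcal C}^{q+1}_\rho$, and $\delta^{q+1}\circ\delta^q=0$ by restriction; thus ${\mathcal C}^\bullet_\rho$ is a subcomplex of $\tilde{\mathcal C}^\bullet$. For finite dimensionality I would choose once and for all a lift $\tilde v\in\pi^{-1}(v)$ for each $v\in{\mathcal X}_q$ and consider the evaluation map ${\mathcal C}^q_\rho\to E^{{\mathcal X}_q}$, $f\mapsto(f(\tilde v))_{v}$. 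A $\rho_\ast$-invariant $f$ is determined on the whole $G$-orbit of $\tilde v$ through $f(g\cdot\tilde v)=\rho(g)f(\tilde v)$, and (using that the $G$-action on $\tilde{\mathcal X}_q$ is free, so orbits are in bijection with $G$) any prescription of values in $E$ extends to a well-defined invariant function; thus the evaluation map is an isomorphism. Since $M$ is closed, $h$ has only finitely many critical points, so ${\mathcal X}_q$ is finite and $\dim_k{\mathcal C}^q_\rho=|{\mathcal X}_q|\cdot\dim_k E<\infty$.

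I do not expect a genuine obstacle: all the substantive content — $\tilde\delta^2=0$, the $G$-invariance and finite support of the incidence functions, and the existence of coherent orientations underlying the definition of $\varepsilon(\gamma)$ — has already been supplied by Propositions~\ref{Theorem4.14}, \ref{Theorem4.18} and \ref{Proposition6.3}. The only point demanding a little care is the bookkeeping in the equivariance computation, namely keeping straight the roles of $g$ and $g^{-1}$, deciding which sum to reindex, and observing that all sums involved are finite so that these manipulations are legitimate; everything else is formal.
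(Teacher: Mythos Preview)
Your proposal is correct and follows the same route as the paper, which in fact does not give a separate proof but presents Proposition~\ref{Proposition6.11bis} as a summary of facts established immediately before it: well-definedness from Proposition~\ref{Proposition6.3}(ii), $G$-equivariance of $\tilde\delta$ from the $G$-invariance of $\tilde I_q$, and $\tilde\delta^{q+1}\circ\tilde\delta^q=0$ from \eqref{6.7}. Your write-up is actually more explicit than the paper's---in particular your equivariance computation and your isomorphism ${\mathcal C}^q_\rho\cong E^{{\mathcal X}_q}$ spell out what the paper leaves as one-line remarks---and you correctly pin the equivariance on Proposition~\ref{Proposition6.3}(i), which is the precise input needed (the paper's pointer to \eqref{6.4}--\eqref{6.5} at that spot is slightly off).
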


\medskip

We refer to ${\mathcal C}^\bullet _\rho \equiv {\mathcal C}
^\bullet _\rho ((h, X), {\mathcal O})$ as the geometric complex
associated to the data $(h,X), {\mathcal O} = \{ {\mathcal O}^-_v \} ,
\rho : G \rightarrow GL(E)$.

{\it De Rham: }
Let $M$ be a smooth, but not necessarily closed manifold.
We denote by
$(\Omega ^\bullet (M), d)$ the de Rham complex. Here
$\Omega ^q(M)$ is
the space of smooth $q$-forms on $M$, and
   \[ d \equiv d^q : \Omega ^q(M) \rightarrow \Omega ^{q + 1}(M)
   \]
is the exterior differential.

More generally, assume that $\pi : \tilde {M} \rightarrow M$ is a
$G$-principal covering of a smooth, closed manifold $M$ and $\rho
: G \rightarrow GL(E)$ a representation of the group $G$ on a
finite dimensional $k$-vector space $E$. Let $\Omega ^\bullet (\tilde
{M};E):= \Omega ^\bullet (\tilde M)
\otimes E$ denote the space of differential forms with
values in $E$. Then the de Rham differential $\tilde d$ on $\Omega
^\bullet (\tilde M)$ can be extended to the
differential $\tilde {d} _E$, mapping $\Omega ^\bullet (\tilde {M};E)$
to $\Omega
^{\bullet + 1} (\tilde {M};E)$,
   \[ \tilde {d} _E = \tilde {d} \otimes Id_E.
   \]
To make notation lighter we will often suppress the subscript $E$. The
action $\rho _\ast $ of $G$ on functions in $\tilde {\mathcal C}
^\bullet
_E$ defined in \eqref{6.14} extends to an action on forms with values in
$E$ and is again denoted by $\rho _\ast $. In particular for any $g
\in G, e \in E$, and $\omega \in \Omega ^q(\tilde {M})$,
   \[ g \cdot (\omega \otimes e):= \left( (g^{-1})^\ast \cdot \omega \right)
      \otimes \rho (g)e
   \]
where for any $g \in G,
g^\ast : \Omega ^q(\tilde {M}) \rightarrow \Omega ^q(\tilde
{M})$ is the map induced by the map $g : \tilde {M} \rightarrow
\tilde {M}, \tilde {x} \mapsto g \tilde {x}$, i.e. for any
$\tilde {x} \in \tilde {M}, \omega \in \Omega ^q(\tilde {M})$,
   \[ g^\ast \omega (\tilde {x})(\xi _1, \cdots , \xi _q) = \omega
      (g \tilde {x}) (d_{\tilde {x}} g \xi _1, \cdots , d_{\tilde
      {x}} g \xi _q)
   \]
for any $\xi _1, \cdots , \xi _q \in T_{\tilde {x}} \tilde {M}$.
This action commutes with the de Rham
differential $\tilde d_E$.

Denote by $\Omega ^\bullet (M, \rho )$ the subspace of $\Omega ^\bullet
(\tilde {M};E)$
consisting of $G$-invariant differential forms on $\tilde {M}$
with values in $E$. Let $d _\rho $ be the restriction of the de
Rham differential
$\tilde d _E$ to $\Omega ^\bullet (M, \rho )$. In this way we get the
de Rham complex $(\Omega ^\bullet (M, \rho ), d_\rho )$ with coefficients
in $\rho $.

\subsection{Integration map}
\label{4.4Integration map}

Let $M$ be a smooth manifold of dimension $n$, $W$ a compact oriented smooth
manifold with corners of dimension $q \leq n$ and $i : W \rightarrow M$ a
smooth map. Then one can define the integration map $Int \equiv Int_W :
\Omega ^q(M) \rightarrow k$ given by
   \[ Int(\omega ) := \int _W i^\ast \omega .
   \]
This is applied to the following situation. Suppose $(h,X)$ is a Morse-Smale
pair on a closed manifold $M$, $\pi : \tilde {M} \rightarrow M$ a
$G$-principal covering and $\rho : G \rightarrow GL(E)$ a
representation of $G$. For any critical point $v \in \mbox{\rm Crit}(h)$,
choose
an orientation ${\mathcal O}_v$ of its unstable manifold $W^-_v$.
As $\pi : W^-_{\tilde v} \rightarrow W^-_{\pi (\tilde v)}$ is a
diffeomorphism for any $\tilde v \in \mbox{Crit}(\tilde h)$,
${\mathcal O}_v$ lifts to an orientation ${\mathcal O}_{\tilde v}$
of the unstable manifold $W^-_{\tilde v}$ of any critical point
$\tilde v$ of $\tilde h$ with $\pi (\tilde v) = v$.
For any $0 \leq q \leq n$ we then define the map
   \[ \widetilde {Int} \equiv \widetilde {Int} ^q : \Omega ^q(\tilde {M}; E)
      \rightarrow \tilde {\mathcal C} ^q_E
   \]
as follows: for any $\tilde {\omega } \in \Omega ^q(\tilde {M};E)$,
the value of $\widetilde{Int}^q(\tilde \omega )$ at a point $\tilde
{v}$ in ${\mathcal X} _q:= \{ \tilde v \in
\mbox{\rm Crit}(\tilde {h}) : i(\tilde {v}) = q \}$ is given by
   \[ \widetilde {Int}^q (\tilde {\omega })(\tilde v) =  \int
      _{W^-_{\tilde {v}}} i ^\ast _{\tilde {v}} \tilde {\omega } = \int
	  _{\hat {W}^- _{\tilde {v}}} \hat {i}^\ast _{\tilde {v}} \tilde
	  {\omega } \in E.
   \]
As $\hat W^-_{\tilde v}$ is a compact manifold with corners, both
integrals are well defined.

By Proposition~\ref{Theorem4.17bis} (version of Stokes' theorem)
one obtains the following identities.

\medskip

\begin{proposition}
\label{Lemma6.20} For any $0 \leq q \leq n$
   \[ \tilde {\delta }^q \circ \widetilde {Int} ^q =
      \widetilde {Int}^{q + 1} \circ \tilde {d}^q_E .
   \]
As a consequence, $\widetilde{\rm Int} : (\tilde \Omega ^\bullet (\tilde M,E),
\tilde d_E) \rightarrow (\tilde{\mathcal C}^\bullet _E, \tilde \delta )$ is a
morphism of cochain complexes. Since $\widetilde{\rm Int}$ commutes with the action
of $G$ its restriction to $(\Omega ^\bullet (M,\rho ), d^\bullet
_\rho )$, denoted by ${\rm Int}$, is also a morphism of cochain complexes,
   \[ {\rm Int} : (\Omega ^\bullet (M, \rho ), d_\delta ) \rightarrow
     ({\mathcal C}^\bullet _\rho , \delta ) .
   \]
\end{proposition}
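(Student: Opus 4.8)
The plan is to reduce the intertwining identity $\tilde{\delta}^q\circ\widetilde{Int}^q=\widetilde{Int}^{q+1}\circ\tilde d^q_E$ to a version of Proposition~\ref{Theorem4.17bis} (the Stokes formula \eqref{4.12}) for the pair $(\tilde h,\tilde X)$ on the covering $\tilde M$, evaluated critical point by critical point, and then to obtain the statements about chain morphisms and about the restriction to $G$-invariants by formal arguments, using the $G$-equivariance of $\widetilde{Int}$.

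First I would establish the analogue of Proposition~\ref{Theorem4.17bis} upstairs: for any $\tilde v\in\mathrm{Crit}(\tilde h)$ of index $q+1$ and any $\tilde\omega\in\Omega^q(\tilde M;E)$,
\[
 \int_{W^-_{\tilde v}} i^\ast_{\tilde v}(\tilde d\,\tilde\omega)\;=\;\sum_{\substack{\tilde w<\tilde v\\ i(\tilde w)=q}}\ \sum_{\tilde\gamma\in\mathcal T(\tilde v,\tilde w)}\varepsilon(\pi\circ\tilde\gamma)\int_{W^-_{\tilde w}} i^\ast_{\tilde w}\tilde\omega .
\]
Although $\tilde h$ need not be proper, Proposition~\ref{Theorem4.18} supplies exactly what the proof of Proposition~\ref{Theorem4.17bis} needs: $\hat W^-_{\tilde v}$ is a \emph{compact} oriented manifold with corners with $\partial_1\hat W^-_{\tilde v}=\bigsqcup_{\tilde w<\tilde v}\mathcal T(\tilde v,\tilde w)\times W^-_{\tilde w}$, the map $\hat i_{\tilde v}$ is a smooth extension of $i_{\tilde v}$, and only finitely many $\tilde w$ contribute. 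The orientations $\mathcal O_{\tilde v}$ of the $W^-_{\tilde v}$ obtained by lifting the chosen $\mathcal O^-_v$ via the diffeomorphisms $\pi|_{W^-_{\tilde v}}$ induce, by the argument of Proposition~\ref{Theorem4.14} applied to $(\tilde h,\tilde X)$ (that argument is local and does not use properness), a coherent collection of orientations $\mathcal O_{\tilde v\tilde w}$ on the $\mathcal T(\tilde v,\tilde w)$; since $\pi$ is a local diffeomorphism carrying $\tilde\Phi$ to $\Phi$, the sign $\varepsilon$ defined by $\mathcal O_{\tilde v\tilde w}$ at $\tilde\gamma$ coincides with $\varepsilon(\pi\circ\tilde\gamma)$. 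With these inputs the proof of Proposition~\ref{Theorem4.17bis} goes through verbatim: write $i^\ast_{\tilde v}\tilde d\tilde\omega=d(\hat i^\ast_{\tilde v}\tilde\omega)$ on $\hat W^-_{\tilde v}$, apply Theorem~\ref{Theorem 4.4.12}, use the analogue of \eqref{4.15} from Theorem~\ref{Theorem1.4.13}(iii), and drop the terms with $\dim W^-_{\tilde w}<q$. Recognising $\sum_{\tilde\gamma\in\mathcal T(\tilde v,\tilde w)}\varepsilon(\pi\circ\tilde\gamma)$ as $\tilde I_{q+1}(\tilde v,\tilde w)$ by \eqref{6.2} and comparing with \eqref{6.18} gives $\tilde\delta^q(\widetilde{Int}^q\tilde\omega)(\tilde v)=\widetilde{Int}^{q+1}(\tilde d^q_E\tilde\omega)(\tilde v)$, which is the asserted identity.

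The remaining claims are then formal. The pair $(\tilde{\mathcal C}^\bullet_E,\tilde\delta)$ is a complex by \eqref{6.17bis}, $(\Omega^\bullet(\tilde M;E),\tilde d_E)$ is a complex since $\tilde d_E=\tilde d\otimes\mathrm{Id}_E$, and the identity just proved is exactly the chain-map condition, so $\widetilde{Int}$ is a morphism of cochain complexes. For the restriction, I would check that $\widetilde{Int}$ commutes with the action $\rho_\ast$: for $g\in G$ the diffeomorphism $g:\tilde M\to\tilde M$ preserves $\tilde X=\pi^\ast X$, hence restricts to an orientation-preserving diffeomorphism $W^-_{g^{-1}\tilde v}\to W^-_{\tilde v}$ (the orientations $\mathcal O_{\tilde v}$ being $\pi$-pullbacks and $\pi\circ g=\pi$); then by change of variables and the definition \eqref{6.14},
\[
 \widetilde{Int}^q(g\cdot\tilde\omega)(\tilde v)=\rho(g)\int_{W^-_{g^{-1}\tilde v}} i^\ast_{g^{-1}\tilde v}\tilde\omega=\bigl(g\cdot\widetilde{Int}^q(\tilde\omega)\bigr)(\tilde v).
\]
Hence $\widetilde{Int}$ carries $\Omega^\bullet(M,\rho)$ into ${\mathcal C}^\bullet_\rho$, and as $\tilde\delta$ and $\tilde d_E$ restrict to $\delta$ and $d_\rho$ on the $G$-invariant subcomplexes, the restriction $\mathrm{Int}$ is again a morphism of cochain complexes.

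The main obstacle I anticipate is bookkeeping rather than conceptual depth: one must be careful that Proposition~\ref{Theorem4.17bis} and the coherence statement of Proposition~\ref{Theorem4.14} are genuinely available for $(\tilde h,\tilde X)$ even though $\tilde h$ fails to be proper — this is precisely what Proposition~\ref{Theorem4.18} is designed to guarantee (compactness of $\hat W^-_{\tilde v}$, its corner structure, and finiteness of the relevant $\tilde w$) — and that every orientation convention upstairs is the $\pi$-pullback of the one downstairs, so that the signs $\varepsilon(\pi\circ\tilde\gamma)$ appearing in \eqref{6.2} are exactly the ones produced by the Stokes computation.
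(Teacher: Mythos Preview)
Your proposal is correct and follows essentially the same approach as the paper: apply the Stokes-type formula of Proposition~\ref{Theorem4.17bis} on the covering at each $\tilde v$ (justified via the compactness and corner structure of $\hat W^-_{\tilde v}$ from Proposition~\ref{Theorem4.18}), then identify the resulting sum of signed counts with $\tilde I_{q+1}$ and hence with $\tilde\delta^q$. You are somewhat more explicit than the paper about the orientation bookkeeping and about the $G$-equivariance of $\widetilde{Int}$ (which the paper asserts in the statement rather than proving), but the argument is the same.
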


\begin{remark}
\label{remark5} It can be shown that both morphisms, $\widetilde{Int}$ and
${Int}$, induce an isomorphism in cohomology.
\end{remark}

\begin{proof}
Let
$\omega \in \Omega ^q(\tilde M;E)$ where $0 \leq q \leq n$. By the
definition of $\widetilde{Int}^{q + 1}$ one has, for any $\tilde
v \in \tilde {\mathcal X} _{q + 1}$,
   \begin{align*} \widetilde{Int}^{q + 1} (\tilde d_E \omega )
                     (\tilde v) &= \int _{W^-_{\tilde v}} i^\ast
					 _{\tilde v}(\tilde d_E \omega ) \\
				  &= \sum _{\underset {\scriptstyle \tilde w < \tilde v}
                     {\scriptstyle \tilde
				     w \in \tilde {\mathcal X} _q}} \ \sum _{\gamma \in
					 {\mathcal T}(\tilde v, \tilde w)} \varepsilon
					 (\pi (\gamma )) \int _{W^-_{\tilde w}} i^\ast _{\tilde
					 w} \omega
   \end{align*}
where for the latter identity we applied Proposition~\ref{Theorem4.17bis}.
(Recall that $(\tilde h, \tilde X)$ is a Morse-Smale pair except for
the fact that $\tilde h$ might not be proper. However $\hat W^-
_{\tilde v}$ is compact and all arguments in the proof of
Proposition~\ref{Theorem4.18} remain valid.) By the definition
\eqref{6.2} of $\tilde I_{q + 1}(\tilde v, \tilde w)$ one gets
   \begin{align*} \widetilde{Int}^{q + 1}(\tilde d_E \omega )(\tilde
                     v) &= \sum _{\underset {\tilde w < \tilde v}
					 {\tilde w \in \tilde {\mathcal X} _q}} \tilde I_{q + 1}
					 (\tilde v, \tilde w) \widetilde {Int}^q
					 (\omega )(\tilde w) \\
				 &= \tilde \delta ^q \left( \widetilde{Int}^q (\omega )
				    \right)(\tilde v)
   \end{align*}
where for the latter identity we used the definition \eqref{6.18}
of $\tilde \delta ^q(f)(\tilde v)$. This establishes the claimed
identity.
\end{proof}

\section{Epilogue}
\label{6. Epilogue}

In his seminal paper \cite{Wi}, Witten proposed an analytic approach to Morse theory,
inspired by quantum mechanics. Given a Morse function $h(x)$ on a closed
Riemannian manifold, he introduced the deformed de Rham differential
$$d(t)=e^{-th}de^{th}=d+tdh\wedge.$$ As $d(t)2 = 0$, the space of forms on $M$
together with this differential defines again a complex, referred to as the deformed
de Rham complex. The deformed differential gives rise to deformed Laplacians
$$\Delta_q(t)=d_q^*(t)d_q(t)+d_{q-1}(t)d_{q-1}^*(t),$$
acting on $q$-forms on $M$; here $d_q(t)$ is the restriction of $d(t)$
to the space of $q$-forms. It turns out that for $t$ sufficiently large,
the spectrum of $\Delta_q(t)$ splits into two parts, one of which
lies exponentially close to $0$ and consists of finitely many eigenvalues,
whereas the other one consists of infinitely many eigenvalues and is contained
in the half line $[Ct,\infty)$ for some constant $C > 0.$ For such a $t$,
let $\Lambda_{\text{sm}}^q(t)$ be the space of $q$-forms,
spanned by the eigenforms of $\Delta_q(t)$ corresponding to
exponentially small eigenvalues. Witten showed that the dimension of
$\Lambda_{\text{sm}}^q(t)$
equals the number of critical points of $h(x)$ of index $q$. As $d_q(t)$ maps
$\Lambda_{\text{sm}}^q(t)$ into $\Lambda_{\text{sm}}^{q+1}(t)$, it follows that
$\Lambda_{\text{sm}}^{\bullet}(t)$ is a subcomplex of the deformed de Rham complex,
sometimes referred to as the {\it small complex}.
Suppose now that the gradient vector field $X= - \text{grad}h$ satisfies the Morse-Smale
condition. As explained in Section 5, the cell decomposition provided by the unstable
manifolds, $W_v^-$, $v\in\text{Crit}(h)$, leads to a complex of finite dimensional vector
spaces. The grading of the complex is provided by the index of the critical points and
the chain maps are defined in terms of the trajectories (instantons) between critical
points whose indices differ by $1$
and a coherent orientation on spaces of trajectories between two critical points of $h$.
The corresponding cochain complex is called the {\it geometric complex}.
Actually, according to \cite{Lau0}, or more recently \cite{BH1}, it can be shown to be a
CW complex.
Witten conjectured
that this complex is isomorphic to the small complex. His conjecture was first proved
by Helffer and Sj\"ostrand \cite{HS2}. Using methods of semiclassical analysis,
they analysed in
detail
the restriction of the deformed de Rham differential to the small complex.
Later on, Bismut and Zhang \cite{BZ2} discovered that the integration map
provides an isomorphism of complexes between the small complex and the
geometric complex. In this way, they could simplify the arguments of Helffer
and Sj\"ostrand and provide
a new proof of de Rham's theorem which says
that the integration map induces an isomorphism between cohomologies.
The present paper provides important elements of the topological part of the so
called Witten-Helffer-Sj\"ostrand theory, which will be treated in our book \cite{Bfk}
in preparation, together with some of the applications of this theory in topology and
geometric analysis.

\begin{bibdiv}
\begin{biblist}

\bib{BZ1}{article}{title={An extension of a theorem by
         Cheeger and M\"uller}, author={Bismut, J.M.} author={Zhang, W.},
         journal={Ast\'erisque}, volume={205}, date={1992}, pages={1--223}}

\bib{BZ2}{article}{title={Milnor and Ray-Singer metrics on
        the equivariant determinant of a flat vector bundle},
        author={Bismut, J.M.} author={Zhang, W.}, journal={GAFA},
        volume={4}, date={1994}, pages={136--212}}

\bib{By}{book}{title={An introduction to differentiable
        manifolds and Riemannian geometry}, author={Boothby, W}, date={1986},
        publisher={Academic Press}}
		
\bib{Bo}{article}{title={Morse theory indomitable}, author={Bott, R.}, journal={Publ. Math.
        I.H.E.S.}, volume={68}, date={1988}, pages={99--114}}

\bib{BT}{book}{title={Differential forms in algebraic topology}, author={Bott, R.}
        author={Tu, L},
        date={1982}, publisher={Springer}}

\bib{Bu}{article}{title={A short course on Witten-Helffer-Sj\"ostrand theory},
        author={Burghelea,
        D.}, journal={Archive DG/0101063}}	

\bib{Bfk}{article}{title={Analytic and Reidemeister torsion for representations in finite
        type Hilbert modules}, author={Burghelea, D.}, author={Friedlander, L.}
        author={Kappeler,
        T.}, journal={GAFA}, volume={6}, date={1996}, pages={751--859}}	

\bib{BH}{article}{title={On the topology and analysis of a closed one form.I (Novikov Theory revisited)}, 
        author={Burghelea, D.} author={Haller, S.}, journal={Monographie de L'Enseignement Math\'ematique},
        volume= {38}, date={2001} pages ={133-175}}

\bib{Ce}{article}{title={Topologie de certains espaces de plongements}, author={Cerf, J.},
        journal={Bull. Soc. Math. France}, volume={89}, date={1961}, pages={227--380}}
        		
\bib{CL}{article}{title={A cohomology complex for manifolds with boundary}, author={Chang,
        K.C.} author={Liu, J.}, journal={Top. Meth. in Nonlinear Anal.}, volume={5},
        date={1995},
		pages={325--340}}

\bib{Do}{book}{title={Vari\'et\'es \`a bord anguleux et voisinages tubulaires},
        author={Douady, A.},
        date={1961/62}, publisher={S\'eminaire Henri Cartan,
        Exp 111}, address={Secr\'etariat Math. France}}		

\bib{Fa}{book}{title={Topology of closed one-forms}, author={Farber, M.}, date={2004},
        publisher={AMS}}

\bib{FH}{article}{title={Coherent orientations for periodic orbit problems in symplectic
        geometry},
        author={Floer, A.} author={Hofer, H.}, journal={Math. Z.}, volume={212},
        date={1993}, pages={13--88}}

\bib{HN}{book}{title={Hypoellipticity and spectral theory for Fokker-Planck operators and
        Witten
        Laplacians}, author={Helffer, B.} author={Nier, F.}, date={2005}, volume={1862},
        publisher={Lecture Notes in Mathematics, Springer}}

\bib{HS2}{article}{title={Puits multiples en m\'ecanique semiclassique. IV. Etude du complexe
         de Witten.}, author={Helffer, B.} author={Sj\"ostrand, J.}, journal={Comm. PDE},
         volume={10}, date={1985}, pages={245--340}}

\bib{Hi}{book}{title={Differential topology}, author={Hirsch, M.W.}, date={1994}, volume={33,
        5th ed}, publisher={Graduate Texts in Mathematics, Springer}}

\bib{La}{book}{title={Real and Functional Analysis}, author={Lang, S.}, date={1993}, volume={142,
        3rd ed}, publisher={Graduate Texts in Mathematics, Springer}}

\bib{Lat}{article}{title={Existence de $1$-formes ferm\'ees non singuli\`eres dans une classe
         de cohomologie de Rham}, author={Latour, F.}, journal={Inst. Hautes Etudes Sci. Publ.
         Math.}, volume={80}, date={1995}, pages={135--194}}

\bib{Lau0}{article}{title={On the Thom-Smale complex}, author={Laudenbach, F.},
        subtitle={In: An extension of a
        theorem by Cheeger and M\"uller / J.M. Bismut and W. Zhang}, journal={Ast\'erisque}
        volume={205}, date={1992},  pages={219--223}}

\bib{Lau}{book}{title={Topologie diff\'erentielle}, author={Laudenbach, F.}, date={1996},
        publisher={Lecture Notes}, address={Ecole Polytechnique, Paris}}

\bib{MT}{book}{title={From calculus to cohomology}, author={Madsen, I.} author={Tornehave, J.},
        date={1997}, publisher={Cambridge University Press}, address={Cambridge}}

\bib{Me1}{book}{title={Calculus of conormal distributions on manifolds with corners},
         author={Melrose, R.}, date={1992}, volume={3}, pages={51--61}, publisher={Int. Math.
         Research Notes}}

\bib{Me2}{book}{title={Differential analysis on manifolds with corners}, author={Melrose, R.},
         date={book in preparation}}

\bib{Mi}{book}{title={Morse theory}, author={Milnor, J.M.}, date={1963}, volume={51},
        publisher={Ann. of Math. Studies}, address={Princeton Univ. Press}}

\bib{Mi2}{book}{title={Lectures on h-cobordism Theorem}, author={Milnor, J.M.}, date={1965},
        publisher={Princeton Univ. Press}}

\bib{Mu}{book}{title={Elementary differential topology}, author={Munkres, J.}, date={1963},
        publisher={Annals of Math. Studies}, address={Princeton Univ. Press,
        Princeton N.J.}}

\bib{Sa}{article}{title={The measure of the critical points of differential  maps},
        author={Sard, A.}, journal={Bull. Amer. Math. Soc.}, volume={48}, date={1942},
        pages={883--890}}

\bib{Sc}{book}{title={Morse homology}, author={Schwarz, M.}, date={1993},
        publisher={Birkh\"auser}, address={Basel}}		

\bib{Sm1}{article}{title={On gradient dynamical systems}, author={Smale, S.},
        journal={Ann. of Math.}, volume={74}, date={1961}, pages={199--206}}

\bib{Sm2}{article}{title={Differentiable dynamical systems}, author={Smale, S.},
        journal={Bull. Amer. Math. Soc.}, volume={73}, date={1967}, pages={747--817}}

\bib{Th}{article}{title={Sur une partition en cellules associ\'es \`a une fonction sur une
        vari\'et\'e}, author={Thom, R.}, journal={C.R.
        Acad. Sci. Paris}, volume={228}, date={1949}, pages={973--975}}

\bib{Wa}{book}{title={Foundations of differentiable manifolds and Lie groups} author={Warner,
        F.}, date={1983}, publisher={Springer}}
	
\bib{Wh}{article}{title={On $C^1$-simplexes}, author={Whitehead, J.H.C.}, journal={Ann.
        of Math.}, volume={41}, date={1940}, pages={809--824}}

\bib{Whit}{book}{title={Geometric integration theory}, author={Whitney, H.}, date={1957},
        publisher={Princeton University Press} address={Princeton}}

\bib{Wi}{article}{title={Supersymmetry and Morse theory}, author={Witten, W.},
        journal={J. Diff. Geom.}, volume={17}, date={1982}, pages={661-692}}

\bib{Zh}{book}{title={Lectures on Chern-Weil theory and Witten deformations},
        author={Zhang, W.}, date={2001}, publisher={World Scientific}}

\bib{BH1}{article}{title={On the hyperbolic Bott vector fields
        and the associated geometric complex}, author={Burghelea, D.} author={Haller, S.},
        date={in preparation}}

\end{biblist}
\end{bibdiv}

\end{document}